\renewcommand*{\backref}[1]{}
\def\SUM#1#2{{\mbox{$\sum\limits_{#1}^{#2}$}}}
 \providecommand{\og}{``}
\providecommand{\fg}{''} \providecommand{\smfandname}{and}
\def\1{\hbox{1\kern-.35em\hbox{1}}}
\newtheorem{theorem}{Theorem}[section]
\newtheorem*{theorem*}{Theorem}
\newtheorem{lemma}[theorem]{Lemma}
\newtheorem{proposition}[theorem]{Proposition}
\newtheorem*{proposition*}{Proposition}
\newtheorem{coro}[theorem]{Corollary}
\theoremstyle{definition}
\newtheorem{definition}[theorem]{Definition}
\newtheorem{example}[theorem]{Example}
\theoremstyle{remark}
\newtheorem{remark}[theorem]{Remark}
\numberwithin{equation}{section}
\newcommand{\bea}{\begin{eqnarray}}
\newcommand{\eea}{\end{eqnarray}}
\newcommand{\be}{\begin{eqnarray*}}
\newcommand{\ee}{\end{eqnarray*}}
\def\t{{\mathfrak t}}
\def\s{{\mathfrak s}}
\newcommand{\Z}{{\mathbb Z}}
\newcommand{\C}{{\mathbb C}}
\newcommand{\mfg}{{\mathfrak g}}
\newcommand{\fh}{{\mathfrak h}}
\newcommand{\ft}{{\mathfrak t}}
\newcommand{\Hom}{{\rm Hom}}
\def\SUBS#1{}
\def\LE{_{\rm L}}
\def\RI{_{\rm R}}
\def\BO{_{\rm B}}
\def\a{\alpha}
\def\b{\beta}
\def\d{\delta}
\def\res{\text{res}}
\def\l{\lambda}
\def\L{\Lambda}
\def\si{\sigma}
\def\sc{\scriptstyle}
\def\ssc{\scriptscriptstyle}
\def\dis{\displaystyle}
\def\n{\mathfrak n}
\def\wt{\widetilde}
\def\Std{\mathcal T^{s}}
\def\N{\mathbb{N}}
\def\Z{\mathbb{Z}}
\def\C{\mathbb{C}}
\def\es{\varepsilon}
\def\Hom{{\rm Hom}}
\def\s{\mathfrak s}
\def\OTIMES{{{\sc\!}\otimes{\sc\!}}}
\def\a{\alpha}\def\b{\beta}\def\L{\Lambda}
\def\equa#1#2{
\begin{equation}\label{#1}#2\end{equation}}
\def\equan#1#2{$$#2$$}
\def\B{\mathscr B}
\def\FF{\frak F}
\def\GG{\frak G}
\def\FF{\frak F}
\def\DBr{{\mathscr  B_{2,r,t}}}
\def\OPL#1{\raisebox{-8pt}{$\stackrel{\mbox{\Large$\oplus$}}{\sc #1}$}}
\def\wt{{\rm wt}_\l}
\def\bii{{\vec{i}}}
\def\bij{{\vec{j}}}
\def\boldf{\textbf{\textit{f}}}
\def\boldg{\textbf{\textit{g}}}
\numberwithin{equation}{section}
\title[Singular vectors of (mixed) tensor products  ] {Highest weight vectors of mixed tensor products\\ of general linear Lie superalgebras
}
\author{Hebing Rui  {\normalfont \smfandname} Yucai \vspace*{-8pt}Su}
\address{H.~Rui: Department of Mathematics, Shanghai Key Laboratory of PMMP,  East China Normal
University, Shanghai, 200241, China} \email{hbrui@math.ecnu.edu.\vspace*{-5pt}cn}
\address{Y.~Su: Department of Mathematics, Tongji University,  Shanghai, 200092, China} \email{ycsu@tongji.edu.cn}
\thanks{Supported by NSFC (grant no.~11025104, 11371278), Shanghai Municipal Science and Technology Commission
~11XD1402200, 12XD1405000, and Fundamental Research Funds for the Central Universities of China.}
\begin{document}
\baselineskip16pt
\begin{abstract} In this paper, a notion of
 cyclotomic  (or level $k$)  walled Brauer algebras  $\mathscr B_{k, r, t}$
 is introduced for arbitrary positive integer $k$. It is proven that  $\mathscr B_{k, r, t}$   is  free over a commutative ring with  rank
$k^{r+t}(r+t)!$ if and only if  it is admissible. Using  super Schur-Weyl duality between general linear Lie
superalgebras $\mathfrak{gl}_{m|n}$ and  $\DBr$,  we give a classification of highest weight vectors of
$\mathfrak{gl}_{m|n}$-modules $M_{pq}^{rt}$,
the tensor products of  Kac-modules with mixed tensor products
of the natural module and its dual. This enables us to
establish an explicit relationship between  $\mathfrak{gl}_{m|n}$-Kac-modules
and  right cell  (or  standard) $\DBr$-modules   over $\mathbb C$.
Further, we find an explicit relationship between indecomposable tilting $\mathfrak{gl}_{m|n}$-modules appearing in
 $M_{pq}^{rt}$, and principal indecomposable right  $\DBr$-modules   via the notion of Kleshchev bipartitions.  As an application,
 decomposition numbers of $\DBr$ arising from super Schur-Weyl duality  are determined.\end{abstract}

\sloppy \maketitle

\section{Introduction}
Motivated by  Brundan-Stroppel's work on higher super Schur-Weyl duality in \cite{BS4}, we introduced
affine walled Brauer algebras  $\mathscr B^{\rm aff}_{r, t}$ in ~\cite{RSu} so as to establish higher super Schur-Weyl duality on
the tensor product $M_{pq}^{rt}$ of a Kac-module with a mixed tensor product
of the natural module and its dual for general linear Lie superalgebra $\mathfrak{gl}_{m|n}$ over $\mathbb C$ under the assumption
$r+t\le \min\{m, n\}$.\footnote{After we finished \cite{RSu}, Professor Stroppel informed us that  Sartori defined affine walled algebras via affine walled Brauer category, independently in \cite{Sa}.}
One of purposes of this paper is to generalize  super Schur-Weyl duality to the case $r+t>\min\{
m, n\}$. For this aim, we need to establish a bijective map from  a level two walled Brauer algebra $\DBr$ appearing in \cite{RSu} to a level two degenerate Hecke algebra $\mathscr H_{2, r+t}$.
This can be done by showing that  the dimension of $\DBr$ is $2^{r+t} (r+t)!$ over $\mathbb C$. We
consider this problem in a general setting by introducing
 a cyclotomic  (or level $k$)  walled Brauer algebra  $\mathscr B_{k, r, t}$
for arbitrary  $k\in \mathbb Z^{>0}$. By employing a totally new method, which is independent of seminormal forms of $\mathscr B_{k, r, t}$, we prove that $\mathscr B_{k, r, t}$   is  free over a commutative ring $R$ with  rank
$k^{r+t}(r+t)!$ if and only if  it is admissible in the sense of Definition~\ref{condi-k}.
It is  expected that  $\mathscr B_{k, r, t}$ can be used to study the problem on a classification of  finite dimensional simple
$\mathscr B^{\rm aff}_{r, t}$-modules over an algebraically closed field. Details will be given elsewhere.

The establishment of the  higher super Schur-Weyl duality~\cite{RSu}  enables us to use the representation theory of $\DBr$ to  classify highest weight vectors of  $M_{pq}^{rt}$ (at this point, we would like to
mention that purely on the Lie superalgebra  side, it seems to be hard to construct highest weight vectors of a given module, which is
 an interesting problem on its own right). On the other hand,
a classification of highest weight vectors of  $M_{pq}^{rt}$  also enables us to relate the category of finite dimensional $\mathfrak{gl}_{m|n}$-modules
with that of $\mathscr B_{2, r, t}$, which in turn gives us  an efficient way to calculate decomposition numbers of $\mathscr B_{2, r, t}$
 (cf. \cite{RSong} for quantum walled Brauer algebras). This  is the main motivation of this paper.
We explain some details below.

It is proven in \cite{RSu} that  $\text{End}_{U(\mathfrak {gl}_{m|n}) }(M_{pq}^{rt})\cong \DBr$ if $r+t\le \min\{m, n\}$. Since there is a bijection between the dominant weights of $M_{pq}^{rt}$ and the poset  $\Lambda_{2, r, t}$ in \eqref{poset}, and  since $\DBr$ is a weakly cellular algebra over $\Lambda_{2, r, t}$ in the sense of \cite{GG}, it is very natural to ask the following problem: whether a $\mathbb C$-space of $\mathfrak {gl}_{m|n}$-highest weight vectors  of $M_{pq}^{rt}$ with a fixed highest weight
is isomorphic to a cell (or standard) module of $\DBr$.

We give an
affirmative  answer to the problem. In sharp contrast to the Lie algebra case, due to the existence of the parity of $\mathfrak {gl}_{m|n}$, the known weakly cellular  basis of $\mathscr B_{2, r,t}$ in \cite{RSu} can not be directly used
to establish a relationship between $\mathfrak {gl}_{m|n}$-highest weight vectors of $M_{pq}^{rt}$ and right cell modules of $\mathscr B_{2, r, t}$.   One has  to find
  new cellular bases of level two Hecke algebra $\mathscr H_{2,r}$ which are different from that in \cite{AMR}. These new cellular bases of $\mathscr H_{2, r}$,
  which  relate both  trivial and signed representations of  symmetric groups, are used to
  construct a  new weakly cellular basis of $\DBr$. Motivated by  explicit descriptions of  bases of  right cell modules for $\DBr$, we construct and classify  $\mathfrak{gl}_{m|n}$-highest weight
  vectors of $M_{pq}^{rt}$.  This leads to a $\DBr$-module isomorphism between each   $\mathbb C$-space of $\mathfrak {gl}_{m|n}$-highest weight vectors
 of $M_{pq}^{rt}$  with a fixed highest weight  and the corresponding  cell module of $\DBr$.
Based on the above, we are able to construct a suitable exact functor sending $\mathfrak {gl}_{m|n}$-Kac-modules   to
 right cell modules of $\DBr$. This functor also sends an indecomposable tilting module appearing in $M_{pq}^{rt}$ to a principal  indecomposable right $\DBr$-module indexed by a pair of  so-called Kleshchev bipartitions in the sense of \eqref{Kle}. It  gives us an efficient way to  calculate  decomposition numbers of $\DBr$ via Brundan-Stroppel's result~\cite{BS4} on the multiplicity of a Kac-module in an indecomposable tilting  module appearing in $M_{pq}^{rt}$.

We organize the paper as follows. In section~2, after recalling the definition of   $\mathscr B^{\rm aff}_{r, t}$ over a commutative ring $R$, we introduce  cyclotomic walled Brauer algebras  $\mathscr B_{k, r, t}:=\mathscr B^{\rm aff}_{r, t}/I$ for arbitrary  $k\in \mathbb Z^{>0}$,  where $I$ is the two-sided ideal of $\mathscr B^{\rm aff}_{r, t}$ generated by two cyclotomic polynomials $\boldf(x_1)$ and $\boldg(\bar x_1)$ of degree $k$, which satisfy \eqref{cycpolyf}--\eqref{cycpolyg}.
When $\mathscr B^{\rm aff}_{r, t}$ is admissible in the sense of Definition~\ref{condi-k}, we describe explicitly an $R$-basis of $I$. This enables us to prove that $\mathscr B_{k, r, t}$ is free over $R$ with rank $k^{r+t}(r+t)!$
if and only  if it is admissible.
In section~3, we construct  cellular bases of $\mathscr H_{2, r}$ and use them  to construct  a weakly cellular basis of $\DBr$. In section~4, higher super Schur-Weyl dualities in \cite{RSu} are generalized to the case $r+t>\min\{m, n\}$.  In sections~5--6,
we classify   highest weight  vectors of $M_{pq}^{r0}$ and  $M_{pq}^{rt}$. Based on this, we
establish an explicit relationship between indecomposable tilting (resp. Kac) modules for $\mathfrak {gl}_{m|n}$ and
principal indecomposable (resp. cell) right $\DBr$-modules  via a suitable exact functor. This  gives us an efficient way to
  calculate decomposition numbers of $\mathscr B_{2, r, t}$ arising from the super Schur-Weyl duality in \cite{RSu}.

\section{Affine walled Brauer algebras and its cyclotomic quotients}

Throughout, we assume that $R$ is a commutative ring containing $\mathbf{\Omega}=\{\omega_a\mid a\in \mathbb N\}$ and identity $1$. In this section,
we introduce a level $k$ walled Brauer algebra $\mathscr B_{k, r, t}$ and  prove that
$\mathscr B_{k, r, t}$ is free over $R$ with  rank $k^{r+t} (r+t)!$ if and only if $\mathscr B_{k, r, t}$ is admissible in the sense of Definition~\ref{condi-k}.
First, we briefly recall
the definition of walled Brauer algebras.

 Fix  $r,t\in\Z^{>0}$. A {\it walled  $(r, t)$-Brauer diagram} (or simply, a {\it walled Brauer diagram}) is a diagram with $(r\!+\!t)$ vertices on  top and bottom rows,
and vertices on both rows are labeled  from left to right
by $r, ...,2, 1, \bar 1, \bar 2, ..., \bar t$, such that
every
$i\!\in\!\{r,...,2,1\}$ (resp., $\bar i\!\in $ $\{\bar 1, \bar 2,..., \bar t\}$) on each
row is
connected to a unique
$\bar j$ (resp., $j$) on the same row or a unique
$j$ (resp., $\bar j$) on the other row. Thus there are  four  types of pairs $[i,j],$ $[i,\bar j]$, $[\bar i,j]$ and $[\bar i,\bar j]$.
The pairs $[i, j]$ and $[\bar i, \bar j]$ are
{\it vertical edges}, and
$[\bar i, j]$ and $[i, \bar j]$ are
{\it horizontal edges}.

The product of two  walled Brauer diagrams $D_1$ and $D_2$ can be defined via concatenation.
Putting $D_1$ above $D_2$ and connecting  each
 vertex on the bottom row of $D_1$ to the corresponding vertex on the top row of $D_2$ yields a diagram $D_1\circ D_2$, called the \textit{concatenation} of
 $D_1$ and $D_2$.
 Removing all circles of $D_1\circ D_2$ yields  a unique walled Brauer diagram,  denoted  $D_3$. Let $n$ be the number of circles appearing in
 $D_1\circ D_2$. Then the {\it product} $D_1 D_2$ is defined to be $\omega_0^{n} D_3$,
 where $\omega_0$ is a fixed element in $R$.
The {\textit{walled Brauer algebra}}~\cite{Koi, Tur, N}  $\mathscr B_{r, t}:=\mathscr B_{r, t}(\omega_0)$ with  defining parameter $\omega_0$
is the
associative $R$-algebra spanned by all walled Brauer diagrams with product defined in this way.

Let $\mathfrak S_r$ (resp. $ {\bar{\mathfrak S}}_{t}$) be the symmetric group in
  $r$ (resp. $t$)  letters $r,...,2,1$ (resp. $\bar 1, \bar 2,..., \bar t$).
It is known that $\mathscr B_{r, t}$ contains two  subalgebras which are   isomorphic to the group algebras of $\mathfrak S_r$ and ${\bar{\mathfrak S}}_{t}$, respectively.
More explicitly, the walled Brauer diagram $s_i$ whose edges are of forms $[k, k]$ and $[\bar k, \bar k]$ except two vertical edges $[i, i+1]$ and $[i+1, i]$
can be identified with the basic transposition $(i, i+1)\in \mathfrak S_r$, which switches $i$ and $i+1$ and fixes others.  Similarly, there is a  walled Brauer
diagram $\bar s_j$  corresponding to  $(\bar j, \overline{j+1})\in {\bar{\mathfrak S}}_{t}$.  Let $e_1$ be the walled Brauer diagram whose edges are
of forms $[k, k]$ and $[\bar k, \bar k]$ except two horizontal edges $[1,\bar 1]$ on the top and bottom rows. Then $\mathscr B_{r, t}$ is the  $R$-algebra \cite{N} generated by
$e_1$, $s_i$, $\bar s_j$ for $1\le i\le r-1$, $1\le j\le t-1$
such that $s_i$'s, $\bar s_j$'s are distinguished  generators of $\mathfrak S_r\times{\bar{\mathfrak S}}_{t}$ and
\begin{equation}\label{walled}\begin{array}{lll}
e_1^2=\omega_0 e_1,\ \ \ \ e_1s_1e_1=e_1=e_1\bar s_1e_1,&& s_ie_1=e_1s_i,\ \ \bar s_je_1=e_1\bar s_j\ \ (i,j\ne1),\\
e_1s_1\bar s_{1} e_1s_{1} = e_1s_1\bar s_1 e_1\bar s_{1},&&
s_1e_1s_1\bar s_1 e_1 = \bar s_1 e_1s_1 \bar s_1 e_1.\end{array}\end{equation}

 Let  $\mathscr H_n^{\rm {aff}}$ be the {\it degenerate affine Hecke algebra}~\cite{Dri}.
As a free $R$-module, it is the tensor product $R[y_1, y_2, \cdots, y_n]\otimes R\mathfrak S_n$ of
a polynomial algebra with the group algebra of  $\mathfrak S_n$.  The multiplication is defined so that $ R[y_1, y_2, \cdots, y_n]\equiv R[y_1, y_2, \cdots, y_n]\otimes 1$
and $R\mathfrak S_n\equiv 1\otimes R\mathfrak S_n$  are subalgebras and $s_i y_j=y_js_i$ if $j\neq i, i+1$ and $s_iy_i=y_{i+1}s_i-1$, $1\le i\le n-1$.

Recall that $R$ contains $1$ and $\mathbf \Omega=\{\omega_a\in R\mid a\in \mathbb N\}$.  The  {\textit {affine walled Brauer algebra}}
$\mathscr B_{r, t}^{\rm {aff}}(\mathbf \Omega)$ (which is  $\widehat{\mathscr B_{r,t}}$  in \cite[\S 4]{RSu})
 with respect to the defining parameters $\omega_a$'s  have been defined via generators and $26$  defining relations~\cite[Definition~2.7]{RSu}.
It follows from  \cite[Theorem~4.15]{RSu} that  $\mathscr B_{r, t}^{\rm {aff}}(\mathbf \Omega)$ can be also defined in a simpler way  as follows:
it is an associative $R$-algebra generated by  $e_1,\,x_1,\,\bar x_1,\,s_i,\,\bar s_j$ for $1\le i\le r-1,$ $1\le j\le t-1$, such that $e_1$, $s_i$'s, $\bar s_j$'s are generators of
 $\mathscr B_{r, t}$ with defining parameter $\omega_0$, and as a free $R$-module,
$$\mathscr B_{r, t}^{\rm {aff}}(\mathbf \Omega)=R[\mathbf {x}_r]\otimes \mathscr B_{r, t}\otimes R[\bar {\mathbf {x}}_t],$$ the tensor product of the walled Brauer algebra $\mathscr B_{r, t}$ with
 two polynomial algebras $$\text{ $R[\mathbf{x}_r]:=R[x_1, x_2, \cdots, x_r]$, \ and \ $R[\bar{ \mathbf{x}}_t]:=R[\bar x_1, \bar x_2, \cdots, \bar x_t]$.}$$
  Multiplication is defined as  $ R[\mathbf{x}_r]\equiv R[\mathbf{x}_r]\otimes 1\otimes  1$, $R[\bar{ \mathbf{x}}_r]\equiv 1\otimes 1\otimes R[\bar{ \mathbf{x}}_r]$,
and $\mathscr B_{r, t}\equiv $ $1\otimes \mathscr B_{r, t}\otimes 1$, and $R[\mathbf{x}_r]\otimes R\mathfrak S_r\otimes 1\cong \mathscr H_r^{\rm{aff}}\otimes 1$,
$1\otimes R{\bar{\mathfrak S}}_{t}\otimes R[\bar{ \mathbf{x}}_r]\cong 1\otimes \mathscr H_t^{\rm{aff}}$
 and
\begin{eqnarray}
\label{it1}&\!\!\!\!\!\!\!\!\!\!\!\!\!\!\!\!&
e_1(x_1+\bar x_1)=(x_1+\bar x_1) e_1=0,\ \ s_1e_1 s_1x_1=x_1s_1e_1s_1,\ \  \bar s_1 e_1 \bar s_1\bar x_1=\bar x_1\bar s_1e_1\bar s_1,\\
\label{it2}&\!\!\!\!\!\!\!\!\!\!\!\!\!\!\!\!&
s_i\bar x_1=\bar x_1 s_i,\ \ \bar s_i x_1= x_1 \bar s_i,\ \  x_1 (e_1+\bar x_1)= (e_1+\bar x_1)x_1,\\
\label{it3}&\!\!\!\!\!\!\!\!\!\!\!\!\!\!\!\!&
e_1x_1^ke_1=\omega_k e_1,\ \  e_1\bar x_1^ke_1=\bar \omega_k e_1, \ \ \forall k\in \mathbb Z^{\ge0},
\end{eqnarray}
where $\bar \omega_a$'s are  determined by \cite[Corollary~4.3]{RSu}. If  $\bar \omega_a$'s  do not satisfy \cite[Corollary~4.3]{RSu}, then $e_1=0$ and
$\mathscr B_{r, t}^{\rm {aff}}(\mathbf \Omega)$ turns out to be $\mathscr H_r^{\rm aff} \otimes \mathscr H_t^{\rm aff}$ if $R$ is a field.

 We remark that   the isomorphism $R[\mathbf{x}_r]{\!}\otimes{\!} R\mathfrak S_r{\!}\otimes {\!}1{\!}\cong {\!}\mathscr H_r^{\rm{aff}}{\!}\otimes {\!}1$
sends $s_i$'s (resp. $x_1$) to
 $s_i$'s (resp. $-y_1$), and the isomorphism  $1\otimes R{\bar{\mathfrak S}}_{t}\otimes R[\bar{ \mathbf{x}}_r]\cong 1\otimes \mathscr H_t^{\rm{aff}}$
sends $\bar s_j$'s (resp. $\bar x_1$) to $s_j$'s (resp.
$-y_1$).  So,  $x_{i+1}=s_ix_is_i-s_i$ and $\bar x_{j+1}=\bar s_j\bar x_j\bar s_j-\bar s_j$ and
$y_{i+1}=s_i y_is_i+s_i$ if all of them make sense.

For the simplification of notation,  we denote   $\mathscr B_{r, t}^{\rm {aff}}(\mathbf \Omega)$  by  $\mathscr B_{r, t}^{\rm {aff}}$.
Fix $u_1, u_2, \cdots, u_k\in R$ for some $k\in \mathbb Z^{>0}$. Let $\boldf(x_1)\in \mathscr B_{r, t}^{\rm aff} $ be  such that
\begin{equation} \label{cycpolyf} \boldf(x_1)=\mbox{$\prod\limits_{i=1}^k$} (x_1-u_i).\end{equation}
 By  \cite[Lemma~4.2]{RSu} (or using  \eqref{it1}--\eqref{it2}), there is a monic  polynomial $\boldg(\bar x_1)\in R[\bar x_1]$ with degree $k$ such that
 \begin{equation} \label{efg} e_1 \boldf(x_1)=(-1)^k e_1 \boldg(\bar x_1).\end{equation} If  $R$ is an algebraically closed field, then there are $\bar u_1, \bar u_2, \cdots, \bar u_k\in R$ such that
  \begin{equation}\label{cycpolyg}
 \boldg(\bar x_1)=\mbox{$\prod\limits_{i=1}^k$} (\bar x_1-\bar u_i).
 \end{equation}

\begin{definition} \label{cycwb} Let $R$ be a commutative ring containing $1$, $\mathbf \Omega=\{\omega_a\in R\mid a\in \mathbb N\}$, and $u_i, \bar u_i$, $1\le i\le k$.
 The cyclotomic (or level $k$)  walled Brauer algebra $\mathscr B_{k, r, t}$
 is the quotient algebra $\mathscr B_{r, t}^{\rm aff} /I$, where $I$ is the two-sided ideal
of $\mathscr B_{r, t}^{\rm aff}$ generated by $\boldf(x_1)$ and $\boldg(\bar x_1)$ satisfying  \eqref{cycpolyf}--\eqref{cycpolyg}.
\end{definition}

If $k=1$, then $\mathscr B_{k, r, t}$ is  $\mathscr B_{r, t}$ with defining parameter $\omega_0$. For some special $u_i, \bar u_i$, $i=1, 2$, $\mathscr B_{2, r, t}$
is the level two walled Brauer algebras arising from super Schur-Weyl duality in
\cite{RSu}.

\begin{lemma}\label{lemm-level-k-walled} Let $\boldf(x_1)$ be given in \eqref{cycpolyf}. Write   $\boldf(x_1)=x_1^k+\sum_{i=1}^{k} a_i x_1^{k-i} $. Then  $e_1$ is an $R$-torsion
element of $ \mathscr B_{k, r, t}$  unless
\begin{equation}\label{condi-k1}
 \omega_\ell=-(a_1\omega_{\ell-1}+\cdots a_k\omega_{\ell-k})\mbox{ \ for all \ }\ell\ge k.
\end{equation}
\end{lemma}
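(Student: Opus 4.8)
The plan is to use only the relation \eqref{it3}, in the form $e_1 x_1^{j} e_1 = \omega_j e_1$ for $j\ge 0$, together with the observation that $\boldf(x_1)$ lies in the two-sided ideal $I$ and hence becomes zero in the quotient $\mathscr B_{k,r,t}=\mathscr B_{r,t}^{\rm aff}/I$.

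First I would work inside $\mathscr B_{r,t}^{\rm aff}$ and, for each integer $\ell\ge k$, sandwich the element $x_1^{\ell-k}\boldf(x_1)$ between two copies of $e_1$. Since all powers of $x_1$ lie in the commutative polynomial subalgebra $R[\mathbf{x}_r]$, one has $x_1^{\ell-k}\boldf(x_1)=x_1^{\ell}+\sum_{i=1}^{k}a_i x_1^{\ell-i}$, and every exponent $\ell-i$ with $0\le i\le k$ is nonnegative because $\ell\ge k$; applying \eqref{it3} term by term then gives
$$e_1\, x_1^{\ell-k}\boldf(x_1)\, e_1=(\omega_\ell+a_1\omega_{\ell-1}+\cdots+a_k\omega_{\ell-k})\, e_1 \qquad\text{in } \mathscr B_{r,t}^{\rm aff}.$$

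Next I would push this identity into $\mathscr B_{k,r,t}$. Because $\boldf(x_1)\in I$ and $I$ is two-sided, the left-hand side belongs to $I$ and so vanishes in $\mathscr B_{k,r,t}$; therefore $(\omega_\ell+a_1\omega_{\ell-1}+\cdots+a_k\omega_{\ell-k})\, e_1=0$ in $\mathscr B_{k,r,t}$ for every $\ell\ge k$. If \eqref{condi-k1} fails, choose $\ell\ge k$ with $\omega_\ell+a_1\omega_{\ell-1}+\cdots+a_k\omega_{\ell-k}\neq 0$ in $R$; this nonzero element of $R$ annihilates $e_1$, so $e_1$ is an $R$-torsion element of $\mathscr B_{k,r,t}$. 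This is precisely the contrapositive of the assertion.

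There is no genuinely hard step in this argument; the only points that need a moment of care are that the iterated product $e_1 x_1^{\ell-k}\boldf(x_1)e_1$ can be evaluated using nothing more than \eqref{it3} and the commutativity of the generators $x_i$ inside $R[\mathbf{x}_r]$, and that transporting the computation to the quotient is legitimate exactly because $I$ is a two-sided ideal, so that $e_1 x_1^{\ell-k}\boldf(x_1)e_1\in I$.
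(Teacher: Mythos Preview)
Your proof is correct and essentially identical to the paper's own argument: the paper sets $b_\ell=\omega_\ell+a_1\omega_{\ell-1}+\cdots+a_k\omega_{\ell-k}$, observes via \eqref{it3} that $b_\ell e_1=e_1\boldf(x_1)x_1^{\ell-k}e_1$ in $\mathscr B_{r,t}^{\rm aff}$, and concludes that $b_\ell e_1=0$ in $\mathscr B_{k,r,t}$ since $\boldf(x_1)\in I$. The only cosmetic difference is the order in which you write the commuting factors $x_1^{\ell-k}$ and $\boldf(x_1)$.
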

\begin{proof} Let  $b_\ell=\omega_\ell+a_1\omega_{\ell-1}+\cdots+ a_k\omega_{\ell-k}\in R$. By \eqref{it3}, $b_\ell e_1=e_1\boldf(x_1)x_1^{\ell-k}e_1$ in $\mathscr B_{r, t}^{\rm aff}$ and $b_\ell e_1=0$  in  ${\mathscr B}_{k,r,t}$.
Thus, $e_1$ is  an $R$-torsion element if  $b_\ell\neq 0$ for some $\ell\ge k$.
\end{proof}

\begin{definition}\label{condi-k}  The algebras $\mathscr B_{r, t}^{\rm aff}$ and   ${\mathscr B}_{k,r,t}$ are called  \textit{admissible}  if \eqref{condi-k1} holds.\end{definition}

\begin{lemma}\label{zero-1} Assume  $\boldf(x_1), \boldg(\bar x_1)\in \mathscr B_{r,t}^{\rm aff}$ satisfying  \eqref{cycpolyf}--\eqref{cycpolyg} . If  $\mathscr B_{r, t}^{\rm aff}$ is admissible, then
\begin{enumerate} \item $e_1 \boldf(x_1) x_1^a e_1=0$ for all $a\in \mathbb N$.
\item  \label{zero-11} $e_1 \boldg(\bar x_1)\bar x_1^a e_1=0$ for all $a\in \mathbb N$.\end{enumerate}
 \end{lemma}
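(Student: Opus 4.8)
The plan is to prove (1) by a one-line computation with the relations \eqref{it3} together with admissibility, and then to deduce (2) from (1) by transporting the identity across the relation \eqref{efg}, once a structural fact about $e_1$ and $\bar x_1$ is in place. For (1), write $\boldf(x_1)=x_1^k+\sum_{i=1}^k a_ix_1^{k-i}$; since powers of $x_1$ commute, $\boldf(x_1)x_1^a$ is again a polynomial in $x_1$, so applying \eqref{it3} termwise gives $e_1\boldf(x_1)x_1^ae_1=b_{k+a}e_1$ with $b_\ell=\omega_\ell+a_1\omega_{\ell-1}+\cdots+a_k\omega_{\ell-k}$ exactly as in the proof of Lemma~\ref{lemm-level-k-walled}. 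Since $k+a\ge k$, admissibility \eqref{condi-k1} gives $b_{k+a}=0$, which proves (1); in effect this is the computation of Lemma~\ref{lemm-level-k-walled} carried out for all $\ell\ge k$ at once.

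The heart of (2) is the claim that $\bar x_1^ae_1\in R[x_1]e_1$ for every $a\in\mathbb N$, which I would establish by induction on $a$. The cases $a=0$ (trivial) and $a=1$ are immediate, since $(x_1+\bar x_1)e_1=0$ in \eqref{it1} gives $\bar x_1e_1=-x_1e_1$. For the inductive step the key input is the commutator identity $[x_1,\bar x_1]=e_1x_1-x_1e_1$, obtained by expanding the relation $x_1(e_1+\bar x_1)=(e_1+\bar x_1)x_1$ from \eqref{it2}. Using it to move $\bar x_1$ through a power $x_1^j$ produces $\bar x_1x_1^je_1=-x_1^{j+1}e_1-\sum_{i=0}^{j-1}x_1^i(e_1x_1-x_1e_1)x_1^{j-1-i}e_1$; every summand on the right contains an inner factor $e_1x_1^me_1$, which collapses to $\omega_me_1$ by \eqref{it3}, so the whole right-hand side lies in $R[x_1]e_1$. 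By $R$-linearity this shows $\bar x_1p(x_1)e_1\in R[x_1]e_1$ for every $p\in R[x_1]$, and applying it to $\bar x_1^ae_1=p_a(x_1)e_1$ (the inductive hypothesis) gives $\bar x_1^{a+1}e_1\in R[x_1]e_1$, completing the induction.

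Granting this, I would finish (2) as follows. Write $\bar x_1^ae_1=p_a(x_1)e_1$ with $p_a(x_1)=\sum_j\beta_jx_1^j$; keeping $\boldf(x_1)$ intact and pulling out the scalars $\beta_j$, part (1) gives $e_1\boldf(x_1)\bar x_1^ae_1=\sum_j\beta_j\,e_1\boldf(x_1)x_1^je_1=0$. Right-multiplying \eqref{efg} by $\bar x_1^ae_1$ then yields $e_1\boldf(x_1)\bar x_1^ae_1=(-1)^ke_1\boldg(\bar x_1)\bar x_1^ae_1$, whence $e_1\boldg(\bar x_1)\bar x_1^ae_1=0$, which is (2). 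The only step with genuine content is the structural claim $\bar x_1^ae_1\in R[x_1]e_1$, that is, that the right $e_1$ already forces every polynomial in $\bar x_1$ acting from the left to be rewritten in $x_1$; I expect the induction to go through without trouble, the only point to watch being that \eqref{it1}, \eqref{it2}, \eqref{it3} really do supply the commutator identity and all the collapses $e_1x_1^me_1=\omega_me_1$ used inside it.
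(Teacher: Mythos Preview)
Your proof is correct and follows the same overall strategy as the paper: part (1) is exactly the computation from Lemma~\ref{lemm-level-k-walled}, and for (2) both you and the paper establish the structural fact $\bar x_1^a e_1\in R[x_1]e_1$, then combine it with \eqref{efg} and (1). The only difference is how that structural fact is obtained: the paper invokes the anti-involution $\sigma$ (which fixes all generators) applied to \cite[Lemma~4.2]{RSu}, which gives $\bar x_1^k e_1=\sum_{i=0}^k a_{k,i}x_1^ie_1$ in one stroke, whereas you supply a self-contained inductive argument using the commutator $[x_1,\bar x_1]=e_1x_1-x_1e_1$ extracted from \eqref{it2} together with the collapses $e_1x_1^m e_1=\omega_m e_1$ from \eqref{it3}. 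Your route avoids the external reference and the anti-involution at the cost of a short explicit computation; the paper's route is terser but relies on machinery established elsewhere.
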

\begin{proof} (1) trivial since  $\mathscr B_{r, t}^{\rm aff}$ is admissible. It is proven in \cite{RSu} that  there is an $R$-linear anti-involution $\sigma$ on $\mathscr B_{r, t}^{\rm aff}$,
which fixes all generators of $\mathscr B_{r, t}^{\rm aff}$.
Applying  $\sigma$ on  \cite[Lemma~4.2]{RSu}  yields  $$ \bar x_1^ke_1=\mbox{$\sum\limits_{i=0}^k$} a_{k, i} x_1^i e_1, \text{  for some $a_{k, i}\in R$.}$$
So, (2) follows from \eqref{efg} and (1), immediately. \end{proof}

 Denote $s_{i, j}=s_is_{i+1}\cdots s_{j-1}$ if $i<j$, and $1$ if $i=j$, and $s_{i-1}s_{i-2}\cdots s_{j}$ if $i>j$.  Denote $\bar s_{i, j}\in \bar {\mathfrak S}_{ t}$ similarly. Let $e_{i,j}$ be the walled Brauer diagram such that each vertical edge of $e_{i, j}$ is of form $[k, k]$ or $[\bar k, \bar k]$ and the  horizontal edges
 on the top and bottom rows of $e_{i, j}$ are $[i, \bar j]$. Then
 \begin{equation}\label{e-ij}e_{i, j}= \bar s_{j, 1} s_{i, 1} e_1 s_{1, i} \bar s_{1, j}
\mbox{ \ for  $i, j$ with  $1\le i\le r $ and $1\le j\le t$.}\end{equation}
 For each nonnegative integer $f\le \min\{r, t\}$, let  \equa{e-f==}{\mbox{$e^f=e_1 e_2\cdots e_f$ for $f>0$ and $e^0=1$, where $e_i=e_{i,i}$.}} Set
 \begin{equation}\label{rcs11}  \mathscr{D}_{r,t}^f=\{ s_{f,i_f} \bar s_{f, j_f} \cdots
s_{1,i_1}\bar s_{1,{j_1}}\,|\,
 1 {\sc\!}\le{\sc\!} i_1{\sc\!}<{\sc\!} \cdots{\sc\!} <{\sc\!}i_f\le r,\,k {\sc\!}\le{\sc\!} {j_k}\}.\end{equation}

 \begin{definition}\label{def-mo} For $\alpha=(\alpha_1, \cdots, \alpha_r)\in  \mathbb N^r$  and $\beta=(\beta_1, \cdots, \beta_t)\in \mathbb N^t$,
 let  $x^\alpha= \prod_{i=1}^r x_i^{\alpha_i} $, ${\bar x}^\beta=\prod_{j=1}^t \bar x_j^{\beta_j}$.  Let $\mathcal M$ be a subset of $\mathscr B_{r,t}^{\rm aff}$ given by
 \begin{equation} \label{regs}  \mathcal M=
 \stackrel{\sc \min\{m, n\}}{\underset{\sc f=0}{\dis\mbox{\Large$\cup$}}}\{x^{\alpha}  c^{-1} e^f w d  \bar x^{\beta}  \mid (\alpha, \beta)\in \mathbb N^r \times  \mathbb N^t, c, d\in \mathscr D_{r, t}^f,
  w\in \mathfrak S_{r-f}\times \bar{\mathfrak S}_{{t-f}}\}.\end{equation}
 Elements of $\mathcal M$  are called   \textit{ regular monomials } of $\mathscr B_{r,t}^{\rm aff}$.\end{definition}

\begin{theorem}\label{monothm}\cite[Theorem~4.15]{RSu} The affine walled Brauer algebra  $\mathscr B_{r, t}^{\rm aff}$ is free over $R$ with $\mathcal M$ as its  $R$-basis. \end{theorem}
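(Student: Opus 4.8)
The plan is to prove two things: that $\mathcal M$ spans $\mathscr B_{r,t}^{\rm aff}$ as an $R$-module, and that the elements of $\mathcal M$ are $R$-linearly independent. For the second I would use the elementary principle that if an $R$-algebra $A$ is generated as an $R$-module by a set $\mathcal M$, and there is an $A$-module $M$ on which the elements of $\mathcal M$ act by $R$-linearly independent operators, then $\mathcal M$ is automatically an $R$-basis of $A$. Thus, once spanning is established, it suffices to exhibit, for each pair $r,t$, a single such module $M$: spanning makes the $R$-linear map from the free module $R\mathcal M$ on $\mathcal M$ to $\mathscr B_{r,t}^{\rm aff}$ (sending a formal basis vector to the corresponding monomial) surjective, and the existence of $M$ makes it injective, hence an isomorphism.

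For the spanning step I would run a straightening algorithm on words in the generators $e_1, x_1, \bar x_1, s_i, \bar s_j$ of the simpler presentation, with three ingredients. First, on each ``one-sided affine'' part: using $x_{i+1}=s_ix_is_i-s_i$, $\bar x_{j+1}=\bar s_j\bar x_j\bar s_j-\bar s_j$ and the commutations of \eqref{it1}--\eqref{it2}, every word in $x_1$ together with $\mathfrak S_r$ (resp.\ in $\bar x_1$ together with $\bar{\mathfrak S}_t$) reduces to an $R$-combination of $x^\alpha w$, $w\in\mathfrak S_r$ (resp.\ of $w\,\bar x^\beta$, $w\in\bar{\mathfrak S}_t$), i.e.\ to the PBW normal form of the degenerate affine Hecke algebra. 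Second, inside the walled Brauer subalgebra one invokes the standard diagram fact that every walled Brauer diagram is uniquely $c^{-1}e^fwd$ with $c,d\in\mathscr D_{r,t}^f$, $w\in\mathfrak S_{r-f}\times\bar{\mathfrak S}_{t-f}$, $0\le f\le\min\{r,t\}$. Third --- the actual content --- one interchanges the polynomial and diagram parts, pushing all $x_i$ to the far left and all $\bar x_j$ to the far right: an $x_1$ that has landed to the right of a factor $e_1$ becomes $-\bar x_1$ by $e_1(x_1+\bar x_1)=0$; a polynomial generator passing between two $e$'s leaves a ``bubble'' absorbed via $e_1x_1^ke_1=\omega_ke_1$ and $e_1\bar x_1^ke_1=\bar\omega_ke_1$ of \eqref{it3}; and moving an $x$ across the middle of $e^f$ is handled by $s_1e_1s_1x_1=x_1s_1e_1s_1$ together with the walled relations \eqref{walled}. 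One then equips words with a well-founded order --- lexicographically in (number of horizontal edges, total polynomial degree, number of polynomial generators not yet at the correct end of the word, total Coxeter length of the permutation parts) --- and verifies that each rewriting rule strictly decreases it; termination then produces, for every word, an expression as an $R$-combination of members of $\mathcal M$.

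Independence is where the real difficulty lies: it is equivalent to confluence of the above rewriting system, i.e.\ to the assertion that the defining relations impose no relation among the $\mathcal M$-monomials beyond those already built into their indexing. The route I would actually take is to construct $M$ explicitly --- for instance a polynomial-tensor space $R[z_1,z_2,\dots]\otimes V^{\otimes r}\otimes(V^*)^{\otimes t}$, with $s_i,\bar s_j$ acting by place permutations, $e_1$ by a contraction followed by a coevaluation, and $x_1,\bar x_1$ by Jucys--Murphy type operators --- verify that the defining relations hold on $M$, and then check that the $\mathcal M$-operators are $R$-linearly independent on $M$; for the latter it is enough to pass to a field with the $\omega_a$ in general position, since each $\mathcal M$-monomial acts by a matrix whose entries are polynomial in the $\omega_a$, and one may in fact take $M$ to be the super Schur--Weyl module of \cite{RSu} with $\min\{m,n\}$ large, where the needed independence is visible. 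An alternative, module-free route is to filter $\mathscr B_{r,t}^{\rm aff}$ by the two-sided ideals $J_f=(e^f)$ and identify the subquotient $J_f/J_{f+1}$, as an $R$-module, with a quotient of $R[\mathbf x_r]\otimes_R R\mathscr D_{r,t}^f\otimes_R R(\mathfrak S_{r-f}\times\bar{\mathfrak S}_{t-f})\otimes_R R\mathscr D_{r,t}^f\otimes_R R[\bar{\mathbf x}_t]$, so that, given the known PBW basis of $\mathscr H_n^{\rm aff}$ and diagram basis of $\mathscr B_{r,t}$, an induction on $r+t$ reduces everything to showing that these surjections of subquotients are injective; but that injectivity is again precisely the confluence point, so in practice the module $M$ is the cleanest way to close the argument, and it is in any case the object needed elsewhere in the paper.
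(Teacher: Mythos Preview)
The paper does not prove this theorem; it is simply quoted from \cite[Theorem~4.15]{RSu}, so there is no in-paper argument to compare your proposal against. Your two-step strategy --- spanning via a straightening/rewriting procedure, then linear independence by exhibiting a module on which the regular monomials act by independent operators --- is the standard one and is indeed the route taken in \cite{RSu}: the spanning step is \cite[Proposition~4.12]{RSu} (invoked again in the present paper in the proof of Theorem~\ref{level-k-walled}), and independence is established there precisely by acting on the mixed tensor module $M_{pq}^{rt}$ for $\mathfrak{gl}_{m|n}$ with $m,n$ large, which is the module you yourself single out.

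One point to tighten: over an arbitrary commutative ring $R$ the parameters $\omega_a$ are fixed elements, not indeterminates you may place ``in general position''. The clean way to run your argument is to work first over the universal ground ring $\mathbb Z[\omega_0,\omega_1,\dots]$, where spanning already holds and where independence can be detected after a single base change to $\mathbb C$ with a generic specialisation (supplied by the $\mathfrak{gl}_{m|n}$-module for suitable $m,n,p,q$); the statement over any $R$ then follows by tensoring. Your alternative filtration-by-$J_f$ route is not the one used in \cite{RSu}, and as you yourself note it ultimately reduces to the same independence question, so it does not buy anything here.
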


We consider $\mathscr B_{r,t}^{\text{aff}}$ as a filtrated $R$-algebra  as follows. Let
$$ \text{ $\text{deg}{\sc\,} s_i=$ $\text{deg}{\sc\,} \bar s_j= \text{deg} e_1=0$ and
$\text{deg}{\sc\,}{x_k}=\text{deg}{\sc\,} \bar x_\ell=1$}$$ for all possible $ i, j, k, \ell$'s. Let $(\mathscr B_{r,t}^{\text{aff}})^{(k)}$ be the  $R$-submodule
 spanned by regular monomials with degrees  less than or equal to $k$ for  $k\in{\mathbb{Z}}^{\ge0}$. Then we have the following filtration
  \begin{equation}\label{filtr}
\mathscr B_{r,t}^{\text{aff}}\supset\cdots\supset (\mathscr B_{r,t}^{\text{aff}})^{(1)}\supset(\mathscr B_{r,t}^{\text{aff}})^{(0)}\supset (\mathscr B_{r,t}^{\text{aff}})^{(-1)}=0.\end{equation}
Let ${\rm gr} ( \mathscr B_{r,t}^{\text{aff}})=\oplus_{i\ge0}( \mathscr B_{r,t}^{\text{aff}})^{[i]}$, where
$( \mathscr B_{r,t}^{\text{aff}})^{[i]}=( \mathscr B_{r,t}^{\text{aff}})^{(i)}/( \mathscr B_{r,t}^{\text{aff}})^{(i-1)}$. Then
  ${\rm gr} ( \mathscr B_{r,t}^{\text{aff}})$ is an  associated $\mathbb Z$-graded algebra.  We will use the same symbols to denote elements in
${\rm gr} ( \mathscr B_{r,t}^{\text{aff}})$.

\begin{lemma}  \label{x-p-iii} Let   $
x'_i=s_{i-1} x_{i-1}' s_{i-1}$, and $\bar x'_j=\bar s_{j-1}\bar x_{j-1} \bar s_{j-1}$ for $i, j\in \mathbb Z^{\ge 2}$ with $i\le r$ and $j\le t$, where  $x_1'=x_1$, and  $\bar x_1'=\bar x_1$. \begin{enumerate} \item
$ x_i=x'_i-L_i$,
where $L_i=\sum_{1\le j<i} (j, i)$ and $(j, i)$  is the  transposition in $\mathfrak S_r$  which switches $j, i$ and fixes others.
\item  $\bar x_i=\bar x'_i-\bar L_i$, where $ \bar L_i=\sum_{\bar 1\le \bar j<\bar i} (\bar j, \bar i)$ and  $(\bar j, \bar i)$ is the  transposition in
  $\bar {\mathfrak S}_{ t}$ which switches $\bar j, \bar i$ and fixes others.
  \item Any symmetric polynomial of $L_1, L_2, \cdots, L_r$ $($resp. $\bar L_1, \bar L_2, \cdots, \bar L_t\,)$ is a central element of $R\mathfrak S_r$ $($resp. $R\bar{\mathfrak S}_{\t}\,)$.\end{enumerate}
\end{lemma}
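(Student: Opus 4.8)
My plan is to obtain (1) and (2) from a one-line induction, and to recognize (3) as the classical fact that symmetric functions in the Jucys--Murphy elements are central.

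For (1) I would induct on $i$. The case $i=1$ is immediate, since $x_1'=x_1$ by definition and $L_1$ is an empty sum, so $L_1=0$. Assuming $x_i=x_i'-L_i$ for some $i<r$, I combine the identity $x_{i+1}=s_ix_is_i-s_i$ (which holds in $\mathscr B_{r,t}^{\rm aff}$, as noted after \eqref{it3}) with the recursion $x_{i+1}'=s_ix_i's_i$ to get $x_{i+1}=x_{i+1}'-(s_iL_is_i+s_i)$, so it remains to check $s_iL_is_i+s_i=L_{i+1}$. Since $s_i=(i,i+1)$ conjugates the transposition $(j,i)$ into $(j,i+1)$ for each $j<i$, we have $s_iL_is_i=\sum_{j<i}(j,i+1)$, and adding $s_i=(i,i+1)$ produces exactly $L_{i+1}=\sum_{j<i+1}(j,i+1)$. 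Part (2) is the same induction carried out inside $\bar{\mathfrak S}_t$, with $s_i,x_i,x_i',L_i$ replaced by $\bar s_j,\bar x_j,\bar x_j',\bar L_j$ and using $\bar x_{j+1}=\bar s_j\bar x_j\bar s_j-\bar s_j$; nothing new occurs.

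For (3) I would argue exactly as for the usual Jucys--Murphy elements, doing the case $R\mathfrak S_r$ (the barred case being identical). The argument rests on two elementary facts about the $L_i$. The first is a set of conjugation relations: $s_k$ commutes with $L_i$ for every $i\notin\{k,k+1\}$ — this is clear for $i<k$, and for $i>k+1$ it holds because conjugation by $s_k$ only interchanges the two summands $(k,i)$ and $(k+1,i)$ of $L_i$ while fixing the rest — together with $s_kL_ks_k=L_{k+1}-s_k$ and $s_kL_{k+1}s_k=L_k+s_k$; right-multiplying the first by $s_k$ gives $L_{k+1}s_k-s_kL_k=1$. The second fact is that the $L_i$ commute pairwise: $Z_m:=L_1+\cdots+L_m$ is the sum of all transpositions in $\mathfrak S_m$, hence central in $R\mathfrak S_m$, so for $i<j$ the element $L_i=Z_i-Z_{i-1}$, which lies in $R\mathfrak S_i\subseteq R\mathfrak S_{j-1}$, commutes with $Z_{j-1}$ and with $Z_j$, hence with $L_j=Z_j-Z_{j-1}$.

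To finish, let $p$ be a symmetric polynomial in $r$ variables. By the second fact the $L_i$ generate a commutative $R$-subalgebra of $R\mathfrak S_r$, and since $p$ is invariant under interchanging its $k$-th and $(k+1)$-st variables, $p(L_1,\dots,L_r)$ may be written as a polynomial in $L_k+L_{k+1}$, $L_kL_{k+1}$, and the $L_i$ with $i\ne k,k+1$. Conjugation by $s_k$ fixes each of these: the $L_i$ with $i\ne k,k+1$ by the first fact; $s_k(L_k+L_{k+1})s_k=(L_{k+1}-s_k)+(L_k+s_k)=L_k+L_{k+1}$; and $s_kL_kL_{k+1}s_k=(L_{k+1}-s_k)(L_k+s_k)=L_{k+1}L_k+(L_{k+1}s_k-s_kL_k)-1=L_{k+1}L_k=L_kL_{k+1}$, using $L_{k+1}s_k-s_kL_k=1$ and commutativity. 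Hence $p(L_1,\dots,L_r)$ commutes with every $s_k$, so it is central in $R\mathfrak S_r$, and likewise for $\bar L_1,\dots,\bar L_t$ in $R\bar{\mathfrak S}_t$. The part demanding real care is (3): getting the conjugation identities exactly right (the case $i>k+1$ is a mild trap) and noticing the pairwise commutativity of the $L_i$, which is what legitimizes the two-variable symmetrization; (1) and (2) are routine once the relation $x_{i+1}=s_ix_is_i-s_i$ is in hand.
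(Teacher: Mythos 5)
Your proof is correct and fills in precisely the standard details behind the paper's one-line appeal to the facts being "trivial" (for (1)--(2)) and "well-known" (for (3)): the induction via $x_{i+1}=s_ix_is_i-s_i$ and the Jucys--Murphy commutation/conjugation identities. Nothing you wrote departs from the classical argument the authors implicitly invoke, and the mildly tricky points (the case $i>k+1$, the pairwise commutativity via the central elements $Z_m$) are handled correctly.
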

\begin{proof} (1)-(2) are trivial and (3) is a well-known result. \end{proof}

The elements $L_i$'s (resp. $\bar L_j$'s) are known as Jucys-Murphy elements of $R\mathfrak S_r$ (resp. $R\bar{\mathfrak S}_{\t}$).
Note that  $x_i x_j=x_jx_i$
and $\bar x_i\bar x_j=\bar x_j\bar x_i$ for all possible $i, j$. However, $x_i'$ and $x_j'$ (resp. $\bar x_i'$ and $\bar x_j'$)  do not commute each other.

Suppose  $0< f\le \min\{m, n\}$. Denote \equa{bii-bij}{\bii=(i_1,..,i_f),\ \ \bij=(j_1,...,j_f),\ \ \ e_{\bii,\bij}=e_{i_1,j_1}e_{i_2,j_2}\cdots e_{i_f,j_f},}
where $i_1, i_2, \cdots, i_f$ are distinct numbers in $\{1, 2, \cdots, r\}$, and  $j_1, j_2, \cdots, j_f$ are distinct numbers in $\{\bar 1, \bar 2, \cdots, \bar t\}$.
Then $e_{i_k, j_k}$'s commute each other.   If $f=0$, we set $\bii=\bij=\emptyset$ and $e_{\bii,\bij}=1$.

We always assume that $\mathfrak S_r$ (resp. $\bar {\mathfrak S}_t$) acts on the right of $\{r,...,2,1\}$ (resp. $\{\bar 1, \bar 2, ..., \bar t\}$).
 \begin{lemma}\label{lemm-----2}
Suppose $a \in\Z^{>0}$, $1\le i, \ell\le r$ and $1\le j\le t$.
\begin{enumerate}\item If  $w\in  \mathfrak S_r$, then
$w\boldf(x'_i){{w}}^{-1}=\boldf(x'_{(i) w^{-1}})$.
\item If $ w\in {\bar{\mathfrak S}}_t$, then
$ w\boldg(\bar x'_{\bar j}) w^{-1}=\boldg(\bar x'_{ (\bar j)w^{-1}})$.
\item  $x'^a_i\boldf(x'_\ell)=\boldf(x'_\ell)x'^a_i+v$, where $v\in\sum_{b<a}\sum_{h, h_1=1}^{\max\{i, \ell\}} \boldf(x'_h)x'^b_{h_1}R\mathfrak S_r$.
\item  $\bar x'^a_j\boldf(x'_i)=\boldf(x'_i)\bar x'^a_j+v$, where $v\in \sum_{b_1+b_2<a,\,c_1+c_2\le1} \epsilon \bar x'^{b_1}_j e^{c_1}_{ij}\boldf(x'_i)e_{ij}^{c_2}\bar x'^{b_2}_j$ for some non-negative integers  $b_1,b_2,c_1,c_2$ and $\epsilon=\pm 1$. \end{enumerate}
\end{lemma}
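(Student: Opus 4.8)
The plan is to treat the four statements in turn, the common engine being a few exact commutation identities among the $x'_i$, $\bar x'_j$, $e_{i,j}$ and the two symmetric groups, after which each statement is an induction. For \textbf{(1) and (2)} the point is that conjugation by $\mathfrak S_r$ (resp.\ $\bar{\mathfrak S}_t$) permutes the family $\{x'_1,\dots,x'_r\}$ (resp.\ $\{\bar x'_1,\dots,\bar x'_t\}$) exactly according to its action on indices, so that $w\,x'_iw^{-1}=x'_{(i)w^{-1}}$ and hence $w\,\boldf(x'_i)\,w^{-1}=\boldf(w\,x'_iw^{-1})=\boldf(x'_{(i)w^{-1}})$; it is enough to verify the permutation rule on Coxeter generators, where the recursion $x'_i=s_{i-1}x'_{i-1}s_{i-1}$ gives $s_{i-1}x'_is_{i-1}=x'_{i-1}$ and $s_ix'_is_i=x'_{i+1}$ at once, while $s_kx'_is_k=x'_i$ for $k\notin\{i-1,i\}$ follows by induction on $i$ from the recursion, the braid relation, and the fact that $s_k$ commutes with $x_1$ for $k\ge 2$. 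Part (2) is identical with bars.

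\textbf{(3).} The heart of this part is the identity
\[ [x'_i,\,p(x'_\ell)]=\bigl(p(x'_i)-p(x'_\ell)\bigr)(i,\ell)\qquad(i\neq\ell,\ p\in R[z]). \]
For $p(z)=z$ one checks $[x'_1,x'_2]=(x'_1-x'_2)s_1$ directly from $x'_2=x_2+L_2=x_2+s_1$, the commutativity $x_1x_2=x_2x_1$ of Lemma~\ref{x-p-iii}, and $s_1x_1=x_2s_1+1$; conjugating by part (1) yields $[x'_\ell,x'_{\ell+1}]=(x'_\ell-x'_{\ell+1})s_\ell$ for all $\ell$, and then induction on $|i-\ell|$ (say $i>\ell$: use that $s_{i-1}$ commutes with $x'_\ell$, the inductive formula for $[x'_{i-1},x'_\ell]$, and $s_{i-1}(i-1,\ell)s_{i-1}=(i,\ell)$) settles the linear case. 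The passage to arbitrary $p$ is a telescoping sum, $[x'_i,x'^n_\ell]=\sum_{p=0}^{n-1}x'^p_\ell[x'_i,x'_\ell]x'^{n-1-p}_\ell=\sum_p x'^p_\ell(x'_i-x'_\ell)x'^{n-1-p}_i(i,\ell)=(x'^n_i-x'^n_\ell)(i,\ell)$.

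Granting this identity, \textbf{(3)} for $a=1$ is immediate: $v=\boldf(x'_i)(i,\ell)-\boldf(x'_\ell)(i,\ell)$ is of the stated shape with $b=0$ (and $v=0$ when $i=\ell$). For general $a$ I would induct on $a$, expanding $x'^a_i\boldf(x'_\ell)=x'^{a-1}_i(\boldf(x'_\ell)x'_i+\boldf(x'_i)(i,\ell)-\boldf(x'_\ell)(i,\ell))$, applying the inductive hypothesis to $x'^{a-1}_i\boldf(x'_\ell)$, using $x'^{a-1}_i\boldf(x'_i)=\boldf(x'_i)x'^{a-1}_i$, and then repeatedly moving individual $x'$-letters across $\boldf(x'_h)$-factors by means of the commutation identity and the clean relation $g\,x'_h=x'_{(h)g^{-1}}g$ of part (1); each such move strictly decreases the $x'$-degree lying to the right of a $\boldf(x'_h)$, and the indices produced remain $\le\max\{i,\ell\}$, so the procedure terminates with $v$ in the displayed span. \emph{Reorganising these correction terms into exactly the claimed normal form is the delicate step}: the raw output of the Leibniz expansion contains mixed monomials $\boldf(x'_h)x'^{b_1}_{h_1}x'^{b_2}_{h_2}\cdots$ that have to be reduced to a single power of a single $x'$ times a group element, and this is where I expect the bulk of the work to lie.

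\textbf{(4).} The analogous identity here is $[\bar x'_j,\,\boldf(x'_i)]=\boldf(x'_i)\,e_{i,j}-e_{i,j}\,\boldf(x'_i)$, proved first in the case $i=j=1$: relation \eqref{it2} gives $\bar x_1x_1=x_1\bar x_1+x_1e_1-e_1x_1$, and the same telescoping argument upgrades this to $[\bar x_1,\boldf(x_1)]=\boldf(x_1)e_1-e_1\boldf(x_1)$; conjugating by $\sigma=s_{i,1}\bar s_{j,1}$, which carries $x_1\mapsto x'_i$, $\bar x_1\mapsto\bar x'_j$ and $e_1\mapsto e_{i,j}$ (using \eqref{it2} and \eqref{e-ij}), gives the general case. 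The induction on $a$ then closes cleanly: every term of the claimed form for $v_{a-1}$ already carries $\bar x'^{b_1}_j$ at the far left, so left multiplication by $\bar x'_j$ merely raises $b_1$ by one, while $\bar x'_j(\boldf(x'_i)\bar x'^{a-1}_j)=\boldf(x'_i)\bar x'^a_j+\boldf(x'_i)e_{i,j}\bar x'^{a-1}_j-e_{i,j}\boldf(x'_i)\bar x'^{a-1}_j$ contributes two new terms already of the required shape with $c_1+c_2=1$. In sum, parts (1), (2), (4) and the $a=1$ case of (3) are formal consequences of the commutation identities above, and the only genuine obstacle is the normal-form bookkeeping in part (3) for $a\ge 2$.
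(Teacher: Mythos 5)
Your proof is correct and takes essentially the same route as the paper: the closed-form commutator identity $[x'_i,p(x'_\ell)]=(p(x'_i)-p(x'_\ell))(i,\ell)$ is exactly the paper's displayed relation $x'_2\boldf(x_1)=\boldf(x_1)(x'_2-s_1)+\boldf(x'_2)s_1$ rearranged, and the subsequent conjugation and degree induction match the paper's strategy (the paper disposes of $a>1$ with the single phrase ``by using the previous result on $a=1$, repeatedly,'' so the bookkeeping step you flag as delicate is elided there as well). Your treatment of (4), which the paper leaves entirely to the reader, is a correct and pleasantly clean induction.
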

\begin{proof} (1)-(2) are trivial.   Since $x_2=x_2'-s_1$ and $x_2x_1=x_1x_2$, \begin{equation}\label{key-1}
x_2' \boldf(x_1)=\boldf(x_1)(x_2'-s_1)+\boldf(x_2') s_1. \end{equation}
Applying the conjugate of  $s_{i, 2}$ on (\ref{key-1}) yields  (3) for $a=1$ and $\ell=1$.
If $\ell>1$, then
   $x_i' \boldf(x_\ell')=x_i's_{\ell-1} \boldf(x_{\ell-1}') s_{\ell-1}= s_{\ell-1}x_{(i)s_{\ell-1}}' \boldf(x_{\ell-1}') s_{\ell-1}$.
   Thus, (3) follows from  inductive assumption on $\ell-1$ and (1) under the assumption $a=1$. The case $a>1$ follows by using the previous result on $a=1$, repeatedly.
Finally, (4) can be checked similarly by induction. We leave the details to the readers.
\end{proof}

\begin{proposition}~\label{2ij} Let  $J_L=\sum_{i=1}^t {\mathscr B}_{r, t}^{\rm aff}\, \boldg(\bar x'_j)$ and   $J_R=\sum_{i=1}^r \boldf(x_i') \mathscr B_{r, t}^{\rm aff}$.  Then
\begin{enumerate}\item $J_L$ is a right $R\mathfrak S_r\otimes  \mathscr H_{t}^{\rm aff}$-module;
\item $J_R$ is a left $\mathscr H^{\rm aff}_r\otimes R\bar {\mathfrak S}_t$-module;
\item if  $\mathscr B_{r, t}^{\rm {aff}}$ is admissible,  then $I=J_L+J_R$,
where $I$ is the two-sided ideal of ${\mathscr B}_{r, t}^{\rm aff}$  generated by
$\boldf(x_1)$ and $ \boldg(\bar x_1)$ satisfying \eqref{cycpolyf}--\eqref{cycpolyg}.\end{enumerate}
\end{proposition}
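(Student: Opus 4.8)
The plan is to deduce (1) and (2) from the stability of $J_L$ and $J_R$ under a set of algebra generators, and to deduce (3) by showing that $K:=J_L+J_R$ is a two-sided ideal of $\mathscr B_{r,t}^{\rm aff}$ containing $\boldf(x_1)$ and $\boldg(\bar x_1)$; since $\boldf(x_i')=s_{i,1}\boldf(x_1)s_{1,i}$ and $\boldg(\bar x_j')=\bar s_{j,1}\boldg(\bar x_1)\bar s_{1,j}$ already lie in $I$, the inclusion $K\subseteq I$ is automatic, so this forces $I=K$. For (1): $J_L$ is visibly a left $\mathscr B_{r,t}^{\rm aff}$-module, so it suffices to check $J_Lg\subseteq J_L$ for $g$ in the generating set $\{s_i,\bar s_\ell,\bar x_1\}$ of $R\mathfrak S_r\otimes\mathscr H_t^{\rm aff}$; here $\boldg(\bar x_j')s_i=s_i\boldg(\bar x_j')$ because $s_i$ commutes with $\bar x_j'$ (by \eqref{it2} and the commutation of $s_i$ with every $\bar s$), $\boldg(\bar x_j')\bar s_\ell=\bar s_\ell\boldg(\bar x_{(\bar j)\bar s_\ell}')\in J_L$ by Lemma~\ref{lemm-----2}(2), and applying the generator-fixing anti-involution $\sigma$ (from the proof of Lemma~\ref{zero-1}) to the evident barred analogue of Lemma~\ref{lemm-----2}(3) gives $\boldg(\bar x_j')\bar x_1=\bar x_1\boldg(\bar x_j')+\sum_h w_h\boldg(\bar x_h')\in J_L$ with $w_h\in R\bar{\mathfrak S}_t$. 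Part (2) is the mirror image: $J_R$ is a right $\mathscr B_{r,t}^{\rm aff}$-module, and $gJ_R\subseteq J_R$ for $g\in\{s_i,\bar s_\ell,x_1\}$ follows from Lemma~\ref{lemm-----2}(1) for $s_i\boldf(x_i')$, the commutation of $\bar s_\ell$ with $x_i'$, and Lemma~\ref{lemm-----2}(3) for $x_1\boldf(x_i')=\boldf(x_i')x_1+\sum_h\boldf(x_h')v_h$.

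For (3) the task is to see that $K$ is a two-sided ideal, and I would establish the right-ideal property first. Using the tensor decomposition $\mathscr B_{r,t}^{\rm aff}=R[\mathbf x_r]\otimes\mathscr B_{r,t}\otimes R[\bar{\mathbf x}_t]$ to note that $\mathscr B_{r,t}^{\rm aff}$ is generated by $R\mathfrak S_r\otimes\mathscr H_t^{\rm aff}$ together with $x_1$ and $e_1$, and that $J_R$ (as a right $\mathscr B_{r,t}^{\rm aff}$-module) and $J_L$ (as a right $R\mathfrak S_r\otimes\mathscr H_t^{\rm aff}$-module by (1)) are both carried into $K$ by elements of $R\mathfrak S_r\otimes\mathscr H_t^{\rm aff}$, an induction on word length reduces $J_L\mathscr B_{r,t}^{\rm aff}\subseteq K$ to the two claims $J_Lx_1\subseteq K$ and $J_Le_1\subseteq K$. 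The first is immediate since $x_1$ commutes with every $\bar s$ and with $\bar x_1$, so $\boldg(\bar x_j')x_1=x_1\boldg(\bar x_j')\in J_L$. For the second: when $j=1$, applying $\sigma$ to \eqref{efg} gives $\boldg(\bar x_1)e_1=(-1)^k\boldf(x_1)e_1\in J_R$; when $j>1$, one pushes $e_1$ leftwards through $\boldg(\bar x_j')=\bar s_{j,1}\boldg(\bar x_1)\bar s_{1,j}$, using that $e_1$ commutes with each $\bar s_\ell$ ($\ell\ne 1$) and with $\bar x_2'$ (a consequence of \eqref{it1}) together with Lemma~\ref{lemm-----2}(2), to obtain $\boldg(\bar x_j')e_1=\big(\bar s_{j,1}\bar s_1 e_1\bar s_{2,j}\big)\boldg(\bar x_j')\in J_L$. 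Hence $J_Le_1\subseteq K$ and $K\mathscr B_{r,t}^{\rm aff}\subseteq K$.

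Now I would bootstrap to the left-ideal property. Dually, $\mathscr B_{r,t}^{\rm aff}$ is generated by $\mathscr H_r^{\rm aff}\otimes R\bar{\mathfrak S}_t$ together with $\bar x_1$ and $e_1$; $J_R$ is a left $\mathscr H_r^{\rm aff}\otimes R\bar{\mathfrak S}_t$-module by (2) and $J_L$ a left $\mathscr B_{r,t}^{\rm aff}$-module, so the same word-length induction reduces $\mathscr B_{r,t}^{\rm aff}J_R\subseteq K$ to $e_1J_R\subseteq K$ and $\bar x_1J_R\subseteq K$. For $e_1J_R=\sum_i e_1\boldf(x_i')\mathscr B_{r,t}^{\rm aff}$: if $i=1$, $e_1\boldf(x_1)=(-1)^ke_1\boldg(\bar x_1)\in\mathscr B_{r,t}^{\rm aff}\boldg(\bar x_1)\subseteq J_L$ by \eqref{efg}, so $e_1\boldf(x_1)\mathscr B_{r,t}^{\rm aff}\subseteq J_L\mathscr B_{r,t}^{\rm aff}\subseteq K$ precisely because $K$ is already a right ideal; if $i>1$, commuting $e_1$ past the permutations (again $e_1x_2'=x_2'e_1$ and $e_1$ commutes with $s_\ell$, $\ell\ne 1$) gives $e_1\boldf(x_i')=s_{i,2}\,\boldf(x_2')\,(e_1s_1s_{1,i})\in J_R$. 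For $\bar x_1J_R$: by Lemma~\ref{lemm-----2}(4) with $a=1$, $\bar x_1\boldf(x_i')=\boldf(x_i')\bar x_1+\epsilon_0\boldf(x_i')+\epsilon_2\boldf(x_i')e_{i,1}+\epsilon_1e_{i,1}\boldf(x_i')$; the first three summands lie in $\boldf(x_i')\mathscr B_{r,t}^{\rm aff}\subseteq J_R$, while $e_{i,1}\boldf(x_i')=(-1)^ke_{i,1}\boldg(\bar x_1)\in\mathscr B_{r,t}^{\rm aff}\boldg(\bar x_1)\subseteq J_L$ by \eqref{efg} and \eqref{e-ij}, so $\bar x_1\boldf(x_i')\in K$, whence $\bar x_1\boldf(x_i')\mathscr B_{r,t}^{\rm aff}\subseteq K\mathscr B_{r,t}^{\rm aff}\subseteq K$ since $K$ is a right ideal. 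Thus $K$ is a two-sided ideal; it contains $\boldf(x_1)\in J_R$ and $\boldg(\bar x_1)\in J_L$, so $I\subseteq K$ and $I=J_L+J_R$. Admissibility is used through Lemma~\ref{zero-1} (equivalently \eqref{condi-k1}): it is exactly what makes the closed loops $e_1\boldf(x_1)x_1^ae_1$ and $e_1\boldg(\bar x_1)\bar x_1^ae_1$ vanish, so that commuting $x_1$- and $\bar x_1$-powers across $\boldf(x_1)$ and $\boldg(\bar x_1)$ never produces a term escaping $J_L+J_R$; without it $e_1$ is an $R$-torsion element (Lemma~\ref{lemm-level-k-walled}) and the identity fails.

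The step I expect to be the main obstacle is the latent circularity in (3): read naively, ``$K$ is a left ideal'' and ``$K$ is a right ideal'' each appear to presuppose the other, because the crossing relation $e_{i,j}\boldf(x_i')=(-1)^ke_{i,j}\boldg(\bar x_j')$ sends a left-ideal difficulty into $J_L$ and a right-ideal difficulty into $J_R$. The device that breaks it is to prove the right-ideal property unconditionally first — which needs only the two self-contained computations $J_Lx_1\subseteq J_L$ and $J_Le_1\subseteq K$ — and only afterwards to invoke $K\mathscr B_{r,t}^{\rm aff}\subseteq K$ when handling $\bar x_1J_R$ and $e_1J_R$.
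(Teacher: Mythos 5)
The proposal contains a genuine gap, and in fact two distinct ones, both in part (3).

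\textbf{First, the claimed commutation $x_1\bar x_1=\bar x_1 x_1$ is false.} From \eqref{it2} one only has $x_1(e_1+\bar x_1)=(e_1+\bar x_1)x_1$, so $[\bar x_1,x_1]=[x_1,e_1]$; combined with $e_1(x_1+\bar x_1)=0=(x_1+\bar x_1)e_1$ from \eqref{it1} this gives $[\bar x_1,x_1]=e_1\bar x_1-\bar x_1 e_1$, which is nonzero (in the regular-monomial basis $\mathcal M$ of Theorem~\ref{monothm}, $e_1\bar x_1$ and $\bar x_1 e_1$ are distinct basis elements; alternatively, $[x_1,\bar x_1]=0$ would force $[x_1,e_1]=0$, which contradicts $e_1x_1^ke_1=\omega_k e_1$ together with $R$-freeness). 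Consequently your step $\boldg(\bar x_j')x_1=x_1\boldg(\bar x_j')$ fails, and $J_Lx_1\subseteq K$ is not established. Indeed $\boldg(\bar x_1)x_1$ differs from $x_1\boldg(\bar x_1)$ by a sum of terms each carrying a factor $e_1$, and untangling these is precisely where the substance of part (3) lies.

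\textbf{Second, the ``right ideal first'' device does not break the circularity you identify.} In the $j=1$ case of $J_Le_1\subseteq K$ you correctly observe $\boldg(\bar x_1)e_1=(-1)^k\boldf(x_1)e_1\in J_R$, but what has to be controlled is $\mathscr B_{r,t}^{\rm aff}\boldg(\bar x_1)e_1=(-1)^k\mathscr B_{r,t}^{\rm aff}\boldf(x_1)e_1$, and $J_R$ is only a \emph{left} $\mathscr H_r^{\rm aff}\otimes R\bar{\mathfrak S}_t$-module, not a left $\mathscr B_{r,t}^{\rm aff}$-module. Concluding $\mathscr B_{r,t}^{\rm aff}\boldf(x_1)e_1\subseteq K$ already requires the left-ideal property of $K$, i.e.\ the very thing you proposed to derive afterwards. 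So both halves of your ``unconditional right-ideal first'' step are in fact conditional.

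What is missing is exactly the hard part of the paper's proof: the reduction to $e_1\boldf(x_1)\mathscr B_{r,t}^{\rm aff}\subseteq J_L+J_R$ (equation \eqref{ejr}), proved by induction on the $x$-degree of a regular monomial $\textbf m$ with a case analysis on the factor $e_{\bii,\bij}$, where admissibility enters concretely through Lemma~\ref{zero-1} to kill $e_1\boldf(x_1)x_1^{\alpha_1}e_1\cdots$ when $e_1$ appears in $e_{\bii,\bij}$. A tell-tale sign that your argument bypasses the crux is that admissibility is never actually invoked in any of your computations, only appended as a commentary at the end. Parts (1) and (2) of your proof are fine and match the paper's brief argument, but part (3) needs the monomial induction.
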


\begin{proof} Obviously, both $J_L$ and $J_R$ are   $ \mathfrak S_r\times\mathfrak S_{\bar t}$-bimodules. By Lemma~\ref{lemm-----2}\,(3),
 $x_1 J_R\subseteq J_R$.
Similarly,  $J_L\bar x_1\subseteq J_L$. This proves (1)--(2).  In order to prove (3), it suffices to verify that $J_L+J_R$ is a two-sided ideal of
$\mathscr B_{r, t}^{\rm aff} $. If so, since  $\{\boldf(x_1), \boldg(\bar x_1)\}\subset J_L+J_R$,    $I=J_L+J_R$, proving the result.

We claim that $e_1 (J_L+J_R)\subseteq J_L+J_R$ and $ (J_L+J_R) e_1 \subseteq J_L+J_R$. If so, by \eqref{it2},
 $(\bar x_1 +e_1) \boldf(x_1)=\boldf(x_1)(\bar x_1+e_1)$ and hence   $\bar x_1 \boldf(x_1)\in J_L+J_R$.
By (1)--(2),  $\bar x_1 \boldf(x_i')=s_{i, 1} \bar x_1 \boldf(x_1) s_{1, i}\in J_L+J_R$, and hence
 $\bar x_1(J_L+J_R)\subseteq J_L+J_R$. Similarly,  $(J_L+J_R) x_1\subseteq J_L+J_R$.
Thus the claim implies that     $J_L+J_R$ is a two-sided ideal of $\mathscr B_{r, t}^{\rm aff} $.

By symmetry, it remains to prove  $e_1(J_L+J_R)\subseteq J_L+J_R$. Obviously, it suffices to verify  \begin{equation}\label{ejr} e_1 J_R\subset J_L+J_R.\end{equation}
 By \eqref{it1}, $e_1 \boldf(x_i')=\boldf(x_i') e_1$ for $i\ge 2$. Let  ${\textbf{m}}$ be a regular monomial of $\mathscr B_{r, t}^{\rm aff}$ defined in~\eqref{regs}.
 Then $\textbf{m}
 =x^\a e_{\bii,\bij} w\bar x^\beta$ for some $w\in \mathfrak S_r\times \bar {\mathfrak S}_t$, $(\a, \b)\in \mathbb N^r\times \mathbb N^t$ and some $\bii, \bij$.
 Using induction on $|\a|$,  we want to prove \begin{equation}\label {condreg} e_1 \boldf(x_1) {\textbf{m}}\in J_L+J_R.\end{equation}
If so, then  $e_1 \boldf(x_1)\mathscr B_{r, t}^{\rm aff}\subset J_L+J_R$ and hence \eqref{ejr} follows.

\medskip

\textit{Case~1: $|\alpha|=0$.}

If $f=0$, then (\ref{condreg}) follows from (1) and  \eqref{efg}. Suppose   $1\le f\le \min\{r, t\} $. Since $\mathscr B_{r,t}^{\rm aff} $ is admissible,    $e_1 \boldf(x_1) {\textbf{m}}=0$ if $e_i$ is a factor
of  $e_{\bii,\bij}$. Assume that   $e_1$ is not a factor of $e_{\bii,\bij}$.
If there is an $l$ such that $i_l=p\neq 1$ and $j_l=1$, by (2),
$$e_1 \boldf(x_1) e_{p, 1}=s_{p, 2} e_1 \boldf(x_1) s_1 e_1 s_{1, p}=s_{p, 2} e_1s_1 \boldf(x_2') e_1 s_{1, p}=s_{p, 2}  \boldf(x_2')e_1 s_{1, p}\in J_R.$$
Suppose  $j_l\neq 1$ for all possible $l$.
If there is an $l$ such that $e_{i_l, j_l}=e_{1, p}$ for some $p\neq 1$, then we assume $i_1=1$ and $j_1=p$ without loss of any generality.
In this case, $$e_1 \boldf(x_1) e_{1, p}=(-1)^k \bar s_{p, 2} e_1 \boldg(\bar x_1) \bar s_1 e_{1} \bar s_{1, p}=(-1)^k \bar s_{p, 2} e_1 \boldg(\bar {x}_2') \bar s_{1,p}=(-1)^k \bar s_{p, 2} e_1 \bar s_{1, p} \boldg(\bar x_1).$$
Since $j_l\neq 1$ for $1\le l\le f$, by \cite[Lemma~4.7(2)]{RSu},  $\bar x_1 e_{i_l, j_l}=e_{i_l, j_l} \bar x_1$ and hence
  \begin{equation}\label{ge} \boldg(\bar x_1)\mbox{$ \prod\limits_{l=2}^f e_{i_l, j_l}=  \prod\limits_{l=2}^f $}e_{i_l, j_l} \boldg(\bar x_1)\in J_L.\end{equation}
Now,  \eqref{condreg} follows from (1).
Finally, if  $\{ i_l, j_l\}\cap \{1\}=\emptyset$ for all possible $l$, then   (\ref{condreg}) follows from (1) and the following fact
$$e_1 \boldf(x_1)\mbox{$ \prod\limits_{ l=1}^f e_{i_f, j_f}=\prod\limits_{ l=1}^f e_{i_f, j_f}e_1 \boldf(x_1) =(-1)^k \prod\limits_{ l=1}^f $}e_{i_f, j_f} e_1 \boldg(\bar x_1)\in J_L.$$

 \textit{Case~2: $|\alpha|>0$.}

 If $\alpha_i\neq 0$ for some $2\le i\le r$, then
 $e_1 x_i=x_i' e_1-e_1\sum_{j=1}^i (j, i)$ and $x_i \boldf(x_1)=\boldf(x_1)x_i$.
 Let $\textbf{m}'$ be obtained from $\textbf{m}$ by removing $x_i$. Then
 $$e_1 (1, i) \boldf(x_1) \textbf{m}'=e_1 \boldf(x_i') (1, i)   \textbf{m}'=\boldf(x_i') e_1 (1, i)   \textbf{m}' \in J_R.$$
Now, \eqref{condreg} follows from inductive assumption on $|\a|$.
If $\alpha_i=0$, $2\le i\le r$, then  $x^\a=x_1^{\a_1}$ with $\a_1>0$.
 Let $v=e_1\boldf(x_1) \textbf{m}$. If
 $j_\ell\neq 1$, $1\le \ell\le f$, then by \eqref{ge}, Lemma~\ref{lemm-----2} and inductive assumption,
$$\begin{aligned}
v&=e_1\boldf(x_1)x_1^{\a_1}e_{\bii,\bij} w\bar x^\b
= (-1)^k  e_1\boldg(\bar x_1)x_1^{\a_1}e_{\bii,\bij} w\bar x^\b
 \equiv (-1)^k  e_1x_1^{\a_1}\boldg(\bar x_1)e_{\bii,\bij} w\bar x^\b\\ &
= (-1)^k e_1x_1^{\a_1}e_{\bii,\bij}{\sc\,}\boldg(\bar x_{1}) w{\bar x}^\b \nonumber \in J_L w {\bar x}^\b\subset J_L+J_R,\\
\end{aligned}$$
where the ``\,$\equiv$\,'' is  modulo $J_L+J_R$.
Finally, if $j_\ell=1$ for some $\ell$,  without loss of any generality, we assume  $j_1=1$.  If $i_1=1$, by Lemma~\ref{zero-1},
$v=e_1\boldf(x_1)x_1^{\a_1}e_{1}e_{\bii',\bij'} w\bar x^\b=0$, where $\bii'=(i_2,...,i_f)$ and  $\bij'=(j_2,...,j_f)$.  Now, we assume $i_1\neq 1$.
 Then
$$\begin{aligned} v&= e_1\boldf(x_1)x_1^{\a_1}e_{i_1,1}e_{\bii',\bij'} w\bar x^\b
=e_1e_{i_1,1}\boldf(x_1)x_1^{\a_1}e_{\bii',\bij'} w\bar x^\b\\
&=e_1(1,i_1)\boldf(x_1)x_1^{\a_1}e_{\bii',\bij'} w\bar x^\b
= e_{1}\boldf(x_i') (1, i) x_1^{\a_1}e_{\bii',\bij'} w\bar x^\b,\\ &
=\boldf(x_i') e_1 (1, i) x_1^{\a_1}e_{\bii',\bij'} w\bar x^\b\in  J_R.\\
\end{aligned}$$
This completes the proof of \eqref{condreg}. \end{proof}

For $(\a, \b)\in \mathbb N^r\times \mathbb N^t$, denote
$\boldf(x')^\alpha=\boldf(x_1)^{\a_1}\cdots \boldf(x_r')^{\a_r}$ and $\boldg(\bar x')^\b=\boldg(\bar x_1)^{\b_1}\cdots \boldg(\bar x_t')^{\b_t}$.
Let $\mathbb N_k^r=\{\a\in \mathbb N^r\mid  \a_i\le k-1, 1\le i\le r\}$ and  $\mathbb N_k^t=\{\a\in\mathbb N^t\mid  \a_i\le k-1, 1\le i\le r\}$.

\begin{lemma}\label{mathcaln} The affine walled Brauer algebra $\mathscr B_{r,t}^{\rm aff}$ is a free $R$-module with $\mathcal N$ as its $R$-basis, where
 \begin{equation} \label{regs1} \begin{aligned}  \mathcal N= \stackrel{\sc \min\{m, n\}}{\underset{\sc f=0}{\dis\mbox{\Large$\cup$}}}
 & \{ \boldf(x')^\a {x}^{\gamma}  c^{-1} e^f w d \bar x^{\delta}\boldg(\bar x')^\b
 \mid (\alpha, \beta)\in\mathbb N^r \times  \mathbb N^t, (\gamma, \delta)\in\mathbb N_k^r \times  \mathbb N_k^t, \\[-8pt]
 & \ c, d\in \mathscr D_{r, t}^f,
  w\in \mathfrak S_{r-f}\times \bar{\mathfrak S}_{{t-f}}\}.\\ \end{aligned} \end{equation}\end{lemma}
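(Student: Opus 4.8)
The plan is to compare $\mathcal N$ with the $R$-basis $\mathcal M$ of $\mathscr B_{r,t}^{\rm aff}$ provided by Theorem~\ref{monothm}, by passing to the associated graded algebra ${\rm gr}(\mathscr B_{r,t}^{\rm aff})$ of the filtration~\eqref{filtr}. The combinatorial input is division with remainder by $k$: for each $\mu\in\mathbb N^r$ there is a unique pair $(\alpha,\gamma)\in\mathbb N^r\times\mathbb N_k^r$ with $\mu=k\alpha+\gamma$, and likewise a unique $(\beta,\delta)\in\mathbb N^t\times\mathbb N_k^t$ with $\nu=k\beta+\delta$ for each $\nu\in\mathbb N^t$. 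This sets up a bijection between the index set of $\mathcal M$ and that of $\mathcal N$ under which the $\mathcal M$-element $x^{\mu}c^{-1}e^fwd\,\bar x^{\nu}$ corresponds to the $\mathcal N$-element $\boldf(x')^{\alpha}x^{\gamma}c^{-1}e^fwd\,\bar x^{\delta}\boldg(\bar x')^{\beta}$ (with the same $c,e^f,w,d$). Since $c^{-1},e^f,w,d$ all have degree $0$, both matched elements lie in $(\mathscr B_{r,t}^{\rm aff})^{(|\mu|+|\nu|)}$.

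The next step is to compute leading symbols in ${\rm gr}(\mathscr B_{r,t}^{\rm aff})$. By Lemma~\ref{x-p-iii}(1), $x_i'-x_i=L_i$ with $\deg L_i=0$, so $x_i$ and $x_i'$ have the same image in degree $1$. Writing $\boldf(x_1)=x_1^k+\sum_{j=1}^{k}a_jx_1^{k-j}$, each term $a_j(x_i')^{k-j}$ lies in $(\mathscr B_{r,t}^{\rm aff})^{(k-1)}$, and $(x_i')^k-x_i^k\in(\mathscr B_{r,t}^{\rm aff})^{(k-1)}$ since $x_i'-x_i$ has degree $0$; hence $\boldf(x_i')-x_i^k\in(\mathscr B_{r,t}^{\rm aff})^{(k-1)}$. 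Raising to the power $\alpha_i$, multiplying over $i$, and using that the $x_i$ commute, one gets $\boldf(x')^{\alpha}-x^{k\alpha}\in(\mathscr B_{r,t}^{\rm aff})^{(k|\alpha|-1)}$, hence $\boldf(x')^{\alpha}x^{\gamma}-x^{\mu}\in(\mathscr B_{r,t}^{\rm aff})^{(|\mu|-1)}$. Since $\boldg(\bar x_1)$ is monic of degree $k$, Lemma~\ref{x-p-iii}(2) gives in the same way $\bar x^{\delta}\boldg(\bar x')^{\beta}-\bar x^{\nu}\in(\mathscr B_{r,t}^{\rm aff})^{(|\nu|-1)}$ (alternatively this follows from \eqref{efg} together with the anti-involution of Lemma~\ref{zero-1}). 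Multiplying these two relations on the two sides of the degree-$0$ element $c^{-1}e^fwd$, the three cross terms land in $(\mathscr B_{r,t}^{\rm aff})^{(|\mu|+|\nu|-1)}$, so each element of $\mathcal N$ is congruent modulo $(\mathscr B_{r,t}^{\rm aff})^{(|\mu|+|\nu|-1)}$ to the corresponding element of $\mathcal M$; equivalently, the two have the same leading symbol in ${\rm gr}(\mathscr B_{r,t}^{\rm aff})$.

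It then remains to invoke a standard filtered-module argument. By Theorem~\ref{monothm} and the definition of~\eqref{filtr}, the leading symbols of the elements of $\mathcal M$ form a free $R$-basis of ${\rm gr}(\mathscr B_{r,t}^{\rm aff})$, each graded piece $(\mathscr B_{r,t}^{\rm aff})^{[i]}$ being free on those of degree $i$; in particular they are all nonzero. By the previous paragraph the leading symbols of $\mathcal N$ coincide, as a set, with those of $\mathcal M$, hence also form a free $R$-basis of ${\rm gr}(\mathscr B_{r,t}^{\rm aff})$. A routine induction on filtration degree then shows that $\mathcal N$ is an $R$-basis of $\mathscr B_{r,t}^{\rm aff}$: for spanning, if $a\in(\mathscr B_{r,t}^{\rm aff})^{(d)}$ then its image in $(\mathscr B_{r,t}^{\rm aff})^{[d]}$ is an $R$-combination of leading symbols of degree-$d$ elements of $\mathcal N$, and subtracting the corresponding combination of those elements drops $a$ into $(\mathscr B_{r,t}^{\rm aff})^{(d-1)}$; for independence, a finite $R$-relation among elements of $\mathcal N$, read modulo the next lower filtration level in its top degree, would produce a nontrivial $R$-relation among leading symbols of $\mathcal N$. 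Here one uses that~\eqref{filtr} is exhaustive and $(\mathscr B_{r,t}^{\rm aff})^{(-1)}=0$.

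The one point requiring care is the leading-symbol identity $\boldf(x')^{\alpha}x^{\gamma}-x^{\mu}\in(\mathscr B_{r,t}^{\rm aff})^{(|\mu|-1)}$ and its barred analogue: one must be sure that replacing each $x_i'$ by $x_i=x_i'-L_i$ throughout the product only creates terms of strictly smaller filtration degree, which is automatic once the grading of~\eqref{filtr} is fixed since $\deg L_i=0$, and that the monomials $x^{k\alpha}$, $\bar x^{k\beta}$ and the full products $x^{\mu}c^{-1}e^fwd\,\bar x^{\nu}$ really attain the degrees claimed, which is part of Theorem~\ref{monothm} since these are elements of $\mathcal M$. After this, the remainder is pure bookkeeping with the filtration.
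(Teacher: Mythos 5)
Your proof is correct and is essentially a careful expansion of the paper's own one-sentence argument ("the transition matrix between $\mathcal N$ and $\mathcal M$ is invertible"): you exhibit the bijection $\mu=k\alpha+\gamma$, $\nu=k\beta+\delta$ between index sets and prove the invertibility by showing the change of basis is unitriangular with respect to the filtration~\eqref{filtr}, i.e.\ that matching elements of $\mathcal M$ and $\mathcal N$ have the same leading symbol in ${\rm gr}(\mathscr B_{r,t}^{\rm aff})$. This is the same underlying idea, just spelled out fully.
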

\begin{proof} The result follows from  Theorem~\ref{monothm} since the transition matrix between $\mathcal N$ and $\mathcal M$ in \eqref{regs} is invertible.
\end{proof}

\begin{lemma}\label{I}Let $I$ be the two-sided ideal of  $\mathscr B_{r,t}^{\rm aff}$ generated by
$\boldf(x_1)$ and $\boldg(\bar x_1)$ satisfying \eqref{cycpolyf}--\eqref{cycpolyg}. If  $\mathscr B_{r,t}^{\rm aff}$ is admissible, then $S$ is an $R$-basis of $I$, where
\begin{equation}\label{sbasis} S=\{ \boldf(x')^\a {x}^{\gamma}  c^{-1} e^f w d  \bar x^{\delta}\boldg(\bar x')^\b\in\mathcal N\mid \a_i+\b_j\neq 0 \text{ for some $i, j$}\}.\end{equation}
\end{lemma}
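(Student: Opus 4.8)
The plan is to prove the identity $\text{span}_R(S)=I$; since $S\subseteq\mathcal N$ and $\mathcal N$ is an $R$-basis of $\mathscr B_{r,t}^{\rm aff}$ by Lemma~\ref{mathcaln}, $S$ is automatically $R$-linearly independent, so this gives the lemma. The inclusion $\text{span}_R(S)\subseteq I$ is the easy half: if $n=\boldf(x')^\alpha\,x^\gamma\,c^{-1}e^f w d\,\bar x^\delta\,\boldg(\bar x')^\beta\in S$, then $\alpha\neq0$ or $\beta\neq0$. When $\alpha\neq0$, pick $i$ minimal with $\alpha_i\neq0$, so that $\boldf(x')^\alpha=\boldf(x'_i)\cdot(\boldf(x'_i)^{\alpha_i-1}\prod_{\ell>i}\boldf(x'_\ell)^{\alpha_\ell})$; since $x'_i=s_{i,1}x_1s_{1,i}$ we have $\boldf(x'_i)=s_{i,1}\boldf(x_1)s_{1,i}$, which lies in the two-sided ideal generated by $\boldf(x_1)$, whence $n\in I$. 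When $\beta\neq0$, pick $j$ maximal with $\beta_j\neq0$ and argue symmetrically via $\boldg(\bar x'_j)=\bar s_{j,1}\boldg(\bar x_1)\bar s_{1,j}$. In particular $\boldf(x_1)=\boldf(x')^{\epsilon_1}$ and $\boldg(\bar x_1)=\boldg(\bar x')^{\epsilon_1}$ already lie in $S$, with $\epsilon_1=(1,0,\dots,0)$.

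For the converse $I\subseteq\text{span}_R(S)$ I would use Proposition~\ref{2ij}(3): $I=J_L+J_R$, where $J_R=\sum_{i=1}^r\boldf(x'_i)\mathscr B_{r,t}^{\rm aff}$ and $J_L=\sum_{j=1}^t\mathscr B_{r,t}^{\rm aff}\,\boldg(\bar x'_j)$. It suffices to show $J_R\subseteq\text{span}_R(S)$, since $J_L\subseteq\text{span}_R(S)$ follows by the same argument with $\boldf$ and the $x'_i$ replaced throughout by $\boldg$ and the $\bar x'_j$ (and the $\bar x$-analogue of Lemma~\ref{lemm-----2}(3), which has the same proof). I would prove, by induction on $d\ge-1$ with respect to the filtration \eqref{filtr}, that
$$(P_d)\qquad\boldf(x'_i)\,b\in\text{span}_R(S)\quad\text{ for all }1\le i\le r\text{ and all }b\in\mathscr B_{r,t}^{\rm aff}\text{ with }\mathrm{deg}\,b\le d;$$
since the filtration is exhaustive, $(P_d)$ for all $d$ gives $J_R\subseteq\text{span}_R(S)$. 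The base case is vacuous, and in the inductive step — the lower-degree part of $b$ being absorbed by $(P_{d-1})$ — it is enough to treat $\boldf(x'_i)\,n$ for a basis element $n=\boldf(x')^\alpha\mathbf m\in\mathcal N$ of degree $d$, where $\mathbf m=x^\gamma c^{-1}e^f w d\,\bar x^\delta\boldg(\bar x')^\beta$ and $\mathrm{deg}\,\mathbf m=d-k|\alpha|$.

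The crux is then a straightening identity for $\boldf(x'_i)\boldf(x')^\alpha$, computed entirely inside $R[\mathbf x_r]\otimes R\mathfrak S_r\cong\mathscr H_r^{\rm aff}$; crucially the idempotents $e^f$ play no role, because in the basis $\mathcal N$ the $\boldf$-powers sit at the far left, so prepending $\boldf(x'_i)$ meets only the block $\boldf(x')^\alpha$ — this is what makes the present argument shorter than that of Proposition~\ref{2ij}. If $\alpha_\ell=0$ for all $\ell<i$ (so in particular if $i=1$ or $\alpha=0$) then $\boldf(x'_i)\boldf(x')^\alpha=\boldf(x')^{\alpha+\epsilon_i}$, whence $\boldf(x'_i)n=\boldf(x')^{\alpha+\epsilon_i}\mathbf m$ is again of the form \eqref{regs1} with first exponent $\alpha+\epsilon_i\neq0$, hence in $S$. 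Otherwise, moving $\boldf(x'_i)$ to the right past $\boldf(x'_1)^{\alpha_1},\dots,\boldf(x'_{i-1})^{\alpha_{i-1}}$ by repeated use of Lemma~\ref{lemm-----2}(1),(3) yields
$$\boldf(x'_i)\boldf(x')^\alpha=\boldf(x')^{\alpha+\epsilon_i}+\sum_\nu c_\nu\,\boldf(x'_{h_\nu})\,q_\nu,\qquad c_\nu\in R,\ q_\nu\in\mathscr H_r^{\rm aff}$$
where each error term still carries a factor $\boldf(x'_{h_\nu})$ (Lemma~\ref{lemm-----2}(3) produces errors of the explicit shape $\boldf(x'_h)(x'_{h_1})^b\sigma$, and everything appearing to the left of such a factor in the reduction is a product of further $\boldf(x')$'s), and, since $\boldf(x'_i)\boldf(x')^\alpha$ has filtration degree $k(|\alpha|+1)$ while the errors have strictly smaller degree, peeling off one leading degree-$k$ factor $\boldf(x'_{h_\nu})$ leaves $\mathrm{deg}\,q_\nu\le k|\alpha|-1$. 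Multiplying on the right by $\mathbf m$: the term $\boldf(x')^{\alpha+\epsilon_i}\mathbf m$ is in $S$, while $\mathrm{deg}(q_\nu\mathbf m)\le(k|\alpha|-1)+(d-k|\alpha|)=d-1$, so $\boldf(x'_{h_\nu})(q_\nu\mathbf m)\in\text{span}_R(S)$ by $(P_{d-1})$. This establishes $(P_d)$, hence $J_R\subseteq\text{span}_R(S)$, and symmetrically $J_L\subseteq\text{span}_R(S)$; therefore $I=J_L+J_R\subseteq\text{span}_R(S)\subseteq I$.

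I expect the genuine work to be in that straightening identity: verifying that the iterated application of Lemma~\ref{lemm-----2}(3) terminates and that, once the $\boldf$-powers are collected on the left, every error term factors as $\boldf(x'_{h_\nu})q_\nu$ with $\mathrm{deg}\,q_\nu\le k|\alpha|-1$ — equivalently, that removing a single degree-$k$ factor $\boldf(x'_{h_\nu})$ lowers the filtration degree by the full $k$. The remaining input — that $I=J_L+J_R$, which is where admissibility and Lemma~\ref{zero-1} are really used — is imported from Proposition~\ref{2ij}.
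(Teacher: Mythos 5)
Your proof is correct and follows essentially the same route as the paper's: import $I=J_L+J_R$ from Proposition~\ref{2ij}(3), observe that $S\subseteq\mathcal N$ is automatically independent by Lemma~\ref{mathcaln}, and show $J_R\subseteq\text{span}_R S$ (symmetrically $J_L$) by straightening $\boldf(x'_i)$ past the $\boldf(x')$-block of an $\mathcal N$-basis element via Lemma~\ref{lemm-----2}(3), noting that every error term carries a factor $\boldf(x'_h)$ and has strictly lower filtration degree. The only real difference is bookkeeping: the paper runs a double induction (outer on $i$, inner on degree, moving $\boldf(x'_i)$ past a single $\boldf(x'_l)$ and absorbing the $\boldf(x'_j)$, $j<i$, errors by the outer induction), while you collapse this into a single induction on filtration degree by pushing $\boldf(x'_i)$ all the way through at once; both work for the same reason.
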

\begin{proof}
Let $M=\text{span}_R S$. 
By Lemma~\ref{mathcaln},  $\boldf(x_1)\mathscr B_{r, t}^{\rm aff}\subseteq M$.
For any positive integer $l$ with $1\le l<i$,
by  Lemma~\ref{lemm-----2}\,(2),  $$\boldf(x_i') \boldf(x_l')\in\mbox{$ \sum\limits_{j=1}^{i-1}$} \boldf(x_j')\mathscr B_{r,t}^{\rm aff}+\boldf(x_i') D,$$
such that $D\in \mathscr B_{r, t}^{\rm aff}$ and the degree of $D$ is strictly less then $k$. Thus, $\boldf(x_i')\mathscr B_{r, t}^{\rm aff}\subseteq M$ which follows from inductive assumption on $j$ with $ 1\le j\le i-1$
and inductive assumption on degrees. This proves $J_R\subseteq M$.   One can check $J_L \subseteq M$ similarly. By Proposition~\ref{2ij}\,(3),   $I=M$. \end{proof}

By abuse of notions, a regular monomial $\textbf{m}$ in Definition~\ref{def-mo} is also called  a {\it regular monomial} of  $\mathscr B_{k, r, t}$
if $0\le \alpha_i, \beta_j\le k-1$ for all $i, j$ with $1\le i\le r$ and $1\le j\le t$. Obviously, the number of all such regular monomials is  $k^{r+t}(r+t)!$.

\begin{theorem}\label{level-k-walled} The cyclotomic walled Brauer algebra  $\mathscr B_{k, r, t}$ is free over $R$ with rank  $k^{r+t}(r+t)!$ if and only if
 $\mathscr B_{k, r, t}$ is admissible.
\end{theorem}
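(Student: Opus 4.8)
The plan is to read the theorem off the explicit $R$-bases already established: the basis $\mathcal N$ of $\mathscr B_{r,t}^{\rm aff}$ (Lemma~\ref{mathcaln}), the basis $S$ of the ideal $I$ in the admissible case (Lemma~\ref{I}), and the torsion criterion for $e_1$ (Lemma~\ref{lemm-level-k-walled}). The first observation is purely combinatorial: the elements of $\mathcal N\setminus S$ are exactly those indexed by $\alpha=0$ and $\beta=0$, that is, the elements $x^{\gamma}c^{-1}e^fwd\bar x^{\delta}$ with $(\gamma,\delta)\in\mathbb N_k^r\times\mathbb N_k^t$, $c,d\in\mathscr D_{r,t}^f$, $w\in\mathfrak S_{r-f}\times\bar{\mathfrak S}_{t-f}$. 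These are precisely the regular monomials of $\mathscr B_{k,r,t}$, and their number is $k^{r+t}(r+t)!$: the data $(\gamma,\delta)$ contribute the factor $k^{r+t}$, and the walled Brauer diagrams $c^{-1}e^fwd$, summed over $f$, contribute the $(r+t)!$ diagrams forming the standard basis of $\mathscr B_{r,t}$.

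For the forward implication, suppose $\mathscr B_{k,r,t}$ is admissible. Then Lemma~\ref{I} gives that $S$ is an $R$-basis of $I$; since $S\subseteq\mathcal N$ and $\mathcal N$ is an $R$-basis of $\mathscr B_{r,t}^{\rm aff}$, we get a splitting $\mathscr B_{r,t}^{\rm aff}=I\oplus P$, where $P$ is the free $R$-submodule with basis $\mathcal N\setminus S$. Hence $\mathscr B_{k,r,t}=\mathscr B_{r,t}^{\rm aff}/I\cong P$ is free over $R$ with basis the images of the regular monomials, so its rank is $k^{r+t}(r+t)!$. This direction is bookkeeping once Lemma~\ref{I}, and through it Proposition~\ref{2ij}(3), are in hand.

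For the converse I argue by contraposition: if $\mathscr B_{k,r,t}$ is not admissible, it cannot be free of rank $k^{r+t}(r+t)!$. The key point, which needs no admissibility hypothesis, is that the regular monomials always span $\mathscr B_{k,r,t}$ over $R$. Indeed $\mathcal N$ spans $\mathscr B_{r,t}^{\rm aff}$, and each element of $S$ carries a factor $\boldf(x_i')$ on the left or a factor $\boldg(\bar x_j')$ on the right; by Lemma~\ref{lemm-----2}(1)--(2) such a factor is an $\mathfrak S_r$- (resp. $\bar{\mathfrak S}_t$-) conjugate of $\boldf(x_1)$ (resp. $\boldg(\bar x_1)$), hence lies in $I$, so $S\subseteq I$ and the images of $\mathcal N\setminus S$ already span $\mathscr B_{k,r,t}$. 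Now if $\mathscr B_{k,r,t}$ were free of rank $n=k^{r+t}(r+t)!$, this spanning set of exactly $n$ elements would have to be a basis, because a surjective $R$-endomorphism of a finitely generated module over a commutative ring is an isomorphism. In particular $e_1$, which is the regular monomial with $f=1$, $\gamma=\delta=0$, $c=d=1$, $w=1$ (so that $e^1=e_1$), would be a basis vector and therefore not an $R$-torsion element; but by Lemma~\ref{lemm-level-k-walled} the failure of \eqref{condi-k1} forces $e_1$ to be $R$-torsion in $\mathscr B_{k,r,t}$, a contradiction. This proves the converse.

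I expect the genuine difficulty to lie entirely upstream, in Proposition~\ref{2ij}(3) and Lemma~\ref{I}: producing an honest $R$-basis of $I$ so that $\mathscr B_{r,t}^{\rm aff}$ splits as $I\oplus P$ with $P$ free of the predicted rank. Granting that, the only slightly delicate step is the passage from ``spanned by $n$ elements'' to ``a basis of a free module of rank $n$'', which is handled by the standard fact on surjective endomorphisms of finitely generated modules, together with the trivial observation that $e_1$ is literally one of the $n$ regular monomials.
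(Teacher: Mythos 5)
Your proposal is correct and rests on the same three pillars as the paper's proof: Lemma~\ref{mathcaln} for the basis $\mathcal N$ of $\mathscr B_{r,t}^{\rm aff}$, Lemma~\ref{I} for the basis $S$ of $I$ in the admissible case, and Lemma~\ref{lemm-level-k-walled} for the torsion of $e_1$ in the non-admissible case. The one place you diverge is the spanning step. The paper shows that the $R$-span $M$ of the regular monomials of $\mathscr B_{k,r,t}$ is a left ideal (citing a degree induction along the lines of [RSu, Prop.~4.12]) and then uses $1\in M$ to conclude $M=\mathscr B_{k,r,t}$; you instead observe directly that every element of $S$ is a left or right multiple of a $\mathfrak S_r$- or $\bar{\mathfrak S}_t$-conjugate of $\boldf(x_1)$ or $\boldg(\bar x_1)$, hence $S\subseteq I$ unconditionally, so the images of $\mathcal N\setminus S$ already span the quotient. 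Your route avoids the separate induction and makes transparent why the spanning statement needs no admissibility hypothesis. In the converse you also spell out the step the paper leaves implicit, namely that a spanning set of $n$ elements in a free module of rank $n$ over a commutative ring is automatically a basis, which is exactly what turns the $R$-torsion of $e_1$ into a contradiction. Both of these are modest improvements in presentation; the mathematical content and the load-bearing lemmas are the same.
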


\begin{proof} Let $M$ be the $R$-submodule of $\mathscr B_{k, r, t}$ spanned by all regular monomials of  $\mathscr B_{k, r, t}$.   By
induction on degrees, it is routine to check that  $M$ is left $\mathscr B_{k, r, t}$-module (cf.~\cite[Proposition~4.12]{RSu} for $\mathscr B^{\rm aff}_{r,t}$). Since $1\in M$, we have    $M=\mathscr B_{k, r, t}$.
If $\mathscr B_{k, r,t}$ is not admissible, by Lemma~\ref{lemm-level-k-walled},  $e_1$ is an $R$-torsion element. Since  $e_1\in M$, either $\mathscr B_{k, r, t}$ is not free or the rank of $\mathscr B_{k, r, t}$ is strictly less than  $k^{r+t}(r+t)!$. If  $\mathscr B_{k, r,t}$ is  admissible, by  Lemmas~\ref{mathcaln}--\ref{I},
the set of all regular monomials of  $\mathscr B_{k, r,t}$ is $R$-linear independent.  Thus, $\mathscr B_{k, r, t}$ is free over $R$ with rank  $k^{r+t}(r+t)!$.  \end{proof}

\section{ A weakly cellular basis of $\mathscr B_{2, r, t}$}
The aim of this section is to  construct a  weakly  cellular basis of
$\mathscr B_{2, r, t}$ in the sense of \cite{GG}. This basis will be used to set up a relationship between  $\mathfrak {gl}_{m|n}$-Kac-modules
and right cell modules of $\DBr$ in section~6.

Recall that a \textit{composition}  of $r$ is a sequence of non--negative integers $\tau=(\tau_1,\tau_2,\dots)$ such that
$|\tau|:=\sum_i\tau_i=r$. If $\tau_i\ge \tau_{i+1}$ for all possible $i$'s, then $\tau$ is called a \textit{partition}.
 Similarly, a \textit{$k$-partition}
of $r$, or simply a \textit{multipartition} of $r$, is an ordered $k$-tuple
$\lambda=(\lambda^{(1)},\lambda^{(2)},\cdots,  \lambda^{(k)})$ of partitions
with $|\lambda|:=\sum_{i=1}^k |\lambda^{(i)}|=r$.
Let $\Lambda^+_k(r)$ be the set of all $k$-partitions  of $r$. Let $\unlhd$ be the dominant order defined on $\Lambda_k^+(n)$ in the sense that $\lambda\unlhd\mu$ if and only if
\begin{equation}\label{par}\mbox{$
\sum\limits_{h=1}^{\ell-1} |\lambda^{(h)}|+\sum\limits_{j=1}^i \lambda_j^{(\ell)}\le  \sum\limits_{k=1}^{\ell-1} |\mu^{(h)}|+\sum\limits_{j=1}^i \mu_j^{(\ell)}$
 for $ \ell\le k$ and all possible $i$,}\end{equation} where $|\lambda^{(0)}|=0$.
 Then $\Lambda_k^+(r)$ is a poset with $\unlhd$ as a partial order on it.
In this paper, we always assume  $k\in \{1, 2\}$.

For each $\lambda\in \Lambda_1^+(r)$,  the {\it Young diagram} $[\lambda]$ is a
collection of boxes arranged in left-justified rows with $\lambda_i$
boxes in the $i$-th row of $[\lambda]$.  A {\it $\lambda$-tableau} $\s$ is
obtained by inserting elements $i,\, 1\le i\le r$ into $[\lambda]$ without
repetition. A $\lambda$-tableau $\s$   is said to be {\it standard} if the
entries in  $\s$   increase both from left to right in each row
and from top to bottom in each column. Let $\Std(\lambda)$ be the
set of all standard $\lambda$-tableaux.
Let $\t^\lambda\in \Std(\lambda)$  be obtained from  $[\lambda]$ by adding $1, 2, \cdots, r$ from left to right
along the rows of $[\lambda]$.
Let $\t_{\lambda}\in \Std(\lambda)$  be obtained from
$[\lambda]$ by adding $1, 2, \cdots, r$ from top to bottom along the columns of $[\lambda]$.
For example, if $\lambda=(3,2)$, then
\begin{equation}\label{tla0}
\t^{\lambda}= \ \ \young(123,45), \quad \text{ and \ }
 \t_{\lambda}=\ \  \young(135,24) .\end{equation}
If  $\lambda\in \Lambda_2^+(r)$,
then the corresponding Young diagram $[\lambda]$ is $([\lambda^{(1)}], [\lambda^{(2)}] )$.
In this case,  a {\it $\lambda$-tableau} $\s=(\s_1, \s_2)$ is
obtained by inserting elements $i,\, 1\le i\le r$ into $[\lambda]$ without
repetition. A $\lambda$-tableau $\s$   is said to be {\it standard} if the
entries in  $\s_i$, $1\le i\le 2$    increase both from left to right in each row
and from top to bottom in each column. Let $\Std(\lambda)$ be the
set of all standard $\lambda$-tableaux.
Let $\t^\lambda\in \Std(\lambda)$  be obtained from  $[\lambda]$ by adding $1, 2, \cdots, r$ from left to right
along the rows of $[\lambda^{(1)}]$ and then $[\lambda^{(2)}]$.
Let $\t_{\lambda}\in \Std(\lambda)$  be obtained from
$[\lambda]$ by adding $1, 2, \cdots, r$ from top to bottom along the columns of $[\lambda^{(2)}]$ and then
$[\lambda^{(1)}]$.
For
example, if $\lambda=((3,2), (3,1))\in \Lambda_2^+(9)$, then
\begin{equation}\label{tla}
\t^{\lambda}=\left( \ \ \young(123,45),\  \young(678,9)\ \ \right) \quad \text{ and \ }
 \t_{\lambda}=\left(\ \ \young(579,68), \ \young(134,2)\ \ \right).\end{equation}
Recall that  $\mathfrak S_r$ acts on the right of  $1, 2, \cdots, r$.  Then $\mathfrak S_r$  acts on the right of a $\lambda$-tableau
$\s$ by permuting its entries. For example, if $\lambda=((3,2), (3,1))\in \Lambda_2^+(9)$, and $w=s_1s_2$,  then
\begin{equation}\label{tlaw}
\t^{\lambda}w=\left( \ \ \young(312,45),\  \young(678,9)\ \ \right).\end{equation}
Write $d(\s)=w$ for $w\in\mathfrak S_r $ if $\t^\lambda w=\s$.
 Then  $d(\s)$ is
uniquely determined by $\s$. Let  $w_\lambda=d(\t_\lambda)$.
 The row stabilizer $\mathfrak S_\lambda$ of $\t^\lambda$ for $\lambda\in \Lambda_{k}^+(r)$ is known as the Young subgroup of $\mathfrak S_r$ with respect to $\lambda$. It is the same as the
  Young  subgroup  $\mathfrak S_{\l_{\rm comp}} $ with respect to the composition $\lambda_{\rm comp}$, which is
obtained from $\lambda$ by concatenation.
For example, $ \lambda_{\rm comp}=(3,2,3,1)$ if $\lambda=((3,2), (3,1))$.

The level two degenerate Hecke $\mathscr H_{2, r}$  with defining parameters $u_1$ and $u_2$ is $\mathscr H_{r}^{\rm aff}/I$, where $I $ is
the two-sided ideal of $\mathscr H_{r}^{\rm aff}$ generated by $(y_1-u_1)(y_1-u_2)$, $u_1, u_2\in R$. By definition, $\mathscr H_{2, r}$ is an $R$-algebra generated by $s_i$, $1\le i\le r-1$ and $y_j$, $1\le j\le r$ such that
\begin{enumerate}  \item  $s_is_j =s_js_i$,  $1<|i-j|$,
\item $ y_iy_\ell =y_\ell y_i$, $1\le i, \ell\le r$,
\item  $s_iy_i-y_{i+1}s_i=-1$,  $ y_is_i-s_iy_{i+1}=-1$,   $1\le i\le r-1$,
\item  $s_js_{j+1}s_j =s_{j+1}s_js_{j+1}$, $1\le j\le r-2$,
\item  $s_i^2 =1$, $1\le i\le r-1$,
\item $(y_1-u_1)(y_1-u_2)=0$. \end{enumerate}
 Following \cite{AMR}, we define   $\pi_\lambda=\pi_{a}(u_2)$ and $\tilde \pi_\lambda=\pi_{a}(u_1)$ for $\lambda\in \Lambda_2^+(r)$ with $|\lambda^{(1)}|=a$,
where for any $u\in R$,  $\pi_0(u)=1$ and $\pi_a(u)=\prod_{i=1}^a (y_i-u)$ if $a>0$.
Let
\begin{equation}\label{wa} w_a=\Big(\begin{array}{ccccccccc}1\!&\!2\!&\!\cdots\!&\!a\!&\!a\!+\!1\!&\!a\!+\!2\!&\!\cdots\!& r\\
r\!-\!a\!+\!1\!&\!r\!-\!a\!+\!3\!&\!\cdots\!&\! r \!& 1 \!&\!\!  2\!\!&\!\cdots\!&\!r\!-\!a\!\end{array}\Big).\end{equation}
 It is well-known that
 \begin{equation}\label{conjwa} w_a s_j = s_{(j)w_a^{-1}} w_a, \quad \text{if $j\neq r-a$}.\end{equation}
 Let $\mathfrak S_{a, r-a}$ be the Young subgroup with respect to the composition $(a, r-a)$. Then
  \equa{Young-a-r}{R\mathfrak S_{a, r-a} w_a =w_a R\mathfrak S_{r-a, a}.}
   For each composition $\lambda$ of $r$, we denote  \equa{x-lambda-y-}{\mbox{$x_\lambda =\sum\limits_{w\in\mathfrak S_\lambda} w ,\ \ y_\lambda= \sum\limits_{w\in\mathfrak S_\lambda} (-1)^{\ell(w)} w,$}} where
$\ell (\cdot )$ is the length function on $\mathfrak S_r$.
Assume $\lambda\in \Lambda_2^+(r)$ with $|\l^{(1)}|=a$. If we denote   $\mu^{(i)}=(\lambda^{(i)})'$, the conjugate of $\lambda^{(i)}$ for $i=1, 2$, then
   \begin{equation} \label{xy1}  w_{a} x_{\mu^{(2)}} y_{\mu^{(1)}}=y_{\mu^{(1)}} x_{\mu^{(2)}}  w_a.\end{equation}

\begin{remark}\label{com-conj}  When we write $x_{\mu^{(2)}} y_{\mu^{(1)}}$, then
$x_{\mu^{(2)}}$ (resp.,  $y_{\mu^{(1)}}$) is defined via symmetric group on $r-a$ letters $\{1, 2, \cdots, r-a\}$ (resp., on $a$ letters $\{r-a+1, \cdots, r\}$).
Similarly, when we write $y_{\mu^{(1)}} x_{\mu^{(2)}}$, then $y_{\mu^{(1)}}$ (resp., $x_{\mu^{(2)}}$) is defined via symmetric group on $a$ letters $\{1, 2, \cdots, a\}$
(resp., on $r-a$ letters $\{a+1, a+2, \cdots, r\}$). \end{remark}

\begin{definition}\label{h-cellu} For any $\s, \t\in \Std(\lambda)$ with  $\lambda\in \Lambda_2^+(r)$, define \begin{enumerate}
 \item $\mathfrak x_{\s \t}=d(\s)^{-1}  \mathfrak x_\l   d(\t)$,  where $\mathfrak x_\l =\pi_\lambda x_{\lambda^{(1)}} y_{\lambda^{(2)}}$, \item
 $\mathfrak y_{\s\t}=  d(\s)^{-1} \mathfrak y_\l d(\t)$, where $\mathfrak y_\l=  \tilde\pi_\lambda  x_{\lambda^{(1)}} y_{\lambda^{(2)} }$,
\item $\bar{\mathfrak x}_{\s\t}=d(\s)^{-1} \bar{ \mathfrak x}_\l   d(\t)$,  where $\bar{\mathfrak x}_\l =\pi_\lambda y_{\lambda^{(1)}} x_{\lambda^{(2)}}$,
\item  $\bar{\mathfrak y}_{\s\t}=d(\s)^{-1} \bar{\mathfrak y}_\l d(\t)$, where $\bar{\mathfrak y}_\l=  \tilde\pi_\lambda  y_{\lambda^{(1)}} x_{\lambda^{(2)} }$.
\end{enumerate}\end{definition}

It is proven in \cite{AMR} that $\mathscr H_{2, r}$ is a cellular algebra over $R$ in the sense of \cite{GL}. In this paper, we need the following cellular basis of $\mathscr H_{2, r}$ so as
to construct a new weakly cellular basis of $\mathscr B_{2, r, t}$.

 \begin{lemma}\label{cell-h2} The set $S_i$, $i\in \{1,2,3, 4\}$, are cellular bases of $\mathscr H_{2,r}$ in the sense of \cite{GL},  where
\begin{enumerate}\item  $S_1=\{\mathfrak x_{\s\t}\mid \lambda\in \Lambda_{2}^+(r), \s, \t\in \Std(\lambda)\}$,
\item  $S_2=\{\mathfrak y_{\s\t}\mid \lambda\in \Lambda_{2}^+(r), \s, \t\in \Std(\lambda)\}$,
\item $S_3=\{\bar {\mathfrak x}_{\s\t}\mid \lambda\in \Lambda_{2}^+(r), \s, \t\in \Std(\lambda)\}$,
\item  $S_4=\{\bar{\mathfrak y}_{\s\t}\mid \lambda\in \Lambda_{2}^+(r), \s, \t\in \Std(\lambda)\}$.
\end{enumerate}
\end{lemma}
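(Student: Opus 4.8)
The plan is to leverage the known result of Ariki--Mathas--Rui \cite{AMR} that $S_1=\{\mathfrak x_{\s\t}\}$ is a cellular basis of $\mathscr H_{2,r}$, and then deduce that $S_2, S_3, S_4$ are cellular bases by exhibiting suitable algebra (anti-)automorphisms of $\mathscr H_{2,r}$ that carry one family to another. First I would recall that $\mathscr H_{2,r}$ admits the $R$-linear anti-involution $*$ fixing each $s_i$ and each $y_j$; this is immediate from the defining relations (1)--(6), since the relation $s_iy_i-y_{i+1}s_i=-1$ is carried to $y_is_i-s_iy_{i+1}=-1$, which also holds. Under $*$ one has $\mathfrak x_{\s\t}^*=\mathfrak x_{\t\s}$ because $d(\s)^{-1}\mathfrak x_\l d(\t)$ maps to $d(\t)^{-1}\mathfrak x_\l^* d(\s)$ and $\mathfrak x_\l=\pi_\lambda x_{\lambda^{(1)}}y_{\lambda^{(2)}}$ is $*$-invariant (each of $\pi_\lambda$, $x_{\lambda^{(1)}}$, $y_{\lambda^{(2)}}$ is a sum over a subgroup that is symmetric under $w\mapsto w^{-1}$, and the three factors commute by Remark~\ref{com-conj}); so $*$ serves as the required anti-involution for all four candidate bases simultaneously.

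Next I would produce the automorphisms relating the four families. For $S_2$ versus $S_1$: the difference between $\mathfrak x_\l$ and $\mathfrak y_\l$ is only the scalar-type factor $\pi_\lambda=\pi_a(u_2)$ versus $\tilde\pi_\lambda=\pi_a(u_1)$, i.e.\ we swap the roles of $u_1$ and $u_2$. The map $\Psi\colon y_1\mapsto u_1+u_2-y_1$, $s_i\mapsto s_i$ (equivalently $y_j\mapsto$ the appropriate shifted element so that relation (3) is preserved) is an algebra isomorphism of $\mathscr H_{2,r}$ interchanging the two cyclotomic parameters, and it sends $\pi_a(u_2)$ to $\pm\pi_a(u_1)$ up to a unit; one checks it carries $S_1$ onto $S_2$ up to units, hence $S_2$ is cellular with the same poset and anti-involution. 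For $S_3$ versus $S_1$: here we interchange $x_{\lambda^{(i)}}$ and $y_{\lambda^{(i)}}$, i.e.\ swap trivial and sign representations of the Young subgroups. The sign automorphism $\epsilon\colon s_i\mapsto -s_i$, $y_j\mapsto y_j$ is an algebra isomorphism (all braid and quadratic relations are sign-homogeneous of even degree, and relation (3) becomes $-s_iy_i+y_{i+1}s_i=-1$ after also negating the constant — so more precisely one takes $\epsilon\colon s_i\mapsto -s_i$, $y_j\mapsto -y_j$, which does preserve (3) and sends $(y_1-u_1)(y_1-u_2)$ to $(y_1+u_1)(y_1+u_2)$, requiring simultaneously $u_i\mapsto -u_i$). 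Under such a twist $x_\tau\mapsto \pm y_\tau$, so $S_1$ is carried onto $S_3$ up to units, and $S_4$ follows by composing the two twists. In each case the transition matrix between the new set and the old cellular basis is, within each two-sided ideal filtration layer, unitriangular up to units with respect to $\unlhd$, which is exactly what is needed to transport cellularity.

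Alternatively — and this is probably the cleaner route to write down — I would argue directly: $S_2$ and $S_1$ differ only by replacing each $\pi_\lambda$ by $\tilde\pi_\lambda$, and an elementary computation (expanding $\pi_a(u_1)$ in terms of $\pi_a(u_2)$ times lower-degree Jucys--Murphy symmetric functions, using $y_i-u_1=(y_i-u_2)+(u_2-u_1)$) shows that the change-of-basis matrix from $S_1$ to $S_2$ is upper unitriangular with respect to $\unlhd$ with unit diagonal entries, once one orders by $|\lambda^{(1)}|$; cellularity of $S_2$ then follows from the standard fact that a unitriangular (with respect to the cell order) change of basis preserves the cellular structure. For $S_3,S_4$, one instead uses the sign twist as above, or the transpose-of-partition symmetry $\lambda\leftrightarrow\lambda'$ together with the identity $\mathfrak S_\lambda$-versus-$\mathfrak S_{\lambda'}$, noting $x_\lambda$ and $y_{\lambda'}$ play mirror roles.

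\textbf{Main obstacle.} The genuinely delicate point is verifying the unitriangularity (or equivalently, that the relevant twist is a well-defined algebra (anti-)automorphism fixing the cell datum): one must check that replacing $\pi_\lambda\leftrightarrow\tilde\pi_\lambda$ or $x_{\lambda^{(i)}}\leftrightarrow y_{\lambda^{(i)}}$ only perturbs $\mathfrak x_\l$ by terms lying in the span of $\{\mathfrak x_{\s'\t'}\mid \mu\rhd\lambda\}$ plus a unit multiple of $\mathfrak x_\l$ itself, and that this compatibility is preserved under the left/right $\mathscr H_{2,r}$-action (the "cellular" axiom controlling $a\cdot\mathfrak x_{\s\t}$ modulo higher terms). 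This requires care with how $\pi_a(u_i)$ and the Jucys--Murphy elements $L_j$ (Lemma~\ref{x-p-iii}) interact, and with the precise ordering convention built into $\unlhd$ in \eqref{par}. Everything else is bookkeeping once the relevant (anti-)automorphisms are in hand.
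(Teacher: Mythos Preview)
There are two genuine gaps in your plan. First, your starting point is misattributed: the cellular basis established in \cite{AMR} is the set $S=\{x_{\s\t}\}$ with $x_{\s\t}=d(\s)^{-1}\pi_\lambda x_{\lambda^{(1)}}x_{\lambda^{(2)}}d(\t)$, i.e.\ with the \emph{trivial} idempotent on both components. Your $S_1$ already has the sign idempotent $y_{\lambda^{(2)}}$ on the second component, and passing from $S$ to $S_1$ is precisely part of what must be proved here; you cannot simply quote it. Second, the automorphism you propose to pass from $S_1$ to $S_2$ does not exist. If $\Psi(s_i)=s_i$ then relation~(3) forces $\Psi(y_j)=y_j+b$ for a single constant $b$, and then $\Psi\big((y_1-u_1)(y_1-u_2)\big)=(y_1+b-u_1)(y_1+b-u_2)$, which lies in the defining ideal only when $b=0$ (or $u_1=u_2$); so no parameter-swapping automorphism fixes the $s_i$'s. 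There \emph{is} an honest involution $s_i\mapsto -s_i$, $y_j\mapsto u_1+u_2-y_j$, but it sends $\mathfrak x_\lambda$ to $\pm\bar{\mathfrak y}_\lambda$, hence pairs $S_1\leftrightarrow S_4$ and $S_2\leftrightarrow S_3$, not $S_1\leftrightarrow S_2$ as you need. Thus even after fixing the attribution you would still owe two independent cellularity proofs (for $S_1$ and for $S_2$), and your ``unitriangular'' alternative for these is not fleshed out: expanding $\pi_a(u_1)=\prod\big((y_i-u_2)+(u_2-u_1)\big)$ produces products of $(y_i-u_2)$ over arbitrary subsets of $\{1,\dots,a\}$, which are not themselves of the form $\pi_b(u_2)$, so unitriangularity in $|\lambda^{(1)}|$ is not immediate.

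The paper bypasses all of this with a different reduction. It starts from the AMR basis $S$ and uses the factorisation $d(\s)=d(\s_1)d(\s_2)d$ with $d$ a distinguished $\mathfrak S_a\times\mathfrak S_{r-a}$-coset representative; this separates the $\lambda^{(1)}$- and $\lambda^{(2)}$-pieces so that the transition matrix between $S$ and $S_1$ is determined entirely by the transition between the two Murphy bases $\{d(\s_2)^{-1}x_{\lambda^{(2)}}d(\t_2)\}$ and $\{d(\s_2)^{-1}y_{\lambda^{(2)}}d(\t_2)\}$ of $R\mathfrak S_{r-a}$. Both of the latter are known cellular bases of the symmetric group algebra, so $S_1$ is a basis, and cellularity itself then follows by repeating the Dipper--James--Murphy argument for type $B$. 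The remaining $S_2,S_3,S_4$ are handled the same way (for $S_2$ one first invokes the AMR basis with the roles of $u_1,u_2$ interchanged, which is legitimate since the algebra depends only on $\{u_1,u_2\}$). In short, the key mechanism is a reduction to the symmetric-group level via the coset factorisation of $d(\s)$, not an automorphism of $\mathscr H_{2,r}$.
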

\begin{proof} 

Let $S=\{x_{\s\t}\mid \s, \t\in \Std(\lambda), \lambda\in \Lambda_{2}^+(r)\}$  and $x_{\s \t}=d(\s)^{-1} \pi_\lambda x_{\lambda^{(1)}} x_{\lambda^{(2)}} d(\t)$.
It is proven in \cite{AMR} that $S$ is a cellular basis of  $\mathscr H_{2, r}$.
If we use  $y_{\lambda^{(2)}}$  instead of  $x_{\lambda^{(2)}}$  in $x_{\s\t}$, we will get $\mathfrak x_{\s\t}$.
However, for  any $\s=(\s_1, \s_2)\in \Std(\lambda)$, $d(\s)$  can be written uniquely as $d(\s_1)d(\s_2) d$ such that $d$ is a distinguished right coset representative of $\mathfrak S_a\times \mathfrak S_{r-a}$ in $\mathfrak S_r$ and  $\s_i\in \Std(\lambda^{(i)})$, where $a=|\l^{(1)}|$. So, the transition matrix between $S_1$   and   $S$
 is determined by the transition matrix between the cellular basis  $\{d(\s_2)^{-1} x_{\lambda^{(2)}} d(\t_2)\mid \lambda^{(2)}\in \Lambda^+(r-a), \s_2, \t_2\in \Std(\lambda^{(2)})\}  $ and  $\{d(\s_2)^{-1} y_{\lambda^{(2)}} d(\t_2)\mid \lambda^{(2)}\in \Lambda^+(r-a), \s_2, \t_2\in \Std(\lambda^{(2)})\}  $ of
   $R\mathfrak S_{r-a}$. Thus, $S_1$ is a basis of $\mathscr H_{2, r}$.
One  can check that  $S_1$ is a cellular basis of $\mathscr H_{2, r}$ in the sense of \cite{GL}  by mimicking Dipper-James-Murphy's arguments in the proof of Murphy basis for Hecke algebras of type $B$ in \cite{DJM}. We leave the details to the readers. Finally, (2)--(4) can be verified similarly.
\end{proof}

By Graham-Lehrer's results on the representation theory of cellular algebras in \cite{GL}, one can define right  cell modules of $\mathscr H_{2, r}$ via the cellular bases $ S_i$, $i\in \{1, 2, 3, 4\}$  in Lemma~\ref{cell-h2}.
The corresponding right cell modules of $\mathscr H_{2, r}$ with respect to  $S_2$ and $S_4$  are denoted by  $\tilde\Delta(\lambda)$, and $\bar \Delta(\l)$.

 For the simplification of discussion, we assume $\mathscr H_{2, r}$ is defined over $\mathbb C$ in Lemma~\ref{ineq}.

\begin{lemma}\label{ineq}  Suppose $a, b\in \mathbb N$. Then \begin{enumerate}\item
$\pi_a(u_2) \mathscr H_{2, r}  \pi_b(u_1) =0$ whenever $a+b> r$ and $a, b\in \mathbb Z^{>0}$.
\item $\pi_a(u_2) \mathscr H_{2, r} \pi_{r-a}(u_1)=  \pi_{a}(u_2) w_a \pi_{r-a}(u_1) \mathbb C\mathfrak S_{r-a, a}$, where $\mathfrak S_{r-a, a}$ is as in \eqref{Young-a-r}.
\item $\mathfrak x_\lambda \mathscr  H_{2, r} \mathfrak y_{\mu'}=0$ if $\lambda, \mu\in \Lambda_2^+(r)$ with  $\lambda\rhd \mu$,
\item  $\mathfrak x_\lambda \mathscr H_{2, r} \mathfrak y_{\l'}=\text{Span}_{\mathbb C}\{\mathfrak x_\lambda w_\lambda\mathfrak y_{\l'} \}$ if $\lambda\in \Lambda^+_2(r)$.
\item $\tilde\Delta(\lambda')\cong \mathfrak x_\lambda w_\lambda\mathfrak y_{\l'}\mathscr H_{2, r}$.\end{enumerate}  \end{lemma}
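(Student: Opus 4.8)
The plan is to prove the five parts of Lemma~\ref{ineq} in order, using the cellular structure of $\mathscr H_{2,r}$ from \cite{AMR} and Lemma~\ref{cell-h2}, together with the well-known facts \eqref{conjwa}, \eqref{Young-a-r}, \eqref{xy1}. For part (1), I would argue by a degree/triangularity count: $\pi_a(u_2)$ kills the $(y_i-u_2)$ factors for $i\le a$ while modulo the level-two relation $(y_1-u_1)(y_1-u_2)=0$ every $y_i$ satisfies a degree-two relation, so working in the cellular basis $S_1$ (or $S_3$) one sees that $\pi_a(u_2)\,\mathfrak x_{\s\t}$ is supported on multipartitions $\nu$ with $|\nu^{(2)}|\ge a$, and symmetrically $\mathfrak y_{\s\t}\,\pi_b(u_1)$ forces $|\nu^{(1)}|\ge b$; when $a+b>r$ there is no such $\nu$, so the product is zero. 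Concretely this is the observation (already implicit in \cite{AMR}) that $\pi_a(u_2)$ lies in the span of cell basis elements $\mathfrak x_{\s\t}$ with $|\l^{(2)}|\ge a$, i.e.\ $\pi_a(u_2)$ is a ``lowest weight'' projector toward the second component.

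For part (2), once (1) is in hand the only surviving multipartition in $\pi_a(u_2)\mathscr H_{2,r}\pi_{r-a}(u_1)$ is $\l$ with $|\l^{(1)}|=a$, $\l^{(1)}=(a)$ being irrelevant since we have not symmetrized within components yet; more precisely $\pi_a(u_2)\,h\,\pi_{r-a}(u_1)$ must be a combination of cell elements whose middle term is $\pi_\l x_{(r-a)} x_{(a)}$-type, and using \eqref{Young-a-r}/\eqref{conjwa} one rewrites any such element through the single permutation $w_a$. So I would show directly: $\pi_a(u_2)\mathscr H_{2,r}\pi_{r-a}(u_1)\subseteq \pi_a(u_2)\,R\mathfrak S_r\,\pi_{r-a}(u_1)$ (since the $y$'s on each side can be absorbed into the $\pi$'s modulo lower terms, which vanish by (1)), then decompose $\mathfrak S_r=\mathfrak S_{a,r-a}\,w_a\,\mathfrak S_{a,r-a}$ — more carefully the double coset containing $w_a$ is the only one surviving because $\pi_a(u_2)$ on the left wants to meet $\pi_{r-a}(u_1)$ which involves letters $1,\dots,r-a$, and $\pi_a(u_2)s_j=s_j\pi_a(u_2)$ for $j\ne a$, $\pi_{r-a}(u_1)s_j=s_j\pi_{r-a}(u_1)$ for $j\ne r-a$, collapsing everything to $\pi_a(u_2)\,\mathbb C[\mathfrak S_a\times\mathfrak S_{r-a}]\,w_a\,\mathbb C[\mathfrak S_a\times\mathfrak S_{r-a}]\,\pi_{r-a}(u_1)$; then $\pi_a(u_2)$ is $\mathfrak S_a$-invariant on the left and $\pi_{r-a}(u_1)$ is $\mathfrak S_{r-a}$-invariant on the right, while the remaining $\mathfrak S_{r-a}$ on the far left and $\mathfrak S_a$ on the far right move through $w_a$ by \eqref{conjwa} and are again absorbed; what is left is exactly $\pi_a(u_2)\,w_a\,\pi_{r-a}(u_1)\,\mathbb C\mathfrak S_{r-a,a}$, using \eqref{Young-a-r} to place the residual Young subgroup on the right.

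Parts (3) and (4) are the ``straightening'' statements. For (3), write $\mathfrak x_\l=\pi_\l x_{\l^{(1)}}y_{\l^{(2)}}$ and $\mathfrak y_{\mu'}=\tilde\pi_{\mu'}x_{\mu'^{(1)}}y_{\mu'^{(2)}}=\tilde\pi_{\mu'}x_{(\mu^{(1)})'}y_{(\mu^{(2)})'}$; here $\tilde\pi_{\mu'}=\pi_{|\mu'^{(1)}|}(u_1)=\pi_{|\mu^{(2)}|}(u_1)$. Since $x_{\l^{(1)}}$ already contains a symmetrizer on the first $|\l^{(1)}|$ letters, $\pi_\l x_{\l^{(1)}}=\pi_{|\l^{(1)}|}(u_2)x_{\l^{(1)}}$ is ``$\pi_a(u_2)$-heavy'' with $a=|\l^{(1)}|$, and similarly $\mathfrak y_{\mu'}$ is ``$\pi_b(u_1)$-heavy'' with $b=|\mu^{(2)}|$; when $\l\rhd\mu$ one has $|\l^{(1)}|\ge|\mu^{(1)}|$, equivalently $a+b=|\l^{(1)}|+|\mu^{(2)}|\ge|\mu^{(1)}|+|\mu^{(2)}|=r$, and unless $\l$ and $\mu$ are such that equality holds throughout (which the strict inequality $\l\rhd\mu$ rules out in the relevant case, after also using the classical $x_\sigma\,h\,y_\tau=0$ vanishing for $\sigma\ntrianglerighteq\tau$ within each symmetric group as in \cite{DJM}) part (1) forces $\mathfrak x_\l\,h\,\mathfrak y_{\mu'}=0$. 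I would assemble this by combining the level-two vanishing (part (1)) in the ``$u_1$ versus $u_2$'' direction with the type-$A$ Carter–Lusztig/Dipper–James vanishing $x_{\l^{(i)}}\,\mathbb C\mathfrak S\,y_{\nu}=0$ when $\l^{(i)}\ntrianglerighteq\nu'$ in the ``within one component'' direction. For (4), take $\l=\mu$ in the argument of (3): the only surviving double coset is that of $w_\l$, and by \eqref{xy1} and \eqref{Young-a-r} the space $\mathfrak x_\l\mathscr H_{2,r}\mathfrak y_{\l'}$ reduces to the one-dimensional span of $\mathfrak x_\l w_\l\mathfrak y_{\l'}$ (using that the residual symmetrizers are absorbed by $x_{\l^{(1)}},y_{\l^{(2)}}$ on one side and $x_{(\l^{(2)})'},y_{(\l^{(1)})'}$ on the other, exactly as in the classical Specht/Murphy computation).

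Finally, part (5) identifies $\tilde\Delta(\l')$, the right cell module for $\mathscr H_{2,r}$ attached to $\l'$ via the basis $S_2=\{\mathfrak y_{\s\t}\}$, with the right ideal $\mathfrak x_\l w_\l\mathfrak y_{\l'}\mathscr H_{2,r}$. The standard mechanism (Graham–Lehrer \cite{GL}, as used for Specht modules) is: in any cellular algebra, for a cellular basis with cell datum giving cell ideals, the right cell module $\tilde\Delta(\l')$ is isomorphic to $z\,\mathscr H/(z\,\mathscr H\cap \mathscr H^{\rhd\l'})$ for a suitable element $z$ lying in the $\l'$-cell, and by parts (3)–(4) the element $z=\mathfrak x_\l w_\l\mathfrak y_{\l'}$ satisfies $\mathfrak x_\l\mathscr H_{2,r}\mathfrak y_{\nu'}=0$ for $\nu\rhd\l$, which is exactly the condition making $z\mathscr H_{2,r}$ hit no higher cell, so the quotient is unnecessary and $z\mathscr H_{2,r}\cong\tilde\Delta(\l')$ directly. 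I would phrase it as: $\mathfrak x_\l w_\l$ lies in the cell ideal indexed by $\l'$ with respect to $S_2$ (since $\mathfrak x_\l$ is obtained from $\mathfrak y_{\l'}$-type data by swapping $u_1\leftrightarrow u_2$ and $x\leftrightarrow y$, and $w_\l=w_a$ is precisely the permutation linking the two), right multiplication by $\mathfrak y_{\l'}$ and then by arbitrary $h\in\mathscr H_{2,r}$ stays in that cell ideal modulo the span of higher $\mathfrak y_{\s\t}$'s, and comparing with the definition of $\tilde\Delta(\l')$ as $\mathfrak y_{\t^{\l'}\s}$ modulo higher terms gives the isomorphism of right modules. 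The main obstacle I anticipate is part (3): getting the bookkeeping right between the two ``directions'' of triangularity (the $u_1/u_2$ level-two direction controlled by part (1), and the within-component dominance direction controlled by the classical type-$A$ vanishing), in particular pinning down exactly which double coset survives and checking that $\l\rhd\mu$ (as opposed to merely $\l\ne\mu$) is what forces the total collapse; the rest is essentially a transcription of the Dipper–James–Murphy arguments referenced in the proof of Lemma~\ref{cell-h2}.
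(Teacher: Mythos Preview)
Your overall strategy is the paper's own: parts (1)--(4) are transcriptions of the Dipper--James type-$B$ arguments, and part (5) is proved by mapping $\mathfrak y_{\lambda'}\mathscr H_{2,r}$ onto $\mathfrak x_\lambda w_\lambda\mathfrak y_{\lambda'}\mathscr H_{2,r}$, killing $\mathscr H_{2,r}^{\rhd\lambda'}$ via (3), and then comparing dimensions. So the architecture is right.

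However, your argument for (3) contains a genuine error that breaks the proof. You write $\tilde\pi_{\mu'}=\pi_{|\mu'^{(1)}|}(u_1)=\pi_{|\mu^{(2)}|}(u_1)$, but by the paper's convention $(\mu')^{(1)}=(\mu^{(1)})'$, so $|\mu'^{(1)}|=|\mu^{(1)}|$, not $|\mu^{(2)}|$. Thus the ``$\pi_b(u_1)$-heavy'' index is $b=|\mu^{(1)}|$, and your key inequality $a+b=|\lambda^{(1)}|+|\mu^{(2)}|\ge r$ is simply false with the correct $b$: one only gets $a+b=|\lambda^{(1)}|+|\mu^{(1)}|$, which need not exceed $r$ even when $\lambda\rhd\mu$ (take e.g.\ $r=10$, $|\lambda^{(1)}|=3$, $|\mu^{(1)}|=2$). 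So part (1) cannot be invoked in the blanket way you propose, and the reduction to the ``within-component'' type-$A$ vanishing never gets off the ground.

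The paper handles (3) differently: it splits into two cases. If $|\lambda^{(1)}|>|\mu^{(1)}|$ it argues from (1) (the point being that the DJ2 machinery lets one move through the $x_{\lambda^{(1)}}y_{\lambda^{(2)}}$ and $x_{(\mu^{(1)})'}y_{(\mu^{(2)})'}$ factors to force an honest $a+b>r$ situation; this is not the naive $a=|\lambda^{(1)}|$, $b=|\mu^{(1)}|$ pair). If $|\lambda^{(1)}|=|\mu^{(1)}|$ it uses (2) to reduce to $\mathbb C\mathfrak S_{r-a,a}$, and then the classical $x_\sigma\,\mathbb C\mathfrak S\,y_{\tau}=0$ vanishing in each component forces $\lambda^{(i)}\unlhd\mu^{(i)}$, contradicting $\lambda\rhd\mu$. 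Your sketch conflates these two mechanisms and, because of the index error, never isolates the first case. The same miscount infects your (4): there $b=|\lambda^{(1)}|=a$, so again $a+b$ is not $r$ in general and (2) does not apply directly; one must first use (2) with the pair $(a,r-a)$ and then bring in the symmetric-group straightening, exactly as in the equal-size case of (3). Fix the index and follow the paper's two-case split, and the rest of your outline (including (5)) goes through.
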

\begin{proof} (1)--(4)   can be proven by arguments similar to  those for Hecke algebras of type $B$ in \cite{DJ2}. We only give details for (3) and (5).

If $\lambda\rhd \mu$, then $|\lambda^{(1)}|\ge |\mu^{(1)}|$. If  $|\lambda^{(1)}|> |\mu^{(1)}|$, then $|\mu^{(1)}|\neq r$ and the result follows from
 (1). When $|\lambda^{(1)}|= |\mu^{(1)}|$,
by (2) together with corresponding result for
the group algebras of symmetric groups, we have $\lambda^{(i)}\unlhd \mu^{(i)}$ for $i=1, 2$ if  $\mathfrak x_\lambda \mathscr  H_{2, r} \mathfrak y_{\mu'}\neq 0$. This proves (3).

There is a surjective $\mathscr H_{2, r }$-homomorphism from $\phi: \mathfrak y_{\lambda'} \mathscr H_{2, r}\rightarrow \mathfrak x_\lambda w_\lambda\mathfrak y_{\l'}\mathscr H_{2, r}$.
Let $\mathscr H_{2, r}^{\rhd \lambda'}$ be the $\mathbb C$-submodule spanned by  $\{\mathfrak y_{\s\t}\mid \s, \t\in \Std(\mu), \mu\rhd \lambda'\}$. It follows from
standard results on cellular algebras that  $\mathscr H_{2, r}^{\rhd \lambda'}$  is a two-sided ideal of $\mathscr H_{2, r}$. So, $ \mathfrak y_{\lambda'}\mathscr H_{2, r}+\mathscr H_{2, r}^{\rhd \lambda'}/\mathscr H_{2, r}^{\rhd \lambda'}$ is isomorphic to a submodule of $\tilde \Delta(\lambda')$.
If  $\mathfrak y_{\s\t}\in\mathscr H_{2, r}^{\rhd \lambda'} $, we have $\mu\rhd\lambda'$ which is equivalent to $\lambda\rhd \mu'$. By (3),
$x_\lambda w_\lambda  \mathfrak y_{\s\t}=0$ and  $\mathscr H_{2, r}^{\rhd \lambda'}\subset \ker \phi$. So, there is an epimorphism from
$ \mathfrak y_{\lambda'}\mathscr H_{2, r}+\mathscr H_{2, r}^{\rhd \lambda'}/\mathscr H_{2, r}^{\rhd \lambda'} $ to  $\mathfrak x_\lambda w_\lambda\mathfrak y_{\l'}\mathscr H_{2, r}$. Mimicking arguments on classical
Specht modules for  Hecke algebra of type $B$ in \cite{DJ2}, we know that  $\mathfrak x_\lambda w_\lambda\mathfrak y_{\l'}\mathscr H_{2, r}$ has a basis  $\{\mathfrak x_\lambda w_\lambda\mathfrak y_{\l'}d(\t)\mid
\t\in \Std(\lambda')\}$. So, $$\dim_{\mathbb C}  \tilde \Delta(\lambda')   =\dim_{\mathbb C}\mathfrak x_\lambda w_\lambda\mathfrak y_{\l'}\mathscr H_{2, r}=\# \Std(\lambda'), $$  forcing
$ \mathfrak y_{\lambda'}\mathscr H_{2,r}+\mathscr H_{2, r}^{\unrhd \lambda'}/\mathscr H_{2, r}^{\unrhd \lambda'}\cong \mathfrak x_\lambda w_\lambda\mathfrak y_{\l'}\mathscr H_{2, r}\cong \tilde\Delta(\lambda')$.
\end{proof}

Now, we use  cellular bases  $S_i$ of $\mathscr H_{2, r}$ in Lemma~\ref{cell-h2} to construct a weakly cellular basis of $\mathscr B_{2, r, t}$ over an arbitrary field in the sense of \cite{GG}.
We remark that when we use results on  level two degenerate Hecke algebra for  $\mathscr B_{2, r, t}$, we should keep in mind that  $x_1, \bar x_1\in \mathscr B_{2, r t}$ should be regarded as  $-y_1\in \mathscr H_{2, r}$ and $\mathscr H_{2, t}$, respectively. Therefore, we have to use $-u_i$ and $-\bar u_i$ instead of $u_i$ and $\bar u_i$.

Fix $r, t, f\in \mathbb Z^{>0}$ with $f\le \min \{r, t\}$. In contrast to \eqref{rcs11}, we define
\begin{eqnarray} \label{rcs}\mathcal D_{r, t}^f=\{\! s_{r-f+1,i_{r-f+1}}{\sc\!} \bar s_{t-f+1, j_{t-f+1}}{\sc\!}\! \cdots\!
s_{r,i_r}{\sc\!}\bar s_{t,{j_t}}\mid
 r  {\!}\ge{\!} i_r{\!}>{\!}{\sc\!} \cdots{\sc\!}{\!} >{\!}i_{r-f+1}, j_k\ge k+f-t\}.  \end{eqnarray}
For each $c\in \mathcal D^f_{r, t}$ as in  \eqref{rcs}, let $\kappa_c$ be the
$r$-tuple \begin{equation}\label{k-ccc}
\kappa_c\!=\!(k_1,\dots,k_r)\!\in\!\{0,1\}^r\mbox{ such that $k_i\!=\!0$  unless $i\!=\!i_r,i_{r-1},\dots,i_{r-f+1}$.}\end{equation}
Note that $\kappa_c$ may have more than one choice for a fixed $c$, and it may
be equal to $\kappa_d$ although $c\neq d$ for $c, d\in \mathcal D^f_{r,
t}$. Let $\mathbf N_f\! =\!\{\kappa_c \,|\, c\!\in \!\mathcal D^f_{r,
  t}\}$. If  $\kappa_c\in \mathbf N_f$,  define  $x^{\kappa_c}=\prod_{i=1}^r x_i^{k_i}$.
In \cite{RSu}, we consider poset $(\Lambda_{2, r,t}, \unrhd)$, where
\begin{equation}\label{poset} \Lambda_{2, r,t} = \left\{ (f,\lambda, \mu )\,|\, (\lambda, \mu) \in \Lambda_2^+(r\!-\!f)\times \Lambda_2^+(t\!-\!f),\,  0\!\le\! f\! \le\! \min \{r,t \} \right\},\end{equation}
such that $(f,\lambda, \mu)\unrhd (\ell, \alpha, \beta)$ for $ (f, \lambda, \mu), (\ell, \alpha, \beta) \in  \Lambda_{2, r, t} $ if either $f>\ell$  or
$f=\ell$ and $\lambda\unrhd_1 \alpha$, and $ \mu\unrhd_2 \beta$, and in case $f=\ell$, the orders $\unrhd_1$ and $\unrhd_2$  are dominant orders on $\Lambda_2^+(r\!-\!f)$ and $\Lambda_2^+(t\!-\!f)$ respectively.
For each $(f, \mu, \nu)\in \Lambda_{2, r, t}$, let
\begin{equation} \label{index} \delta(f,\mu, \nu)
  =\{(\t, c, \kappa_c)\,|\,\t=(\t^{(1)}, \t^{(2)})\in\Std(\mu)\times \Std(\nu),c\in
                        \mathcal D^f_{r, t}\text{ and }\kappa_c\in\mathbf N_f
                        \}. \end{equation}

\begin{definition}\label{cellbasis}
For any $(\s, d, \kappa_d),(\t, c, \kappa_c)\in\delta(f,\mu, \nu)$ with  $(f,\mu, \nu)\in\Lambda_{2, r, t}$,    define
\begin{equation} \label{CCCCC} C_{(\s, d, \kappa_d)(\t,c, \kappa_c)}
              =x^{\kappa_d} d^{-1} \mathfrak e^{f} \n_{\s\t} c x^{\kappa_c}, \end{equation}
             where, in contrast to notation $e^f$ in \eqref{e-f==}, we define $\mathfrak e^f =e_{r, t} e_{r-1, t-1} \cdots e_{r-f+1, t-f+1}$ if  $f\ge1$  and  $\mathfrak e^0=1$, and
 $\n_{\s\t}= \mathfrak y_{\s^{(1)} \t^{(1)}}\bar {\mathfrak y}_{\s^{(2)}\t^{(2)}}$ if  $\s=(\s^{(1)}, \s^{(2)})$ and $\t=(\t^{(1)}, \t^{(2)})$ are in
 $ \Std(\mu)\times \Std(\nu) $.
\end{definition}

Note that   $\n_{\s\t}$ in Definition~\ref{cellbasis} are defined via  cellular basis elements of $\mathscr H_{2, r-f}$ and $\mathscr H_{2, t-f}$
 in Lemma~\ref{cell-h2}\,(2)\,(4). Since  $x_i$ and $\bar x_j$ do not commute each other,  a cellular basis element of  $\mathscr H_{2, r-f}$ is always put on the left.
 Further,  we need  to use $x_i$, $-u_1, -u_2$  (resp. $\bar x_i$, $-\bar u_1, -\bar u_2$) instead of $-y_i$, $u_1, u_2$ in  Lemma~\ref{cell-h2}.

\begin{theorem}\label{cellular-1} If  $\mathscr B_{2, r, t}$ is admissible, then
the set
$$\mathscr C=\{C_{(\s,\kappa_c,c)(\t,\kappa_d,d)}\,|\,
            (\s,\kappa_c,c),(\t,\kappa_d,d)\in\delta(f,\lambda),
              \forall (f,\lambda)\in\Lambda_{2, r, t}\}$$
is a weakly  cellular basis $\mathscr B_{2, r, t}$ over  $R$ in the sense of \cite{GG}.
\end{theorem}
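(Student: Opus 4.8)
The plan is to check, one at a time, the three ingredients required for $\mathscr C$ to be a weakly cellular basis of $\mathscr B_{2,r,t}$ over the poset $(\Lambda_{2,r,t},\unrhd)$ of \eqref{poset} in the sense of \cite{GG}: that $\mathscr C$ is an $R$-basis; that the anti-involution $\sigma$ of \cite{RSu} sends $C_{(\s,d,\kappa_d)(\t,c,\kappa_c)}$ to $C_{(\t,c,\kappa_c)(\s,d,\kappa_d)}$ modulo the $R$-span $B^{\rhd}$ of those basis elements whose index triple is strictly larger than $(f,\l,\mu)$; and that right multiplication by any element of $\mathscr B_{2,r,t}$ alters only the first index, again modulo $B^{\rhd}$. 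For the basis claim I would first count: a bookkeeping of \eqref{rcs}--\eqref{index} gives $\sum_{(f,\l,\mu)\in\Lambda_{2,r,t}}|\delta(f,\l,\mu)|^2=2^{r+t}(r+t)!$, which by Theorem~\ref{level-k-walled} (case $k=2$) is the $R$-rank of $\mathscr B_{2,r,t}$, so it suffices to show that $\mathscr C$ spans. I would do this by a transition-matrix argument against the regular monomials $x^\a c'^{-1}e^f w'd'\bar x^\b$ of $\mathscr B_{2,r,t}$, i.e.\ the image of $\mathcal M$ from Definition~\ref{def-mo} under $\mathscr B_{r,t}^{\rm aff}\to\mathscr B_{2,r,t}$, a basis by Theorem~\ref{level-k-walled}: in $C_{(\s,d,\kappa_d)(\t,c,\kappa_c)}=x^{\kappa_d}d^{-1}\mathfrak e^f\n_{\s\t}cx^{\kappa_c}$ one writes $\mathfrak e^f=g^{-1}e^f g$ for a suitable $g\in\mathfrak S_r\times\bar{\mathfrak S}_t$ (absorbing $g$ into $c,d$), expands $\n_{\s\t}=\mathfrak y_{\s^{(1)}\t^{(1)}}\bar{\mathfrak y}_{\s^{(2)}\t^{(2)}}$ through the cellular bases $S_2$ of $\mathscr H_{2,r-f}$ and $S_4$ of $\mathscr H_{2,t-f}$ of Lemma~\ref{cell-h2}, which have invertible transition to monomial-type bases, and finally straightens into regular-monomial shape using the Jucys--Murphy substitutions of Lemma~\ref{x-p-iii}, the commutations \eqref{it2}, and Lemma~\ref{lemm-----2}. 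Since each $C_{(\s,d,\kappa_d)(\t,c,\kappa_c)}$ has leading term with parameter exactly $(f,\l,\mu)$, the transition matrix is unitriangular with respect to (filtration degree, dominance on $\Lambda_{2,r,t}$), hence invertible, and $\mathscr C$ is a basis.

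Next I would treat the anti-involution. Take $\sigma$, the $R$-linear anti-involution of $\mathscr B_{r,t}^{\rm aff}$ fixing all generators already used in Lemma~\ref{zero-1}; it descends to $\mathscr B_{2,r,t}$, fixes $\mathfrak e^f$, and restricts on $\mathscr H_{2,r-f}$ and $\mathscr H_{2,t-f}$ to the standard anti-involution, under which $\mathfrak y_{\s\t}\mapsto\mathfrak y_{\t\s}$ and $\bar{\mathfrak y}_{\s\t}\mapsto\bar{\mathfrak y}_{\t\s}$ by cellularity of $S_2,S_4$. Thus $\sigma(C_{(\s,d,\kappa_d)(\t,c,\kappa_c)})=x^{\kappa_c}c^{-1}\bar{\mathfrak y}_{\t^{(2)}\s^{(2)}}\mathfrak y_{\t^{(1)}\s^{(1)}}\mathfrak e^f d\,x^{\kappa_d}$, whereas $C_{(\t,c,\kappa_c)(\s,d,\kappa_d)}=x^{\kappa_c}c^{-1}\mathfrak e^f\mathfrak y_{\t^{(1)}\s^{(1)}}\bar{\mathfrak y}_{\t^{(2)}\s^{(2)}}d\,x^{\kappa_d}$; the two differ only by moving $\mathfrak e^f$ past the $\mathscr H$-factors and, crucially, by reordering the $\mathfrak y$- and $\bar{\mathfrak y}$-factors (which is forced because the $x_i$ and $\bar x_j$ do not commute). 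By Lemma~\ref{lemm-----2} and \eqref{it2} every correction term produced by these moves carries an extra horizontal edge and hence lies in $B^{\rhd}$. This is exactly the weakened $*$-axiom of \cite{GG}.

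Then I would verify the module axiom by computing the right product of $C_{(\s,d,\kappa_d)(\t,c,\kappa_c)}$ with each generator $s_i,\bar s_j,e_1,x_1,\bar x_1$ of $\mathscr B_{2,r,t}$. When the generator does not meet the horizontal edges of $\mathfrak e^f$, it acts only on the right-hand factors $cx^{\kappa_c}$ and on $\bar{\mathfrak y}_{\s^{(2)}\t^{(2)}}$, and the claim reduces to the coset identities \eqref{Young-a-r} and the combinatorics of $\mathcal D^f_{r,t}$ (rewriting $cx^{\kappa_c}g$ in terms of $c'x^{\kappa_{c'}}$), together with the cellularity of $\mathscr H_{2,t-f}$ via Lemma~\ref{cell-h2}\,(4) and the vanishing and straightening estimates of Lemma~\ref{ineq}, yielding a combination of $C_{(\s,d,\kappa_d)(\t',c',\kappa_{c'})}$ plus terms of strictly larger index. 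When the generator does meet a horizontal edge of $\mathfrak e^f$ — multiplication by $e_1$, or the occurrences of $x_1,\bar x_1$ it creates adjacent to $\mathfrak e^f$ — I would straighten using the walled relations \eqref{walled}, \eqref{it1}--\eqref{it3} and \eqref{e-ij}; admissibility, through Lemma~\ref{zero-1} and Proposition~\ref{2ij}, kills the torsion terms $e_1\boldf(x_1)x_1^ae_1$, and the net effect is either to leave $f$ unchanged, giving a combination of $C$'s with the same index, or to raise $f$, landing in $B^{\rhd}$. Assembling these three steps gives the theorem.

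The hard part will be this last case of the module axiom: showing that after straightening an $e_1$-multiplication no term survives with the \emph{same} value of $f$ but a shape outside the $R$-span of $\{C_{(\s,d,\kappa_d)(\t,c,\kappa_c)}\mid(\t,c,\kappa_c)\in\delta(f,\l,\mu)\}$, and that all lower-$f$ contributions cancel. This is precisely where the definitions of $\mathcal D^f_{r,t}$ in \eqref{rcs}, of $\mathbf N_f$ in \eqref{k-ccc}, and of the $\mathfrak y$-type cellular bases $S_2,S_4$ (rather than the $x$-type bases of \cite{AMR}) are forced, and where I expect to follow closely the walled-Brauer straightening of \cite{RSu} and the type-$B$ Specht-module arguments of \cite{DJM,DJ2}. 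A persistent secondary difficulty, to be honoured in every computation, is the non-commutativity of the $x_i$ with the $\bar x_j$, which is the reason $\n_{\s\t}$ in Definition~\ref{cellbasis} always keeps the $\mathscr H_{2,r-f}$-factor on the left.
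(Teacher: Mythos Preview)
Your plan --- a direct verification of the weakly cellular axioms --- is correct in outline: the counting, the $\sigma$-computation, and the straightening for the module axiom are the right ingredients, and you have correctly identified that commuting the $x_i$ past the $\bar x_j$ produces horizontal-edge correction terms, which is exactly what forces the \emph{weakly} in weakly cellular. (One small point: you need not worry that ``lower-$f$ contributions cancel'', since $\mathfrak e^f\n_{\s\t}$ already lies in the two-sided ideal $\mathscr B_{2,r,t}^f$ and right multiplication cannot escape it.) The paper, however, takes a much shorter route. If one replaces $S_2,S_4$ in the definition of $\n_{\s\t}$ by the AMR-type basis $S$ from the proof of Lemma~\ref{cell-h2}, one recovers precisely the weakly cellular basis of \cite[Theorem~6.12]{RSu}; that theorem was stated for $R=\mathbb C$ and special parameters, but by \cite[Remark~6.13]{RSu} its proof needs only that $\mathscr B_{2,r,t}$ is free of rank $2^{r+t}(r+t)!$, which Theorem~\ref{level-k-walled} now supplies for arbitrary admissible parameters. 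The paper then observes that the verification of the cellular axioms in \cite{RSu} uses only that the Hecke pieces $\mathscr H_{2,r-f}$, $\mathscr H_{2,t-f}$ carry \emph{some} cellular basis, not which one, so the argument transfers verbatim to $S_2,S_4$; that $\mathscr C$ is an $R$-basis then follows from the invertibility of the Hecke-level transition $S\leftrightarrow S_2,S_4$. Your approach would work but essentially reproves \cite[Theorem~6.12]{RSu}; the paper buys brevity by quoting it and arguing that swapping cellular bases of the Hecke factors is harmless.
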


\begin{proof} Let $S$ be the cellular basis of $\mathscr H_{2, r-f}$ (resp. $\mathscr H_{2, t-f}$) for $0\le f\le \min\{r, t\}$ defined in the proof of Lemma~\ref{cell-h2}. If we use $S$ instead of the cellular basis $S_2$ of $\mathscr H_{2, r-f}$  and $S_4$ of  $\mathscr H_{2, t-f}$ in Lemma~\ref{cell-h2}, we will obtain the weakly cellular basis of $\mathscr B_{2, r, t}$ over $R$ in \cite[Theorem~6.12]{RSu} provided that $R=\mathbb C$ and $u_1=-p$, $u_2=m-q$, $\bar u_1=q$ and
$\bar u_2=p-n$ with $r+t\le \min\{m, n\}$. Since  $\mathscr B_{2, r, t}$ is admissible, by  Theorem~\ref{level-k-walled}, the rank of  $\mathscr B_{2, r, t}$ is $2^{r+t}(k+t)!$. As pointed in \cite[Remark~6.13]{RSu}, \cite[Theorem~6.12]{RSu} holds over $R$ with arbitrary parameters $u_1, u_2, \bar u_1, \bar u_2$ if the rank of $\mathscr B_{2, r, t}$ is $2^{r+t} (r+t)!$.  Thus, $\mathscr C$ is an $R$-basis of  $\mathscr B_{2, r, t}$. Further, the weakly cellularity of $\mathscr B_{2, r, t}$ depends only on  cellular bases of $\mathscr H_{2, r-f}$ and $\mathscr H_{2, t-f}$ and does not depend on the explicit  descriptions of cellular bases of $\mathscr H_{2, r-f}$ and $\mathscr H_{2, t-f}$.
(cf.  the proof of  \cite[Theorem~6.12]{RSu}). So, all arguments  for the proof of \cite[Theorem~6.12]{RSu} can be used smoothly to prove that  $\mathscr C$ is a weakly  cellular basis $\mathscr B_{2, r, t}$ over  $R$. \end{proof}

Suppose $\mathscr B_{2, r, t}$ is defined  over a field $F$.
By  Theorem~\ref{cellular-1}, one can define  right cell
modules  $C(f, \mu, \nu)$ with respect to $(f, \mu, \nu)\in \Lambda_{2, r, t}$ for $\DBr$.
 Let    $\phi_{f,
\mu, \nu}$ be  the corresponding invariant form on $C(f, \mu, \nu)$ and let $D^{f, \mu, \nu} =C(f, \mu, \nu)/\text{Rad\,} \phi_{f, \mu, \nu}$, where $\text{Rad\,} \phi_{f, \mu, \nu}$ is the radical of $\phi_{f, \mu, \nu}$. By Graham-Lehrer's results  in \cite{GL} (a weakly cellular algebra has similar representation theory of a cellular algebra in \cite{GL}),
$D^{f, \mu, \nu}$ is either $0$ or irreducible and all non-zero  $D^{f, \mu, \nu}$ consist of a complete set of pair-wise non-isomorphic irreducible $\mathscr B_{2,  r, t}$-modules.
Let $\tilde \Delta(\mu)$ (resp. $\bar \Delta(\nu)$) be the cell module  of $\mathscr H_{2, r-f}$ (resp. $\mathscr H_{2, t-f}$) defined via
 $S_2$ and $S_4$ in Lemma~\ref{cell-h2}.  Similarly, one has the notations $D^\mu$ and $\bar D^\nu$, respectively.

\begin{proposition}\label{simp-b2}  Suppose that $\mathscr B_{2, r, t}$ is  admissible over $ F$.  For any $(f, \mu, \nu)\in \Lambda_{2, r, t}$,   $D^{f, \mu, \nu}\neq 0$  if and only if
\begin{enumerate}\item  $D^\mu\neq 0$ and $\bar D^\nu\neq 0$, \item $f\neq r $ provided $r=t$ and $\omega_0=\omega_1=0$.\end{enumerate}
\end{proposition}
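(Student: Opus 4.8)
The plan is to reduce everything to the non-vanishing of the cellular bilinear form. By the representation theory of weakly cellular algebras (\cite{GG}, with the cell-module formalism of \cite{GL}; cf. the discussion preceding the proposition), $D^{f,\mu,\nu}\neq 0$ exactly when the invariant form $\phi_{f,\mu,\nu}$ on the right cell module $C(f,\mu,\nu)$ is not identically zero, so the proposition becomes the statement that $\phi_{f,\mu,\nu}\not\equiv 0$ iff (1) and (2) hold, and I would test this on the cellular basis $\mathscr C$ of Theorem~\ref{cellular-1}. Recall from Definition~\ref{cellbasis} that a cell basis vector is $x^{\kappa_d}d^{-1}\mathfrak e^{f}\n_{\s\t}c\,x^{\kappa_c}$ modulo higher cells, with the Hecke part $\n_{\s\t}=\mathfrak y_{\s^{(1)}\t^{(1)}}\bar{\mathfrak y}_{\s^{(2)}\t^{(2)}}$ built from the cellular bases of $\mathscr H_{2,r-f}$ and $\mathscr H_{2,t-f}$ in Lemma~\ref{cell-h2}, and with $(\s,d,\kappa_d),(\t,c,\kappa_c)\in\delta(f,\mu,\nu)$; note from \eqref{k-ccc} that the monomial $x^{\kappa_c}$ sits only on the $f$ cupped strands while $\n_{\s\t}$ sits only on the $r-f$, resp. $t-f$, free strands.

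The crux is a factorization of $\phi_{f,\mu,\nu}$. I would multiply two cell basis vectors and collapse the factor sandwiched between the two copies of $\mathfrak e^{f}$ using the walled Brauer relations $e_1^2=\omega_0e_1$, $e_1s_1e_1=e_1=e_1\bar s_1e_1$ of \eqref{walled}, the relations \eqref{it3}, and Lemma~\ref{zero-1}(2). Since the $x^{\kappa}$-dots lie on cupped strands they get trapped in the circles created by the collapse, while the Hecke factors pass off onto the free strands; the outcome should be a factorization
$$\phi_{f,\mu,\nu}((\s,d,\kappa_d),(\t,c,\kappa_c))=\gamma(d,c,\kappa_d,\kappa_c)\,\phi_\mu(\s^{(1)},\t^{(1)})\,\phi_\nu(\s^{(2)},\t^{(2)}),$$
where $\phi_\mu$, $\phi_\nu$ are the invariant forms of the cell modules $\tilde\Delta(\mu)$ over $\mathscr H_{2,r-f}$ and $\bar\Delta(\nu)$ over $\mathscr H_{2,t-f}$, and the scalar $\gamma$ records only the closed circles and the signs produced by the collapse --- concretely a $\mathbb{Z}$-polynomial in $\omega_0,\omega_1,\omega_2$. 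This is precisely the bookkeeping already carried out for the weakly cellular basis of $\mathscr B_{2,r,t}$ in \cite[Theorem~6.12]{RSu}, now keeping track of the $\omega_a$-dependence, so I would quote that computation rather than redo the diagram calculus. It then follows that $\phi_{f,\mu,\nu}\not\equiv 0$ iff $D^\mu\neq 0$, $\bar D^\nu\neq 0$, and $\gamma(d,c,\kappa_d,\kappa_c)\neq 0$ for at least one choice of diagram data.

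It remains to show that the last condition holds automatically unless $f=r=t$, and in that case holds iff $\omega_0\neq 0$ or $\omega_1\neq 0$. If $f<r$ or $f<t$ there is a free (un-barred or barred) strand, and one can pick $c,d\in\mathcal D^{f}_{r,t}$ so that in $\mathfrak e^{f}(cd^{-1})\mathfrak e^{f}$ the cup of one $\mathfrak e^{f}$ is routed through that strand into the cap of the other (iterating $e_1s_1e_1=e_1$), producing no closed circle; taking $\kappa_c=\kappa_d=0$ then gives $\gamma=\pm1\neq 0$. Now suppose $f=r=t$; then $\mu=\nu=\emptyset$, $\mathcal D^{f}_{r,t}$ is a single point, $\mathfrak S_{r-f}\times\bar{\mathfrak S}_{t-f}$ is trivial, and there is no free strand, so collapsing $\mathfrak e^{r}(\,\cdot\,)\mathfrak e^{r}$ must close $r$ circles and each $\gamma$ is a $\mathbb{Z}$-polynomial in $\omega_0,\omega_1,\omega_2$ with zero constant term. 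As $k=2$, admissibility \eqref{condi-k1} gives $\omega_2=(u_1+u_2)\omega_1-u_1u_2\omega_0$, so $\omega_0=\omega_1=0$ forces $\omega_2=0$ and hence every $\gamma\equiv 0$, i.e. $D^{r,\emptyset,\emptyset}=0$. Conversely, for $f=r=t$ with $\omega_0\neq 0$ take $\kappa_c=\kappa_d=0$ so that $\gamma=\omega_0^{r}\neq 0$; and for $f=r=t$ with $\omega_0=0$ but $\omega_1\neq 0$ take $\kappa_c$ with a $1$ on each cupped strand and $\kappa_d=0$, so each of the $r$ circles traps exactly one dot and $\gamma$ is --- using $\omega_0=0$ to see that the sign-corrections cannot cancel it --- a nonzero power of $\omega_1$. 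In every non-excluded case $\phi_{f,\mu,\nu}$ is nonzero on a suitable pair of cell basis vectors, so $D^{f,\mu,\nu}\neq 0$.

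The main obstacle is pinning down $\gamma$ in the degenerate layer $f=r=t$: because the $x_i$ do not commute with the $e_{j,j}$ (Lemma~\ref{x-p-iii}) and the barred and un-barred Hecke factors must be slid past $\mathfrak e^{f}$, the collapse produces $\omega_a$'s entangled with signs, and one has to verify carefully that under admissibility the vanishing of $\gamma$ is governed exactly by $\omega_0=\omega_1=0$ (equivalently, by all $\omega_a=0$), as well as exhibit the explicit nonzero values in the two non-excluded subcases. I expect this to come out of the same inductive unravelling of $\mathfrak e^{f}$ used in Section~2 and in \cite[\S4,\S6]{RSu}, with Lemma~\ref{zero-1} and the identity \eqref{condi-k1} as the additional inputs; the remaining pieces --- necessity of (1), and sufficiency whenever a free strand is available --- are comparatively routine.
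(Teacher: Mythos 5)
The paper does not give a self-contained proof here: it simply refers the reader to Lemmas~7.3--7.4 of \cite{RSu}, which carry out exactly the programme you describe. Your reconstruction is therefore the intended argument: factor the invariant form $\phi_{f,\mu,\nu}$ into the Hecke-level forms $\phi_\mu\cdot\phi_\nu$ times a scalar $\gamma$ recording the loops produced when the two copies of $\mathfrak e^f$ are collapsed, observe that $\gamma$ can be made a unit whenever a free strand exists (so that the only delicate case is $f=r=t$), and in that last case use the admissibility relation $\omega_2=(u_1+u_2)\omega_1-u_1u_2\omega_0$ to reduce the vanishing of $\gamma$ to $\omega_0=\omega_1=0$. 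That matches both the statement of the proposition and the shape of the cited lemmas, so the approach is the same one the paper has in mind.

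Two caveats, one factual and one substantive. The factual one: for $f=r=t$ the set $\mathcal D^{r}_{r,t}$ is \emph{not} a single point. From \eqref{rcs} the constraint $r\ge i_r>\cdots>i_1$ does force $i_k=k$ (so the $s$-part is trivial), but the conditions $j_k\ge k$, $j_k\le t$ still leave $t-k+1$ choices for each $j_k$, giving $|\mathcal D^{r}_{r,t}|=r!$ rather than $1$. This does not damage your argument, because the only fact you actually use is that when $f=r=t$ there is no free strand, so every strand of $\mathfrak e^r(cd^{-1})\mathfrak e^r$ closes into a loop and $\gamma$ has zero constant term; but the wording should be fixed. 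The substantive caveat is the one you flag yourself: the claim that when $\omega_0=0$ but $\omega_1\ne0$ the commutator corrections arising from sliding dots past crossings produce only $\omega_0$-terms and therefore cannot kill the $\omega_1^{\,r}$ contribution is asserted rather than verified. That is precisely the computation delegated to \cite{RSu}, so as a stand-alone proof your write-up has a gap there, but as a reconstruction of the paper's (cited) proof it identifies the right lemma to quote and the right place where the work lives.
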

\begin{proof} The  result can be proven  by arguments similar to those for Lemmas~7.3--7.4 in \cite{RSu}.\end{proof}

\begin{remark}\label{cycsim} By arguments similar to those for Theorem~\ref{cellular-1}, one can lift   cellular bases of $\mathscr H_{k, r}$ and $\mathscr H_{k, t}$ in \cite{AMR} to
obtain a weakly cellular basis of  $\mathscr B_{k, r, t}$ over $R$, provide that $\mathscr B_{k, r, t}$ is admissible. Further, it is not difficult to prove a result which is similar to
Proposition~\ref{simp-b2} for $\mathscr B_{k, r, t}$ over an arbitrary field $F$ with  characteristic $\text{char\,} F$ either zero or positive.
 Let $\mathbf u=(u_1, \cdots, u_k)\in F^k$  such that  $u_i=d_i\cdot1_F$ and  $0\le d_i<\text{char\,} F$ for $1\le i\le k$.  Kleshchev~\cite{K} has shown that the simple
$\mathscr H_{k,n}(\mathbf u)$-modules are \textit{labeled} by a set of multipartitions
which gives the same Kashiwara crystal as the set of $\mathbf u$-Kleshchev
multipartitions of~$n$ in  \cite{AM:simples,Ariki:class}. Thus,  the
simple $\mathscr B_{k, r, t}$-modules are labeled by the set
$\{(f,\mu, \nu )\}$, where (1) $0\le f\le\min\{r, t\}$, (2)  $\mu$'s  are
Kleshchev multipartitions of $r-f$ with respect to $\mathbf u $, (3) $\nu$'s are  Kleshchev multipartitions of $t-f$ with respect to  $\bar {\mathbf u}:=(\bar u_1, \bar u_2, \cdots, \bar u_k)$,
 (4) $f\neq r$ if $r=t$ and $\omega_i=0$ for $0\le i\le k-1$.
It is pointed in \cite{AMR} that one can  modify the proof of \cite[Theorem~1.1]{DM:Morita}, or~\cite[Theorem~1.3]{AM:simples},
to show  that  when $\mathscr B_{k, r, t}$ is admissible,  the
simple $\mathscr B_{k, r, t}$-modules are always labeled by the $(f, \mu, \nu)\in \Lambda_{k, r, t}$
with $0\le f\le \min\{r, t\}$ and $\mu$ (resp. $\nu$) are Kleshchev multipartitons with respect to $\mathbf u$ (resp. $\bar{\mathbf u}$)
and $f\neq r$ if $r=t$ and $\omega_i=0$ for $1\le i\le r$. However, we are not
claiming that $D^{(f,\mu, \nu)}\ne0$ for the multipartitions $\mu, \nu$
which Kleshchev~\cite{K} uses to label the  simple
$\mathscr H_{k,r-f}(\mathbf u)$-modules (resp. $\mathscr H_{k,r-f}(\bar{ \mathbf u})$-modules).
\end{remark}

 We recall the definition of Kleshchev bipartitions over $\mathbb C$ as follows (see e.g., \cite{Vazi}), which will be used in sections~5-6.
 Fix $u_1, u_2\!\in\! \mathbb C$ with $u_1\!-\!u_2\!\in\!\mathbb N$. Then   $\lambda\!=(\lambda^{(1)}, \lambda^{(2)})\!\in\! \Lambda^+_2(r)$   is called a {\it Kleshchev bipartition}~\cite{Vazi} with respect to $u_1, u_2$ if
\begin{equation}\label{Kle}
\lambda^{(1)}_{u_1-u_2+i}\le \lambda^{(2)}_{i} \text{ for all possible $i$.}\end{equation}
If $u_1-u_2\not\in \mathbb Z$,   all bipartitions of $r$ are Kleshchev bipartitions.
A pair of bipartitions  $(\mu, \nu)$ is {\it Kleshchev} if both $\mu$ and $\nu$ are Kleshchev  bipartitions  in the sense of \eqref{Kle} with respect to the parameters $u_1,\,u_2$ and $\bar u_1,\,\bar u_2$.
The following result will be used in section~6.
\begin{proposition} \label{classcell2} Suppose $\DBr$ is admissible over $\mathbb C$. For each $(f, \mu, \nu)\in \Lambda_{2, r, t}$, let $$\tilde C(f, \mu, \nu):=\mathfrak {e}^f \mathfrak x_{\mu'} \bar{ \mathfrak x}_{\nu'} w_{\mu'} w_{\nu'}
 \mathfrak {y}_{\mu}\bar{\mathfrak y}_\nu \mathscr B_{2, r,t} \pmod{ \mathscr \B_{2, r, t}^{f+1}},$$ where $\mathscr{B}_{2, r,t}^{f+1}$  is  the two-sided ideal of
$\mathscr{B}_{2, r,t}$ generated by $\mathfrak e^{f+1}$. Then   $C(f, \mu, \nu )\cong \tilde C(f, \mu, \nu)$.
\end{proposition}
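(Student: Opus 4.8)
The plan is to deduce this from the corresponding fact for the level two degenerate Hecke algebras, namely Lemma~\ref{ineq}(5) together with its analogue for the basis $S_4$ of $\mathscr H_{2,t-f}$, lifted to $\mathscr B_{2,r,t}$ through the weakly cellular structure of Theorem~\ref{cellular-1}, in the same manner that \cite[Theorem~6.12]{RSu} is built from cellular bases of the Hecke algebras. Throughout, write $\mathscr B_{2,r,t}^{\rhd(f,\mu,\nu)}$ for the span of the $\mathscr C$-basis elements labelled by $(\ell,\alpha,\beta)\rhd(f,\mu,\nu)$; this is a two-sided ideal, and by the shape of $\unrhd$ on $\Lambda_{2,r,t}$ it equals $\mathscr B_{2,r,t}^{f+1}+\mathrm{span}\{\,\mathscr C\text{-elements with }\ell=f,\ (\alpha,\beta)\rhd(\mu,\nu)\,\}$, where $\mathscr B_{2,r,t}^{f+1}$ is the ideal generated by $\mathfrak e^{f+1}$.

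First I would identify $C(f,\mu,\nu)$ concretely. Taking $\mathfrak s_0=(\mathfrak t^\mu,\mathfrak t^\nu)$, the identity element of $\mathcal D^f_{r,t}$, and the zero $r$-tuple of $\mathbf N_f$, Definition~\ref{cellbasis} gives the diagonal basis element $C_{(\mathfrak s_0,1,0)(\mathfrak s_0,1,0)}=\mathfrak e^f\mathfrak y_\mu\bar{\mathfrak y}_\nu$ (using $d(\mathfrak t^\mu)=d(\mathfrak t^\nu)=1$), and from Definition~\ref{cellbasis} every row-$\mathfrak s_0$ basis element has the form $\mathfrak e^f\mathfrak y_\mu\bar{\mathfrak y}_\nu\cdot d(\mathfrak s^{(1)})d(\mathfrak s^{(2)})c\,x^{\kappa_c}$. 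Hence, by the standard (weakly) cellular machinery of \cite{GL,GG}, $C(f,\mu,\nu)$ is isomorphic as a right $\mathscr B_{2,r,t}$-module to the cyclic right ideal $\overline{\mathfrak e^f\mathfrak y_\mu\bar{\mathfrak y}_\nu}\,\mathscr B_{2,r,t}$ of $\mathscr B_{2,r,t}/\mathscr B_{2,r,t}^{\rhd(f,\mu,\nu)}$.

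The key step is to reproduce the proof of Lemma~\ref{ineq}(5) inside $\mathscr B_{2,r,t}$. Applying Lemma~\ref{ineq}(5) inside $\mathscr H_{2,r-f}$ with $\lambda=\mu'$, left multiplication by $\mathfrak x_{\mu'}w_{\mu'}$ is a surjective $\mathscr H_{2,r-f}$-homomorphism $\mathfrak y_\mu\mathscr H_{2,r-f}\twoheadrightarrow\mathfrak x_{\mu'}w_{\mu'}\mathfrak y_\mu\mathscr H_{2,r-f}$ with kernel $\mathfrak y_\mu\mathscr H_{2,r-f}\cap\mathscr H_{2,r-f}^{\rhd\mu}$, inducing $\tilde\Delta(\mu)\cong\mathfrak x_{\mu'}w_{\mu'}\mathfrak y_\mu\mathscr H_{2,r-f}$; likewise left multiplication by $\bar{\mathfrak x}_{\nu'}w_{\nu'}$ gives $\bar\Delta(\nu)\cong\bar{\mathfrak x}_{\nu'}w_{\nu'}\bar{\mathfrak y}_\nu\mathscr H_{2,t-f}$. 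I would then show that the assignment $\overline{\mathfrak e^f\mathfrak y_\mu\bar{\mathfrak y}_\nu\,h}\mapsto\overline{\mathfrak e^f\mathfrak x_{\mu'}\bar{\mathfrak x}_{\nu'}w_{\mu'}w_{\nu'}\mathfrak y_\mu\bar{\mathfrak y}_\nu\,h}$, i.e. left multiplication by $\mathfrak x_{\mu'}\bar{\mathfrak x}_{\nu'}w_{\mu'}w_{\nu'}$ inserted in the ``Hecke slot'', is a well-defined surjection $C(f,\mu,\nu)\twoheadrightarrow\tilde C(f,\mu,\nu)$ of right $\mathscr B_{2,r,t}$-modules. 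Indeed, if $\mathfrak e^f\mathfrak y_\mu\bar{\mathfrak y}_\nu h\in\mathscr B_{2,r,t}^{\rhd(f,\mu,\nu)}$, then the straightening calculus for $\mathscr B^{\rm aff}_{r,t}$ and its cyclotomic quotient (the same one underlying Theorems~\ref{monothm}, \ref{level-k-walled}, \ref{cellular-1}) lets one push $h$ to the right of $\mathfrak e^f\mathfrak y_\mu\bar{\mathfrak y}_\nu$ so that, modulo $\mathscr B_{2,r,t}^{f+1}$, the ``Hecke content'' of this element lies in $\mathscr H_{2,r-f}^{\rhd\mu}$ in the first slot or $\mathscr H_{2,t-f}^{\rhd\nu}$ in the second; applying $\mathfrak x_{\mu'}w_{\mu'}$ resp.\ $\bar{\mathfrak x}_{\nu'}w_{\nu'}$ annihilates it by the kernel description above, so $\mathfrak e^f\mathfrak x_{\mu'}\bar{\mathfrak x}_{\nu'}w_{\mu'}w_{\nu'}\mathfrak y_\mu\bar{\mathfrak y}_\nu h\in\mathscr B_{2,r,t}^{f+1}$. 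This well-definedness, which forces the insertion to commute with passage to the two quotients and the $\mathscr H$-level kernels into $\mathscr B_{2,r,t}^{\rhd(f,\mu,\nu)}$, is the main obstacle; it is the exact analogue of the purely Hecke-theoretic argument in Lemma~\ref{ineq}(5), carried out with the walled-Brauer straightening relations.

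Finally I would compare $\mathbb C$-dimensions. By Theorem~\ref{cellular-1}, $\dim_{\mathbb C}C(f,\mu,\nu)=\#\delta(f,\mu,\nu)=\#\Std(\mu)\cdot\#\Std(\nu)\cdot\#\mathcal D^f_{r,t}\cdot\#\mathbf N_f$. On the other side $\tilde C(f,\mu,\nu)$ is spanned by the images of $\mathfrak e^f\mathfrak x_{\mu'}\bar{\mathfrak x}_{\nu'}w_{\mu'}w_{\nu'}\mathfrak y_\mu\bar{\mathfrak y}_\nu\cdot d(\mathfrak s^{(1)})d(\mathfrak s^{(2)})c\,x^{\kappa_c}$ over $\Std(\mu)\times\Std(\nu)\times\mathcal D^f_{r,t}\times\mathbf N_f$, the bounds $\#\Std(\mu)$, $\#\Std(\nu)$ on the Hecke directions being $\dim\tilde\Delta(\mu)$, $\dim\bar\Delta(\nu)$ from Lemma~\ref{ineq}(5), and the bound $\#\mathcal D^f_{r,t}\cdot\#\mathbf N_f$ on the remaining $c,x^{\kappa_c}$ factors coming, modulo $\mathscr B_{2,r,t}^{f+1}$, from the freeness arguments of Theorems~\ref{level-k-walled} and \ref{cellular-1} (where admissibility enters). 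Hence $\dim_{\mathbb C}\tilde C(f,\mu,\nu)\le\#\delta(f,\mu,\nu)$, and a surjection of $\mathbb C$-spaces whose source has dimension at most that of the target must be an isomorphism, so $C(f,\mu,\nu)\cong\tilde C(f,\mu,\nu)$. As with Proposition~\ref{simp-b2}, once the key step is in place the remaining bookkeeping proceeds ``by arguments similar to those in \cite{RSu}''.
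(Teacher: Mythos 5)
Your proof is correct and takes essentially the same approach as the paper: both reduce the claim, modulo $\mathscr B_{2,r,t}^{f+1}$, to a computation in $\mathscr H_{2,r-f}\otimes\mathscr H_{2,t-f}$ acting on $\mathfrak e^f\DBr$ and then invoke Lemma~\ref{ineq}\,(5). The paper cites \cite[Proposition~6.10, Lemma~6.9]{RSu} for this reduction and reads off the isomorphism directly, where you instead build a surjection and close with a dimension count — a slightly longer but equivalent route.
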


\begin{proof}  Let $M_f$ be
the left  ${\mathscr{B}}_{2, r-f,t-f}$-module  generated by
\begin{equation} \label{key-2} V_{r,t}^f=\{ \mathfrak e^f d x^{\kappa_d}\mid (d, \kappa_d) \in \mathcal {D}_{r,t}^f\times \mathbf N_f \}.\end{equation}
By \cite[Proposition~6.10]{RSu},  $M_f=\mathfrak e^f \DBr$.
By \cite[Lemma~6.9]{RSu}, one can use $\mathscr H_{2, r-f}\otimes \mathscr H_{2,t-f}$ instead of   ${\mathscr{B}}_{2, r-f,t-f}$ in $\mathfrak x_{\mu'} \bar{ \mathfrak x}_{\nu'} w_{\mu'} w_{\nu'}
 \mathfrak {y}_{\mu}\bar{\mathfrak y}_\nu M_f  \pmod{ \mathscr \B_{2, r, t}^{f+1}}$. Now, the required isomorphism follows from Lemma~\ref{ineq}\,(5).
\end{proof}

\section{ Super Schur-Weyl duality}

The aim of this section is to generalize super Schur-Weyl duality between general linear Lie superalgebra $\mathfrak {gl}_{m|n}$ and $\DBr$ to the case $r+t>\min\{m, n\}$.
 Throughout, let $I_0=\{1,...,m\}$, $I_1=\{m+1,...,m+n\}$ and $I=I_0\cup I_1$.

For any pairs $(i, j)\in I\times I$,
let $E_{ij}$ be the matrix unit with parity $[E_{ij}]=[i]+[j]$, where $[i]=a$ if  $i\in I_a$, $a=0,1$.
The\textit{ general linear Lie superalgebra} $\mathfrak {gl}_{m|n}$ over $\mathbb C$, denoted by $\mfg$,  is
$\mfg_{-1}\oplus\mfg_0\oplus\mfg_1$, where
\begin{eqnarray}
&\!\!\!\!\!\!\!\!\!\!\!\!&\mfg_{-1}={\rm span}_\C\{E_{i,j}\,|\,i\in I_1,\,j\in I_0\},\ \
\mfg_{1}={\rm span}_\C\{E_{i,j}\,|\,i\in I_0,\,j\in I_1\},\nonumber\\
&\!\!\!\!\!\!\!\!\!\!\!\!&
\mfg_{0}={\rm span}_\C\{E_{i,j}\,|\,i,j\in I_0\mbox{ \ or \ }i,j\in I_1\}.
\end{eqnarray}
The Cartan subalgebra  $\fh$ of $\mfg$ is the $\mathbb C$-space with basis $\{E_{ii}\,\,|\,i\in I\}$. Let $\fh^*$ be the  dual space of $\fh$  with
dual basis $\{\es_i\,|\,i\in I\}$. Then any  $\xi\in\fh^*$, called a {\it weight} of $\mfg$, can be written as
\begin{equation}\label{wet}\mbox{$
\xi=\sum\limits_{i\in I_0} \xi^L_i\es_i+\sum\limits_{i\in I_1} \xi^R_{i-m}\es_i$} \text{ with $\xi^L_i,\xi_j^R\in\C$.}\end{equation}
Denote $\xi$ by  $(\xi^L_1,...,\xi^L_m\,|\,\xi^R_{1},...,\xi^R_{n})$.
If both  $\xi^L_i-\xi^L_{i+1}\in \mathbb N$ and $ \xi^R_j-\xi^R_{j+1}\in\!\mathbb N$ for all possible $i,j$, then $\xi$
is called {\it integral dominant}. Let  $P^+$ be the set of integral dominant weights.
 For any  $\xi\in P^+$,  let \begin{equation}\label {lrho} \xi^\rho:=\xi+\rho=(\xi^{L,\rho}_1,...,\xi^{L,\rho}_m\,|\,\xi^{R,\rho}_1,...,\xi^{R,\rho}_n),\end{equation}
where  $ \rho=(0,-1,...,1\!-\!m\,|\,m\!-\!1,m\!-\!2,...,m\!-\!n)$.
Following ~\cite{Kac77}, let
 $$\ell=\# \{(i,j)\mid  \xi^{L,\rho}_i\!+\!\xi^{R,\rho}_j\!=\!0, 1\le i\le m, 1\le j \le n\}.$$
Then $\xi$  is called  an {\it $\ell$-fold atypical weight} if $\ell>0$. Otherwise,
$\xi$ is called  {\it a typical weight}.

\begin{example} \label{l-pq2} For any $p, q\in\C$, let $\l_{pq}=(p,...,p\,|\,-q,...,-q)$.
Then $\l_{pq}$ is a typical  weight if and only if
\begin{equation}\label{Do-condi}
\mbox{$p - q \notin \Z$ \ or \ $p - q \le -m$ \ or \ $p - q \ge n$.}\end{equation}\end{example}
The current $q$ should be regarded as  $q+m$ in \cite[IV]{BS4}. In the remaining part of this paper,  $\l_{pq}$ is  always a typical  weight in the sense of \eqref{Do-condi}.

Let  $V=\C^{m|n}$ be the
natural $\mfg$-module with natural basis $\{v_i\,|\,i\in I\}$
such that $v_i$ has parity $[v_i]=[i]$. Then the dual space   $V^*$, which  has  the dual basis $\{\bar v_i\,|\,i\in I\}$, is a left $\mfg$-module  such that

\begin{equation}\label{dua} E_{ab}\bar v_i=-(-1)^{[a]([a]+[b])}\d_{ia}\bar v_b \text{ for any $(a, b)\in I\times I$}.\end{equation}
In particular, the weight of $\bar v_i$ is $-\epsilon_i$.
For the simplicity of notation, we set $W=V^*$. 

\begin{definition} \label{mlpq}
Fix  $r,t\!\in\!\Z^{>0}$.
Let  $V^{rt}\!=\!V^{\otimes r}\OTIMES W^{\otimes t}$ and
$ M_{pq}^{rt}\!=\!V^{\otimes r}\OTIMES K_{\l_{pq}}\OTIMES W^{\otimes t}$,
where $K_{\l_{pq}}$ is the Kac-module~\cite{Kac77} with respect to the highest weight  $\lambda_{pq}$ in Example~\ref{l-pq2}. \end{definition}

Let  $\pi:M_{pq}^{rt}\to V^{rt}$  be the projection such that, for any  $v\in M_{pq}^{rt}$,   $\pi(v)$ is the vector obtained from $v$ by deleting the tensor factor
in $K_{\l_{pq}}$. Let $v_{pq}$ be the  highest weight vector of $K_{\l_{pq}}$ with highest weight $\lambda_{pq}$. Then $v_{{pq}}$ is unique  up to a scalar. It is well-known (e.g. see  \cite{BS4}) that
$K_{\l_{pq}}$ is $2^{mn}$-dimensional with a basis
\begin{equation}\label{basis-k-l}
B=\Big\{b^\si:=\mbox{$\prod\limits_{i=1}^{n}\,\prod\limits_{j=1}^{m}$}E_{m+i,j}^{\si_{ij}}v_{{pq}}\,\Big|\,\si=(\si_{ij})_{i,j=1}^{n,m}\in\{0,1\}^{n\times m}\Big\},
\end{equation}

where the products are taken in any fixed order. Define 
\begin{equation} \begin{aligned}\label{imn} I(m|n, r)=&\{\mathbf i\mid \mathbf i=(i_r, i_{r-1},
\cdots, i_{1}), \,   i_j\in I,\,  1\le
j\le r\}, \\
\bar I(m|n, t)=&\{\mathbf j\mid \mathbf j=(j_{1}, j_{ 2},
\cdots, j_{t}), \,   j_{ i}\in I,\,  1\le
i\le t\}.
\end{aligned}
\end{equation}
If  $(\mathbf i, b, \mathbf j)\in I(m|n, r)\times B\times \bar I(m|n, t)$,  we define
\begin{equation}\label{iaj}
v_{\mathbf i, b, \mathbf j} =v_{i_r}\otimes v_{i_{r-1}}\otimes \cdots\otimes  v_{i_1}\otimes b\otimes
\bar v_{j_{ 1}}\otimes \bar v_{j_{ 2}}\otimes \cdots\otimes  \bar v_{j_{ t}}\in M_{pq}^{rt}.
\end{equation}

\begin{lemma} \label{basis-M-} Let $B_M=\{v_{\mathbf i}\otimes b\otimes  \bar v_{\mathbf j}\mid (\mathbf i, b, \mathbf j) \in I(m|n,
r)\times B \times \bar I(m|n, t)\}$. Then $B_M$ is a basis of  $M^{rt}_{pq}$.\end{lemma}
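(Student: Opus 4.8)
The plan is to deduce this directly from the elementary fact that a tensor product of vector-space bases is a basis of the tensor product, combined with the already-recorded description of a basis of the Kac-module.

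First I would recall that, by definition, $V=\C^{m|n}$ has $\{v_i\mid i\in I\}$ as a basis, so that $V^{\otimes r}$ has $\{v_{\mathbf i}:=v_{i_r}\otimes\cdots\otimes v_{i_1}\mid \mathbf i\in I(m|n,r)\}$ as a basis, the index set being as in \eqref{imn}. Dually, $W=V^*$ has the dual basis $\{\bar v_i\mid i\in I\}$, whence $W^{\otimes t}$ has $\{\bar v_{\mathbf j}\mid \mathbf j\in\bar I(m|n,t)\}$ as a basis. Next I would invoke \eqref{basis-k-l}: the set $B=\{b^\sigma\mid \sigma\in\{0,1\}^{n\times m}\}$ is a basis of the $2^{mn}$-dimensional Kac-module $K_{\l_{pq}}$. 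The only point here that deserves a word is that the monomials $\prod_i\prod_j E_{m+i,j}^{\sigma_{ij}}v_{pq}$ are, up to their span, independent of the chosen order of the factors, which is a standard PBW-type consequence of $\mfg_{-1}$ being abelian and of $K_{\l_{pq}}$ being free of rank one over $U(\mfg_{-1})$ (cf.~\cite{Kac77, BS4}); since the statement \eqref{basis-k-l} is already quoted, this may simply be cited.

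Finally, since $M_{pq}^{rt}=V^{\otimes r}\otimes K_{\l_{pq}}\otimes W^{\otimes t}$, applying the fact that $\{a\otimes b\otimes c\}$ runs over a basis of $A\otimes B\otimes C$ whenever $a,b,c$ run over bases of $A,B,C$, and unravelling the definition \eqref{iaj} of $v_{\mathbf i,b,\mathbf j}$, yields that $B_M$ is a basis of $M_{pq}^{rt}$; in particular $\dim_{\C}M_{pq}^{rt}=(m+n)^{r+t}\,2^{mn}$. There is no genuine obstacle: this is a bookkeeping lemma fixing notation for the highest-weight-vector computations of sections~5--6, and its proof is immediate from linear algebra once the basis \eqref{basis-k-l} of $K_{\l_{pq}}$ is in hand.
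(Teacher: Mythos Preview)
Your argument is correct and is exactly what the paper intends: the lemma is stated without proof there, since it is an immediate consequence of \eqref{basis-k-l} together with the elementary fact that a tensor product of bases is a basis. Your write-up simply spells this out.
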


Denote by
$U(\mfg)$  the universal enveloping algebra of $\mfg$. Then $M_{pq}^{rt}$ is a
left $U(\mfg)$-module.
Let $J=J_1\cup \{0\}\cup J_2$ with $J_1=\{r,...,2,1\}$ and $J_{2}=\{\bar 1, \bar 2, ..., \bar t\}$.
Then $(J, \prec)$ is a total ordered set with $$r\prec r-1\prec \cdots\prec 1\prec 0\prec \bar 1\prec\cdots\prec \bar t.$$
For any  $a,b\in J$ with $a\prec b$,  define
$\pi_{ab}:U(\mfg)^{\otimes2}\to U(\mfg)^{\otimes(r+t+1)}$ by \begin{equation}\label{pi-ab}
\pi_{ab}(x\OTIMES y)=1\OTIMES\cdots\OTIMES1\OTIMES \overset{a\text{-th}} {x}\OTIMES 1\OTIMES\cdots\OTIMES1\OTIMES\overset{b\text{-th}} y\OTIMES1\OTIMES\cdots\OTIMES1.\end{equation}
Let $\Omega$ be a  {\it Casimir element} in $\mfg^{\otimes2}$ given by  \begin{eqnarray}\label{def-Omega}
\Omega=\SUM{i,j\in I}{}(-1)^{[j]}E_{ij}\OTIMES E_{ji}.\end{eqnarray}
In \cite{RSu}, we define operators  $s_i$, $\bar s_j$, $x_1$, $\bar x_1$ and $e_1$ acting on the right of $M_{pq}^{rt}$
via the following formulae:
\begin{eqnarray}\label{operator--1}&\!\!\!\!&
s_i=\pi_{i+1,i}(\Omega)|_{M_{pq}^{rt}}\ (1\le i<r),\ \ \ \
\bar s_j=\pi_{\bar j,\overline{j+1}}(\Omega)|_{M_{pq}^{rt}}\ (1\le j<t),\nonumber\\&\!\!\!\!&
x_1=-\pi_{10}(\Omega)|_{M_{pq}^{rt}},\ \ \ \ \ \ \ \bar x_1=-\pi_{0\bar1}(\Omega)|_{M_{pq}^{rt}},\ \ \ \ \ \ \
  e_1=-\pi_{1\bar1}(\Omega)|_{M_{pq}^{rt}}.
\end{eqnarray}
Then there is an algebra homomorphism $\phi:\mathscr B_{2, r, t}\rightarrow \text{End}_{U(\mfg)} (M_{pq}^{rt})$ sending the generators $s_i, \bar s_j$, $x_1$, $\bar x_1$ and $e_1$ to the operators
 $s_i$, $\bar s_j$, $x_1$, $\bar x_1$ and $e_1$ as above~\cite{RSu}. In this case, we need to use $-p, m-q$, and $q, p-n$ instead of $u_1, u_2$, $\bar u_1$ and $\bar u_2$  respectively in Definition~\ref{cycwb}  for $k=2$. Further, $\omega_0=m-n$, $\omega_1=nq-mp$ and $\omega_a=(m-p-q)\omega_{a-1}-p(q-m)\omega_{a-2}$  for $a\ge2$ and $\bar\omega_a$'s are  determined by
\cite[Corollary~4.3]{RSu}. Thus, $\DBr$ is admissible in the sense of Definition~\ref{condi-k}. By Theorem~\ref{level-k-walled}, $\dim_\mathbb C \DBr=2^{r+t}(r+t)!$.
We will always consider  $\mathscr B_{2, r, t}$ as above  in the remaining part of this paper.

\begin{theorem}\label{level-21}\cite[Theorem~5.16]{RSu} Fix $r, t\in \mathbb Z^{>0}$ with  $r+t\le \min\{m, n\}$. Then ${\rm End}_{\mfg}(M_{pq}^{rt})\cong \DBr$.
\end{theorem}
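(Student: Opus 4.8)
The plan is to prove that the algebra homomorphism $\phi\colon\DBr\to{\rm End}_{\mfg}(M_{pq}^{rt})$ recalled above (sending $s_i,\bar s_j,x_1,\bar x_1,e_1$ to the operators of \eqref{operator--1}) is an isomorphism. Since $\dim_\C\DBr=2^{r+t}(r+t)!$ by Theorem~\ref{level-k-walled} and $\DBr$ is admissible here, it suffices to show that $\phi$ is injective and that $\dim_\C{\rm End}_{\mfg}(M_{pq}^{rt})=2^{r+t}(r+t)!$; injectivity will then force $\phi$ to be an isomorphism by comparing dimensions.

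\emph{Computing $\dim_\C{\rm End}_{\mfg}(M_{pq}^{rt})$.} Because $\l_{pq}$ is typical, the Kac module $K_{\l_{pq}}$ is a simple projective--injective object of the category $\mathcal{F}$ of finite-dimensional $\mfg$-modules. The functor $-\OTIMES V$ has an exact two-sided adjoint $-\OTIMES W$ (as $W=V^*$ and $V^{**}\cong V$), and likewise for $-\OTIMES W$, so these functors preserve projectivity; hence $M_{pq}^{rt}=V^{\otimes r}\OTIMES K_{\l_{pq}}\OTIMES W^{\otimes t}$ is projective in $\mathcal{F}$. Projective modules in $\mathcal{F}$ admit a Kac flag, so writing $M_{pq}^{rt}\cong\bigoplus_\lambda P(\lambda)^{\oplus d_\lambda}$ and using BGG reciprocity $[P(\lambda):K(\xi)]=[K(\xi):L(\lambda)]$ one gets the standard identity
$$\dim_\C{\rm End}_{\mfg}(M_{pq}^{rt})=\sum_\xi[\,M_{pq}^{rt}:K(\xi)\,]^2,$$
the sum over the finitely many dominant weights $\xi$, with $[\,\cdot:K(\xi)\,]$ the multiplicity in a Kac flag. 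One then computes these multiplicities by tensoring on one factor at a time: the sections of a Kac flag of $V^{\otimes a}\OTIMES K_{\l_{pq}}\OTIMES W^{\otimes b}$ are the Kac modules $K(\xi')$, where $\xi'$ is obtained from a section at the previous stage by adding a weight $\pm\es_i$ of $V$ (resp.\ of $W$) and keeping $\mfg_0$-dominance. The hypothesis $r+t\le\min\{m,n\}$ ensures that at most $r\le m$ boxes are ever added to the first $m$ coordinates and at most $t\le n$ to the last $n$, so nothing runs off the edge, while typicality of $\l_{pq}$ keeps the $I_0$- and $I_1$-parts from interacting; a bookkeeping argument then identifies $[\,M_{pq}^{rt}:K(\xi)\,]$ with $\dim_\C C(f,\lambda,\mu)$, where $(f,\lambda,\mu)\in\Lambda_{2,r,t}$ corresponds to $\xi$ under the bijection between dominant weights of $M_{pq}^{rt}$ and $\Lambda_{2,r,t}$. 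Granting this, $\dim_\C{\rm End}_{\mfg}(M_{pq}^{rt})=\sum_{(f,\lambda,\mu)\in\Lambda_{2,r,t}}\big(\dim_\C C(f,\lambda,\mu)\big)^2=\dim_\C\DBr=2^{r+t}(r+t)!$, the middle equality being the formal property of the weakly cellular structure of $\DBr$ furnished by Theorem~\ref{cellular-1}.

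\emph{Injectivity of $\phi$.} The subalgebra of $\DBr$ generated by $e_1,s_i,\bar s_j$ is a copy of the ordinary walled Brauer algebra $\mathscr B_{r,t}(m-n)$, and under $\phi$ it acts on the subspace $V^{\otimes r}\OTIMES\C v_{pq}\OTIMES W^{\otimes t}\cong V^{\otimes r}\OTIMES W^{\otimes t}$ exactly as on mixed tensor space (these generators do not touch the Kac factor), where the action is faithful because $r+t\le\min\{m,n\}$; hence $\phi$ is faithful on $\mathscr B_{r,t}(m-n)$. To upgrade this to all of $\DBr$ I would use the degree filtration with $\deg x_i=\deg\bar x_j=1$, $\deg s_i=\deg\bar s_j=\deg e_1=0$: the operators $x_1,\bar x_1$ act through the Casimir across the Kac factor, and evaluating their action on the Kac flags of $V^{\otimes a}\OTIMES K_{\l_{pq}}$ (typicality again) shows that their symbols are not annihilated modulo the lower part of the filtration, so that $\phi$ carries the regular-monomial $R$-basis of $\DBr$ (Lemma~\ref{mathcaln} with $k=2$) to a linearly independent set. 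Together with the dimension count, $\phi$ is then an isomorphism.

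\emph{Main obstacle.} The crux is the exact control of ${\rm End}_{\mfg}(M_{pq}^{rt})$: since $\mathcal{F}$ is far from semisimple, the naive Schur--Weyl comparison of simple summands is unavailable, and one must genuinely use projectivity of $M_{pq}^{rt}$, BGG reciprocity, and the combinatorics of Kac flags matched against cell-module dimensions of $\DBr$ — precisely where the bound $r+t\le\min\{m,n\}$ and the typicality of $\l_{pq}$ are indispensable. (Alternatively one could cite Brundan--Stroppel's identification \cite{BS4} of such endomorphism algebras with generalized Khovanov arc algebras, but the argument above stays within the present setup.)
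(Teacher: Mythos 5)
Your proposal takes a genuinely different route from the one the paper relies on. The cited result [RSu, Theorem~5.16] (and the generalization proved as Theorem~\ref{level-2schur} in the present paper) establishes the isomorphism by using the ``flip'' map that identifies $M_{pq}^{rt}$ with $V^{\otimes r}\otimes K_{\l_{pq}}\otimes V^{\otimes t}$, transferring the problem to Brundan--Stroppel's super Schur--Weyl duality for the level two degenerate Hecke algebra $\mathscr H_{2,r+t}$ and then citing \cite[IV, Theorem~3.21]{BS4}. You instead propose to compute $\dim_\C\mathrm{End}_\mfg(M_{pq}^{rt})$ from scratch via projectivity, BGG reciprocity and Kac-flag combinatorics, and to prove injectivity of $\phi$ by a filtration argument. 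Staying on the Lie-superalgebra side is conceptually appealing, but two steps have real gaps.

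The first gap is the injectivity argument. Asserting that ``the symbols of $x_1,\bar x_1$ are not annihilated modulo the lower part of the filtration'' is not the same as showing that $\phi$ sends the full regular-monomial basis of $\DBr$ (some $2^{r+t}(r+t)!$ elements, many of high $x$-degree, with products of $e_{i,j}$'s and elements of $\mathfrak S_{r-f}\times\bar{\mathfrak S}_{t-f}$ interleaved) to a $\C$-linearly independent set in $\mathrm{End}_\mfg(M_{pq}^{rt})$. The degree filtration on $\DBr$ does not automatically induce a compatible filtration on the target algebra, so the ``graded'' reasoning has nothing to grade against; and non-vanishing of individual images does not give linear independence of the whole family. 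Since you already have the dimension count, the logically easier complement is surjectivity of $\phi$, not injectivity — and surjectivity is exactly what the flip-map argument delivers cheaply by citing \cite{BS4}.

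The second gap is the identity $[\,M_{pq}^{rt}:K(\xi)\,]=\dim_\C C(f,\lambda,\mu)$, which carries the entire dimension computation. You label it ``a bookkeeping argument,'' but as stated it is precisely the content established in Sections~5--6 of this paper (see Corollary~\ref{heck-kac0} and Proposition~\ref{kac-ke}), and those results are proved \emph{using} Theorem~\ref{level-21}. To avoid circularity you would need an independent Pieri-type branching argument for Kac flags of $V^{\otimes a}\otimes K_{\l_{pq}}\otimes W^{\otimes b}$ matched, multiplicity by multiplicity, against the dimensions $\dim C(f,\lambda,\mu)=2^f|\mathcal D_{r,t}^f|\cdot|\Std(\lambda)|\cdot|\Std(\mu)|$; this is plausibly doable (the hypotheses $r+t\le\min\{m,n\}$ and typicality of $\l_{pq}$ keep the combinatorics controlled), but it is genuine work that is not supplied. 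Relatedly, the formula $\dim\mathrm{End}(M_{pq}^{rt})=\sum_\xi[\,M_{pq}^{rt}:K(\xi)\,]^2$ tacitly uses that the $\nabla$-flag multiplicities of $M_{pq}^{rt}$ agree with its $\Delta$-flag multiplicities; since the contragredient dual of $M_{pq}^{rt}$ is $W^{\otimes r}\otimes K_{\l_{pq}}\otimes V^{\otimes t}$ rather than $M_{pq}^{rt}$ itself, this needs a short justification as well.
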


\begin{theorem}\cite[IV, Theorem~3.13]{BS4}\label{bshec}  If  $0<r\le \min\{m, n\}$, then $\text{End}_{U(\mfg)} (M_{pq}^{r0} )\cong \mathscr H_{2, r}$, the level two Hecke algebra with defining parameters $u_1=-p$ and $u_2=m-q$. \end{theorem}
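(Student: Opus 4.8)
This is \cite[IV, Theorem~3.13]{BS4}; I sketch the proof I would give. Write $M_{pq}^{r0}=V^{\otimes r}\OTIMES K_{\lambda_{pq}}$. First I would construct a homomorphism $\phi\colon\mathscr H_{2,r}\to\text{End}_{U(\mfg)}(M_{pq}^{r0})$: by \eqref{operator--1} the operators $s_i=\pi_{i+1,i}(\Omega)$ $(1\le i<r)$ and $x_1=-\pi_{10}(\Omega)$ commute with $U(\mfg)$, and by the $t=0$ case of \cite[Theorem~4.15]{RSu} they satisfy the defining relations of $\mathscr H_r^{\rm aff}$ under the identification $x_1\leftrightarrow -y_1$. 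Typicality of $\lambda_{pq}$ forces in addition the quadratic relation $\boldf(x_1)=(x_1+p)(x_1-(m-q))=0$ on $M_{pq}^{r0}$; this is the $t=0$ specialisation of the admissibility/parameter computation recorded just before Theorem~\ref{level-21} ($\omega_0=m-n$, $\omega_1=nq-mp$, $\omega_a=(m-p-q)\omega_{a-1}-p(q-m)\omega_{a-2}$ for $a\ge 2$). Hence $\phi$ is well defined, with the parameters of the statement once the Section~3 sign dictionary ($x_1$ representing $-y_1$) is applied.

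The main work would then be the dimension equality $\dim_{\mathbb C}\text{End}_{U(\mfg)}(M_{pq}^{r0})=2^{r}r!=\dim_{\mathbb C}\mathscr H_{2,r}$. Since $\lambda_{pq}$ is typical, $K_{\lambda_{pq}}$ is simple, projective and injective, so $M_{pq}^{r0}$ is projective-injective. When $p-q\notin\Z$ all weights occurring in $M_{pq}^{r0}$ stay typical, so I would use the tensor identity $V\OTIMES\text{Ind}_{\mfg_0\oplus\mfg_1}^{\mfg}(-)\cong\text{Ind}_{\mfg_0\oplus\mfg_1}^{\mfg}(V|_{\mfg_0\oplus\mfg_1}\OTIMES(-))$ and the two-step filtration $0\to V_0\to V\to V_1\to 0$ of $V|_{\mfg_0\oplus\mfg_1}$ ($V_a=\mathrm{span}\{v_i\mid i\in I_a\}$) to obtain, by induction on $r$, a decomposition $M_{pq}^{r0}\cong\bigoplus_\mu c_\mu K_\mu$, the sum over dominant weights $\mu\leftrightarrow(\alpha,\beta)$ obtained from $\lambda_{pq}$ by adding $r$ boxes distributed between the $\mathfrak{gl}_m$- and $\mathfrak{gl}_n$-parts ($|\alpha|+|\beta|=r$), with $c_\mu=\binom{r}{|\alpha|}f^{\alpha}f^{\beta}$ a multinomial coefficient times standard-tableau numbers. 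Then $\dim_{\mathbb C}\text{End}_{U(\mfg)}(M_{pq}^{r0})=\sum_\mu c_\mu^{2}=\sum_{k=0}^{r}\binom{r}{k}^{2}k!\,(r-k)!=r!\sum_{k=0}^{r}\binom{r}{k}=2^{r}r!$, using $\sum_{|\nu|=k}(f^{\nu})^{2}=k!$ (Robinson--Schensted). In the remaining typical cases $p-q\le-m$ or $p-q\ge n$, where $M_{pq}^{r0}$ need not be semisimple, I would replace the direct sum by a Kac flag of $M_{pq}^{r0}$ with $K_\mu$ occurring $c_\mu$ times and use BGG-type reciprocity (available since $M_{pq}^{r0}$ is projective and injective) to conclude $\dim_{\mathbb C}\text{End}_{U(\mfg)}(M_{pq}^{r0})=\sum_\mu c_\mu^{2}=2^{r}r!$ again.

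Surjectivity of $\phi$ I would deduce from the double-centraliser property of the commuting $(U(\mfg),\mathscr H_{2,r})$-actions on the projective-injective module $M_{pq}^{r0}$: the image of $U(\mfg)$ in $\text{End}_{\mathbb C}(M_{pq}^{r0})$ equals $\text{End}_{\mathscr H_{2,r}}(M_{pq}^{r0})$, whence $\text{End}_{U(\mfg)}(M_{pq}^{r0})=\phi(\mathscr H_{2,r})$; combined with the dimension count this makes $\phi$ an isomorphism. (Equivalently, once $\dim_{\mathbb C}\text{End}_{U(\mfg)}(M_{pq}^{r0})=2^{r}r!$ is in hand it is enough to note that $\phi$ is injective, the standard faithfulness statement in the stable range $r\le\min\{m,n\}$.) The step I expect to be the main obstacle is the dimension count in the boundary typical cases $p-q\le-m$, $p-q\ge n$, where $V^{\otimes r}\OTIMES K_{\lambda_{pq}}$ is genuinely non-semisimple and the Kac-flag multiplicities must be controlled through its projective-injective structure; for $p-q\notin\Z$ the count reduces to the clean identity $\sum_{k=0}^{r}\binom{r}{k}^{2}k!\,(r-k)!=2^{r}r!$. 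I would close by remarking that this result is the $t=0$ base case of the pattern in Theorem~\ref{level-21}, compatible with the fact that a walled $(r,0)$-Brauer diagram has no horizontal edges, so the relevant ``cyclotomic walled Brauer algebra'' is just $\mathscr H_{2,r}$.
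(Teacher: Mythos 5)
The paper offers no proof of this statement: it is quoted directly from Brundan--Stroppel \cite[IV, Theorem~3.13]{BS4}, and Lemma~\ref{xsfor} (their Lemma~3.1) is the only ingredient from that source that the present paper actually restates. So there is nothing in the paper to compare your sketch against line-by-line; the comparison has to be with the BS4 argument itself, which your outline tracks reasonably well (define the action via \eqref{operator--1}, verify a degree-two relation for $x_1$, count $\dim\text{End}_{U(\mfg)}(M_{pq}^{r0})=2^rr!$ via a Kac flag, and finish with double centralizer/projectivity).

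Two small corrections to the way you phrase things. First, the quadratic relation is not ``forced by typicality'' as such: it comes from the two-step $\mfp$-filtration $0\to V_0\to V\to V_1\to 0$ inducing a two-step Kac flag on $V\OTIMES K_{\lambda_{pq}}$, and from the explicit Casimir eigenvalue computation $x_1=-\pi_{10}(\Omega)$ acting as $-p$ on the $V_0$-layer and as $m-q$ on the $V_1$-layer (this is exactly the content of Lemma~\ref{xsfor} specialized to $k=1$; the factor $m$ shows up because the Casimir eigenvalue of the $V_1$ piece picks up the even rank). Typicality is instead what makes $K_{\lambda_{pq}}$, hence $M_{pq}^{r0}$, projective-injective, which is what your dimension count and your double-centralizer step both rely on. Second, in the dimension count you don't really need to split the non-semisimple typical cases off as a harder subcase: the Kac-flag multiplicities $c_\mu$ are character-determined and hence the same for all typical $\lambda_{pq}$, and for any module $M$ with both a $\Delta$- and a $\nabla$-flag one has $\dim\text{End}(M)=\sum_\mu(M:\Delta_\mu)(M:\nabla_\mu)=\sum_\mu c_\mu^2$, since $(M:\Delta_\mu)=(M:\nabla_\mu)$ as both are read off the character. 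So the identity $\sum_{k}\binom{r}{k}^2k!(r-k)!=2^rr!$ settles all typical cases at once; there is no separate ``boundary'' obstacle.
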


\begin{theorem}\label{level-2schur} {\rm(Super Schur-Weyl duality)} Keep the condition~\eqref{Do-condi}.
 The algebra homomorphism $\phi_1:\DBr\twoheadrightarrow{\rm End}_{\mfg}(M_{pq}^{rt})$ is surjective. It it injective if and only $r+t\le\min\{m,n\}$.
\end{theorem}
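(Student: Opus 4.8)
The plan is to prove surjectivity and the injectivity criterion separately, bootstrapping from the two duality results already available for special cases. For surjectivity, the strategy is a standard double-centralizer/reduction-to-small-rank argument. First I would observe that $M_{pq}^{rt}$ decomposes under $\mathfrak{gl}_{m|n}$, and that it suffices to check that $\phi_1$ hits a spanning set of $\operatorname{End}_{\mathfrak g}(M_{pq}^{rt})$. The key point is that $\operatorname{End}_{\mathfrak g}(M_{pq}^{rt})$ is controlled by the action of the Casimir $\Omega$ on pairs of tensor slots: by general nonsense (the image of $U(\mathfrak g)$ in $\operatorname{End}_{\mathbb C}(M_{pq}^{rt})$ is its own double centralizer, since $\mathbb C$ is algebraically closed and $M_{pq}^{rt}$ is finite dimensional), any $\mathfrak g$-endomorphism is built from the slot-wise Casimir operators $\pi_{ab}(\Omega)$, hence lies in the image of $\mathscr B_{2,r,t}^{\mathrm{aff}}$. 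The cyclotomic relations $\boldf(x_1),\boldg(\bar x_1)$ act as zero on $M_{pq}^{rt}$ — this is exactly why $\omega_0=m-n$, $\omega_1=nq-mp$, etc.\ were computed in \cite{RSu} and why $\DBr$ is admissible with the stated parameters — so the image of $\mathscr B_{2,r,t}^{\mathrm{aff}}$ already factors through $\DBr$. This gives surjectivity of $\phi_1$ for \emph{all} $r,t$; I would cite \cite[Theorem~5.16]{RSu} or its proof for the structural input (that $\operatorname{End}_{\mathfrak g}(M_{pq}^{rt})$ is generated by the $\pi_{ab}(\Omega)$), noting that the argument there never used $r+t\le\min\{m,n\}$ for the surjectivity half.

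For the injectivity criterion, one direction is immediate: if $r+t\le\min\{m,n\}$ then $\phi_1$ is an isomorphism by Theorem~\ref{level-21}, so in particular it is injective. The content is the converse: if $r+t>\min\{m,n\}$ then $\phi_1$ has nontrivial kernel. Here I would use a dimension/support argument. Since $\dim_{\mathbb C}\DBr=2^{r+t}(r+t)!$ by Theorem~\ref{level-k-walled} (using admissibility, which holds for these parameters), it suffices to show $\dim_{\mathbb C}\operatorname{End}_{\mathfrak g}(M_{pq}^{rt})<2^{r+t}(r+t)!$ whenever $r+t>\min\{m,n\}$. The cleanest route is to reduce to the already-handled ``no Kac-module'' case: projecting away the Kac factor, or rather using that $M_{pq}^{rt}$ contains $V^{\otimes r}\otimes W^{\otimes t}$-type information, one expects $\operatorname{End}_{\mathfrak g}(M_{pq}^{rt})$ to have dimension equal to the number of pairs $(\mathbf i,b,\mathbf j)$-orbits / the dimension of the relevant Hom-space, which is governed by decomposition of tensor powers of $V$ and $W$ — and tensor powers of the natural module for $\mathfrak{gl}_{m|n}$ exhibit the same ``stability breaks at $\min\{m,n\}$'' phenomenon as for $\mathfrak{gl}_n$. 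Concretely, I would exhibit a specific nonzero element of the kernel: when $\min\{m,n\}<r+t$, there is a walled-Brauer-type ``antisymmetrizer'' or Jucys–Murphy combination that must vanish on $M_{pq}^{rt}$ for weight/parity reasons (analogous to the classical fact that $e$ applied to $(r+1)$ antisymmetrized copies of an $r$-dimensional space is zero), and its image under $\phi$ in $\DBr$ is nonzero by the explicit cellular basis of Theorem~\ref{cellular-1}.

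The main obstacle is the converse direction of the injectivity claim — producing an explicit nonzero kernel element (or an airtight dimension count) when $r+t>\min\{m,n\}$. The subtlety is the interaction of the Kac-module factor $K_{\lambda_{pq}}$ with the tensor slots: naively one wants to ``ignore'' $K_{\lambda_{pq}}$ and reduce to $\operatorname{End}_{\mathfrak g}(V^{\otimes(r+t)})$ or to the $\mathscr H_{2,r}$-case of Theorem~\ref{bshec}, but $K_{\lambda_{pq}}$ is not the trivial module and tensoring with it can \emph{change} dimensions of endomorphism algebras, so one must check that its presence does not accidentally enlarge $\operatorname{End}_{\mathfrak g}(M_{pq}^{rt})$ back up to the full $2^{r+t}(r+t)!$. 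I would handle this by a careful analysis of the highest-weight-vector classification — which, per the introduction, is developed in later sections (\S5--6) and shows that the dominant weights of $M_{pq}^{rt}$ are indexed by $\Lambda_{2,r,t}$ only under a genericity condition, with coincidences forced when the rank exceeds $\min\{m,n\}$ — and extract from it the strict inequality $\dim\operatorname{End}_{\mathfrak g}(M_{pq}^{rt})<\dim\DBr$. Alternatively, and perhaps more in the spirit of a self-contained proof here, one cites that for $r+t>\min\{m,n\}$ the natural-module tensor category has fewer simple summands than the walled Brauer algebra has cell modules, exactly as in the non-super case, and that the typicality of $\lambda_{pq}$ guarantees $K_{\lambda_{pq}}$ behaves like a projective/injective so that tensoring with it is exact and dimension-multiplicative on the relevant Grothendieck group, preserving the strict inequality.
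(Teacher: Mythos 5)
Your proposal has two genuine gaps, both stemming from not finding the key device the paper uses: the ``flip'' isomorphism ${\rm flip}_{r,t}:{\rm End}_\C(V^{\otimes r}\OTIMES K_{\l_{pq}}\OTIMES (V^*)^{\otimes t})\to{\rm End}_\C(V^{\otimes r}\OTIMES K_{\l_{pq}}\OTIMES V^{\otimes t})$, borrowed from Brundan--Stroppel. This map carries $\DBr$ onto $\mathscr H_{2,r+t}$ (both of rank $2^{r+t}(r+t)!$ by Theorem~\ref{level-k-walled}, so a surjection between them is a bijection), and conjugates $\phi_1$ into the Brundan--Stroppel map $\pi_1:\mathscr H_{2,r+t}\to{\rm End}_\C(V^{\otimes(r+t)}\OTIMES K_{\l_{pq}})$. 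Surjectivity and the injectivity criterion for $\pi_1$ are both already established in \cite[IV, Theorem~3.21]{BS4}, so the whole theorem for mixed tensor products follows by transport of structure. That is the proof; everything reduces to the $t=0$ case.

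For the surjectivity half, your ``general nonsense'' claim is false as stated: the double-centralizer property of the image of $U(\mfg)$ (which itself is delicate here, since $\mathfrak{gl}_{m|n}$-mod is not semisimple) says the commutant is its own bicommutant, not that the commutant is generated by the slot-wise Casimirs $\pi_{ab}(\Omega)$. The latter is precisely the nontrivial content of Schur--Weyl duality and must be proved, not deduced from finite-dimensionality and algebraic closure. Your fallback---claiming \cite[Theorem~5.16]{RSu} ``never used $r+t\le\min\{m,n\}$ for the surjectivity half''---is an unverified assertion; the paper does not go that route precisely because the flip argument bypasses the issue.

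For the converse direction of the injectivity criterion (nontrivial kernel when $r+t>\min\{m,n\}$), you openly acknowledge the gap: you float several candidate strategies (explicit antisymmetrizer in the kernel, dimension count via highest-weight classification, Grothendieck-group multiplicativity) but do not carry any of them through, and several would require substantial extra work (e.g., the sections~5--6 classification is stated in the paper only under $r+t\le\min\{m,n\}$, so it cannot directly supply the dimension drop in the regime $r+t>\min\{m,n\}$). The flip instantly settles this: injectivity of $\phi_1$ is equivalent to injectivity of $\pi_1$, and \cite[IV, Theorem~3.21]{BS4} already characterizes exactly when $\pi_1$ is injective, namely when $r+t\le\min\{m,n\}$.
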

\begin{proof}  By Theorem~\ref{level-21}, it suffices to prove that $\phi_1$ is surjective and is not injective if $r+t>\min\{m, n\}$.
As in \cite[(7.16)]{BS}, the map ${\rm flip}_{r,t}$ defined by the  following commutative diagram is a $\mfg$-module isomorphism
\begin{eqnarray}\label{diag1}
\begin{array}{ccccc}{\rm End}_\C(V^{\otimes r}\OTIMES K_{\l_{pq}}\OTIMES (V^*)^{\otimes t})
\!&\!\stackrel{{\rm flip}_{r,t}}{-\!\!\!-\!\!\!-\!\!\!-\!\!\!-\!\!\!-\!\!\!-\!\!\!\longrightarrow}\!&\!{\rm End}_\C(V^{\otimes r}\OTIMES K_{\l_{pq}}\OTIMES V^{\otimes t})\\
\mbox{\large$|{\sc\!}|$}\!&\!\!&\!\mbox{\large$|{\sc\!}|$}\\
{\rm End}_\C(V^{\otimes r}\OTIMES K_{\l_{pq}})\OTIMES{\rm End}_\C((V^*)^{\otimes t})\!&\!\!\!\stackrel{f\otimes g^*\mapsto f\otimes g}{-\!\!\!-\!\!\!-\!\!\!-\!\!\!-\!\!\!-\!\!\!-\!\!\!\longrightarrow}\!\!\!&\!
{\rm End}_\C(V^{\otimes r}\OTIMES K_{\l_{pq}})\OTIMES{\rm End}_\C(V^{\otimes t}).
\end{array}
\end{eqnarray}
Note that   ${\mathscr H}_{2,r+t}$ (denoted as $H_{r+t}^{p,q}$ in \cite[IV]{BS4}) is a subspace of ${\rm End}_\C(V^{\otimes r}\OTIMES K_{\l_{pq}}\OTIMES V^{\otimes t})$, thus \eqref{diag1} induces the following commutative diagram
\begin{eqnarray}\label{diag2}
\begin{array}{ccccc}\DBr
\!&\!\stackrel{{\rm flip}_{r,t}}{-\!\!\!-\!\!\!-\!\!\!-\!\!\!-\!\!\!\longrightarrow}\!&\!{\mathscr H}_{2,r+t}\\
\!\!\!\!\!\!\phi_1\ \ \put(4,10){\vector(0,-1){20}}\!&\!\!&\! \put(4,10){\vector(0,-1){20}}\ \ \pi_1\\
{\rm End}_\C(V^{\otimes r}\OTIMES K_{\l_{pq}}\OTIMES (V^*)^{\otimes t})\!&\!\stackrel{{\rm flip}_{r,t}}{-\!\!\!-\!\!\!-\!\!\!-\!\!\!-\!\!\!\longrightarrow}\!&\!{\rm End}_\C(V^{\otimes r}\OTIMES K_{\l_{pq}}\OTIMES V^{\otimes t}).
\end{array}
\end{eqnarray}
 By Theorem~\ref{level-k-walled} for $k=2$,
$\dim_\mathbb C\DBr=2^{r+t} (r+t)!$.  This implies that the top map is a bijection, and the  bottom map is a $\mfg$-module isomorphism, which induces an isomorphism between two subspaces
${\rm End}_\mfg(V^{\otimes r}\OTIMES K_{\l_{pq}}\OTIMES (V^*)^{\otimes t})$ and ${\rm End}_\mfg(V^{\otimes r}\OTIMES K_{\l_{pq}}\OTIMES V^{\otimes t})$. Since $\pi_1$ is surjectively mapped to ${\rm End}_\mfg(V^{\otimes r}\OTIMES K_{\l_{pq}}\OTIMES V^{\otimes t})$ by \cite[IV, Theorem 3.21]{BS4}, we see that $\phi_1:\DBr\twoheadrightarrow{\rm End}_{\mfg}(M_{pq}^{rt})$ is surjective.
Finally, the second assertion follows from the corresponding result for $t=0$ in~\cite[IV, Theorem~3.21]{BS4}.
\end{proof}

\def\u{\mathfrak u}
\def\v{\mathfrak v}

\section{Highest weight   vectors in $V^{\otimes r} \otimes K_{\lambda_{pq}}$} \label{hhwv}
The aim of this section is to give a  classification of  highest weight   vectors of  $M_{pq}^{r0}:=V^{\otimes r} \otimes K_{\lambda_{pq}}$ when $r\le \min\{m, n\}$,
 where $V$ is the natural representation of $\mfg:=\mathfrak {gl}_{m|n}$ and $K_{\lambda_{pq}}$ is the Kac-module with highest weight $\lambda_{pq}$ in Example~\ref{l-pq2}.
 This will be done in a few steps. First, by noting that $\mathfrak{g}$-highest weight vectors of $M_{pq}^{r0} $  is in one to one correspondence with the $\mathfrak{g}_{ 0}$-highest weight vectors of $V^{\otimes r}$ (cf.~\cite[Lemmas~5.1-5.2]{SHK}), we are able to reduce the problem to
the Lie algebra case. Secondly, since $\mathfrak{g}_{0}=\mathfrak{gl}_m\oplus\mathfrak{gl}_n$,  and $V^{\otimes r}$ can be decomposed as a direct sum of tensor products
of natural representations of $\mathfrak{gl}_m$ and $\mathfrak{gl}_n$,
we are able to further simplify the problem to the $\mathfrak{gl}_m$ case.

To begin with, we briefly  recall the results on a classification of $\mathfrak{gl}_m$-highest weight vectors of $V^{\otimes r}$, where $V$ temporarily denotes the natural
representation of $\mathfrak{gl}_m$ over $\mathbb C$.
Let $\{v_i\mid 1\le i\le m\}$ be a basis of $V$. Obviously, $V^{\otimes r}$ has a basis
$\{v_{\mathbf i}\mid \mathbf i\in I(m|0, r)\}$,  where
$$v_{\mathbf i}=v_{i_r}\otimes v_{i_{r-1} }\otimes \cdots \otimes v_{i_1}.$$
We consider a Cashmir element $\Omega$ in $\mathfrak {gl}_m^{\otimes2} $ with
\begin{equation}\label{evenomega}\mbox{$
 \Omega=\sum\limits_{1\le i,j\le m} E_{ij}\OTIMES E_{ji}\in \mathfrak {gl}_m^{\otimes2}, $}\end{equation}
 which is a special case of \eqref{def-Omega}.
 Define $\mathbf s_i=\pi_{i,i+1}(\Omega)$, $1\le i\le r-1$.  Then   $(i, i+1)\in \mathfrak S_r$
acts   on $V^{\otimes r}$  via  $\mathbf s_i$. Thus,  $V^{\otimes r }$ is a $(\mathfrak {gl}_m, \mathbb C\mathfrak S_{r})$-bimodule such that
 \begin{equation} \label{placep} v_{\mathbf i} w=v_{i_{(r)w^{-1} }} \otimes v_{i_{(r-1)^{w^{-1}}}} \otimes \cdots \otimes v_{i_{(1)^{w^{-1}}}} \text{ for any  $w\in \mathfrak S_r$.}\end{equation}
For example, $v_{i_3} \otimes v_{i_2} \otimes v_{i_1} s_1s_2=v_{i_1}\otimes v_{i_3}\otimes v_{i_2}$.
If $r\le m$, it is well-known that  $$\text{End}_{U(\mathfrak {gl}_m)}(V^{\otimes r})\cong \mathbb C\mathfrak S_r.$$

\begin{definition}\label{vlambda} If $\lambda\in\Lambda^+(r, m)$, the set of partitions of $r$ with at most $m$ parts,   we define
$v_\lambda= v_{\mathbf i_\lambda}\in V^{\otimes r}$,  where $\mathbf i_\lambda=(1^{\lambda_1}, 2^{\lambda_2}, \cdots, m^{\lambda_m})$ and $k^{\lambda_k}$ denotes the sequence $k, k, \cdots, k$ with multiplicity $\lambda_k$.
\end{definition}
The following  result is  well-known,  and  Lemma~\ref{liehwv} follows from Lemma~\ref{ine123}.

\begin{lemma}\label{ine123}Suppose $\lambda$ and $\mu$ are two compositions of $r$ and $\mu'$ is the conjugate of $\mu$, and $x_\l,y_{\mu'}$ are defined in \eqref{x-lambda-y-}. Then  $x_\lambda \mathbb C \mathfrak S_r y_{\mu'}=0$ unless $\lambda\unlhd \mu$.
\end{lemma}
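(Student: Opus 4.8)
The plan is to reduce the identity first to a single group element of $\mathfrak S_r$ and then to a counting statement about Young diagrams. Since $\mathbb C\mathfrak S_r=\sum_{g\in\mathfrak S_r}\mathbb C g$, it is enough to show that $x_\lambda\,g\,y_{\mu'}=0$ for every $g\in\mathfrak S_r$, assuming $\lambda\not\unlhd\mu$. Recall from \eqref{x-lambda-y-} that $x_\lambda=\sum_{w\in\mathfrak S_\lambda}w$, where $\mathfrak S_\lambda$ is the row stabilizer of $\t^\lambda$, and $y_{\mu'}=\sum_{w\in\mathfrak S_{\mu'}}(-1)^{\ell(w)}w$, where $\mathfrak S_{\mu'}$ is the Young subgroup attached to $\mu'$, which is the column stabilizer of $\t_\mu$. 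Since $x_\lambda$ and $y_{\mu'}$ depend only on these two Young subgroups, I would at the outset assume $\lambda$ and $\mu$ are partitions, so that the order $\unlhd$ of \eqref{par} is in force.

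Next I would record the elementary vanishing criterion: if there is a transposition $s\in\mathfrak S_\lambda$ with $g^{-1}sg\in\mathfrak S_{\mu'}$, then $x_\lambda s=x_\lambda$ while $(g^{-1}sg)\,y_{\mu'}=-y_{\mu'}$ (a transposition has odd length), hence
\[
x_\lambda g y_{\mu'}=x_\lambda\,s\,g\,y_{\mu'}=x_\lambda g\,(g^{-1}sg)\,y_{\mu'}=-\,x_\lambda g y_{\mu'},
\]
and over $\mathbb C$ this forces $x_\lambda g y_{\mu'}=0$. In tableau terms, for a fixed $g$ the subgroup $g\,\mathfrak S_{\mu'}\,g^{-1}$ is the column stabilizer of a filling $T_g$ of the diagram $[\mu]$ by $1,\dots,r$, and every filling of $[\mu]$ arises this way; the criterion applies precisely when two numbers lying in a common row of $\t^\lambda$ also lie in a common column of $T_g$.

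Thus the lemma reduces to the combinatorial claim: if $\lambda\not\unlhd\mu$, then in every filling $T$ of $[\mu]$ by $1,\dots,r$ some two numbers occupy a common row of $\t^\lambda$ and a common column of $T$. To prove this I would pick the least $k$ with $\lambda_1+\cdots+\lambda_k>\mu_1+\cdots+\mu_k$ and consider the $N:=\lambda_1+\cdots+\lambda_k$ numbers filling the first $k$ rows of $\t^\lambda$. Were no two of them sharing a row of $\t^\lambda$ to share a column of $T$, then each column of $T$ could contain at most one of these $N$ numbers from each of the $k$ rows, hence at most $\min(k,\mu'_j)$ of them in its $j$-th column; summing over columns gives $N\le\sum_{j\ge1}\min(k,\mu'_j)=\mu_1+\cdots+\mu_k$, contradicting the choice of $k$. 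This proves the claim, and the lemma follows.

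The first two steps are formal, so the main obstacle is the pigeonhole count of the third paragraph; apart from that, one only needs the identification of $\mathfrak S_{\mu'}$ with the column stabilizer of $\t_\mu$ and the identity $\sum_{j\ge1}\min(k,\mu'_j)=\mu_1+\cdots+\mu_k$, which merely counts the boxes of $[\mu]$ lying in its first $k$ rows by columns instead of by rows.
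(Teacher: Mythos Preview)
Your proof is correct; it is precisely the standard argument (the ``von Neumann lemma'' as found, for instance, in James's monograph on the symmetric group). The paper itself gives no proof of this lemma: the sentence immediately preceding it reads ``The following result is well-known,'' and the authors move on. So there is nothing in the paper to compare your argument against. Your reduction from compositions to partitions (via conjugation inside $\mathbb C\mathfrak S_r$), the transposition vanishing criterion $x_\lambda s g y_{\mu'}=-x_\lambda g y_{\mu'}$, and the pigeonhole count $\sum_j\min(k,\mu'_j)=\mu_1+\cdots+\mu_k$ are all standard and correctly executed.
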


\begin{lemma}\label{liehwv} There is a bijection between the set of  dominant weights of $V^{\otimes r}$ and $\Lambda^+(r, m)$, the set of partitions of $r$ with at most $m$ parts.
Further, the $\mathbb C$-space of $\mathfrak{gl_m}$-highest weight vectors with highest weight $\lambda$ has a basis
$\{ v_{\lambda} w_\lambda y_{\lambda'} d(\t)\mid \t\in \Std(\lambda')\}$.
 \end{lemma}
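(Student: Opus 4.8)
The plan is to combine classical Schur--Weyl duality with the standard basis theorem for Specht modules of $\mathfrak S_r$. For the first assertion, note that the weights of $V^{\otimes r}$ are exactly the compositions of $r$ with at most $m$ parts, so its dominant weights are the partitions in $\Lambda^+(r,m)$; as $r\le m$ this is the set of all partitions of $r$, and the identity map is the asserted bijection. Moreover, using the stated isomorphism $\text{End}_{U(\mathfrak{gl}_m)}(V^{\otimes r})\cong\C\mathfrak S_r$, the double centralizer theorem, and complete reducibility of finite-dimensional $\mathfrak{gl}_m$-modules, one gets $V^{\otimes r}\cong\bigoplus_{\lambda}L(\lambda)^{\oplus f^\lambda}$, where $L(\lambda)$ is the irreducible $\mathfrak{gl}_m$-module of highest weight $\lambda$ and $f^\lambda=\#\Std(\lambda)=\#\Std(\lambda')$. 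Hence every $\lambda\in\Lambda^+(r,m)$ really occurs as a highest weight, and the $\C$-space $H_\lambda$ of $\mathfrak{gl}_m$-highest weight vectors of weight $\lambda$ has $\dim_\C H_\lambda=f^\lambda$, since each copy of $L(\lambda)$ contributes a one-dimensional space of highest weight vectors of weight $\lambda$ while the other constituents contribute none.

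Next I would show that $v_\lambda w_\lambda y_{\lambda'}$ is a nonzero $\mathfrak{gl}_m$-highest weight vector of weight $\lambda$. Unwinding the place-permutation action \eqref{placep}, the definition $w_\lambda=d(\mathfrak t_\lambda)$ via the column-superstandard tableau, and the fact that the Young subgroup $\mathfrak S_{\lambda'}$ antisymmetrizes the tensor slots of $v_\lambda$ grouped according to the columns of $[\lambda]$, one finds that $v_\lambda w_\lambda y_{\lambda'}$ is, up to a nonzero scalar, the tensor product of the exterior vectors $v_1\wedge v_2\wedge\cdots\wedge v_c$ taken over the columns of $[\lambda]$, with $c$ running through the column lengths $\lambda'_1,\lambda'_2,\dots$. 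Each such factor spans the one-dimensional space of $\mathfrak{gl}_m$-highest weight vectors of $\Lambda^{c}V$ — it is killed by every raising operator $E_{i,i+1}$, since it either contains both $v_i$ and $v_{i+1}$ (forcing a repeated entry) or does not contain $v_{i+1}$ — so by the Leibniz rule $v_\lambda w_\lambda y_{\lambda'}$ is a highest weight vector, and its weight is readily computed to be $\lambda$ (the multiplicity of the index $i$ is the number of columns of $[\lambda]$ of length at least $i$, namely $\lambda_i$). Since the right $\mathfrak S_r$-action commutes with $U(\mathfrak{gl}_m)$ and preserves the total weight, it preserves $H_\lambda$, so every $v_\lambda w_\lambda y_{\lambda'}d(\mathfrak t)$ again lies in $H_\lambda$.

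It then remains to see that the $\#\Std(\lambda')$ vectors $v_\lambda w_\lambda y_{\lambda'}d(\mathfrak t)$, $\mathfrak t\in\Std(\lambda')$, are linearly independent; by the dimension count above they will then automatically form a basis of $H_\lambda$ (and span it). This is the classical standard basis theorem for $\mathfrak S_r$: the cyclic right $\C\mathfrak S_r$-module generated by $v_\lambda$ inside the $\lambda$-weight space of $V^{\otimes r}$ is a Young permutation module, under which $v_\lambda w_\lambda y_{\lambda'}\C\mathfrak S_r$ corresponds to the Specht-type submodule $x_\lambda w_\lambda y_{\lambda'}\C\mathfrak S_r$, and the latter is free on $\{x_\lambda w_\lambda y_{\lambda'}d(\mathfrak t)\mid\mathfrak t\in\Std(\lambda')\}$; the linear independence of these standard elements is exactly what the vanishing recorded in Lemma~\ref{ine123} yields via the usual straightening (Garnir) argument. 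Transporting back gives the stated basis of $H_\lambda$, which completes the proof.

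The only non-formal point, which I expect to be the main obstacle, is the bookkeeping in the second paragraph: one must match the paper's conventions for $v_\lambda$, for $w_\lambda=d(\mathfrak t_\lambda)$, for $y_{\lambda'}$, and for the right action \eqref{placep} (including the reversed labelling of tensor slots) carefully enough to confirm that $v_\lambda w_\lambda y_{\lambda'}$ is precisely the column-wise exterior tensor — in particular that it is nonzero and is a highest weight vector. Everything else is either the cited isomorphism $\text{End}_{U(\mathfrak{gl}_m)}(V^{\otimes r})\cong\C\mathfrak S_r$ with complete reducibility, or the classical standard basis theorem for Specht modules together with Lemma~\ref{ine123}.
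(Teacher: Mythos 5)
Your proof is correct. The paper gives essentially no proof of this lemma — it merely declares it ``well-known'' and says it ``follows from Lemma~\ref{ine123}'' — and your argument, combining classical Schur--Weyl duality for the dimension count, the identification of $v_\lambda w_\lambda y_{\lambda'}$ with a tensor of column-wise exterior vectors (hence a highest weight vector), and the standard basis theorem for Specht modules $x_\lambda w_\lambda y_{\lambda'}\C\mathfrak S_r$ transported along the isomorphism of the $\lambda$-weight space with the Young permutation module $x_\lambda\C\mathfrak S_r$, is exactly the standard route the paper is implicitly invoking.
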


Now, we  turn to construct $\mfg$-highest weight vectors  of $M_{pq}^{r0}$. Since  $r\le \min\{m, n\}$,
 there is a bijection between the set of  dominant weights of $M_{pq}^{r0}$
and  $\Lambda_{2}^+(r)$. Further, if $\lambda=(\lambda^{(1)}, \lambda^{(2)})\in \Lambda_2^+(r)$, the corresponding dominant weight of  $M_{pq}^{r0}$ is
 \begin{equation}\label{barl} \bar \lambda:=\lambda_{pq}+\tilde \lambda,\end{equation}
 where
\begin{equation}\label{tilla} \tilde \lambda=(\lambda^{(1)}_1, \cdots, \lambda^{(1)}_m\mid \lambda^{(2)}_1, \cdots, \lambda^{(2)}_n).\end{equation}
For instance, if $\lambda=((3, 1),(2,1))$, then $\tilde \lambda=(3, 1, 0, \cdots, 0\mid 2, 1, 0, \cdots, 0)$.
Recall that $\Omega$ is  a   Casimir element in $\mfg^{\otimes2}$ given in \eqref{def-Omega}.
Define operators  $s_i$, $x_1$  acting on the right of $M_{pq}^{r0} $
via the following formulae: $s_i=\pi_{i+1,i}(\Omega)$, $1\le i\le r-1$ and $x_1=-\pi_{10}(\Omega)$. In this case, $u_1=-p$ and $u_2=m-q$.
We remind that Brundan-Stroppel \cite{BS4} defined $x_1$ via $\pi_{10}(\Omega)$. So, the current $x_1$ is  $-x_1$ in \cite{BS4}.
 Recall that $v_{\mathbf i}\otimes v_{pq}=v_{i_r}\otimes \cdots \otimes v_{i_2}\otimes v_{i_1}\otimes v_{pq}$ for any $\mathbf i \in I(m|n, r)$ (cf. \eqref{imn}), and  $x_k'=x_k+L_k$ with $L_k=\sum_{i=1}^{k-1} (i, k)$ (see
 Lemma~\ref{x-p-iii}).

\begin{lemma}\cite[Lemma~3.1]{BS4} \label{xsfor} Suppose $\mathbf i\in I(m|n, r)$, and $1\le k\le r$.
\begin{enumerate} \item
$v_{\mathbf i}\otimes v_{pq} x_k'=-p v_{\mathbf i}\otimes v_{pq}$  if $1\le i_k\le m$.
\item $v_{\mathbf i}\otimes v_{pq} x_k'=-qv_{\mathbf i}\otimes v_{pq}+\sum_{j=1}^m (-1)^{\sum_{l=1}^{k-1} [i_l]} v_{\mathbf j} \otimes (E_{i_k, j} v_{pq})$
if $m+1\le i_k\le m+n$, where   $\mathbf j\in I(m|n, r)$ which is obtained from $\mathbf i$ by using $j$ instead of $i_k$ in $\mathbf i$. In particular, the weight of $v_{\mathbf j}$ is strictly bigger than that of $v_{\mathbf i}$.
 \end{enumerate}
\end{lemma}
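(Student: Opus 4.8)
The plan is to compute the right action of $x_k'$ on a basis vector $v_{\mathbf i}\otimes v_{pq}$ of $M_{pq}^{r0}$ directly, reducing first to the case $k=1$ and then to the behaviour of the Kac module $K_{\lambda_{pq}}$ near its highest weight line. Recall from Lemma~\ref{x-p-iii} that $x_1'=x_1$ and $x_k'=s_{k-1}x_{k-1}'s_{k-1}$; moreover, under the action recalled above (cf.\ \eqref{operator--1}), $x_1$ acts as $-\pi_{10}(\Omega)$, and each $s_i=\pi_{i+1,i}(\Omega)$ acts on $M_{pq}^{r0}$ as the super-flip of the tensor slots labelled $i$ and $i+1$, i.e.\ as the place permutation $(i,i+1)$ weighted by the Koszul sign coming from the parities of the two entries being exchanged. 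So I would induct on $k$: applying $s_{k-1}$ on the right moves the entry $i_k$ into slot $k-1$ (picking up the sign $(-1)^{[i_{k-1}][i_k]}$), the inductive formula for $x_{k-1}'$ then applies to that slot, and a second $s_{k-1}$ returns everything to its place. Two flips compose to the identity on the diagonal term, which therefore comes back unchanged; on the terms indexed by $j$ the accumulated sign telescopes to $\prod_{l<k}(-1)^{[i_l][i_k]}=(-1)^{[i_k]\sum_{l<k}[i_l]}$, which is trivial in case~(1) (where $[i_k]=0$) and equals $(-1)^{\sum_{l<k}[i_l]}$ in case~(2) (where $[i_k]=1$), exactly the sign in the statement.

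It then remains to treat $k=1$, i.e.\ to evaluate $-\pi_{10}(\Omega)$ on $v_{\mathbf i}\otimes v_{pq}$. Expanding $\Omega=\sum_{a,b\in I}(-1)^{[b]}E_{ab}\OTIMES E_{ba}$ and applying the first tensor factor to $v_{i_1}$ forces $b=i_1$; what remains, up to a Koszul sign, is $v_{i_1}$ replaced by $v_a$ and the Kac factor $v_{pq}$ replaced by $E_{i_1,a}v_{pq}$, summed over $a\in I$. The structural input is that $\lambda_{pq}=(p,\dots,p\,|\,{-q},\dots,{-q})$ has all of its $\mathfrak{gl}_m$-coordinates equal and all of its $\mathfrak{gl}_n$-coordinates equal, so the $\mathfrak{g}_0$-submodule $U(\mathfrak{g}_0)v_{pq}$ of $K_{\lambda_{pq}}$ is one-dimensional. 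Consequently $E_{ab}v_{pq}=0$ for every off-diagonal even $E_{ab}$ (that is, $a\ne b$ with $[a]=[b]$), $E_{ii}v_{pq}=pv_{pq}$ for $i\le m$ and $E_{ii}v_{pq}=-qv_{pq}$ for $i>m$, and $E_{ab}v_{pq}=0$ whenever $a\le m<b$ (such $E_{ab}$ lie in $\mathfrak{g}_1$, and $v_{pq}$ is a highest weight vector). Thus the only surviving terms come from $a=i_1$ and, in case~(2), from the $\mathfrak{g}_{-1}$-generators $E_{i_1,a}$ with $a\le m<i_1$: in case~(1), $i_1\le m$, only $a=i_1$ survives and one obtains $-pv_{\mathbf i}\otimes v_{pq}$; in case~(2), $i_1>m$, the term $a=i_1$ yields $-qv_{\mathbf i}\otimes v_{pq}$ while the remaining $a\le m$ yield $\sum v_{\mathbf j}\otimes E_{i_1,a}v_{pq}$, where $\mathbf j$ is $\mathbf i$ with $i_1$ replaced by $a$; and since $a\le m<i_1$ the weight $\es_a$ of $v_a$ exceeds the weight $\es_{i_1}$ of $v_{i_1}$ in the root order for the distinguished Borel, which gives the last assertion.

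The one genuinely delicate point is keeping the super (Koszul) signs straight throughout: the factor $(-1)^{[b]}$ in $\Omega$, the signs from each $E_{ab}$ or $E_{ba}$ commuting past the intervening tensor factors, and the right-action and slot-ordering conventions fixed in \eqref{operator--1} and \eqref{pi-ab}. Everything else — the one-dimensionality of the $\mathfrak{g}_0$-top of $K_{\lambda_{pq}}$ and the inductive flip argument — is routine; with the dictionary that the present $x_1$ is the negative of Brundan--Stroppel's, the computation is \cite[Lemma~3.1]{BS4}.
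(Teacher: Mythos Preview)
Your proposal is correct and is essentially the standard direct computation; the paper itself offers no proof at all, simply citing \cite[Lemma~3.1]{BS4}, and your argument is precisely the one that underlies that reference (with the sign change $x_1\mapsto -x_1$ accounted for). The reduction to $k=1$ via $x_k'=s_{k-1}x_{k-1}'s_{k-1}$, the observation that the second flip in case~(2) contributes no sign because $[j]=0$, and the use of the one-dimensionality of the $\mathfrak g_0$-top of $K_{\lambda_{pq}}$ are all exactly right.
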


\begin{definition}\label{muh}  For $\lambda\!=\!(\lambda^{(1)}, \lambda^{(2)}) \in \Lambda_2^+(r)$,  define $v_{\tilde\lambda}\!=\!v_{\mathbf i}$ with
$\mathbf i\!=\!(\mathbf i_{\l^{(1)}}, \mathbf i_{\l^{(2)}})\in I(m|n, r)$.
\end{definition}

For instance, $v_{\tilde \lambda}=v_{\mathbf i}$ if $\lambda=((3, 1),(2,1))$, where $\mathbf i=(1^3, 2, (m+1)^2, m+2)$.
\begin{definition}\label{vt1} For any $\t\in \Std(\lambda')$, we   define  $v_\t= v_{\tilde\lambda }\otimes v_{pq} w_{\lambda}\mathfrak y_{\l'} d(\t)$, where
$\mathfrak y_{\l'}$ is given in Definition~\ref{h-cellu}\,(2).\end{definition}

\begin{theorem} \label{hecg} Suppose $r\le \min\{m, n\}$.  There is a bijection between the set of  dominant weights of $M_{pq}^{r0}$ and  $\Lambda_{2}^+(r)$.
Further, the $\mathbb C$-space $V_{\bar \l}$ of $\mfg$-highest weight vectors of $M_{pq}^{r0}$ with highest weight $\bar\lambda$ has a basis
$\{ v_\t \mid \t\in \Std(\lambda')\}$.
\end{theorem}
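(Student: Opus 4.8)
The plan is to carry out the two-step reduction announced at the start of the section: first from $\mfg$ to $\mfg_0=\mathfrak{gl}_m\oplus\mathfrak{gl}_n$, then from $\mfg_0$ to the ordinary $\mathfrak{gl}_m$-situation covered by Lemma~\ref{liehwv}. First I would invoke \cite[Lemmas~5.1--5.2]{SHK}: since $\lambda_{pq}$ is typical in the sense of \eqref{Do-condi}, the map sending a $\mfg$-highest weight vector $v$ of $M_{pq}^{r0}$ to its $v_{pq}$-coefficient (the image of $v$ under the linear projection $V^{\otimes r}\otimes K_{\lambda_{pq}}\twoheadrightarrow V^{\otimes r}\otimes\C v_{pq}\cong V^{\otimes r}$) is a bijection onto the set of $\mfg_0$-highest weight vectors of $V^{\otimes r}$, carrying weight $\bar\lambda=\lambda_{pq}+\tilde\lambda$ to $\tilde\lambda$ and preserving dimensions of weight spaces. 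Since $r\le\min\{m,n\}$, every $(\lambda^{(1)},\lambda^{(2)})\in\Lambda_2^+(r)$ has $\ell(\lambda^{(1)})\le m$ and $\ell(\lambda^{(2)})\le n$, so this already gives the asserted bijection between the dominant weights of $M_{pq}^{r0}$ and $\Lambda_2^+(r)$, and reduces the rest to exhibiting a basis of the $\mfg_0$-highest weight space of $V^{\otimes r}$ of weight $\tilde\lambda$.

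Next, as a $\mfg_0$-module $V=\C^m\oplus\C^n$, so $V^{\otimes r}|_{\mfg_0}=\bigoplus_{S\subseteq\{1,\dots,r\}}W_S$, where the tensor slots of $W_S$ indexed by $S$ carry the natural $\mathfrak{gl}_m$-module and the remaining slots the natural $\mathfrak{gl}_n$-module. With $a=|\lambda^{(1)}|$, a $\mfg_0$-highest weight vector of weight $\tilde\lambda$ lies in the $\binom{r}{a}$ summands with $|S|=a$, and in each of those it is the product of a $\mathfrak{gl}_m$-highest weight vector of shape $\lambda^{(1)}$ on the slots in $S$ with a $\mathfrak{gl}_n$-highest weight vector of shape $\lambda^{(2)}$ on the complementary slots. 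Applying Lemma~\ref{liehwv} to $\mathfrak{gl}_m$ and its evident $\mathfrak{gl}_n$-analogue, each such space has dimension $\#\Std((\lambda^{(1)})')$, resp.\ $\#\Std((\lambda^{(2)})')$, with explicit bases of the shape $v_{\lambda^{(i)}}w_{\lambda^{(i)}}y_{(\lambda^{(i)})'}d(\cdot)$. Since $\#\Std(\lambda')=\binom{r}{a}\,\#\Std((\lambda^{(1)})')\,\#\Std((\lambda^{(2)})')$, summing over $S$ shows the $\mfg_0$-highest weight space of $V^{\otimes r}$ of weight $\tilde\lambda$ has dimension exactly $\#\Std(\lambda')$, which is the number of the vectors $v_\t$.

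It then remains to recognise the $v_\t$ as the \cite{SHK}-lifts of these $\mfg_0$-highest weight vectors. In $v_\t=v_{\tilde\lambda}\otimes v_{pq}\,w_\lambda\mathfrak y_{\lambda'}d(\t)$, the permutation $w_\lambda$ repositions the $\mathfrak{gl}_m$- and $\mathfrak{gl}_n$-blocks of $v_{\tilde\lambda}$ so that the factors of $\mathfrak y_{\lambda'}=\tilde\pi_{\lambda'}x_{(\lambda')^{(1)}}y_{(\lambda')^{(2)}}$ act on the correct blocks; the symmetrizer/antisymmetrizer factors $x_{(\lambda')^{(1)}},y_{(\lambda')^{(2)}}$ then produce, out of $v_{\tilde\lambda}$, the $\mfg_0$-highest weight vectors of the two blocks exactly as in Lemma~\ref{liehwv}, all other terms being killed by Lemma~\ref{ine123}; and the cyclotomic factor $\tilde\pi_{\lambda'}=\prod_i(y_i-u_1)$ supplies the $K_{\lambda_{pq}}$-tail that turns the resulting $\mfg_0$-highest weight vector (times $v_{pq}$) into a genuine $\mfg$-highest weight vector. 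The point in this last step is Lemma~\ref{xsfor}: each factor $y_i-u_1$ annihilates any term whose $i$-th slot carries a $\mathfrak{gl}_m$-index, whereas an odd raising operator $E\in\mfg_1$ increases the number of such slots, so $\tilde\pi_{\lambda'}$ annihilates $E\cdot(v_{\tilde\lambda}\otimes v_{pq})$ and leaves the $\mfg_0$-highest weight content unchanged up to strictly higher-weight corrections. Taking $v_{pq}$-coefficients recovers the $\mfg_0$-highest weight basis vectors, so the $v_\t$ are linearly independent $\mfg$-highest weight vectors of weight $\bar\lambda$; by the dimension count they form a basis of $V_{\bar\lambda}$.

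The main obstacle is this last step --- making the identification of $v_\t$ with the \cite{SHK}-lift precise, and in particular the $\mfg_1$-invariance. The $\mfg_0$-invariance is the purely even statement of Lemma~\ref{liehwv}/\ref{ine123} carried through the $\mathfrak{gl}_m\oplus\mathfrak{gl}_n$-decomposition; the genuinely super input is that $\tilde\pi_{\lambda'}$, via Lemma~\ref{xsfor} and the commutativity of the $y_i$, produces exactly the Kac-module correction terms demanded by \cite{SHK}. Getting this right requires careful bookkeeping over the $\binom{r}{a}$ summands $W_S$ and an exact dictionary, with its sign and shift conventions, between the parameters $u_1,u_2$ and the eigenvalues appearing in Lemma~\ref{xsfor}.
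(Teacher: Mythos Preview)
Your overall architecture is exactly the paper's: reduce to $\mfg_0$ via the bijection of \cite[Lemmas~5.1--5.2]{SHK}, decompose $V^{\otimes r}$ under $\mfg_0$ and invoke Lemma~\ref{liehwv} blockwise to get the dimension count, then use Lemma~\ref{xsfor} for the super step. The dimension argument and the identification of the $v_{pq}$-component are fine in outline.

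There is, however, a genuine gap in your $\mfg_1$-invariance argument. You assert that ``each factor $y_i-u_1$ annihilates any term whose $i$-th slot carries a $\mathfrak{gl}_m$-index.'' But Lemma~\ref{xsfor} computes the action of $x_k'$, not of $x_k$ (equivalently $y_k$); these differ by the Jucys--Murphy element $L_k=\sum_{j<k}(j,k)$, which is nonzero for $k>1$. So for $i>1$ the factor $y_i-u_1$ does \emph{not} kill a vector merely because slot $i$ carries an $I_0$-index; it produces extra terms coming from $L_i$. Your slot-by-slot annihilation picture therefore does not go through as stated.

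The paper handles this by a reduction to $k=1$. One checks only $E_{m,m+1}$ (the even raising operators are covered by Lemma~\ref{liehwv}). Acting on $v_{\tilde\lambda}\otimes v_{pq}$, $E_{m,m+1}$ produces a single basis vector $v_{\mathbf j}\otimes v_{pq}$ with $j_{a+1}=m$, multiplied by an element of $\C\mathfrak S_{a}\otimes\C\mathfrak S_{r-a}$. Using $w_\lambda=w_{\lambda^{(1)}}w_{\lambda^{(2)}}w_a$ and the commutation \eqref{conjwa}, all symmetric-group factors can be pushed past $w_a$ and past $\pi_{r-a}(-p)$ (the latter is a symmetric polynomial in $y_1,\dots,y_{r-a}$, hence central in the parabolic sub-Hecke algebra). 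What remains is $v_{\mathbf k}\otimes v_{pq}\cdot\pi_{r-a}(-p)$ with $k_1=m\in I_0$, and now the single factor at $i=1$ applies: $x_1=x_1'$, so Lemma~\ref{xsfor}(1) gives zero. This is the manoeuvre missing from your sketch.

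A second point you leave implicit is the nonvanishing of $v_\t$. The paper computes the $v_{pq}$-component explicitly (cf.~\eqref{vpqt}) and finds it equals, up to the scalar $\prod_{i=1}^{r-a}(p-q-\res_{\t^{\mu^{(2)}}}(i))$, a nonzero $\mfg_0$-highest weight vector from Lemma~\ref{liehwv}. That scalar is nonzero precisely because $\lambda_{pq}$ is typical in the sense of \eqref{Do-condi} and $r\le\min\{m,n\}$; you should make this step explicit, since otherwise linear independence does not follow from the $v_{pq}$-projection argument.
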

\begin{proof} The required bijection between $\Lambda_{2}^+(r)$ and the set of  dominant weights of $M_{pq}^{r0}$
is the map sending $\lambda$ to $\bar\lambda$ defined in \eqref{barl}.
We claim that each $ v_\t$  is killed by $E_{m, m+1}$ and $E_{i, j}$ with $i<j$
and either $i, j\in I_0$ or $i, j\in I_1$. Since $M_{pq}^{r0} $ is $(\mfg,\mathscr H_{2, r})$-bimodule, we need only consider the case $d(\t)=1$. In this case,  $\t=\t^{\lambda'}$.

Denote $|\lambda^{(1)}|=a$. Recall that   $w_{\lambda^{(1)}}\in \mathfrak S_a$ and  $w_{\lambda^{(2)}}\in \mathfrak S_{r-a}$ such that  $\t^{\lambda^{(i)}} w_{\lambda^{(i)}}=\t_{ \lambda^{(i)}}$ for $i=1, 2$. Then
  \begin{equation}\label{wla1} w_\lambda =w_{\lambda^{(1)}} w_{\lambda^{(2)}} w_a=w_a  w_{\lambda^{(2)}} w_{\lambda^{(1)}}.  \end{equation}
By \eqref{conjwa} and \eqref{wla1},   $$v_{\t} = v_{\tilde\lambda }\otimes v_{pq} w_{\lambda^{(1)}} w_{\lambda^{(2)}} y_{\mu^{(1)}} x_{\mu^{(2)}} w_a \pi_{r-a}(-p),$$
where $\mu^{(i)}$ is the conjugate of $\l^{(i)}$ for $i=1, 2$.
By  Lemmas~\ref{ine123}--\ref{liehwv}, $v_{\t} $  is killed by  $E_{i, j}$ with $i<j$ and
either $i,j\in I_0$ or $i, j\in I_1$.
Since  $E_{m, m+1}$ acts on $M_{pq}^{r0} $ via $\sum_{i=1}^{r+1} 1^{\otimes i-1}\otimes E_{m, m+1}\otimes 1^{\otimes r+1-i}$, we have  $E_{m, m+1} v_{\tilde \lambda}\otimes v_{pq}=0$ if $v_{m+1}$ does not occur in $v_{\tilde \lambda}$. Otherwise, $\lambda^{(2)}\neq \emptyset$ and $r-a\neq 0$. In this case, up to a sign, $E_{m, m+1} v_{\tilde \lambda} \otimes v_{pq}$ is equal to
 $$v_{\mathbf j} \otimes v_{pq}(1-s_{a+1}+s_{a+1, a+3}+\cdots+ (-1)^{b-a} s_{a+1, b+1}),$$
 where $b=a+\lambda^{(2)}_1-1$ and $v_{\mathbf j}$ is obtained from $v_{\tilde \lambda}$ by replacing $v_{m+1}$ by $v_{m}$ at the $(a+1)$-th position. Thus,  $j_{a+1}=m$.
 Let  $$h=(1-s_{a+1}+s_{a+1, a+3} +\cdots+ (-1)^{b-a} s_{a+1, b+1}) w_{\lambda^{(1)}} w_{\lambda^{(2)}} y_{\mu^{(1)}} x_{\mu^{(2)}}.$$
Then  $h\in \mathbb C\mathfrak S_a\otimes \mathbb C\mathfrak S_{r-a}.$ By \eqref{conjwa}, $h w_a=w_a h_1$ for some  $h_1\in
 \mathbb C\mathfrak S_{r-a}\otimes \mathbb C \mathfrak S_{a}$. Since $h_1 \pi_{r-a}(-p)=\pi_{r-a}(-p) h_1$, it is enough to prove
 $v_{\mathbf j} \otimes v_{pq} w_a \pi_{r-a}(-p)=0$. Up to a sign,   $v_{\mathbf j} \otimes v_{pq} w_a =v_{\mathbf k}\otimes v_{pq}$ for some $\mathbf k$ such that  $v_{k_1}=v_m\in V_0$.
Since $r-a\neq 0$,  $x_1+p$ is a factor of $\pi_{r-a}(-p)$. By Lemma~\ref{xsfor}\,(1), $v_{\mathbf j} \otimes v_{pq} w_a \pi_{r-a}(-p)=0$. Thus,  $v_{\t} $ is a highest weight vector of $M_{pq}^{r0}$ if $v_\t\neq 0$.

 Note that any vector of $M_{pq}^{r0}$ can be written as $v=\sum_{b\in B}v_{b}\otimes b$, where $B$ is a basis of $K_{\l_{pq}}$ defined in \eqref{basis-k-l} and  $v_b\in V^{\otimes r}$. Following \cite{BS4},  $v_{b}$ is called   the {\it $b$-component} of $v$.
By Lemma~\ref{xsfor}\,(2) (or the arguments in the proof of \cite[Corollary~3.3]{BS}), the $v_{pq}$-component of  $v_{\tilde\lambda }\otimes v_{pq} w_{a}\pi_{r-a}(-p)$ is
 $v_{\tilde\lambda } w_{a} \prod_{i=1}^{r-a} (p-q-L_i)$.  By Lemma~\ref{x-p-iii}\,(3),  $\prod_{i=1}^{r-a} (p-q-L_i)$ is  a  central element in $\mathbb C\mathfrak S_{r-a}$, which   acts on $v_{\tilde\lambda }\otimes v_{pq} w_{\lambda^{(2)}} x_{\mu^{(2)}}$ as scalar
     $\prod_{i=1}^{r-a} (p-q-\res_{\t^{\mu^{(2)}}}  (i))$, where  $\mu=\l'$ and   $ \res_{\t^{\mu^{(2)}}}(i) $ is $j-l$ if $i$ is in the $l$-th row and  $j$-th column of  $\t^{\mu^{(2)}}$.  Since  $\lambda_{pq}$ is typical (cf. \eqref{Do-condi}), and  $r\le \min\{m, n\}$,
    $\prod_{i=1}^{r-a} (p-q-\res_{\t^{\mu^{(2)}}}  (i))\neq 0$.  So, \begin{equation} \label{vpqt} \text{the $v_{pq}$-component of
    $ v_{\t} $  \ = \  $v_{\tilde\lambda } w_a w_{\lambda^{(2)}} x_{\mu^{(2)}} w_{\lambda^{(1)}} y_{\mu^{(1)}} d(\t) $ (up to a non-zero scalar).}\end{equation}
     By Lemma~\ref{liehwv}, it is a $\mfg_0$-highest vector of $V^{\otimes r}$
    with highest weight $\tilde \l$ (cf.~\eqref{tilla}), forcing $v_\t\neq 0$.

    Now, we prove that $\{ v_{\t} \mid \t\in \Std(\lambda')\}$ is $\mathbb C$-linear independent.
    First, consider $V=V_0\oplus V_1$ as a module for $\mfg_0=\mathfrak{gl}_m\oplus \mathfrak{gl}_n$. Then $V^{\otimes r }$ can be decomposed as a direct sum of $V_{i_1}\otimes V_{i_2}\otimes\cdots \otimes V_{i_r}$, where
    $i_j\in \{0, 1\}$. As $\mfg_0$-modules,  $V_{i_1}\otimes V_{i_2}\otimes\cdots \otimes V_{i_r}\cong V_1^{\otimes r-a}\otimes  V_0^a$ for some non-negative integer $a\le r$ with  $a=\# \{i_j\mid i_j=0\}$. The corresponding isomorphism is given by
acting  a unique  element $w$ on the right hand side of   $V_1^{\otimes r-a}\otimes  V_0^a$ , where $w$ is  a distinguished right coset representatives of $ \mathfrak S_a\times \mathfrak S_{r-a}$ in $\mathfrak S_r$.
By Lemma~\ref{liehwv}, all $\mfg_0$-highest weight vectors
of $V_{i_1}\otimes V_{i_2}\otimes\cdots \otimes V_{i_r}$ with highest weight $\tilde\l$ are $v_{\tilde\lambda }w_{\lambda^{(1)}} y_{\mu^{(1)}} w_{\lambda^{(2)}} x_{\mu^{(2)}}  d(\t_1) d(\t_2) w$ for all $\t_1\in \Std(\mu^{(1)})$ and $\t_2\in \Std(\mu^{(2)})$.  Therefore, the $\mathbb C$-space $V_{\tilde\l}$  of all $\mfg_0$-highest weight vectors of $V^{\otimes r}$ with highest weight $\tilde \l$  has  a basis $\{
  v_{\tilde\lambda } w_{\lambda^{(2)}} x_{\mu^{(2)}} w_{\lambda^{(1)}} y_{\mu^{(1)}} d(\t)\mid \t\in \Std(\lambda')\}$, where $\mu=\l'$.
  By \eqref{vpqt},  $\{ v_{\t}\mid \t\in \Std(\lambda')\}$ is $\mathbb C$-linear independent. Finally, since there is a  one to one correspondence between  $\mathfrak{g}$-highest weight vectors of $M_{pq}^{r0}  $  and  $\mathfrak{g}_{ 0}$-highest weight vectors of $V^{\otimes r}$ (cf. \cite[Lemmas~5.1-5.2]{SHK}),  and $\dim V_{\tilde \l}=\#  \{ v_{\t}\mid \t\in \Std(\lambda')\}$, one obtains that
   $\{ v_{\t}\mid \t\in \Std(\lambda')\}$ is a basis of $V_{\bar \l}$. \end{proof}

In the remaining part of this section, we want to establish the relationship between $V_{\bar \l}$ with
a special  cell module of $\mathscr H_{2, r}$ with respect to $\lambda\in \Lambda_2^+(r)$.
 This result will be needed in section~6.  We go on using $-x_1$ instead of $x_1$ in \cite{BS4}. In this case, the current  $-p$ and $m-q$ are the same as $p$ and $q$ in \cite{BS4}.

\begin{proposition}\label{cell-iso1} For any $\lambda\in \Lambda_2^+(r)$, $V_{\bar \lambda}\cong\mathfrak x_\lambda w_\lambda \mathfrak y_{\lambda'}\mathscr H_{2, r}$  as right $\mathscr H_{2, r}$-modules, where $V_{\bar\l}$ is defined in Theorem~$\rm\text{\rm\ref{hecg}}$.
 \end{proposition}

\begin{proof} By Lemma~\ref{ineq}\,(2),
 $S^\lambda:=\mathfrak x_\lambda w_\lambda \mathfrak y_{\lambda'}\mathscr H_{2, r}$ has a basis $M=\{\mathfrak x_\l w_\l \mathfrak y_{\l'} d(\t)\mid \t\in \Std(\lambda')\}$.
It follows from Theorem~\ref{hecg} that  there is a linear isomorphism $\phi: V_{\bar\lambda}\rightarrow S^\lambda$  sending $v_\t$ to $\mathfrak x_\l w_\l \mathfrak y_{\l'} d(\t)$.
 Obviously, $\phi$ is a  right $\mathfrak S_r$-homomorphism. In order to show that $\phi$ is a right $\mathscr H_{2, r}$-homomorphism,  it suffices to prove that \begin{equation} \label{xkc} \phi(v_\t x_k)=\phi(v_\t) x_k \text{ for $1\le k\le r$.}\end{equation}
Denote $a=|\lambda^{(1)}|$. If $1\le k\le r-a$, then $\tilde \pi_{\l'}  x_k=\pi_{r-a} (m-q)x_k=\pi_{r-a} (m-q)(-p-L_k)$. Since  $\phi$ is a right $\mathfrak S_r$-homomorphism,  \eqref{xkc} holds for $1\le k\le r-a$.
If $r-a+1\le k\le r$, then $x_k=s_{k, r-a+1}  x_{r-a+1} s_{r-a+1, k}  -\sum_{j=r-a+1}^{k-1} (j, k)$.
By Lemma~\ref{ineq}\,(1),   \begin{equation} \label{spec1} \pi_\l  w_a \tilde \pi_{\l'} x_k=\pi_\l w_a \tilde \pi_{\l'}   \Big(-p-\mbox{$\sum\limits_{j=r-a+1}^{k-1}$}  (j, k)\Big). \end{equation}
On the other hand, $\tilde \pi_{\l'}  x_k=x_k \tilde \pi_{\l'} $ and $v_{\tilde \lambda} \otimes v_{pq}  w_{\lambda^{(1)}} w_{\lambda^{(2)}} y_{\mu^{(1)}} x_{\mu^{(2)}} w_a$ is a linear combination of elements
$ v_{\mathbf i}\otimes v_{pq}$, for some $\mathbf i\in I(m|n, r)$ such that  $v_{i_j}\in V_0$ for all $r-a+1\le j\le r$.
By Lemma~\ref{xsfor}\,(1), $x_k$ acts on $ v_{\mathbf i}\otimes v_{pq}$ as $-p-L_k$. In order to verify  \eqref{xkc} for $k\ge r-a+1$,  by \eqref{spec1}, it remains to show that
\begin{equation}\label{ik1} v_{\mathbf i}\otimes  v_{pq}      (i, k)\tilde \pi_{\l'}=0
\ \ \text{for all $i, 1\le i\le r-a$.}\end{equation} Write $v_{\mathbf i}\otimes  v_{pq}      (i, k)=v_{\mathbf j}$ up to a sign. Then $v_{j_i}\in V_0$ and  $v_{\mathbf j} (1, i)\tilde \pi_{\l'}=0$ by  Lemma~\ref{xsfor}\,(1).
 Since  $(1, i)\tilde \pi_{\l'}= \tilde\pi_{\l'}(1, i)$, and $(1, i)$ is invertible,   $v_{\mathbf j}  \tilde \pi_{\l'}=0$, proving \eqref{ik1}.
\end{proof}

\begin{coro}\label{heck-kac0}
Suppose  $\lambda\in \Lambda_2^+(r)$.  As right $\mathscr H_{2, r}$-modules,
\begin{equation}\label{kac-cell23} \Hom_{U(\mfg)} (K_{\bar \lambda}, M_{pq}^{r0} ) \cong \tilde \Delta(\lambda')\end{equation}  where $\tilde \Delta(\lambda')$ is the right cell module  defined via the cellular basis of $\mathscr H_{2, r}$ in Lemma~$\text{\rm\ref{cell-h2}\,(2)}.$
\end{coro}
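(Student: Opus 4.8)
The plan is to identify $\Hom_{U(\mfg)}(K_{\bar\lambda},M_{pq}^{r0})$ with the highest weight space $V_{\bar\lambda}$ of Theorem~\ref{hecg}, and then to feed in Proposition~\ref{cell-iso1} and Lemma~\ref{ineq}\,(5). First I would invoke the universal property of the Kac-module \cite{Kac77}: $K_{\bar\lambda}$ is induced from the irreducible finite-dimensional $\mfg_{0}$-module $L^{0}(\bar\lambda)$ of highest weight $\bar\lambda$, inflated to the parabolic subalgebra $\fp\supseteq\fh$ with Levi part $\mfg_{0}$ by letting the odd part act as zero. By Frobenius reciprocity,
\[
\Hom_{U(\mfg)}(K_{\bar\lambda},M_{pq}^{r0})\;\cong\;\Hom_{U(\fp)}(L^{0}(\bar\lambda),M_{pq}^{r0}),
\]
and a homomorphism on the right is determined by the image of a highest weight generator of $L^{0}(\bar\lambda)$, which must be a $\mfg_{0}$-highest weight vector of weight $\bar\lambda$ annihilated by $\mfg_{1}$. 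Since $\mfg_{0}$ is reductive (so $M_{pq}^{r0}$ is $\mfg_{0}$-semisimple) and the nilradical of the standard Borel of $\mfg$ is $\fn^{+}=\fn_{0}^{+}\oplus\mfg_{1}$, such vectors are exactly the $\mfg$-highest weight vectors of $M_{pq}^{r0}$ of weight $\bar\lambda$, that is, the elements of $V_{\bar\lambda}$. This yields a $\C$-linear isomorphism $\Hom_{U(\mfg)}(K_{\bar\lambda},M_{pq}^{r0})\cong V_{\bar\lambda}$ sending a homomorphism to the image of the generator.

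Next I would check that this isomorphism respects the right $\mathscr H_{2,r}$-module structures. The right $\mathscr H_{2,r}$-action on $M_{pq}^{r0}$ is given by the algebra homomorphism $\mathscr H_{2,r}\to{\rm End}_{U(\mfg)}(M_{pq}^{r0})$ implemented by the operators $s_{i}$, $x_{1}$ fixed before Theorem~\ref{hecg} (with parameters $u_{1}=-p$, $u_{2}=m-q$), and these commute with the left $U(\mfg)$-action. Hence $\mathscr H_{2,r}$ acts on $\Hom_{U(\mfg)}(K_{\bar\lambda},M_{pq}^{r0})$ by post-composition, and transporting this through the identification of the previous paragraph gives precisely the restriction to $V_{\bar\lambda}$ of the right $\mathscr H_{2,r}$-action on $M_{pq}^{r0}$. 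Therefore $\Hom_{U(\mfg)}(K_{\bar\lambda},M_{pq}^{r0})\cong V_{\bar\lambda}$ as right $\mathscr H_{2,r}$-modules.

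Finally I would chain the isomorphisms already at our disposal: by Proposition~\ref{cell-iso1}, $V_{\bar\lambda}\cong\mathfrak x_{\lambda}w_{\lambda}\mathfrak y_{\lambda'}\mathscr H_{2,r}$ as right $\mathscr H_{2,r}$-modules, and by Lemma~\ref{ineq}\,(5) the latter is isomorphic to $\tilde\Delta(\lambda')$, the right cell module of $\mathscr H_{2,r}$ defined via the cellular basis $S_{2}$ of Lemma~\ref{cell-h2}\,(2). Composing the three isomorphisms gives $\Hom_{U(\mfg)}(K_{\bar\lambda},M_{pq}^{r0})\cong\tilde\Delta(\lambda')$, as required. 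I do not expect a genuine obstacle here: the only delicate point is bookkeeping — keeping the parameters defining $\mathscr H_{2,r}$ and $\tilde\Delta(\lambda')$ aligned with the sign convention $x_{1}=-\pi_{10}(\Omega)$ on $M_{pq}^{r0}$, so that the $-y_{1}$, $u_{1}$, $u_{2}$ of \cite{BS4} correspond to $x_{1}$, $-p$, $m-q$ here, exactly as arranged before Proposition~\ref{cell-iso1} — and all of the substantive work has already been done in Theorem~\ref{hecg} and Proposition~\ref{cell-iso1}.
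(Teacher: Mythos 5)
Your proposal is correct and follows essentially the same route as the paper: both identify $\Hom_{U(\mfg)}(K_{\bar\lambda},M_{pq}^{r0})$ with $V_{\bar\lambda}$ via the image of the highest weight generator (you package this as Frobenius reciprocity for the parabolic induction defining $K_{\bar\lambda}$, the paper invokes the universal property directly), observe that the $\mathscr H_{2,r}$-action by $U(\mfg)$-endomorphisms makes this an $\mathscr H_{2,r}$-module isomorphism, and then chain Proposition~\ref{cell-iso1} with Lemma~\ref{ineq}\,(5). The extra care you take in noting that a $\mfg_0$-highest weight vector annihilated by $\mfg_1$ is exactly a $\mfg$-highest weight vector is implicit in the paper's reliance on Theorem~\ref{hecg}.
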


\begin{proof} For any $\mfg$-highest weight vector $v$ of $M_{pq}^{r0} $
 with highest weight $\bar \lambda$, there is a unique $U(\mfg)$-homomorphism   $f_v: K_{\bar \lambda} \rightarrow  \mathbf U(\mfg) v\subset M_{pq}^{r0}$
 sending $v_{\bar \lambda}$ to $v$, 
 where $v_{\bar  \lambda}$ is the highest weight vector of $K_{\bar  \lambda}$.
 Further $f_v$ can be considered as a homomorphism in $\Hom_{U(\mfg)} (K_{\bar \lambda}, M_{pq}^{r0} )$ by composing an embedding homomorphism.

 For any $0\neq f\in \Hom_{U(\mfg)} (K_{\bar\l}, M_{pq}^{r0})$, $f(v_{\bar\l})$ is a highest
 weight vector of $ M_{pq}^{r0}$. By Theorem~\ref{hecg},  $f(v_{\bar\l})$ is a linear combination of
 $v_\t$'s, for $\t\in \Std(\lambda')$. So, $f$ can be written as a linear combination of $f_{v_\t}$'s.
 Thus, $\{f_{v_\t}\mid \t\in \Std(\lambda')\}$ is a basis of  $\Hom_{U(\mfg)} (K_{\bar \lambda}, M_{pq}^{r0}) $.
 Let $V_{\bar\l}$ be defined in Theorem~\ref{hecg}.  Then the linear isomorphism $\phi:  \Hom_{U(\mfg)} (K_{\bar \lambda}, M_{pq}^{r0})\rightarrow V_{\bar\l}$ sending $f_{v_\t}$ to $v_\t$ for any $\t\in \Std(\lambda')$
 is a right $\mathscr H_{2, r}$-homomorphism. By  Lemma~\ref{ineq}\,(5) and
  Proposition~\ref{cell-iso1}, $V_{\bar\l}\cong \tilde \Delta(\lambda')$, proving \eqref{kac-cell23}.
 \end{proof}

In the remaining part of this section, we always assume   $p-q\le -m$. If $p-q\ge n$, one can switch roles between $p$ and $q$ (or consider the dual module of $M_{pq}^{r0}$).  Without loss of any generality,  we assume $p, q\in \mathbb Z$.

Let $\l\in\L^+_{2}(r)$ with $r\le\min\{m,n\}$. Then $\l$ corresponds to a dominant weight $\bar\l$ defined in \eqref{barl}.
 In particular, $\bar\emptyset=\l_{pq}$.
Following \cite{BS4, GS, SZ3}, we are going to represent a dominant weight $\bar\l$ in a unique way by a {weight diagram} $D_\l$.
First we write (cf. \eqref{lrho})
\equa{rho-lambda=}{\bar\l^\rho\!=\!\bar\l\!+\!\rho\!=\!
(\bar\l^{L,\rho}_1,...,\bar\l^{L,\rho}_m\,|\,{\bar\l^{R,\rho}_1},...,{\bar\l^{R,\rho}_n}).}
Denote
\equan{set-s-l}{
\begin{aligned}
S(\l)\LE&=\{\bar\l^{L,\rho}_i\,|\,i=1,...,m\},&\quad
S(\l)\RI&=\{{-\bar\l^{R,\rho}_j}\,|\,j=1,...,n\},\\
S(\l)&=S(\l)\LE\cup S(\l)\RI,
&\quad  S(\l)\BO&=S(\l)\LE\cap S(\l)\RI.
\end{aligned}}
\begin{definition}\label{weight-d}The {\it weight diagram} $D_\l$ associated with the dominant weight $\bar\l$ is a line with vertices indexed by $\Z$ such that each  vertex $i$
is associated with a symbol $D_\l^{\,i}=\emptyset,<,>$ or $\times$
according to whether $i\notin S(\l)$, $i\in S(\l)\RI\setminus
S(\l)\BO$, $i\in S(\l)\LE\setminus S(\l)\BO$ or $i\in S(\l)\BO$.
\end{definition}
For example, if $p,q\in\Z$ with $p\le q-m$, then
the weight diagram $D_{\emptyset}$ of $\bar\emptyset=\l_{pq}$ is given by
\equa{Diagram-l}{
\raisebox{-0pt}\ \ \ \ .\ .\ .\
\line(1,0){12}\raisebox{-5pt}{$\stackrel{\dis\empty}{\footnotesize }$}\line(1,0){12} \raisebox{-5pt}{$\stackrel{\dis
>}{\circ}$}\put(-15,-9){$\ssc p-m+1$}\line(1,0){12} \raisebox{-5pt}{$\stackrel{\dis
>}{\circ}$}\line(1,0){12}\ldots
\line(1,0){12}\raisebox{-5pt}{$\stackrel{\dis >}{\circ}$}\put(-8,-9){$\ssc p$}
\line(1,0){12}\raisebox{-5pt}{$\stackrel{\dis \empty}{\circ}$}
\line(1,0){12}\raisebox{-5pt}{$\circ$}\line(1,0){12}\ldots
\line(1,0){12}\raisebox{-5pt}{$\stackrel{\dis <}{\circ}$}\put(-15,-9){$\ssc q-m+1$}
\line(1,0){12}\raisebox{-5pt}{$\stackrel{\dis <}{\circ}$}\line(1,0){12}\ldots
\line(1,0){12}\raisebox{-5pt}{$\stackrel{\dis <}{\circ}$}
\line(1,0){12}\raisebox{-5pt}{$\stackrel{\dis <}{\circ}$}\put(-15,-9){$\ssc q-m+n$}
\line(1,0){12}\raisebox{-5pt}{$\circ$}\line(1,0){12} \ .\ .\ .,
}
where, for simplicity, we have associated vertex $i$ with nothing if
$D_\l^i=\emptyset$.  Note that $\sharp S(\emptyset)\BO=0$, i.e., $\l_{pq}$ is typical.

\begin{definition}\label{topp}Let $ \bar \lambda$ be as in \eqref{barl}, where  $\l\in\L^+_2(r)$. \begin{itemize}\item[(1)]
Let $\bar\l^{\rm top}$ be the unique dominant weight such that $L_{\bar \l}$ is the simple submodule of the Kac-module $K_{\bar\l^{\rm top}}$.
Then $\bar\l^{\rm top}$ is obtained from $\bar\l$ via the unique longest right path (cf.~\cite[Definition 5.2]{SZ3}, ~\cite[Conjecture~4.4]{vZ})   or via
a raising operator (cf.~\cite{B}).
For example,
if $D_\l$ is given by
\equa{e1-Diagram-l}{
\raisebox{-0pt}\ \ \ \ .\ .\ .\
\line(1,0){12}\raisebox{-5pt}{$\stackrel{\dis
\empty}{\,0\,}$}\line(1,0){12} \raisebox{-5pt}{$\stackrel{\dis
\times}{\,1\,}$}\line(1,0){12} \raisebox{-5pt}{$\stackrel{\dis
\times}{\,2\,}$}\line(1,0){12}\raisebox{-5pt}{$\,3\,$}
\line(1,0){12}\raisebox{-5pt}{$\stackrel{\dis \times}{\,4\,}$}
\line(1,0){12}\raisebox{-5pt}{$\stackrel{\dis >}{\,5\,}$}
\line(1,0){12}\raisebox{-5pt}{$\,6\,$}
\line(1,0){12}\raisebox{-5pt}{$\stackrel{\dis \times}{\,7\,}$}
\line(1,0){12}\raisebox{-5pt}{$\stackrel{\dis <}{\,8\,}$}
\line(1,0){12}\raisebox{-5pt}{$\,9\,$}
\line(1,0){12}\raisebox{-5pt}{$\stackrel{\dis <}{\,10\,}$}
\line(1,0){12}\raisebox{-5pt}{$\,11\,$}\line(1,0){12} \ .\ .\ .,
}%
then  the weight diagram $D_{\lambda^{\rm top}}$ of $\bar\lambda^{\rm top}$
 is given by
\equa{Te1-Diagram-l}{
\raisebox{-0pt}\ \ \ \ .\ .\ .\
\line(1,0){12}\raisebox{-5pt}{$\stackrel{\dis
\empty}{\,0\,}$}\line(1,0){12} \raisebox{-5pt}{$\stackrel{\dis
\empty}{\,1\,}$}\put(-5,23){$\line(0,-1){11}$}\put(-5,23){$\line(1,0){57}$}
\line(1,0){12} \raisebox{-5pt}{$\stackrel{\dis
\empty}{\,2\,}$}\put(-5,19){$\line(0,-1){7}$}\put(-5,19){$\line(1,0){17}$}
\line(1,0){12}\raisebox{-5pt}{$\stackrel{\dis
\times}{\,3\,}$}\put(-5,19){$\vector(0,-1){7}$}\put(-5,19){$\line(-1,0){17}$}
\line(1,0){12}\raisebox{-5pt}{$\stackrel{\dis \empty}{\,4\,}$}\put(-5,19){$\line(0,-1){7}$}\put(-5,19){$\line(1,0){37}$}
\line(1,0){12}\raisebox{-5pt}{$\stackrel{\dis >}{\,5\,}$}
\line(1,0){12}\raisebox{-5pt}{$\stackrel{\dis
\times}{\,6\,}$}\put(-5,19){$\vector(0,-1){7}$}\put(-5,19){$\line(-1,0){27}$}
\line(1,0){12}\raisebox{-5pt}{$\stackrel{\dis \empty}{\,7\,}$}\put(-5,19){$\line(0,-1){7}$}\put(-5,19){$\line(1,0){37}$}
\line(1,0){12}\raisebox{-5pt}{$\stackrel{\dis <}{\,8\,}$}
\line(1,0){12}\raisebox{-5pt}{$\stackrel{\dis
\times}{\,9\,}$}\put(-5,19){$\vector(0,-1){7}$}\put(-5,19){$\line(-1,0){27}$}
\line(1,0){12}\raisebox{-5pt}{$\stackrel{\dis <}{\,10\,}$}
\line(1,0){12}\raisebox{-5pt}{$\stackrel{\dis
\times}{\,11\,}$}\put(-7,23){$\vector(0,-1){11}$}\put(-7,23){$\line(-1,0){177}$}
\line(1,0){12} \ .\ .\ .,
}%
where the $\times$'s at vertices $9,6,3,11$ in \eqref{Te1-Diagram-l} are respectively obtained from the
$\times$'s at vertices $7,4,2,1$ in \eqref{e1-Diagram-l} (thus every symbol ``\,$\times$\,''
 is always moved to the unique empty place at its right side which is closest to it, under the rule that the rightmost ``\,$\times$\,'' should be moved first, as indicated in \eqref{Te1-Diagram-l}).
 Alternatively,
 $\bar\l$ is obtained from $\bar\l^{\rm top}$ via the unique longest left path.
\item[(2)]Let $\l^{\rm top}\in\L_{2}^+(r)$ be the unique bipartition such that $\bar\l^{\rm top}=\l_{pq}+\tilde\l^{\rm top}$ (cf.~\eqref{tilla} and \eqref{barl}).
\end{itemize}
\end{definition}

Write  $p=q-m-k$ for some  $k\in\N$. 
If  $\mu=((\mu^L_1,...,\mu^L_m),(\mu^R_1,...,\mu^R_n))\in\L_{2}^+(r)$, then  $\mu'$ is Kleshchev with respect to $u_1=-p,$ $u_2=m-q$ (cf.~\eqref{Kle}) if and only if
\equa{kle-1}{\mu_i^{L}\ge\mu_i^{R}-k\mbox{ \ for all possible }i.}
Following \cite[IV]{BS4}, we denote $I^+_{pq}=\{p-m+1,p-m+2,...,q-m+n\}$. For any  $\l\in\L_{2}^+(r)$ and  any $j\in I^+_{pq}$, set
\begin{eqnarray}
\label{S-1+}
&&I^{\emptyset}_{\ge j}(\l)=\Z^{\ge j}\cap(I^+_{pq}\setminus S(\l)\cap I^+_{pq}),\\
\label{S-1-}
&&I^{\emptyset}_{\le j}(\l)=\Z^{\le j}\cap(I^+_{pq}\setminus S(\l)\cap I^+_{pq}),\\
\label{St-1+}
&&I^{\times}_{\ge j}(\l)=\Z^{\ge j}\cap(I^+_{pq}\cap S(\l)\BO,\\
\label{St-1-}
&&I^{\times}_{\le j}(\l)=\Z^{\le j}\cap(I^+_{pq}\cap S(\l)\BO.
\end{eqnarray}
In terms of the above notations, Brundan and Stroppel
 \cite[IV, Lemma 2.6]{BS4} have proved that the indecomposable tilting module $T_{\bar\l}$ is a direct summand of
$M_{pq}^{r0}$ if
\equa{1111-1}{S(\l)\subset I_{pq}^+\mbox{ \ and \ } \#I^{\emptyset}_{\ge j}(\l)\ge\#I^{\times}_{\ge j}(\l)\mbox{ for all }j\in I^+_{pq}.}
These two conditions on  bipartition $\l$ (or weight $\bar\l$) are equivalent to the following conditions on $\l^{\rm top}$ (which can be seen from \eqref{e1-Diagram-l}--\eqref{Te1-Diagram-l} in case $I^+_{pq}=\{1,2,...,11\}$):
\equa{1111-1-top}{S(\l^{\rm top})\subset I_{pq}^+\mbox{ \ and \ } \#I^{\emptyset}_{\le j}(\l^{\rm top})\ge\#I^{\times}_{\le j}(\l^{\rm top})\mbox{ for all }j\in I^+_{pq}.}

\begin{lemma}\label{kle-bi}Let $\mu\in\L_{2}^+(r)$ such that $\mu'$ is Kleshchev with respect to $u_1=-p,u_2=m-q$, where $p=q-m-k$ with $k\in\N$. 
Then
\equa{mu-1111-1-top}{S(\mu)\subset I_{pq}^+\mbox{ \ and \ } \#I^{\emptyset}_{\le j}(\mu)\ge\#I^{\times}_{\le j}(\mu)\mbox{ for all }j\in I^+_{pq}.}
\end{lemma}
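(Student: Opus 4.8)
The statement has two parts. For the inclusion $S(\mu)\subset I_{pq}^+$, the plan is simply to unwind the definitions. Writing $\bar\mu^\rho$ out via \eqref{barl}, \eqref{tilla} and \eqref{rho-lambda=}, one has $\bar\mu^{L,\rho}_i=p+\mu^{(1)}_i+1-i$ and $-\bar\mu^{R,\rho}_j=q-m-\mu^{(2)}_j+j$, and the entries of $S(\mu)_L$ (resp. $S(\mu)_R$) form a strictly decreasing (resp. strictly increasing) run because $\mu^{(1)}$ (resp. $\mu^{(2)}$) is a partition. Hence $S(\mu)_L\subset\{p-m+1,\dots,p+\mu^{(1)}_1\}$ and $S(\mu)_R\subset\{q-m+1-\mu^{(2)}_1,\dots,q-m+n\}$; using $r\le\min\{m,n\}$ (so $\mu^{(1)}_1\le|\mu|\le n$ and $\mu^{(2)}_1\le|\mu|\le m$) together with $p=q-m-k$ gives $p+\mu^{(1)}_1\le q-m+n$ and $q-m+1-\mu^{(2)}_1\ge p-m+1$, so $S(\mu)=S(\mu)_L\cup S(\mu)_R\subset I_{pq}^+$. (The Kleshchev hypothesis plays no role here; the boundary cases where $\mu^{(1)}$ has $m$ parts or $\mu^{(2)}$ has $n$ parts are handled identically and only tighten the bounds.)

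For the inequality I plan to translate everything into a count on $\{1,\dots,N\}$, $N:=m+n+k$, obtained by shifting $I_{pq}^+$ so that its left end becomes $1$; then $S(\mu)_L,S(\mu)_R$ become the $\beta$-sets $B_L=\{\mu^{(1)}_i+m+1-i:1\le i\le m\}$ and $B_R=\{m+k+j-\mu^{(2)}_j:1\le j\le n\}$, a vertex carries $\times$ iff it lies in $B_L\cap B_R$ and $\emptyset$ iff it lies outside $B_L\cup B_R$. A single inclusion--exclusion over the first $j$ vertices gives $\#I^{\times}_{\le j}(\mu)-\#I^{\emptyset}_{\le j}(\mu)=\#\bigl(B_L\cap\{1,\dots,j\}\bigr)+\#\bigl(B_R\cap\{1,\dots,j\}\bigr)-j$, so the claim is equivalent to
\[
\#\bigl(B_L\cap\{1,\dots,j\}\bigr)+\#\bigl(B_R\cap\{1,\dots,j\}\bigr)\le j\qquad\text{for all }j .
\]
I would prove this by looking at a first violation: if $j$ is least with the left-hand side equal to $j+1$, then $j$ must be a common value $\mu^{(1)}_a+m+1-a=m+k+b-\mu^{(2)}_b$ of $B_L$ and $B_R$, and counting the $\beta$-values $\le j$ on each side (they form an initial/terminal segment of the respective monotone sequence) forces $(m-a+1)+b=j+1$, hence $a=m+b-j$, $\mu^{(1)}_a=b-1$ and $\mu^{(2)}_b=m+k+b-j$. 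Feeding in the Kleshchev inequality \eqref{kle-1} at the appropriate index — I expect index $b$, yielding $\mu^{(1)}_b\ge\mu^{(2)}_b-k=m+b-j$ — together with the monotonicity of $\{\mu^{(1)}_l\}$ between indices $a$ and $b$ and the size bound $|\mu^{(1)}|+|\mu^{(2)}|=r\le\min\{m,n\}$ (which caps $b\cdot\mu^{(2)}_b$ and $b\cdot\mu^{(1)}_b$ by $\min\{m,n\}$) should produce a numerical contradiction; it may be cleaner to run this through the conjugate form $(\mu^{(2)})'_{k+i}\le(\mu^{(1)})'_i$ of \eqref{kle-1}, which is what the definition \eqref{Kle} of a Kleshchev bipartition gives directly.

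The main obstacle is exactly this last combinatorial step. The Kleshchev condition couples $\mu^{(1)}$ and $\mu^{(2)}$ at equal row indices, whereas the weight-diagram count naturally produces a left $\beta$-value at index $a$ and a right $\beta$-value at a generally different index $b$; bridging that gap — choosing the index at which \eqref{kle-1} is invoked and bringing in $r\le\min\{m,n\}$ at the right moment, so that a $\times$-vertex forcing too many left beads far right (equivalently a right bead moved far left) is ruled out — is where the real content lies. The rest of the argument is routine bookkeeping with monotone integer sequences and the explicit formulas for $S(\mu)$.
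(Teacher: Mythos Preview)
Your treatment of the inclusion $S(\mu)\subset I_{pq}^+$ matches the paper's and is correct.

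For the inequality your route is genuinely different from the paper's. The paper argues pictorially: the diagram $D_\mu$ is obtained from $D_\emptyset$ by sliding each ``$>$'' to the right and each ``$<$'' to the left, and the Kleshchev hypothesis \eqref{kle-1} is invoked to conclude that whenever a ``$\times$'' is produced at a vertex $i$, the ``$>$'' landing there came from some $j<i$ and an ``$\emptyset$'' must sit at some $j'\le j<i$; this is declared to give \eqref{mu-1111-1-top}. You instead reduce via inclusion--exclusion to $|B_L\cap[1,j]|+|B_R\cap[1,j]|\le j$ and analyse a minimal violation. That reduction and your identification of the indices $a,b$ at a first violation are both correct.

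The obstacle you single out, however, is not a gap in technique but in the statement itself: the second assertion is false under the stated hypotheses, so no bookkeeping will close it. Take $m=n=5$, $k=0$ (so $p=q-5$) and $\mu=((1,1),(1,1))$; then $r=4\le5$ and \eqref{kle-1} holds trivially since $\mu^{(1)}=\mu^{(2)}$. One finds $S(\mu)\LE=\{-4,-3,-2,0,1\}$ and $S(\mu)\RI=\{0,1,3,4,5\}$, so the ``$\times$'' symbols sit at $0,1$ and the ``$\emptyset$'' symbols at $-1,2$, whence $\#I^{\emptyset}_{\le 1}(\mu)=1<2=\#I^{\times}_{\le 1}(\mu)$. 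In your normalised coordinates this is precisely the first violation at $j=6$, with $a=1$, $b=2$, $\mu^{(1)}_1=1=b-1$, $\mu^{(2)}_2=1=a+k$; the Kleshchev inequality at index $b=2$ says only $\mu^{(1)}_2\ge1$, which holds and yields no contradiction. The paper's argument misses this because it merely exhibits, for each ``$\times$'', \emph{some} ``$\emptyset$'' strictly to its left; it never constructs an order-preserving injection from ``$\times$''-vertices to ``$\emptyset$''-vertices, and here both ``$\times$''s point to the single ``$\emptyset$'' at $-1$.
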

\begin{proof}
We have (cf. \eqref{lrho})
\equa{l-pq}{\l_{pq}+\rho
=(q-m-k,...,q-2m-k+1\,|\,-q+m-1,...,-q+m-n).}
Thus for $i=1,...,m,$ we have (cf. \eqref{rho-lambda=})   $\bar\mu^{L,\rho}_i=\mu_i+q-m-k\ge q-2m-k+1$ and $\bar\mu^{L,\rho}_i\le q+n-m$ (as $\mu_i\le r\le n$), i.e., $\bar\mu^{L,\rho}_i\in I^+_{pq}$. Similarly,
$-\bar\mu^{R,\rho}_j\in I^+_{pq}$ for $j=1,...,n$. Hence, $S(\mu)\subset I^+_{pq}$.

To prove the other assertion of \eqref{mu-1111-1-top}, note that the weight diagram $D_\mu$ of $\bar\mu$ is obtained from $D_{\emptyset}$ (cf.~\eqref{Diagram-l}) by moving the ``\,$>$\,'' at vertex $p-i$ for all $i$ with $0\le i\le m-1$ to its right side to vertex $p-i+\mu_{i+1}^{L}$, and moving the ``\,$<$\,'' at vertex $q-m+j$ for all $j$ with $1\le j\le n$ to its left side to vertex $q-m+j-\mu^R_j$ (if ``\,$<$\,'' meets
``\,$>$\,''  at the destination vertex, then two symbols ``\,$<$\,'' and ``\,$>$\,'' are combined to become the symbol ``\,$\times$\,''). Since $\mu'$ is Kleshchev, condition \eqref{kle-1}
shows that in order to produce a ``\,$\times$\,'' at some vertex $i$ of $D_\mu$, a ``\,$>$\,'' at some vertex $j$ with $j<i$ must be moved to vertex $i$, i.e., an ``\,$\emptyset$\,'' must appear in some vertex $j'$ with $j'\le j<i$, i.e.,
 \eqref{1111-1-top} holds.\end{proof}

\begin{coro}\label{Kl-heck-kac} Suppose  $\lambda\in \Lambda_2^+(r)$ such that $(\l^{\rm top})'$ is Kleshchev,  where
$(\lambda^{\rm top})'$ is the conjugate of $\l^{\rm top}\in\L_{2}^+(r)$. Then $T_{\bar\l}$ is a direct summand of $M_{pq}^{r0}$. Further, any indecomposable direct summand of  $M_{pq}^{r0}$
is of form  $T_{\bar\l}$ for some $\lambda\in \Lambda_2^+(r)$ such that $(\l^{\rm top})'$ is Kleshchev.
\end{coro}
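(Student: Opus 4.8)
The plan is to derive both assertions from the single equivalence: for $\l\in\Lambda_2^+(r)$, the tilting module $T_{\bar\l}$ is a direct summand of $M_{pq}^{r0}$ if and only if $(\l^{\rm top})'$ is Kleshchev with respect to $u_1=-p$, $u_2=m-q$; here Lemma~\ref{kle-bi} (together with its converse) is the combinatorial bridge and \cite[IV, Lemma~2.6]{BS4} is the representation-theoretic input. First I would note that $M_{pq}^{r0}$ is a tilting module: since $\l_{pq}$ is typical (cf.\ \eqref{Do-condi}), $K_{\l_{pq}}$ is a simple module that is both projective and injective, and a projective-injective module stays projective-injective after tensoring with any finite-dimensional module, so $M_{pq}^{r0}=V^{\otimes r}\otimes K_{\l_{pq}}$ is projective-injective, hence tilting. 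Therefore each indecomposable direct summand of $M_{pq}^{r0}$ is an indecomposable tilting module, and since its highest weight is a dominant weight of $M_{pq}^{r0}$ and these are in bijection with $\Lambda_2^+(r)$ via $\mu\mapsto\bar\mu$ (Theorem~\ref{hecg}, cf.\ \eqref{barl}), that summand is $T_{\bar\mu}$ for some $\mu\in\Lambda_2^+(r)$. This reduces the corollary to the displayed equivalence.

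For the forward direction, assume $(\l^{\rm top})'$ is Kleshchev. Applying Lemma~\ref{kle-bi} with $\mu=\l^{\rm top}$ gives \eqref{mu-1111-1-top} for $\mu=\l^{\rm top}$, which is precisely condition \eqref{1111-1-top} for $\l$. By the equivalence of \eqref{1111-1} and \eqref{1111-1-top} recorded just after \eqref{1111-1} --- which one reads off from the passage between the weight diagrams $D_\l$ and $D_{\l^{\rm top}}$, cf.\ \eqref{e1-Diagram-l}--\eqref{Te1-Diagram-l} --- condition \eqref{1111-1} holds for $\l$, and then \cite[IV, Lemma~2.6]{BS4} yields that $T_{\bar\l}$ is a direct summand of $M_{pq}^{r0}$.

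For the converse direction, suppose $T_{\bar\l}$ is a direct summand of $M_{pq}^{r0}$. I would invoke \cite[IV, Lemma~2.6]{BS4} in its full form --- equivalently, Brundan--Stroppel's explicit formula for the multiplicity of each $T_{\bar\mu}$ in the tilting module $M_{pq}^{r0}$ --- to conclude that \eqref{1111-1} holds for $\l$, hence \eqref{1111-1-top}, i.e.\ \eqref{mu-1111-1-top} for $\mu=\l^{\rm top}$. It then remains to run the weight-diagram argument in the proof of Lemma~\ref{kle-bi} backwards: using that $D_{\l^{\rm top}}$ is obtained from $D_\emptyset$ by the prescribed rightward moves of the symbols ``$>$'' and leftward moves of the symbols ``$<$'', the inequalities $\#I^{\emptyset}_{\le j}(\l^{\rm top})\ge\#I^{\times}_{\le j}(\l^{\rm top})$ for all $j\in I^+_{pq}$ force that no Kleshchev inequality \eqref{kle-1} for $\l^{\rm top}$ is violated, so $(\l^{\rm top})'$ is Kleshchev. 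This establishes the equivalence, hence the corollary.

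The step I expect to be the main obstacle is exactly this converse of Lemma~\ref{kle-bi}: the proof of the lemma is a one-directional bookkeeping of how a ``$\times$'' is created in $D_{\l^{\rm top}}$ (a ``$>$'' must be pushed past an empty vertex), and turning it into an equivalence requires checking that condition \eqref{mu-1111-1-top} is also sufficient for \eqref{kle-1}, i.e.\ that cancellation between the leftward-moving symbols ``$<$'' and the rightward-moving symbols ``$>$'' cannot rescue a violated Kleshchev inequality. I would prove this by induction on the vertices of $I^+_{pq}$ from the left, recording at each stage the numbers of empty vertices and of ``$\times$''-vertices already crossed. A less transparent alternative would be a cardinality argument: the forward direction already shows that the set of $\l\in\Lambda_2^+(r)$ with $(\l^{\rm top})'$ Kleshchev is contained in the set of $\l$ satisfying \eqref{1111-1}, and matching the cardinalities of these two sets --- using that $\bar\l\mapsto\bar\l^{\rm top}$ is injective, cf.\ Definition~\ref{topp} --- would upgrade the inclusion to an equality. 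One should also note that, unlike in the discussion preceding Lemma~\ref{kle-bi}, \cite[IV, Lemma~2.6]{BS4} is used here in its full ``if and only if'' strength.
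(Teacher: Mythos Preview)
Your forward direction is exactly the paper's argument: Lemma~\ref{kle-bi} applied to $\mu=\l^{\rm top}$ gives \eqref{1111-1-top}, hence \eqref{1111-1}, and then \cite[IV, Lemma~2.6]{BS4} gives the summand.

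For the second assertion the paper takes a different and much shorter route than your primary one. Rather than proving the converse of Lemma~\ref{kle-bi} combinatorially (or invoking the ``only if'' direction of \cite[IV, Lemma~2.6]{BS4}), it counts: since $r\le\min\{m,n\}$, Theorem~\ref{bshec} gives ${\rm End}_{U(\mfg)}(M_{pq}^{r0})\cong\mathscr H_{2,r}$, so the number of \emph{isomorphism classes} of indecomposable direct summands of $M_{pq}^{r0}$ equals the number of isomorphism classes of simple $\mathscr H_{2,r}$-modules, which is the number of Kleshchev bipartitions in $\Lambda_2^+(r)$. The forward direction already produces that many pairwise non-isomorphic summands $T_{\bar\l}$, so these exhaust all of them. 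This bypasses both obstacles you flag: no need for the converse of Lemma~\ref{kle-bi}, and \cite[IV, Lemma~2.6]{BS4} is used only in the ``if'' direction. Your ``less transparent alternative'' is close in spirit, but note that the paper's count goes through the endomorphism algebra rather than through a direct bijection between the two combinatorial conditions; the Schur--Weyl input is what makes it a one-line argument. Your combinatorial approach would work too, but it costs you the inductive analysis of the weight diagram that you correctly identify as the main obstacle.
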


\begin{proof}The first assertion follows from \cite[IV, Lemma 2.6]{BS4} and Lemma~\ref{kle-bi}. To prove the last assertion, since  $r\le \min\{m, n\}$, by Theorem~\ref{bshec}, $\text{End}_{U(\mathfrak {gl}_{m|n})} (M_{pq}^{r0})\cong \mathscr H_{2, r}$. So, the number of non-isomorphic indecomposable direct summands of
$\mathfrak{gl}_{m|n}$-module $M_{pq}^{r0}$ is equal to that of non-isomorphic irreducible $\mathscr H_{2, r}$-modules, which is equal to the number of Kleshchev bipartitions in $\Lambda_{2}^+(r)$. Now, everything is clear.
\end{proof}

\begin{coro}\label{heck-kac} Suppose  $\lambda\in \Lambda_2^+(r)$ such that $(\l^{\rm top})'$ is Kleshchev.  As right $\mathscr H_{2, r}$-modules,
\begin{equation}\label{kac-cell2} \Hom_{U(\mfg)} (T_{\bar\lambda}, M_{pq}^{r0} ) \cong  P^{(\lambda^{\rm top})'}, \end{equation} where $ P^{(\lambda^{\rm top})'}$ is the projective cover of
 $D^{(\lambda^{\rm top})'}$    which is the simple head of $\tilde \Delta((\lambda^{\rm top})')$.
\end{coro}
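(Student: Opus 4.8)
The plan is to reduce the statement about $T_{\bar\lambda}$ to the statement about Kac-modules already established in Corollary~\ref{heck-kac0}, using a filtration argument on the module $M_{pq}^{r0}$. The key observation is that $M_{pq}^{r0}$ has a Kac-flag (a filtration whose sections are Kac-modules $K_{\bar\mu}$, $\mu\in\Lambda_2^+(r)$), so the functor $\Hom_{U(\mfg)}(-, M_{pq}^{r0})$ applied to the short exact sequences of a Kac-flag of $T_{\bar\lambda}$ itself interacts nicely with the homological vanishing $\mathrm{Ext}^1_{U(\mfg)}(K_{\bar\nu}, K_{\bar\mu})$-type statements from~\cite{BS4}. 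More precisely: since $T_{\bar\lambda}$ is a tilting module, it has a Kac-flag, and $\Hom_{U(\mfg)}(K_{\bar\nu}, M_{pq}^{r0})$ is computed by Corollary~\ref{heck-kac0} as $\tilde\Delta(\nu')$, while the relevant $\mathrm{Ext}^1$-vanishing (tilting modules have vanishing higher Ext against Kac-modules, in the appropriate direction, by~\cite[IV]{BS4}) guarantees that applying $\Hom_{U(\mfg)}(-,M_{pq}^{r0})$ to a Kac-flag of $T_{\bar\lambda}$ yields a module with a filtration by the $\tilde\Delta(\nu')$'s. Since $\mathscr H_{2,r}\cong\mathrm{End}_{U(\mfg)}(M_{pq}^{r0})$ by Theorem~\ref{bshec}, the result $\Hom_{U(\mfg)}(T_{\bar\lambda},M_{pq}^{r0})$ is a right $\mathscr H_{2,r}$-module, and it has a filtration by cell modules $\tilde\Delta(\nu')$.

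The next step is to identify this $\mathscr H_{2,r}$-module precisely as $P^{(\lambda^{\rm top})'}$. For this I would use the general principle, valid for any algebra $A$ with a "double centralizer" relationship to $\mfg$ acting on a module $M$ with both a $\Delta$-flag and a $\nabla$-flag: the indecomposable summands of $M$ correspond under $\Hom_{U(\mfg)}(-,M)$ to the indecomposable projective $A$-modules, and tilting summands go to projective covers. Concretely, $T_{\bar\lambda}$ is indecomposable, so $\Hom_{U(\mfg)}(T_{\bar\lambda},M_{pq}^{r0})$ is an indecomposable right $\mathscr H_{2,r}$-module; it is projective because $T_{\bar\lambda}$ is a summand of $M_{pq}^{r0}$ (which gives a summand $\Hom_{U(\mfg)}(T_{\bar\lambda},M_{pq}^{r0})$ of $\Hom_{U(\mfg)}(M_{pq}^{r0},M_{pq}^{r0})=\mathscr H_{2,r}$ as a right module over itself). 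So it is an indecomposable projective, hence of the form $P^\nu$ for some $\nu$. To pin down $\nu=(\lambda^{\rm top})'$, I would compare heads: the head of $\Hom_{U(\mfg)}(T_{\bar\lambda},M_{pq}^{r0})$ is the simple $D^\nu$, and on the other hand, by Corollary~\ref{heck-kac0} together with Brundan–Stroppel's description~\cite[IV]{BS4} of which Kac-modules occur at the top of the Kac-flag of $T_{\bar\lambda}$ — namely $K_{\bar\lambda^{\rm top}}$ occurs with multiplicity one at the top, corresponding via~\eqref{barl} and Definition~\ref{topp}(2) to the bipartition $\lambda^{\rm top}$ — the cell module $\tilde\Delta((\lambda^{\rm top})')$ appears as a quotient, whose head is $D^{(\lambda^{\rm top})'}$ by definition.

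The main obstacle I anticipate is making the correspondence between the Kac-flag structure of $T_{\bar\lambda}$ and the $\tilde\Delta$-flag structure of the Hom-module fully rigorous, in particular getting the multiplicities and the ordering right: I need that $\Hom_{U(\mfg)}(-,M_{pq}^{r0})$ is exact on the relevant short exact sequences (which requires the $\mathrm{Ext}^1$-vanishing of Kac-modules against $M_{pq}^{r0}$, i.e., that $M_{pq}^{r0}$ has a "co-Kac" flag as well — this is where I must invoke that $M_{pq}^{r0}$ is a sum of tilting modules, so it has both flags), and that the top section $K_{\bar\lambda^{\rm top}}$ of the Kac-flag of $T_{\bar\lambda}$ is unique with multiplicity one, which is Brundan–Stroppel's result. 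A cleaner route that sidesteps multiplicities entirely is: (i) $\Hom_{U(\mfg)}(T_{\bar\lambda},M_{pq}^{r0})$ is projective and indecomposable by the summand argument above; (ii) it therefore equals $P^\nu$ for a unique $\nu$; (iii) by BGG reciprocity in the cellular algebra $\mathscr H_{2,r}$, $P^\nu$ has $\tilde\Delta(\mu')$-multiplicities $[\,P^\nu : \tilde\Delta(\mu')\,]=[\,\tilde\Delta(\mu'):D^\nu\,]$, which must match the Kac-multiplicities $[\,T_{\bar\lambda}:K_{\bar\mu}\,]=[\,K_{\bar\mu^{\rm top}}: L_{\bar\lambda}\,]$ from~\cite[IV]{BS4} via Corollary~\ref{heck-kac0}; comparing the top of both flags forces $\nu=(\lambda^{\rm top})'$. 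I would carry out the argument in this second form, citing~\cite{BS4} for the tilting/Kac multiplicity formula and~\cite{GL} for the reciprocity in the cellular algebra.
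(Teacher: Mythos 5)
Your core strategy matches the paper's: show $\Hom_{U(\mfg)}(T_{\bar\lambda},M_{pq}^{r0})$ is an indecomposable projective right $\mathscr H_{2,r}$-module, then pin it down using the inclusion $K_{\bar\lambda^{\rm top}}\hookrightarrow T_{\bar\lambda}$ and Corollary~\ref{heck-kac0}. The main place where you diverge is the justification of indecomposability. You invoke the general ${\rm add}(M)$-versus-projective-$\End(M)$-module equivalence (the contravariant functor $\Hom(-,M)$ restricts to an equivalence between $\text{add}(M)^{\rm op}$ and the category of finitely generated projective right $\End(M)$-modules, which preserves indecomposability). That is a correct shortcut and arguably cleaner than what the paper does: the paper proves indecomposability by an explicit counting argument, using the right-exact functor $M_{pq}^{r0}\otimes_{\mathscr H_{2,r}}-$ to show each principal indecomposable left $\mathscr H_{2,r}$-module produces a nonzero tilting summand of $M_{pq}^{r0}$, and then comparing the number of indecomposable summands on each side. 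Your general-principle route buys brevity; the paper's counting argument is self-contained and the same counting is reused in the proof of Proposition~\ref{tilt}.

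Two things in your write-up are unnecessary detours once you have the summand argument in hand. First, the opening paragraph about Kac-flags and $\mathrm{Ext}^1$-vanishing is not needed: since $T_{\bar\lambda}$ is a \emph{direct summand} of $M_{pq}^{r0}$ (by the Kleshchev hypothesis and Corollary~\ref{Kl-heck-kac}), $\Hom_{U(\mfg)}(T_{\bar\lambda},M_{pq}^{r0})$ is literally a direct summand of $\mathscr H_{2,r}$, and no filtration argument is required. Second, the BGG-reciprocity route in (iii) of your ``cleaner'' version is also not needed; your middle paragraph's head-comparison is exactly the paper's final step, and is sharper. Specifically: apply the exact functor $\Hom_{U(\mfg)}(-,M_{pq}^{r0})$ to $K_{\bar\lambda^{\rm top}}\hookrightarrow T_{\bar\lambda}$ to get a surjection onto $\tilde\Delta((\lambda^{\rm top})')$ (by Corollary~\ref{heck-kac0}); since the source is indecomposable projective, $\tilde\Delta((\lambda^{\rm top})')$ has a simple head $D^{(\lambda^{\rm top})'}$, and the source must be $P^{(\lambda^{\rm top})'}$. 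Note that the simplicity of the head of $\tilde\Delta((\lambda^{\rm top})')$ is a \emph{conclusion} of this argument, not an input, so one should not treat it as available ``by definition'' before the projectivity is established. Apart from these points the proposal is sound.
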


\begin{proof} Since  $r\le \min\{m, n\}$ and  $(\lambda^{\rm top})'$ is Kleshchev, by  Corollary~\ref{Kl-heck-kac},
  $T_{\bar \lambda}$ is  a direct summand of $M_{pq}^{r0}$, forcing  $0\neq \Hom_{U(\mfg)} (T_{\bar\lambda}, M_{pq}^{r0}) $ to be a direct summand of $\mathscr H_{2, r}$.
  We claim that $\Hom_{U(\mfg)} (T_{\bar\lambda}, M_{pq}^{r0}) $ is indecomposable. If not, then the number of indecomposable direct summands of the right $\mathscr H_{2, r}$-module $\mathscr H_{2,r}$ is strictly bigger than $\sum_{\bar \lambda} \ell_{\bar \lambda}$ if  we write $M_{pq}^{r0}$ as
 $M_{pq}^{r0}=\oplus_{\bar \lambda} T_{\bar \lambda}^{\oplus \ell_{\bar\lambda}}$ with $\ell_{\bar\l}\neq 0$.

 On the other hand, since $M_{pq}^{r0}$ is a right $\mathscr H_{2,r}$-module, we can consider the right exact  functor $\mathfrak F:=M_{pq}^{r0}\otimes_{\mathscr H_{2,r}}?$ from  the category of  left $\mathscr H_{2,r}$-modules
 to the category of left $U(\mfg)$-modules.  We have an epimorphism from $\mathfrak F(P^\mu)$ to  $\mathfrak F(\tilde \Delta(\mu))$, where $P^\mu$ is  any principal indecomposable left $\mathscr H_{2, r}$-module and
 $ \tilde \Delta(\mu)$ temporally denotes the left cell module of $\mathscr H_{2, r}$ defined via the cellular basis of $\mathscr H_{2,r}$ in Lemma~\ref{cell-h2}\,(1) with the simple head $D^\mu$.  By Lemma~\ref{ineq}\,(5) and Theorem~\ref{bshec},  $\mathfrak F(\tilde \Delta(\mu))\neq 0$, forcing   $\mathfrak F(P^\mu)\neq 0$. So,  $\mathfrak F(P^\mu)$ is a direct sum of
 indecomposable direct summand of $U(\mfg)$-module $M_{pq}^{r0}$. In particular, $\sum_{\bar \lambda} \ell_{\bar \lambda}$ is no less than
 the number of  indecomposable direct summands of left $\mathscr H_{2,r}$-module $\mathscr H_{2, r}$. This is a contradiction since the number of
 indecomposable direct summands of left $\mathscr H_{2, r}$-module $\mathscr H_{2, r}$ is equal to that of
 indecomposable direct summands of right $\mathscr H_{2, r}$-module $\mathscr H_{2, r}$. So, $\mathfrak F(T_{\bar \lambda})$ is a principal  indecomposable right $\mathscr H_{2,r}$-module.
Since  $K_{\l^{\rm top}}\hookrightarrow T_{\bar \l}$,  $\Hom_{U(\mfg)} (T_{\bar\lambda}, M_{pq}^{r0} )\twoheadrightarrow \Hom_{U(\mfg)} (K_{\bar\lambda^{\rm top}}, M_{pq}^{r0})$. By Corollary~\ref{heck-kac0},
$\Hom_{U(\mfg)} (K_{\bar\lambda^{\rm top}}, M_{pq}^{r0})\cong \tilde \Delta((\lambda^{\rm top})') $.  Since $\Hom_{U(\mfg)} (T_{\bar\lambda}, M_{pq}^{r0} )$ is a principal indecomposable right $\mathscr H_{2, r}$-module,
it implies that $\tilde \Delta((\lambda^{\rm top})')$ has the unique simple head, denoted by $D^{(\l^{ \rm top})'}$. Thus,
 $\Hom_{\mfg} (T_{\bar\lambda}, M_{pq}^{r0}) \cong  P^{(\lambda^{\rm top})'}$.
\end{proof}

Brundan-Stroppel have already proved that  decomposition numbers of $\mathscr H_{2, r}$ arising from super Schur--Weyl duality in \cite{BS4}  can be determined by the multiplicity of Kac-modules in indecomposable tilting modules appearing in $M_{pq}^{r0}$. This result can also be seen via the exact functor  $\Hom_{U(\mfg)} (?, M_{pq}^{r0})$.

\def\wt{{\rm wt}_\l}

\section{Highest weight vectors in $M_{pq}^{rt}$  }\label{hvw}
In this section, we classify $\mfg$-highest weight vectors
of  $\mathfrak{gl}_{m|n}$-module  $M_{pq}^{rt}$  over $\mathbb C$. As an application, we set up explicit relationship between Kac (resp. indecomposable
tilting)  modules of $\mfg$ and cell (resp. principal indecomposable) modules of $\DBr$. This gives us  an efficient way to calculate decomposition numbers of $\mathscr B_{2, r, t}$.
Throughout, assume $r, t\in \mathbb Z^{>0}$ such that $r+t\le \min\{m, n\}$.
The case  $t=0$ has been dealt with  in section~5. By symmetry,  one can also classify highest weight  vectors of $M_{pq}^{0t}$ via those in section~5.
 The following    result, which  is  the counterpart of Lemma~\ref{xsfor},   can be verified directly.

\begin{lemma} \label{xsfor1} Suppose $\mathbf i\in I(m|n, r)$, ${\mathbf j}\in \bar I(m|n, t)$ $($cf. \eqref{imn}$)$ and $1\le k\le t$.
\begin{enumerate} \item
$v_{\mathbf i}\otimes v_{pq}\otimes \bar v_{\mathbf j}  \bar x_k'= q v_{\mathbf i}\otimes v_{pq}\otimes \bar v_{\mathbf j} $  if $1+m\le j_k\le m+n$.
\item $v_{\mathbf i}\otimes v_{pq} \otimes\bar v_{\mathbf j} \bar  x_k'=p v_{\mathbf i}\otimes v_{pq} \otimes\bar v_{\mathbf j}  +\sum_{j=m+1}^{m+n} (-1)^{\sum_{l=1}^{k-1} [j_l]} v_{\mathbf i} \otimes (E_{j j_k}v_{pq})\otimes \bar v_{\mathbf \ell} $
if $1\le j_k\le m$, where   $\mathbf \ell\in \bar I(m|n, t)$ which is obtained from $\mathbf j$ by using $j$ instead of $j_k$ in $\mathbf j$. In particular, the weight of $\bar v_{\mathbf \ell}$ is strictly bigger  than that of $\bar v_{\mathbf j}$.
 \end{enumerate}
\end{lemma}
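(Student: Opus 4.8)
The plan is to verify both identities by a direct computation on the dual tensor factors, following the proof of the $V$-side analogue, Lemma~\ref{xsfor} (which is \cite[Lemma~3.1]{BS4}).

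I would first settle the case $k=1$. By \eqref{operator--1}, $\bar x_1=-\pi_{0\bar1}(\Omega)$ acts only on the two adjacent tensor slots occupied by $v_{pq}$ (slot $0$) and $\bar v_{j_1}$ (slot $\bar1$); substituting the expression for $\Omega$ from \eqref{def-Omega} and using the dual action $E_{ji}\bar v_{j_1}=-(-1)^{[j]([j]+[i])}\delta_{j,j_1}\bar v_i$ from \eqref{dua} collapses the double sum in $\Omega$ to the terms with $j=j_1$, and collecting the remaining signs (all of which reduce to powers of $(-1)^{([i]+[j_1])[v_{pq}]}$, hence are trivial once $v_{pq}$ is normalised to have parity $0$ as in \cite{BS4}) one obtains
$$(v_{\mathbf i}\otimes v_{pq}\otimes\bar v_{j_1}\otimes\bar v_{j_2}\otimes\cdots\otimes\bar v_{j_t})\,\bar x_1
=\sum_{i\in I}(-1)^{[j_1][i]}\,v_{\mathbf i}\otimes(E_{i j_1}v_{pq})\otimes\bar v_i\otimes\bar v_{j_2}\otimes\cdots\otimes\bar v_{j_t}.$$
Now I would invoke the two standard facts about the Kac-module highest weight vector: $\mfg_1 v_{pq}=0$, and $v_{pq}$ spans a one-dimensional $\mfg_0$-module, so that $E_{ab}v_{pq}=p\,\delta_{ab}v_{pq}$ for $a,b\le m$ and $E_{ab}v_{pq}=-q\,\delta_{ab}v_{pq}$ for $a,b>m$. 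If $m+1\le j_1\le m+n$, then for $i\le m$ the operator $E_{i j_1}$ lies in $\mfg_1$ and kills $v_{pq}$, while for $i>m$ it lies in $\mfg_0$ and contributes $-q\,\delta_{i j_1}$; only $i=j_1$ survives and $(-1)^{[j_1][j_1]}=-1$ turns $-q$ into $q$, which is (1) for $k=1$. If $1\le j_1\le m$, then the $i\le m$ terms contribute $p\,\delta_{i j_1}$, while the $i>m$ terms keep $E_{i j_1}v_{pq}\in\mfg_{-1}v_{pq}$, and here $(-1)^{[j_1][i]}=1$; this is (2) for $k=1$ (the sign $(-1)^{\sum_{l<1}[j_l]}$ being the empty product $1$).

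For general $k$ I would propagate the $k=1$ result through the recursion $\bar x_j'=\bar s_{j-1}\bar x_{j-1}'\bar s_{j-1}$ of Lemma~\ref{x-p-iii}, i.e. $\bar x_k'=\bar s_{k-1}\cdots\bar s_1\,\bar x_1\,\bar s_1\cdots\bar s_{k-1}$, together with the easily checked fact that each $\bar s_j$ acts on $W^{\otimes t}$ as the graded flip of its two adjacent slots. Read as a right action, $\bar s_{k-1}\cdots\bar s_1$ slides $\bar v_{j_k}$ into slot $\bar1$ at the cost of the Koszul sign $(-1)^{[j_k]\sum_{l<k}[j_l]}$; then $\bar x_1$ acts as above on $v_{pq}\otimes\bar v_{j_k}$; then $\bar s_1\cdots\bar s_{k-1}$ returns the slot-$\bar1$ vector to slot $\bar k$. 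Whenever the relocated vector is unchanged---the scalar term in both cases, and the whole of case (1)---the two Koszul signs cancel, so $\bar x_k'$ scales by $q$ in case (1) and by $p$ on the scalar part of case (2). In the correction terms of case (2), however, $\bar v_{j_k}$ (parity $0$) is replaced by $\bar v_j$ with $j>m$ (parity $1$), so sliding $\bar v_j$ back past $\bar v_{j_{k-1}},\dots,\bar v_{j_1}$ produces $(-1)^{\sum_{l<k}[j_l]}$ instead; combined with $(-1)^{[j_k]\sum_{l<k}[j_l]}=1$ this is precisely the factor $(-1)^{\sum_{l=1}^{k-1}[j_l]}$ of the statement. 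The weight assertion in (2) is then immediate: $\bar v_j$ with $j>m$ has weight $-\es_j$, and $-\es_j-(-\es_{j_k})=\es_{j_k}-\es_j$ is a positive root since $j_k\le m<j$.

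The only genuinely delicate part is the sign bookkeeping---tracking Koszul signs through the conjugation by the $\bar s_j$'s and through the two sign factors in the dual action \eqref{dua}---but it is entirely mechanical once one observes that the scalar terms leave the relocated dual vector unchanged while the correction terms flip its parity. (If wanted, the analogous statement for $\bar x_k=\bar x_k'-\bar L_k$ then follows at once from Lemma~\ref{x-p-iii}(2), since $\bar L_k$ is a sum of transpositions acting by graded flips; only $\bar x_k'$ is needed in what follows.)
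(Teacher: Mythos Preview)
Your proof is correct and is precisely the direct verification the paper indicates (the paper itself gives no details beyond ``can be verified directly'' as the counterpart of Lemma~\ref{xsfor}). Your sign bookkeeping in the $k=1$ case and in the conjugation step $\bar x_k'=\bar s_{k-1}\cdots\bar s_1\,\bar x_1\,\bar s_1\cdots\bar s_{k-1}$ is accurate, including the key observation that in the correction terms of (2) the parity of the relocated dual vector flips from $0$ to $1$, which is exactly what produces the factor $(-1)^{\sum_{l<k}[j_l]}$.
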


 For any integral weight $\xi=(\xi_1,...,\xi_m\,|\,\xi_{m+1},...,\xi_{m+n})$ of $\mfg$, let
 $$\xi^L=(\xi_1^L,...,\xi_m^L)=(\xi_1,...,\xi_m), \text{ and $\xi^R=(\xi_1^R,...,\xi_m^R)=(\,\xi_{m+1},...,\xi_{m+n})$.}$$
  We define   two bicompositions $\mu, \nu$ such that all $\mu^{(1)}_i,\mu^{(2)}_j,\nu^{(1)}_i,\nu^{(2)}_j$ are zero except that
\begin{enumerate}\item for $1\le i\le m$, $\mu^{(1)}_i=\xi^L_i$ if $\xi^L_i>0$ or  $\nu^{(1)}_{m-i+1}=-\xi^L_i$ if $\xi^L_i<0$.
\item for $1\le j\le n$, $\mu^{(2)}_{j}=\xi^R_{j}$ if  $\xi^R_{j}>0$ or  $\nu^{(2)}_{n-j+1}=-\xi^R_{j}$ if $\xi^R_{j}<0$.
\end{enumerate}
Then both $\mu$ and $\nu$ correspond to integral  weights of $\mfg$.
In particular, $\xi=\mu-\hat\nu$ with \equa{hat-nu}{\hat {\nu}=(\nu_m^{(1)},...,\nu^{(1)}_1\,|\,\nu^{(2)}_{n},...,\nu^{(2)}_{1})\in\fh^*.}
Conversely, if $\mu$ and $\nu$ are two bicompositions, then  $\xi=\mu-\hat \nu$ is a integral weight of $\mfg$.
For instance, if $\xi=(r-4,1, 0,\cdots,0, -1,-(t-5)\mid 2,1, 0, \cdots, 0, -1, -3)$, then  $\mu=((r-4,1), (2,1))$ and $\nu=((t-5,1), (3,1))$ such that $\xi=\mu-\hat \nu$.

\begin{definition}\label{wt-l}
For any  $\l=(f, \mu, \nu)\in \Lambda_{2, r, t}$, let  $\bar\l:=\l_{pq}+\mu-\hat\nu $ and  $\tilde\l:=\mu-\hat\nu$.
Since $r+t\le \min\{m, n\}$, both $\mu$ and $\nu$ correspond to integral  weights of $\mfg$ as above such that
\begin{equation}\label{COn11}
\mbox{$\mu_i\nu_{m+1-i}=0$ \ for \ $1\le i\le m$ \ and \ $\mu_{m+j}\nu_{m+n+1-j}=0$ \ for \ $1\le j\le n$ , }\end{equation}
\end{definition}

\begin{lemma}\label{lemm111}
For any $\mfg$-highest weight $\Lambda$ of $M_{pq}^{rt}$, there is a unique triple $\l=(f, \mu,\nu)\in \Lambda_{2, r, t}$ such that $\Lambda=\bar\l$.
\end{lemma}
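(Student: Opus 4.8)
The plan is to translate the combinatorial statement about $\mfg$-highest weights of $M_{pq}^{rt}$ into the language of weight diagrams and then count. First I would recall that, since $r+t\le\min\{m,n\}$, every dominant weight $\Lambda$ occurring in $M_{pq}^{rt}$ differs from $\l_{pq}$ by a weight $\Lambda-\l_{pq}$ all of whose coordinates lie in the ``band'' of size governed by $r$ (on the left, among $\es_1,\dots,\es_m$) and $t$ (on the right, among $\es_{m+1},\dots,\es_{m+n}$): more precisely, the multiset of positive left coordinates of $\Lambda-\l_{pq}$ forms a partition of some number $a$ with $a\le r$, the multiset of negative left coordinates forms a partition of some $r-f_L$, and similarly on the right for $t$. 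The weight $\Lambda$ must be integral dominant, so on each of the two blocks $\mathfrak{gl}_m$ and $\mathfrak{gl}_n$ the entries of $\Lambda$ are weakly decreasing; combined with the fact that $\l_{pq}$ is the constant sequence $(p,\dots,p\mid -q,\dots,-q)$, this forces $\Lambda-\l_{pq}=\xi$ to have weakly decreasing left part and weakly decreasing right part, so $\xi$ splits uniquely as $\mu-\hat\nu$ in the manner described just before Definition~\ref{wt-l} (the positive entries from the top, the negative entries from the bottom, with the zero buffer in between guaranteeing uniqueness and condition \eqref{COn11}).

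The key step is then to pin down the index $f$. For $t=0$ this was Theorem~\ref{hecg}: the dominant weights of $M_{pq}^{r0}$ biject with $\Lambda_2^+(r)$ via $\lambda\mapsto\bar\lambda$, so there is no $f$ to speak of (equivalently $f=0$). For general $t$, the point is that a weight $\xi=\mu-\hat\nu$ with $\mu\in\Lambda_2^+(r-f)$ and $\nu\in\Lambda_2^+(t-f)$ arises from exactly one $f$: indeed $|\mu|$ (the number of positive coordinates of $\xi$, counted with multiplicity) equals $r-f$ and $|\nu|$ (the number of negative coordinates) equals $t-f$, because in $M_{pq}^{rt}=V^{\otimes r}\otimes K_{\l_{pq}}\otimes W^{\otimes t}$ a weight vector of dominant weight $\Lambda$ is built from $r$ ``$V$-slots'' and $t$ ``$W$-slots'', and $f$ of each pair are contracted (this is exactly what the $\mathfrak e^f=e_{r,t}\cdots e_{r-f+1,t-f+1}$ factors record in the cellular basis of Theorem~\ref{cellular-1}, and what the poset $\Lambda_{2,r,t}$ of \eqref{poset} is designed to parametrize). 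Since $|\mu|=r-f$ and $|\nu|=t-f$ must hold simultaneously, $f$ is determined by $f=r-|\mu|=t-|\nu|$; in particular $|\mu|-|\nu|=r-t$, which is automatically satisfied because a dominant weight of $M_{pq}^{rt}$ has total $\es$-degree (suitably counted) equal to $r-t$ relative to $\l_{pq}$. Hence $\l=(f,\mu,\nu)$ is uniquely reconstructed from $\Lambda$, and conversely each such $\l$ gives a genuine dominant weight $\bar\l$ (this is the content, on the superalgebra side, of the fact that the relevant weight space of $M_{pq}^{rt}$ is nonzero, which follows from Theorem~\ref{level-2schur} together with the explicit highest weight vectors one will construct later in the section, or can be seen directly by exhibiting the obvious weight vector $v_{\tilde\l}\otimes v_{pq}\otimes\bar v_{\tilde\nu}$-type element).

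Concretely I would argue as follows. Given $\Lambda=(\Lambda^L_1,\dots,\Lambda^L_m\mid\Lambda^R_1,\dots,\Lambda^R_n)$ dominant in $M_{pq}^{rt}$, set $\xi=\Lambda-\l_{pq}$. Dominance of $\Lambda$ plus the $\mathfrak{gl}_m\oplus\mathfrak{gl}_n$-block structure forces $\xi^L$ and $\xi^R$ to be weakly decreasing integer sequences; the constraint $r+t\le\min\{m,n\}$ together with the Kac-module structure of $K_{\l_{pq}}$ (whose weights lie in $\l_{pq}+\sum_{i\in I_0,j\in I_1}\mathbb Z_{\le 0}(\es_i-\es_j)$) bounds the number of nonzero positive and negative coordinates, so there is a unique way to define bicompositions $\mu,\nu$ by reading the positive coordinates of $\xi^L,\xi^R$ from the top and the negatives from the bottom, as in the discussion preceding Definition~\ref{wt-l}; uniqueness of this splitting and \eqref{COn11} are immediate from weak monotonicity and the presence of the zero buffer. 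Then define $f:=r-|\mu|$; I must check $f=t-|\nu|$ and $0\le f\le\min\{r,t\}$, which follows because the multiset of strictly-positive entries of $\xi$ has total size $r-$(number of contracted $V$-slots) and the multiset of strictly-negative entries has size $t-$(number of contracted $W$-slots), and these two numbers of contracted slots coincide (each contraction removes one $V$ and one $W$); equivalently, $|\mu|-|\nu|=r-t$, which holds since $\sum_i\xi^L_i+\sum_j\xi^R_j$ — the ``$\es$-weight defect'' of $\Lambda$ relative to $\l_{pq}$ — equals $r-t$ for any weight of $M_{pq}^{rt}$. This yields $\l=(f,\mu,\nu)\in\Lambda_{2,r,t}$ with $\bar\l=\Lambda$. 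For existence of a dominant weight with this data and for the reverse implication, I invoke that $\bar\l$ as in Definition~\ref{wt-l} is a genuine integral dominant weight (immediate from \eqref{COn11}) and that its weight space in $M_{pq}^{rt}$ is nonzero — which one sees either from the super Schur--Weyl duality of Theorem~\ref{level-2schur} and the cellular structure of Theorem~\ref{cellular-1}, or, more elementarily, by noting that $M_{pq}^{rt}$ surjects onto $V^{\otimes(r-f)}\otimes K_{\l_{pq}}\otimes W^{\otimes(t-f)}$ after $f$ contractions and applying Theorem~\ref{hecg} (and its $W$-side analogue for $M_{pq}^{0t}$). The main obstacle I anticipate is the bookkeeping that guarantees $f$ is well-defined — i.e. that the ``number of positive coordinates'' and ``number of negative coordinates'' of $\xi$ are forced to satisfy $r-(\#\text{pos})=t-(\#\text{neg})$ for \emph{every} dominant weight of $M_{pq}^{rt}$, not just a lucky one — and keeping straight the identification $\Lambda_{2,r,t}\ni(f,\mu,\nu)\leftrightarrow\bar\l$ in the presence of the dual tensor factors; once the $\es$-degree count $|\mu|-|\nu|=r-t$ is in hand, uniqueness is a short monotonicity argument.
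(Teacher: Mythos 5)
Your sign-splitting of $\xi=\Lambda-\l_{pq}$ into $\mu-\hat\nu$ is the same decomposition the paper arrives at: after subtracting $\xi_i=\min\{\eta_i,\zeta_i\}$, the paper's $\mu=\eta-\xi$ and $\gamma=\zeta-\xi$ have disjoint supports, so they are precisely the positive and negative parts of $\Lambda-\l_{pq}$, and your $\es$-degree count $|\mu|-|\nu|=r-t$ is correct. The gap is exactly the point you flag as ``the main obstacle'': you need $|\mu|\le r$ (equivalently $|\nu|\le t$), and you never prove it. The degree identity tells you only that $r-|\mu|=t-|\nu|$, not that this common value is non-negative, and dominance alone does not give it either --- the $\mfg_{-1}$-action inside $K_{\l_{pq}}$ pushes positive $\es$-coordinates onto the $\mathfrak{gl}_n$-side with no bound coming from $r$, so a generic dominant weight of $M_{pq}^{rt}$ can easily have $|\mu|>r$.

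The paper handles this in its very first line by citing \cite[Lemma~5.20]{RSu}: every $\mfg$-highest weight of $M_{pq}^{rt}$ has the form $\l_{pq}+\eta-\zeta$ for bicompositions $\eta,\zeta$ of sizes \emph{exactly} $r$ and $t$. This is a genuine constraint on highest (as opposed to merely dominant) weights --- morally, it records that the $v_{pq}$-component of a highest weight vector is nonzero. Your substitute, ``the multiset of strictly-positive entries of $\xi$ has total size $r-(\text{number of contracted $V$-slots})$,'' restates what needs proving rather than deriving it, and the appeals to Theorem~\ref{cellular-1} or Theorem~\ref{level-2schur} do not provide a weight-by-weight argument. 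If you import \cite[Lemma~5.20]{RSu} as the paper does, the rest of your reasoning --- disjoint supports giving \eqref{COn11}, dominance forcing $\mu,\nu$ to be bipartitions, uniqueness by monotonicity --- closes cleanly.
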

\begin{proof}
By \cite[Lemma 5.20]{RSu}, $\Lambda=\l_{pq}+\eta-\zeta$ for some bicompositions $\eta$ and $\zeta$ of sizes $r$ and $t$ respectively. For $i\in I$, let $\xi_i=\min\{\eta_i,\zeta_i\}$ and $f=\sum_{i\in I}\xi_i$. Then we obtain a weight $\xi$, and two bicompositions  $\mu:=\eta-\xi$ and $\gamma:=\zeta-\xi$ such that $|\mu|=r-f$, $|\gamma|=t-f$ and  $\Lambda=\l_{pq}+\mu-\gamma$.
Set $\nu=\hat\gamma$, then $\Lambda=\bar \l$ and \eqref{COn11} is satisfied by definition of $\xi$. Since  $\Lambda$ is dominant, $\mu,\nu$ must be bipartitions. Thus $\Lambda$ corresponds to $\l=(f, \mu,\nu)\in \Lambda_{2, r, t}$. Such a  $\l$ is unique.
\end{proof}

\begin{definition}\label{mixh} For each  $\lambda=(f, \mu, \nu)\in \Lambda_{2, r, t}$, denote
    $v_{\lambda}=v_{\mathbf i}\otimes v_{pq}\otimes v_{\mathbf j}$, where
 $$\mathbf i=(\mathbf i_{\mu^{(1)}}, \mathbf i_{\mu^{(2)}},\underset{f} {\underbrace{ 1, \cdots, 1}})\in I(m|n, r), \text{ and $\mathbf j=(\mathbf j_{\nu^{(2)}}, \mathbf j_{\nu^{(1)}},\underset{f}{\underbrace{ 1, \cdots, 1}})\in \bar I(m|n, t)$}\vspace*{-7pt}, $$ such that
 \begin{enumerate} \item
 $\mathbf j_{\nu^{(2)}}$  is obtained from $\mathbf i_{\nu^{(2)}}$  by using $m+n-i+1$ instead of $i$ for $1\le i\le n$,
 \item  $\mathbf j_{\nu^{(1)}}$   is obtained from $\mathbf i_{\nu^{(1)}}$ by using $m-i+1$ instead of $i$ for $1\le i\le m$.
 \end{enumerate}
\end{definition}
For instance, if $\l=(1, \mu, \nu)\in \Lambda_{2, 8, 10} $ with $\mu=((3, 1),(2,1))$ and $\nu=((4,1), (3,1))$, then
$\mathbf i=(1^3, 2, (m+1)^2, (m+2), 1) $ and $\mathbf j=((m+n)^3, (m+n-1), m^4, (m-1), 1)$. Thus,
$$v_\lambda=v_1\otimes v_{m+2}\otimes v_{m+1}^{\otimes 2}\otimes v_2\otimes v_1^{\otimes 3}\otimes v_{pq}\otimes \bar{v}_{m+n}^{\otimes 3}\otimes \bar{v}_{m+n-1}\otimes \bar{v}_m^{\otimes 4}\otimes\bar{v}_{m-1}\otimes \bar{v}_1.$$

\begin{definition} \label{rcs1} For any $(f, \mu, \nu)\in  \Lambda_{2, r, t}$, define \begin{enumerate}
\item  $w_{\mu, \nu} =w_\mu w_{\nu^{o}}$, with $\nu^{o}=(\nu^{(2)}, \nu^{(1)})$, $w_\mu=d(\ft_\mu)\in \mathfrak S_{r-f}$  and $w_{ \nu^{o}}=d(\ft_{ \nu^{o}})\in \bar {\mathfrak S}_{{t-f}}$,
\item
$v_{\lambda, \t, d, \kappa_d}=v_\l \mathfrak  e^f w_{\mu, \nu} \mathfrak y_{\mu'} \bar {\mathfrak y}_{(\nu^o)'}  d(\t) d x^{\kappa_d}$,  $\t\in \Std(\mu')\times \Std((\nu^{o})')$, $ d\in \mathcal D_{r,t}^f$ and $ \kappa_d\in \mathbf N_f$.
\end{enumerate}\end{definition}

\begin{theorem}\label{theorem-main1}
Suppose $r+t\le \min\{m, n\}$. \begin{enumerate} \item There is a bijection between the set of  dominant weights of $M_{pq}^{rt}$ and
$\Lambda_{2, r, t}$. \item If  $\l=(f, \mu, \nu)\in \Lambda_{2, r, t}$, then  $V_{\bar\l}$, the $\mathbb C$-space of all $\mfg$-highest weight vectors of $M_{pq}^{rt}$
with highest weight $\bar\l$,    has a basis $S:=\{v_{\lambda, \t, d, \kappa_d}\mid \t\in \Std(\mu')\times \Std((\nu^{o})'), d\in \mathcal D_{r,t}^f, \kappa_d\in \mathbf N_f\}$.\end{enumerate}
\end{theorem}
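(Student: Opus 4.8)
The plan is to build the proof on the template already set up for the $t=0$ case in Theorem~\ref{hecg}, combining it with the dimension count coming from the super Schur--Weyl duality of Theorem~\ref{level-2schur}. Part~(1) is immediate: by Lemma~\ref{lemm111} every $\mfg$-highest weight $\Lambda$ of $M_{pq}^{rt}$ is of the form $\bar\l$ for a unique $\l=(f,\mu,\nu)\in\Lambda_{2,r,t}$, and conversely each such $\l$ produces a candidate highest weight vector $v_\l$ by Definition~\ref{mixh}; so the map $\l\mapsto\bar\l$ is the required bijection once part~(2) shows each $V_{\bar\l}$ is nonzero. For part~(2), the first step is to verify that every $v_{\lambda,\t,d,\kappa_d}\in S$ is genuinely a $\mfg$-highest weight vector of weight $\bar\l$; since $M_{pq}^{rt}$ is a right $\DBr$-module via the operators of \eqref{operator--1}, and $d(\t)\,d\,x^{\kappa_d}$ acts on the right, it suffices to treat $d(\t)=d=1$, $\kappa_d=0$, i.e. the single vector $v_\l\mathfrak e^f w_{\mu,\nu}\mathfrak y_{\mu'}\bar{\mathfrak y}_{(\nu^o)'}$. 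Killing by $E_{ij}$ with $i<j$ inside $I_0$ or inside $I_1$ follows from the $\mathfrak{gl}_m\oplus\mathfrak{gl}_n$-version of Lemmas~\ref{ine123}--\ref{liehwv} exactly as in Theorem~\ref{hecg}, after moving the cyclotomic factors $\tilde\pi$ and $\bar{\tilde\pi}$ through $w_a$-type permutations using \eqref{conjwa}; killing by the odd simple generator $E_{m,m+1}$ uses Lemma~\ref{xsfor}(1) and Lemma~\ref{xsfor1} together with the fact that the relevant $x$- or $\bar x$-factor annihilates vectors whose relevant tensor slot lies in $V_0$ (resp.\ whose dual slot lies in $V_1$), just as in the proof of Theorem~\ref{hecg}; the presence of the $\mathfrak e^f$ horizontal strands is handled by the commutation relations of \cite{RSu} (Lemma~\ref{lemm-----2} and \cite[Lemma~4.7]{RSu}), which let one slide $E_{m,m+1}$ past $e_{r,t},\dots$ onto the Kac-module slot where typicality of $\l_{pq}$ (condition \eqref{Do-condi}) forces the result to vanish.

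The second step is linear independence of $S$. Here I would run the reduction used in Theorem~\ref{hecg}: project $M_{pq}^{rt}$ onto its $v_{pq}$-component and onto $V^{\otimes r}\otimes W^{\otimes t}$, reducing to $\mfg_0$-highest weight vectors of $V^{\otimes r}\otimes W^{\otimes t}$ by \cite[Lemmas~5.1--5.2]{SHK}. Decompose $V=V_0\oplus V_1$ and $W=W_0\oplus W_1$; each summand $V_{i_1}\otimes\cdots\otimes W_{j_t}$ is $\mfg_0$-isomorphic to a standard $(\mathfrak{gl}_m,\mathfrak{gl}_n)$-tensor space via a distinguished coset representative, and the $\mfg_0$-highest weight vectors of fixed weight $\tilde\l$ are enumerated by Lemma~\ref{liehwv} applied to $\mathfrak{gl}_m$ and $\mathfrak{gl}_n$ separately. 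Tracking the central Jucys--Murphy scalars $\prod(p-q-\res_\t(i))$ as in Theorem~\ref{hecg} (nonzero because $r+t\le\min\{m,n\}$ and $\l_{pq}$ is typical), one sees the $v_{pq}$-components of the $v_{\lambda,\t,d,\kappa_d}$ are, up to nonzero scalars, a subset of an explicit $\mathbb C$-basis of the space $V_{\tilde\l}$ of $\mfg_0$-highest vectors, hence linearly independent; in particular each $v_\l\neq0$, completing part~(1).

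The third step is to show $S$ spans $V_{\bar\l}$, equivalently that $\#S=\dim V_{\bar\l}$. Here the mixed-tensor structure is essential: $\dim\Hom_{U(\mfg)}(K_{\bar\l},M_{pq}^{rt})$ should be computed via the right $\DBr$-module structure. By Theorem~\ref{level-21} (valid since $r+t\le\min\{m,n\}$), $\mathrm{End}_{\mfg}(M_{pq}^{rt})\cong\DBr$, and by Theorem~\ref{level-k-walled} with $k=2$ this algebra has dimension $2^{r+t}(r+t)!$; the weakly cellular basis $\mathscr C$ of Theorem~\ref{cellular-1} then gives, for the cell datum $(f,\mu',(\nu^o)')$, a right cell module $C(f,\mu',(\nu^o)')$ whose dimension is exactly $\#\Std(\mu')\cdot\#\Std((\nu^o)')\cdot\#\mathcal D^f_{r,t}\cdot\#\mathbf N_f=\#S$. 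One then argues, paralleling Corollary~\ref{heck-kac0} and using the index set \eqref{index}, that $\dim V_{\bar\l}=\dim C(f,\mu',(\nu^o)')$: the combinatorics of $\delta(f,\mu,\nu)$ is precisely the combinatorics parametrising $S$ in Definition~\ref{rcs1}. Since $S$ consists of highest weight vectors of the right number and is linearly independent, it must be a basis of $V_{\bar\l}$.

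The main obstacle I expect is the first step, specifically verifying that $v_\l\mathfrak e^f w_{\mu,\nu}\mathfrak y_{\mu'}\bar{\mathfrak y}_{(\nu^o)'}$ is annihilated by the odd generator $E_{m,m+1}$ in the presence of the $f$ horizontal $\mathfrak e^f$-strands and of \emph{both} a level-two Hecke element on the bra side and one on the ket side (which do not commute, as emphasised after Definition~\ref{cellbasis}). One must carefully commute $E_{m,m+1}$ — acting diagonally on all $r+t+1$ tensor factors — past the permutation and cyclotomic data, using the explicit relations \eqref{it1}--\eqref{it3}, \eqref{e-ij}, and \cite[Lemmas~4.2,~4.7]{RSu}, and then invoke typicality of $\l_{pq}$ together with $r+t\le\min\{m,n\}$ to see the surviving term dies on the Kac-module slot. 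The bookkeeping of signs from the super structure, and the ordering convention that the $\mathscr H_{2,r-f}$-factor is always written on the left, is where the argument is most delicate; the other two steps are essentially a transcription of the $t=0$ arguments together with the cell-module dimension count.
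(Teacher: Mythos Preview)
Your proposal has the right outline but misidentifies where the real difficulty lies, and contains a genuine gap in the linear independence step when $f>0$.

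\textbf{The highest weight vector check (your Step~1).} You flag this as the main obstacle, proposing to commute $E_{m,m+1}$ past $\mathfrak e^f$ and the Hecke data via the relations of \cite{RSu}. The paper's approach is considerably simpler and avoids this bookkeeping entirely. The key observation is that $v_i\otimes\bar v_i\,e_1=(-1)^{[i]}\sum_{j\in I}v_j\otimes\bar v_j$ is independent of $i$ up to sign. Hence, when computing the action of a fixed $E_{j\ell}$ on the $(r-k)$-th and $\overline{t-k}$-th tensor slots (the endpoints of one horizontal strand), one may first replace those slots by $v_o\otimes\bar v_o$ for any convenient $o$. This reduces the verification to the ``interior'' vector $v_\t$ in \eqref{vtt}, which is handled directly by Theorem~\ref{hecg} applied to $V^{\otimes(r-f)}\otimes K_{\l_{pq}}$ and to $K_{\l_{pq}}\otimes W^{\otimes(t-f)}$. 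No commutation of $E_{m,m+1}$ past $\mathfrak e^f$ is needed.

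\textbf{The linear independence (your Step~2): this is where the gap is.} Your plan is to project onto the $v_{pq}$-component and argue, as in Theorem~\ref{hecg}, that the images land in a $\mfg_0$-highest-weight basis of $V^{rt}$ up to nonzero Jucys--Murphy scalars. For $f=0$ this works. For $f>0$ it does not: the factor $x^{\kappa_d}$ acts on tensor slots that, after $\mathfrak e^f d$, carry a full sum $\sum_{k\in I}v_k\otimes\cdots\otimes\bar v_k$. The $v_{pq}$-component of $v_{\lambda,\t,d,\kappa_d}$ therefore receives contributions from both $V_0$- and $V_1$-endpoints of each horizontal strand, and for different $\kappa_d$ (same $\t,d$) these are genuinely different linear combinations, not scalar multiples of distinct basis elements. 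Nothing in your argument separates the $2^f$ choices of $\kappa_d$.

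The paper's remedy is specific: because $r+t\le\min\{m,n\}$, one can choose $f$ ``fresh'' pairs $(k_i,\ell_i)$ with $k_i\in I_0$, $\ell_i\in I_1$, none of which appear in $v_{\tilde\mu}$ or $\bar v_{\mathbf j}$. For the maximal $d$ occurring in a purported dependence relation, one reads off the coefficient of the basis vector in which the horizontal-strand endpoints are precisely these $k_i$'s (respectively $\ell_i$'s). By Lemma~\ref{xsfor}, $x_{i_h}$ contributes a factor $-p$ on a $V_0$-endpoint and $-q$ on a $V_1$-endpoint; since $\l_{pq}$ is typical, $p\neq q$, and the resulting coefficients $\prod_h(-p)^{\epsilon_h}$ versus $\prod_h(-q)^{\epsilon_h}$ isolate each $\kappa_d$. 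This reduces to a dependence among the interior vectors $\tilde v_\t$, which is then ruled out as in the $f=0$ case. This eigenvalue-separation argument is the essential new idea for $f>0$, and it is missing from your proposal.

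\textbf{The dimension count (your Step~3)} is fine: the cell-module dimension $\#\delta(f,\mu',(\nu^o)')$ agrees with the $\mfg_0$-highest-weight count the paper uses, so either route works once linear independence is established.
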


\begin{proof}
 Obviously, (1) follows from  Lemma~\ref{wt-l}. To obtain (2), we prove that for each $\l=(f, \mu, \nu)\in \Lambda_{2, r, t}$, $V_{\bar {\l}}$ has the required basis in the case either $f=0$ or $f>0$.

\medskip\textit{Case~1: $f=0$.}

By Definition~\ref{rcs1},
$v_{\lambda, \t, d, \kappa_d}=v_{\mathbf i}\otimes v_{pq} \otimes \bar v_{\mathbf j}
 w_{\mu} \mathfrak y_{\mu'} d(\t_1) w_{\nu^{o}}  \bar{\mathfrak y}_{(\nu^o)'}  d(\t_2)$,
where $\mathbf i$, $\mathbf j$ are defined in  Definition~\ref{mixh}. By Theorem~\ref{hecg},
$v_{\mathbf i}\otimes v_{pq} \otimes \bar v_{\mathbf j}
 w_{\mu} \mathfrak y_{\mu'} d(\t_1)$ can be regarded as  a $\mfg$-highest weight vector of $M_{pq}^{r0}$. Similarly,  $v_{\mathbf i}\otimes v_{pq} \otimes \bar v_{\mathbf j}w_{\nu^{o}} \bar {\mathfrak y}_{(\nu^o)'} d(\t_2)$ can be regarded as a $\mfg$-highest weight vector of $M_{pq}^{0t}$. Thus,
$v_{\lambda, \t, d, \kappa_d}$ is  a  $\mfg$-highest weight vector of $M_{pq}^{rt}$. The last assertion follows from
 arguments on counting the dimensions of $V_{\bar\l}$ and  that of  $\mfg_0$-highest weight vectors of $V^{rt}:=V^{\otimes r} \otimes W^{\otimes  t} $  with highest weight $\mu-\hat\nu$.

\medskip
\textit{Case~2: $f>0$.}

 For any $i\in I$,
 $$v_i\otimes \bar v_{i}e_{1}=(-1)^{[i]}\mbox{$\sum\limits_{j\in I}$} v_j\otimes \bar v_{j}.$$
Thus  $v_i\otimes \bar v_{i}e_{1}$ is unique up to a sign for different $i$'s.
  Since $M_{pq}^{rt}$ is a $(\mfg, \mathscr B_{2, r, t})$-bimodule,
we can switch $v_{i_{r-k}}$ and  $\bar v_{j_{t-k}}$  in $v_\lambda$ with $i_{r-k}=j_{t-k}$ to  $v_o$ and  $\bar v_o$  for  any fixed $o, 1\le o\le m+n$ simultaneously  when we consider
the action of  $E_{j,\ell}$  on $i_{r-k}$-th (resp. $j_{t-k}$-th) tensor factor of $v_{\lambda, \t, d, \kappa_d}$ for $0\le k\le f-1$.
Let
\begin{equation}\label{vtt} v_\t:=v_{i_{r-f}}\otimes \cdots v_{i_1}\otimes v_{pq}\otimes  \bar v_{j_{ 1}} \otimes \cdots \bar v_{j_{{t-f}}}  w_{\mu, \nu} x_{\alpha^{(2)}} y_{\alpha^{(1)}} y_{\beta^{(1)}}  x_{\beta^{(2)}} \pi_{r-f-a} (-p)\pi_{b}(q) d(\t),\end{equation}
where $\a^{(i)}$ (resp. $\b^{(i)}$) is the conjugate of $\mu^{(i)}$ (resp. $\nu^{(i)}$), $i=1,2$.   Applying Theorem~\ref{hecg} to both $V^{\otimes r-f}\otimes K_{\lambda_{pq}}$ and $K_{\lambda_{pq}}\otimes W^{\otimes t-f}$ yields $E_{ j,\ell} v_\t=0$. So,
  $E_{j,\ell} v_{\l, \t, d, \kappa_d}=0$ for any $j<\ell$.

 We claim  that $S$ is linear independent, where $S$ is given in (2). If so, each  $v_{\l, \t, d, \kappa_d}\neq 0$, forcing $v_{\l, \t, d, \kappa_d}$ to be a $\mfg$-highest weight vector of $M_{pq}^{rt}$ with highest weight $\bar \l$.

Suppose   $\mathbf i\in I(m|n, r_1-1)$ and $\mathbf j\in \bar I(m|n, t_1-1)$ with $r_1\le r$ and $t_1\le t$  such that  there are at least some  $k_0\in I_0$ and $\ell_0\in I_1$  satisfying $k_0, \ell_0\not\in \{ i_l,  j_o\}$ for all possible $i, o$'s.
We consider  $\sum_{k\in I} v_k\otimes v_{\mathbf i} \otimes v\otimes v_{\mathbf j}\otimes \bar v_k\in M_{pq}^{r_1, t_1}$, where $v\in B$ is  a basis element of  $K_{\l_{pq}}$ in \eqref{basis-k-l}.
Since $x_{r_1}'=x_{r_1}+L_{r_1}$ and $x_{r_1}'$ acts on $M_{pq}^{r_1, t_1}$ as $-\pi_{r_1, 0} (\Omega)$, where $\Omega$ is given in \eqref{def-Omega}, we have
$$\begin{aligned} \mbox{$\sum\limits_{k\in I}$} & v_k\otimes v_{\mathbf i} \otimes v\otimes v_{\mathbf j}\otimes \bar v_k  (x_{r_1}+L_{r_1})
=-\pi_{r_1, 0} (\Omega) \mbox{$\sum\limits_{k\in I}$}  v_k\otimes v_{\mathbf i} \otimes v\otimes v_{\mathbf j}\otimes \bar v_k\\
& =-\mbox{$\sum\limits_{k, i\in I}$} (-1)^{[k]+([k]+[i])([k]+[\mathbf i])} v_i\otimes v_{\mathbf i} \otimes E_{k,i} v\otimes \bar v_{\mathbf j} \otimes \bar v_{k},\\
 \end{aligned}$$
 where $[\mathbf i]=\sum_{j=1}^{r_1-1} [i_j]$.  So, up to some scalar $a$,  $\sum_{k=1}^{m+n}  v_k\otimes v_{\mathbf i} \otimes v\otimes v_{\mathbf j}\otimes \bar v_k  x_{r_1}$ contains the unique term
 $v_{k_0} \otimes v_{\mathbf i} \otimes v \otimes \bar v_{\mathbf j}\otimes  \bar v_{k_0}$.
 In particular, if $v\neq v_{pq}$,
 $\sum_{k\in I}  v_k\otimes v_{\mathbf i} \otimes v\otimes v_{\mathbf j}\otimes \bar v_k  x_{r_1}$
does not contribute terms with form $v_{k_0} \otimes v_{\mathbf i'} \otimes v_{pq} \otimes v_{\mathbf j'}\otimes \bar v_{k_0}$ for all possible $\mathbf i'$ and $\mathbf j'$. If $v=v_{pq}$, by Lemma~\ref{xsfor}, the previous scalar is $-p$. Similarly, the coefficient of $v_{\ell_0} \otimes v_{\mathbf i} \otimes v_{pq} \otimes v_{\mathbf j}\otimes \bar v_{\ell_0}$ in the expression of
$\sum_{k\in I}  v_k\otimes v_{\mathbf i} \otimes v\otimes v_{\mathbf j}\otimes \bar v_k  x_{r_1}$ is
$ -q$.  Assume
\begin{equation} \label{cd}  c\mbox{$\sum\limits_{k\in I}  v_k\otimes v_{\mathbf i} \otimes v_{pq}\otimes v_{\mathbf j}\otimes \bar v_k  x_{r_1}+d \sum\limits_{k\in I}$}  v_k\otimes v_{\mathbf i} \otimes v_{pq}\otimes v_{\mathbf j}\otimes \bar v_k=0\end{equation}
for some $c, d\in \mathbb C$. Then $d=cp=cq$ by  considering the coefficients of  $v_{k} \otimes v_{\mathbf i} \otimes v_{pq} \otimes v_{\mathbf j}\otimes \bar v_{k}$, $k\in \{k_0, \ell_0\}$ in the expression of
LHS of \eqref{cd}. If $c\neq 0$, then $p-q=0$. This is a contradiction since $\l_{pq}$ is typical in the sense of \eqref{Do-condi}. So, $c=d=0$  and hence $\sum_{k\in I}  v_k\otimes v_{\mathbf i} \otimes v_{pq}\otimes v_{\mathbf j}\otimes \bar v_k  x_{r_1}$ and $\sum_{k\in I}  v_k\otimes v_{\mathbf i} \otimes v_{pq}\otimes v_{\mathbf j}\otimes \bar v_k$ are linear independent.
Now, we assume \begin{equation}\label{r-lid} \mbox{$\sum\limits_{\t, d, \kappa_d}$} r_{\t, d,  \kappa_d} v_{\l, \t, d, \kappa_d}=0 \text{
for some  $  r_{\t, d,  \kappa_d}\in \mathbb C$.}\end{equation}
 We claim that $  r_{\t, d,  \kappa_d}=0$ for all possible $\t, d,  \kappa_d$. If not, then we  pick up a  $d\in \mathcal D_{r,t}^f$ such that
\begin{enumerate} \item  $r_{\t, d,  \kappa_d} \neq 0$, \item
 $d=s_{r-f+1,i_{r-f+1}}{\sc\!} \bar s_{t-f+1, j_{t-f+1}}{\sc\!}\! \cdots\!
s_{r,i_r}{\sc\!}\bar s_{t,{j_t}} $ and   $i_r>i_{r-1}>\cdots>i_{r-f+1}$,
\item $(i_r, \cdots, i_{r-f+1})$ is maximal with respect to lexicographic order. \end{enumerate}
Since $r+t\le \min\{m, n\}$ and $0<f\le \min\{r, t\}$, we can pick  $f$  pairs $(k_i, \ell_i)$, $r-f+1\le i\le r$  such that  \begin{enumerate} \item $k_i\in I_0$,   $\ell_i\in I_1$, $k_i>k_j$ and $\ell_i> \ell_j$ if $i> j$;
\item both  $v_{k_i}$ and $v_{\ell_i}$ are  not a tensor factor of $v_{\mathbf i_\mu}$,
\item   both  $\bar v_{k_i}$ and $\bar v_{\ell_i}$ are  not a tensor factor of  $\bar v_{\mathbf j}$. \end{enumerate}
We consider the terms $v_{\mathbf a} \otimes v_{pq}\otimes \bar v_{\mathbf b}$'s in
the expressions of  $v_{\l, \t, d, \kappa_d}$'s  in LHS of \eqref{r-lid} with  $r_{\t, d,  \kappa_d}\neq 0$
such that either  $v_{a_{i_h}}=v_{k_h}$ and $\bar v_{b_{i_{t-r+h}}}= \bar v_{k_h}$ or  $v_{a_{i_h}}=v_{\ell_h}$ and $\bar v_{b_{i_{t-r+h}}} =\bar v_{\ell_h}$
  for $r-f+1\le h\le r$.  Such terms occur in the expression of  $v_1^{\otimes f}\otimes\tilde  v_{\t} \otimes \bar v_1^{\otimes f} \mathfrak e^f d x^{\kappa_d}$, where $\tilde v_{\t}$ is a linear combination of  the terms
  in $v_{\t}$'s (cf. \eqref{vtt})  with forms $v_{\mathbf i'}\otimes v_{pq}\otimes \bar v_{\mathbf j'}$.
If $v_{a_h}=v_{k_h}$ and $\bar v_{b_{i_{t-r+h}}}= \bar v_{k_h}$, by previous arguments, the coefficient of $v_{\mathbf a}\otimes v_{pq}\otimes \bar v_{\mathbf b}$
in  $v_1^{\otimes f}\otimes v_{\t}\otimes \bar v_1^{\otimes f} e^f d x^{\kappa_d}$
 is $\prod_{h=r}^{r-f+1} (-p)^{\epsilon_h} $, where  $\epsilon_h=1$ if $\kappa_h=1$ and $0$ if $\kappa_h=0$.
 If $v_{a_h}=v_{\ell_h}$ and $\bar v_{b_{i_{t-r+h}}}= \bar v_{\ell_h}$, then the coefficient of $v_{\mathbf a}\otimes v_{pq}\otimes \bar v_{\mathbf b}$
in  $v_1^{\otimes f}\otimes\tilde  v_{\t}\otimes \bar v_1^{\otimes f} \mathfrak e^f d x^{\kappa_d}$  is $\prod_{h=r}^{r-f+1} (-q)^{\epsilon_h} $, where  $\epsilon_h=1$ if $\kappa_h=1$ and $0$ if $\kappa_h=0$.
 By \eqref{r-lid},  $\sum_\t r_{\t, d, \kappa_d} \tilde v_\t=0$ for any fixed $\kappa_d$. Thus, we can assume that $\kappa_d=(0,
 \cdots, 0)\in \mathbf N_f$. If we identify  $\tilde v_\t$ with its $v_{pq}$-component, then $\tilde v_\t$
   can be considered as $\mfg_0$-highest weight vectors of $V^{\otimes r-f}\otimes W^{\otimes  t-f}$ (cf. arguments in the proof of Theorem~\ref{hecg}) of the form
    $$\tilde v_\t=v_{i_{r-f}}\otimes \cdots v_{i_1}\otimes   \bar v_{j_{ 1}} \otimes \cdots \bar v_{j_{{t-f}}}  w_{\mu, \nu} x_{\alpha^{(2)}} y_{\alpha^{(1)}}\bar y_{\beta^{(1)}} \bar x_{\beta^{(2)}}d(\t).$$
So,  $r_{\t, d,\kappa_d}\!=\!0$, a contradiction.  This proves that  $S$ is $\mathbb C$-linear independent. Further,  $S$ is  a basis of $V_{\bar\l}$ since
the cardinality of $S$ is  $2^f |\mathcal D_{r,t}^f|\!\cdot\! |\Std(\mu')|\!\cdot\! |\Std(\nu')|$, which is   the dimension of space consisting of $\mfg_0$-highest weight vectors of
$V^{rt}$ with highest weight $\mu\!-\!\hat \nu$.
\end{proof}

\begin{definition}\label{f} Let $\FF={\rm Hom}_{U(\mfg)}(?,M_{pq}^{rt})$ be the functor from the category of finite dimensional left $\mfg$-modules to the category of right $\DBr$-modules over $\mathbb C$.\end{definition}

\begin{lemma}\label{exact} The functor $\FF$ is  exact.\end{lemma}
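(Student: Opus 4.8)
The plan is to reduce the statement to the injectivity of $M_{pq}^{rt}$ as an object of the category $\cF$ of finite-dimensional $\mfg$-modules. Indeed, $\FF=\Hom_{U(\mfg)}(-,M_{pq}^{rt})$ is contravariant and, for any $\mfg$-module $M$, the functor $\Hom_{U(\mfg)}(-,M)$ is left exact in the sense that it carries a short exact sequence $0\to A\to B\to C\to 0$ to the exact sequence $0\to\Hom_{U(\mfg)}(C,M)\to\Hom_{U(\mfg)}(B,M)\to\Hom_{U(\mfg)}(A,M)$; so full exactness of $\FF$ is precisely the surjectivity of the last map, i.e. the injectivity of $M_{pq}^{rt}$. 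Since $\DBr$ acts on $M_{pq}^{rt}$ on the right commuting with the left $\mfg$-action, $\FF$ indeed takes values in right $\DBr$-modules (as already recorded in Definition~\ref{f}), and a sequence of $\DBr$-modules is exact if and only if it is exact as a sequence of $\mathbb C$-spaces; hence it suffices to check exactness of $\FF$ at the level of underlying $\mathbb C$-spaces.

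The key input is that $K_{\l_{pq}}$ is both projective and injective in $\cF$. Under the standing hypothesis \eqref{Do-condi} the weight $\l_{pq}$ is typical, so by Kac's theorem \cite{Kac77} the Kac-module $K_{\l_{pq}}$ is irreducible, and a typical finite-dimensional irreducible $\mfg$-module is projective and injective in $\cF$. Granting this, I would use that the tensor product of $\mfg$-modules is associative and (super-)commutative up to natural isomorphism to write $M_{pq}^{rt}\cong K_{\l_{pq}}\OTIMES V^{rt}$, where $V^{rt}=V^{\otimes r}\OTIMES W^{\otimes t}$ is finite-dimensional. For any finite-dimensional $\mfg$-module $F$ the tensor--hom adjunction over the Hopf superalgebra $U(\mfg)$ gives a natural isomorphism $\Hom_{U(\mfg)}(N,K_{\l_{pq}}\OTIMES F)\cong\Hom_{U(\mfg)}(N\OTIMES F^{*},K_{\l_{pq}})$, functorial in $N$. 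Since $-\OTIMES F^{*}$ is exact (tensoring over $\mathbb C$) and $\Hom_{U(\mfg)}(-,K_{\l_{pq}})$ is exact because $K_{\l_{pq}}$ is injective, the composite $\FF=\Hom_{U(\mfg)}(-,M_{pq}^{rt})$ is exact; equivalently, $M_{pq}^{rt}$ is injective in $\cF$.

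The only non-formal ingredient is the projectivity--injectivity of the typical Kac-module $K_{\l_{pq}}$, which is exactly where the typicality hypothesis \eqref{Do-condi} enters; everything else is routine homological algebra. Alternatively, one may invoke the fact, used in \cite{BS4}, that $M_{pq}^{rt}$ is a prinjective module (simultaneously projective and injective), but the tensoring argument above produces the needed injectivity of $M_{pq}^{rt}$ directly from the classical properties of typical Kac-modules.
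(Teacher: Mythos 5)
Your proof is correct and rests on the same central fact as the paper's: since $\l_{pq}$ is typical, $M_{pq}^{rt}$ is injective as a $\mfg$-module, hence $\FF=\Hom_{U(\mfg)}(-,M_{pq}^{rt})$ is exact. The paper simply cites \cite[IV]{BS4} for the injectivity (indeed projectivity and tilting as well) of $M_{pq}^{rt}$, whereas you derive it from scratch: typicality implies $K_{\l_{pq}}$ is simple and hence projective--injective, then the tensor--hom adjunction $\Hom_{U(\mfg)}(N,K_{\l_{pq}}\OTIMES F)\cong\Hom_{U(\mfg)}(N\OTIMES F^*,K_{\l_{pq}})$ for finite-dimensional $F=V^{rt}$ shows that tensoring an injective with a finite-dimensional module stays injective. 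This is a correct and standard argument (it is essentially the proof that \cite{BS4} implicitly relies on); it gains self-containment but adds nothing substantively new over the paper's one-line citation.
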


\begin{proof}Since  $\lambda_{pq}$ is typical, $M^{rt}$ is  projective, injective and tilting as left $\mfg$-module (e.g., \cite[IV]{BS4}). So, $\FF$ is exact.\end{proof}

\begin{proposition}\label{kac-ke} Suppose $\l=(f, \mu, \nu)\in \Lambda_{2, r, t}$.
Then $\FF(K_{\bar\l})\cong C(f, \mu', (\nu^o)')$, where $\nu^o=(\nu^{(2)}, \nu^{(1)})$.\end{proposition}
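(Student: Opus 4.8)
The plan is to compute $\FF(K_{\bar\l})=\Hom_{U(\mfg)}(K_{\bar\l},M_{pq}^{rt})$ as a right $\DBr$-module by exhibiting an explicit basis coming from the highest weight vectors classified in Theorem~\ref{theorem-main1}, and then identifying the resulting module with the cell module $C(f,\mu',(\nu^o)')$ via the concrete description of cell modules given in Proposition~\ref{classcell2}. First I would recall, exactly as in the proof of Corollary~\ref{heck-kac0}, that for any $\mfg$-highest weight vector $v$ of $M_{pq}^{rt}$ with highest weight $\bar\l$ there is a unique $U(\mfg)$-homomorphism $f_v\colon K_{\bar\l}\to M_{pq}^{rt}$ with $f_v(v_{\bar\l})=v$ (this uses the universal property of the Kac-module, together with the fact that $M_{pq}^{rt}$ is injective since $\l_{pq}$ is typical, cf.~Lemma~\ref{exact}); conversely every $f\in\Hom_{U(\mfg)}(K_{\bar\l},M_{pq}^{rt})$ sends $v_{\bar\l}$ to a highest weight vector of weight $\bar\l$, so $f\mapsto f(v_{\bar\l})$ is a $\mathbb C$-linear isomorphism $\Hom_{U(\mfg)}(K_{\bar\l},M_{pq}^{rt})\xrightarrow{\sim}V_{\bar\l}$, where $V_{\bar\l}$ is the space of Theorem~\ref{theorem-main1}. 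Moreover this isomorphism is a right $\DBr$-module homomorphism, because the $\DBr$-action on $M_{pq}^{rt}$ commutes with $U(\mfg)$ and hence with post-composition by $f_v$; explicitly $f_v\cdot b$ has $f_v\cdot b(v_{\bar\l})=v\,b$ for $b\in\DBr$. Thus it suffices to prove $V_{\bar\l}\cong C(f,\mu',(\nu^o)')$ as right $\DBr$-modules.

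Next I would use the basis $S=\{v_{\l,\t,d,\kappa_d}\}$ of $V_{\bar\l}$ from Theorem~\ref{theorem-main1}(2), indexed by $\t\in\Std(\mu')\times\Std((\nu^o)')$, $d\in\mathcal D_{r,t}^f$, $\kappa_d\in\mathbf N_f$; by Definition~\ref{rcs1} these vectors are $v_\l\,\mathfrak e^f w_{\mu,\nu}\mathfrak y_{\mu'}\bar{\mathfrak y}_{(\nu^o)'}d(\t)\,d\,x^{\kappa_d}$. The cell module $\tilde C(f,\mu',(\nu^o)')$ of Proposition~\ref{classcell2} has by construction (using Lemma~\ref{ineq}(2) for $\mathscr H_{2,r-f}$ and $\mathscr H_{2,t-f}$ and $\eqref{key-2}$) a basis of exactly the same shape: $\mathfrak e^f\mathfrak x_{(\mu')'}\bar{\mathfrak x}_{((\nu^o)')'}w_{\mu'}w_{(\nu^o)'}\mathfrak y_{\mu'}\bar{\mathfrak y}_{(\nu^o)'}$ times $d(\t)\,d\,x^{\kappa_d}$ modulo $\mathscr B_{2,r,t}^{f+1}$, with the same index set $\delta(f,\mu',(\nu^o)')$ (note $\mu''=\mu$, so the ``$\mathfrak x$'' part matches $w_{\mu,\nu}$ up to the rearrangement of factors recorded in $\eqref{wla1}$ and Remark~\ref{com-conj}). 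So I would define the $\mathbb C$-linear map $\Psi\colon \tilde C(f,\mu',(\nu^o)')\to V_{\bar\l}$ sending the cell-module basis element indexed by $(\t,d,\kappa_d)$ to $v_{\l,\t,d,\kappa_d}$, which is a bijection on bases, and check it is $\DBr$-equivariant. For the $t=0$ (or $f=0$) core, Proposition~\ref{cell-iso1} already gives $V_{\bar\l}\cong\mathfrak x_{\mu}w_{\mu}\mathfrak y_{\mu'}\mathscr H_{2,r}\cong\tilde\Delta(\mu')$ as right $\mathscr H_{2,r}$-modules, so the point is to propagate this through the $e_1$'s and the coset representatives $d\in\mathcal D_{r,t}^f$.

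To prove $\DBr$-equivariance I would verify it on the generators $s_i,\bar s_j,x_1,\bar x_1,e_1$. For the symmetric-group generators and for $x_1,\bar x_1$ acting in the ``core'' $r-f,t-f$ positions, equivariance is inherited directly from Proposition~\ref{cell-iso1} applied to both $V^{\otimes(r-f)}\otimes K_{\l_{pq}}$ and $K_{\l_{pq}}\otimes W^{\otimes(t-f)}$ (exactly the reduction used in Theorem~\ref{theorem-main1}, Case~2, via the vectors $v_\t$ of $\eqref{vtt}$), combined with the straightening relations for $d\in\mathcal D_{r,t}^f$ and $\kappa_d\in\mathbf N_f$ that were used to build the weakly cellular basis in Theorem~\ref{cellular-1}; these are the same relations that govern how $\DBr$ acts on the cell module, so the two actions are intertwined by $\Psi$ essentially tautologically. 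The genuinely new checks, and the main obstacle, concern $e_1$ and the $x_k$ in the ``outer'' positions $r-f+1,\dots,r$: one must show that $v_\l\,\mathfrak e^f(\cdots)$ behaves under the $e_i$'s the same way $\mathfrak e^f(\cdots)$ does modulo $\mathscr B_{2,r,t}^{f+1}$. Here I would use the identity $v_i\otimes\bar v_i\,e_1=(-1)^{[i]}\sum_{j\in I}v_j\otimes\bar v_j$ from Case~2 of Theorem~\ref{theorem-main1}, the admissibility of $\DBr$ (so that $e_1\boldf(x_1)x_1^a e_1=0$, Lemma~\ref{zero-1}) together with the fact that $\omega_0=m-n,\omega_1=nq-mp$ are precisely the values making $\DBr$ admissible in this setting, to argue that applying an extra $e_i$ either produces an element of the ideal $\mathscr B_{2,r,t}^{f+1}$ (killed in the cell module) or reduces to the action already accounted for — mirroring the computations of Proposition~\ref{simp-b2} and \cite[Lemma~6.9, Proposition~6.10]{RSu}. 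Once $\Psi$ is shown to be a $\DBr$-homomorphism that is bijective on the chosen bases, it is an isomorphism, and combining with $\tilde C(f,\mu',(\nu^o)')\cong C(f,\mu',(\nu^o)')$ from Proposition~\ref{classcell2} gives $\FF(K_{\bar\l})\cong C(f,\mu',(\nu^o)')$, as desired.
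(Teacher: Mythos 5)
Your proposal is correct and follows essentially the same route as the paper: both start by observing (as in the proof of Corollary~\ref{heck-kac0}) that the evaluation map $f\mapsto f(v_{\bar\l})$ gives a right $\DBr$-module isomorphism $\Hom_{U(\mfg)}(K_{\bar\l},M_{pq}^{rt})\cong V_{\bar\l}$, and then identify $V_{\bar\l}$ with the cell module by matching the explicit bases from Theorem~\ref{theorem-main1} and Proposition~\ref{classcell2}, with $\DBr$-equivariance coming from Proposition~\ref{cell-iso1} (for the core $r\!-\!f$, $t\!-\!f$ positions) together with the straightening machinery of \cite[Proposition~6.10]{RSu} for $\mathfrak e^f$, $d\in\mathcal D_{r,t}^f$ and $x^{\kappa_d}$. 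Only a small slip: after substituting $(f,\mu,\nu)\mapsto(f,\mu',(\nu^o)')$ into Proposition~\ref{classcell2}, the conjugating permutations should read $w_{(\mu')'}w_{((\nu^o)')'}=w_\mu w_{\nu^o}=w_{\mu,\nu}$ rather than $w_{\mu'}w_{(\nu^o)'}$ as written, but your subsequent remark that this part must match $w_{\mu,\nu}$ via \eqref{wla1} shows you have the correct picture.
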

\begin{proof}
 By Proposition~\ref{classcell2}, there is  an explicit linear  isomorphism between  $C(f, \mu',  (\nu^{o})')$ and $V_{\bar\lambda}$, where $V_{\bar\lambda}$ is given in Theorem~\ref{theorem-main1}.
 By Proposition~\ref{cell-iso1} and \cite[Proposition~6.10]{RSu}, this linear isomorphism is a $\DBr$-homomorphism. Thus, $C(f, \mu',  (\nu^{o})')\cong V_{\bar\lambda}$ as right $\DBr$-modules.
 Using the universal property of Kac-modules yields  $\Hom_{U(\mfg)} (K_{\bar \l}, M_{pq}^{rt} )\cong V_{\bar\l}$
as   $\DBr$--modules (cf. the proof of Corollary~\ref{heck-kac0}).
 Now, everything is clear.
\end{proof}

In the remaining part of this section, we calculate decomposition matrices of $\DBr$. We always assume that
$p\in \mathbb Z$.  Otherwise, one can use $x_1+p_1$ instead of $x_1$  for any $p_1\in \mathbb C$ with $p-p_1\in \mathbb Z$. Since $\lambda$ is typical, we have $p-q\not\in \mathbb Z$ or
$p-q\le -m$ or $p-q\ge n$. In the first case, by \cite[Theorem~5.21]{RSu},
$\DBr$ is semisimple and hence its decomposition matrix is the identity matrix. We assume that  $p-q\le -m$. If $p-q\ge n$,  one can switch the roles between $p$ and $q$   (or by considering the dual module of $M_{pq}^{rt}$) in the following arguments.

Suppose $\lambda=(f, \mu, \nu)\in \Lambda_{2, r, t}$. Let $T_{\bar \l}$ be the indecomposable tilting module, where  $\bar \l=\l_{pq}+\tilde\l=\l_{pq}+\mu-\hat\nu$ (cf.~Definition \ref{wt-l}).
 It is the projective cover of $L_{\bar\l}$,  where $L_{\bar \l}$ is the simple $\mfg$-module with highest weight $\bar \l$.
 It is known that   $T_{\bar \l}$ has  filtrations of Kac-modules.
Let $K_{\bar \lambda^{\rm top}}$ be the unique bottom of   $T_{\bar \l}$. Then $L_{\bar \lambda}$ is the simple $\mfg$-module of $K_{\bar \lambda^{\rm top}}$. Further,  $\bar \lambda^{\rm top}$ is the dominant weight defined in Definition \ref{topp}\,(1).
  Since $M_{pq}^{rt}$ is a tilting module,  it can be decomposed into the direct sum of indecomposable tilting modules
   \begin{equation}\label{Mrtpq}
M_{pq}^{rt}=\OPL{\mu\in P^+}T_\mu^{\oplus \ell_\mu}\mbox{ \ for some \ }\ell_\mu\in\N. 
\end{equation}
In the remaining part of this paper, we denote $S$ to be the following  finite subset of $P^+$,  \begin{equation}\label{tiltings} S:=\{\mu\in P^+\,|\,\ell_\mu\ne0\}.\end{equation}
Parallel to Corollary \ref{Kl-heck-kac}, we have the following.
\begin{lemma}\label{M-tilting}
Let $\lambda=(f, \mu, \nu)\in \Lambda_{2, r, t}$ such that $(\l^{\rm top})'$ is Kleshchev, where $\l^{\rm top}$ is defined in Definition $\text{\rm\ref{topp}\,(2)}$. Then $T_{\bar\l}$ is a direct summand of $M_{pq}^{rt}$.
\end{lemma}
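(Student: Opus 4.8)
The plan is to follow the pattern of the proof of Corollary~\ref{Kl-heck-kac}: first isolate a purely combinatorial criterion, in terms of weight diagrams, for an indecomposable tilting module to be a direct summand of $M_{pq}^{rt}$, and then check that the hypothesis on $(\lambda^{\rm top})'$ is exactly what makes that criterion hold. Write $\lambda^{\rm top}=(f,\mu^{\rm top},\nu^{\rm top})\in\Lambda_{2,r,t}$ for the triple attached to $\bar\lambda^{\rm top}$, the bottom Kac-weight of $T_{\bar\lambda}$ (Definition~\ref{topp}); the atypicality $f$ is unchanged, and ``$(\lambda^{\rm top})'$ is Kleshchev'' means that $(\mu^{\rm top})'$ is a Kleshchev bipartition with respect to $u_1=-p,\,u_2=m-q$ and $((\nu^{\rm top})^o)'$ is a Kleshchev bipartition with respect to $\bar u_1=q,\,\bar u_2=p-n$, where $\nu^o=(\nu^{(2)},\nu^{(1)})$.

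\textbf{Step 1 (the tilting criterion).} Since $\lambda_{pq}$ is typical, $K_{\lambda_{pq}}=T_{\lambda_{pq}}$ is projective-injective, so $M_{pq}^{rt}\cong V^{\otimes r}\otimes W^{\otimes t}\otimes K_{\lambda_{pq}}$ and, more generally, every $V^{\otimes a}\otimes W^{\otimes b}\otimes K_{\lambda_{pq}}$ is a tilting module (cf.\ the proof of Lemma~\ref{exact}), the order of the tensor factors being immaterial up to $\mfg$-module isomorphism. Hence the translation functors $?\otimes V$ and $?\otimes W$ preserve the additive category of tilting modules on the blocks involved, and the indecomposable tilting summands of $M_{pq}^{rt}$ are precisely those produced by $r$ applications of $?\otimes V$ and $t$ applications of $?\otimes W$ to $T_{\lambda_{pq}}$. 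Decomposing $?\otimes V$ and $?\otimes W$ into the Chevalley functors $E_i$, $F_i$ of the categorical $\mathfrak{sl}_\infty$-action of \cite{BS4}, and writing $D_\xi$ for the weight diagram of a dominant weight $\xi$ as in Definition~\ref{weight-d} (so $D_\lambda$ is the diagram of $\bar\lambda$ and $D_\emptyset$ that of $\lambda_{pq}$), each $E_iT_\xi$ (resp.\ $F_iT_\xi$) is a direct sum of indecomposable tiltings $T_{\xi'}$ with $D_{\xi'}$ obtained from $D_\xi$ by sliding one label across vertex $i$ a single step to the right (resp.\ left). Iterating, $T_{\bar\lambda}$ is a direct summand of $M_{pq}^{rt}$ if and only if $D_\lambda$ is reachable from $D_\emptyset$ by a legal chain of such moves using exactly $r$ rightward and $t$ leftward steps. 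Reversing the chain and using that $D_{\lambda^{\rm top}}$ is the terminus of the unique longest right path out of $D_\lambda$ (Definition~\ref{topp}(1)), this is equivalent to a two-sided strengthening of condition~\eqref{1111-1-top}: $S(\lambda^{\rm top})\subset I_{pq}^+$ together with prefix/suffix inequalities comparing the $\emptyset$-vertices and the $\times$-vertices of $D_{\lambda^{\rm top}}$, counted from both ends of $I_{pq}^+$ so as to accommodate the $W$-factors. For $t=0$ this recovers \cite[IV, Lemma~2.6]{BS4}.

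\textbf{Step 2 (matching with the Kleshchev condition).} One then checks that this diagram condition is implied by ``$(\lambda^{\rm top})'$ Kleshchev'', which is the mixed analogue of Lemma~\ref{kle-bi}. Reading $D_{\lambda^{\rm top}}$ off from $(f,\mu^{\rm top},\nu^{\rm top})$ through \eqref{barl}--\eqref{hat-nu} and \eqref{rho-lambda=}, one finds exactly as in the proof of Lemma~\ref{kle-bi} that $D_{\lambda^{\rm top}}$ is obtained from $D_\emptyset$ by moving the $m$ symbols ``$>$'' rightward and $n$ symbols ``$<$'' leftward by the amounts prescribed by $(\mu^{\rm top})^{(1)}$ and $(\mu^{\rm top})^{(2)}$, and, for the $W$-side, by moving $f$ further ``$>$''-symbols leftward and ``$<$''-symbols rightward by the amounts prescribed by $\nu^{\rm top}$, the $f$ resulting collisions being the ``$\times$''-vertices. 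The Kleshchev inequality~\eqref{kle-1} for $(\mu^{\rm top})'$ guarantees that each ``$\times$'' lying to the right of its home vertex is preceded on that side by enough ``$\emptyset$''-vertices, and the corresponding inequality for $((\nu^{\rm top})^o)'$ does the same on the opposite side; together these give precisely the prefix/suffix inequalities of Step~1. Invoking the criterion of Step~1 then yields that $T_{\bar\lambda}$ is a direct summand of $M_{pq}^{rt}$.

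\textbf{Main obstacle.} The genuinely new input, and the delicate point, is Step~1 for $t>0$: the case $t=0$ is \cite[IV, Lemma~2.6]{BS4}, which is exactly what the proof of Corollary~\ref{Kl-heck-kac} cites, whereas handling the $W$-factors requires the categorical $\mathfrak{sl}_\infty$-action of \cite{BS4} in full, in particular the one-step box-removal description of the functors $F_i$ on indecomposable tilting modules, so that $?\otimes W$ is treated on the same footing as $?\otimes V$. Once Step~1 is available, Step~2 is bookkeeping with weight diagrams that copies the proof of Lemma~\ref{kle-bi}. (One must keep the index conventions in mind: on the $W$-side $x_j,\bar u_i$ stand for $-y_j,-u_i$, so the relevant Kleshchev parameters are $\bar u_1=q$, $\bar u_2=p-n$ and \eqref{kle-1} is applied with these in the mirrored diagram.)
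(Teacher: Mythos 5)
Your approach is genuinely different from the paper's, and it has a real gap that the paper's proof carefully avoids. You try to derive a two-sided combinatorial criterion on $D_{\lambda^{\rm top}}$ (a ``prefix/suffix'' strengthening of \eqref{1111-1-top}) characterizing when $T_{\bar\lambda}$ appears in $M_{pq}^{rt}$, by following the crystal-graph paths for \emph{both} $?\otimes V$ and $?\otimes W$ simultaneously. You correctly identify this as ``the genuinely new input, and the delicate point,'' but you do not actually establish it: you only sketch what such a criterion should look like, and the putative equivalence between reachability in the mixed crystal graph and a prefix/suffix diagram inequality is far from automatic (the one-sided case in \cite[IV, Lemma~2.6]{BS4} is already nontrivial, and the mixed case with $r$ applications of $E_i$-type moves interleaved with $t$ of $F_i$-type moves is substantially messier). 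So as written Step~1 is a gap, not a proof, and Step~2 is predicated on it.

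The paper's proof is structured precisely so that no two-sided criterion is ever needed. It proceeds in three reductions: (i) reduce $f>0$ to $f=0$ by tensoring $T_{\bar\lambda}\ltimes M_{pq}^{r-f,t-f}$ with $v_1^{\otimes f}\otimes ?\otimes \bar v_1^{\otimes f}\mathfrak e^f$, an embedding you do not perform; (ii) observe, from the weight diagram of $\bar\nu=\lambda_{pq}-\hat\nu$, that moving ``$>$'' leftward and ``$<$'' rightward can never create ``$\times$,'' so $\bar\nu$ is typical, hence $K_{\bar\nu}=T_{\bar\nu}$ is a direct summand of $M_{pq}^{0t}$; (iii) reduce to showing $T_{\bar\lambda}\ltimes V^{\otimes r}\otimes K_{\bar\nu}$, and use the locality of the functors $F_i$ from \cite[IV, Lemma~2.4]{BS4} together with the disjointness condition \eqref{COn11} on $(\mu,\nu)$ to conclude that the crystal paths from $K_{\bar\nu}$ to $T_{\bar\lambda}$ and from $K_{\lambda_{pq}}$ to $T_{\lambda_{pq}+\tilde\mu}$ are governed by identical local data — so the one-sided $t=0$ result (Corollary~\ref{Kl-heck-kac}, hence \cite[IV, Lemma~2.6]{BS4}) applies verbatim. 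In other words, the entire $W^{\otimes t}$ factor gets packaged into a single typical Kac-module \emph{before} any crystal combinatorics is invoked; only rightward ($V$-side) translations are ever analyzed. If you want to salvage your approach you would have to prove the two-sided diagram criterion from scratch, which is harder than the original lemma; it is much cleaner to follow the paper's absorption-into-$K_{\bar\nu}$ reduction.
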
\begin{proof}
We claim that $T_{\bar\l}$ is a direct summand in $M_{pq}^{r-f,t-f}$. If so, then
\equan{t-fff}{v_1^f\otimes T_{\bar\l}\otimes\bar v_1^f \mathfrak e^f }
is obviously a tilting submodule in $M_{pq}^{rt}$ which is isomorphic to $T_{\bar\l}$. Thus the claim implies the result. Therefore, it suffices to consider the case  $f=0$.

Denote $\bar\nu=\l_{pq}-\hat\nu$. Since $p\le q-m$, the weight diagram $D_\nu$ (cf.~Definition \ref{weight-d}) of $\bar\nu$ is obtained from that of $\l_{pq}$ in \eqref{Diagram-l}
by moving the ``\,$>$\,'' at vertex $p-i+1$ to its left side at vertex $p-i+1-\nu^{(1)}_{m-i+1}$ for each $i$ with $1\le i\le m$, and moving the ``\,$<$\,'' at vertex $q-m+j$ to its right side at vertex $q-m+j+\nu^{(2)}_{n-j+1}$ for each $j$ with $1\le j\le n$ (cf.~\eqref{hat-nu}). Thus no ``\,$\times$\,'' can be produced, i.e., $\bar\nu$ is typical. Hence $K_{\bar\nu}$ is a direct summand in $M_{pq}^{0t}$. Thus, it suffices to prove that $T_{\bar\l}$ is a direct summand in $V^{\otimes r}\otimes K_{\bar\nu}\ltimes M_{pq}^{rt}$, here $\ltimes$ means direct summand of $M_{pq}^{rt}$.
 For this, we can apply \cite[IV, Lemmas 2.4 and 2.6]{BS4}.
Note from \cite[IV, Lemma 2.4]{BS4} that the action of the functor $F_i$ on $K_{\bar\nu}$ defined in \cite[IV]{BS4}  only depends on symbols at vertices $i$ and $i+1$ of the weight diagram $D_\nu$  of $\bar\nu$
(we remark that symbols $\circ,\wedge,\vee,\times$ in \cite[IV]{BS4} are respectively symbols $<,\times,\emptyset,>$ in this paper).
Due to condition \eqref{COn11}, for any $i\in I_{pq}:=I_{pq}^+\setminus\{q-m+n\}$ such that $i$ is involved in a path in the crystal graph in \cite[IV, Lemma 2.6]{BS4}, the symbols at vertex $i$ and $i+1$ in the weight diagram $D_\nu$ of $\bar\nu$ are the same as that in the weight diagram $D_\emptyset$ of $\l_{pq}$. This shows that
$T_{\bar\l}$ is a direct summand in $V^{\otimes r}\otimes K_{\bar\nu}$ if and only if
$T_{\l_{pq}+\tilde\mu}$ is a direct summand in $V^{\otimes r}\otimes K_{\l_{pq}}$, more precisely, \cite[IV, Lemma 2.6]{BS4} implies
$$
F_{i_r}\cdots F_{i_1}K_{\bar\nu}\cong T_{\bar\l}^{\otimes 2^\ell}
\ \Longleftrightarrow\
F_{i_r}\cdots F_{i_1}K_{\l_{pq}}\cong T_{\l_{pq}+\tilde\mu}^{\otimes 2^\ell},
$$
where $\ell$ is the number of edges in the given path of the form $\emptyset\,\times\, \to\ \,<\,>$.
 Thus the result follows from Corollary \ref{Kl-heck-kac}. 
\end{proof}

 We remark that there is  a bijection between  $S$ defined in \eqref{tiltings}
and the set of pair-wise non-isomorphic simple $\DBr$-modules. See \cite[Theorem~7.5]{RSu}.
For any $\xi\in S$ as above,  parallel to Definition \ref{topp}, we define $\xi^{\rm top}$ to be the unique dominant weight such that $L_\xi$ is the simple submodule of $K_{\xi^{\rm top}}$.

\begin{proposition}~\label{tilt} For any $\xi\in S$, there is a unique $(f,\mu,\nu)\in\Lambda_{2,r,t}$ such that $\xi^{\rm top}=\lambda_{pq}+\mu-\hat\nu$ . Further,  $\FF(T_\xi)$ is isomorphic to
the projective cover  of $D^{f, \mu', (\nu^0)'}$, where  $D^{f, \mu', (\nu^0)'}$ is the simple head of  $C(f, \mu', (\nu^o)')$. \end{proposition}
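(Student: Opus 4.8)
The plan is to follow the pattern of Corollary~\ref{heck-kac}, with Proposition~\ref{kac-ke} playing the role of Corollary~\ref{heck-kac0}. For the first assertion, since $\xi\in S$ the tilting module $T_\xi$ is a direct summand of $M_{pq}^{rt}$, and the Kac module $K_{\xi^{\rm top}}$ is the bottom Kac submodule of $T_\xi$, so $K_{\xi^{\rm top}}\hookrightarrow T_\xi$ and $\xi^{\rm top}$ is a $\mfg$-highest weight of $M_{pq}^{rt}$. By Lemma~\ref{lemm111} there is then a unique triple $\lambda=(f,\mu,\nu)\in\Lambda_{2,r,t}$ with $\xi^{\rm top}=\bar\lambda=\lambda_{pq}+\mu-\hat\nu$, which is the $(f,\mu,\nu)$ in the statement.

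Next I would show that $\FF(T_\xi)$ is a principal indecomposable right $\DBr$-module. Since $\lambda_{pq}$ is typical, $M_{pq}^{rt}$ is injective over $U(\mfg)$, so $\FF={\rm Hom}_{U(\mfg)}(-,M_{pq}^{rt})$ is exact (Lemma~\ref{exact}); and as $r+t\le\min\{m,n\}$, Theorem~\ref{level-21} gives ${\rm End}_{U(\mfg)}(M_{pq}^{rt})\cong\DBr$, so $\FF(M_{pq}^{rt})\cong\DBr$ as a right $\DBr$-module. Hence $\FF(T_\xi)$ is a direct summand of the right regular module $\DBr$, i.e. projective; and by the additive Yoneda lemma $\FF$ restricts to an anti-equivalence from ${\rm add}(M_{pq}^{rt})$ onto the category of finitely generated projective right $\DBr$-modules, so the indecomposable object $T_\xi$ is carried to an indecomposable projective, i.e. a PIM. (Alternatively one can repeat verbatim the counting argument of Corollary~\ref{heck-kac}: $M_{pq}^{rt}$ is faithful over $\DBr$ by Theorem~\ref{level-21}, so $M_{pq}^{rt}\otimes_{\DBr}P\ne0$ for every left PIM $P$; comparing, via this functor and via $\FF$, the number of indecomposable summands of the left and right regular $\DBr$-modules with $\sum_{\xi\in S}\ell_\xi$, and using $\#S=\#\{\text{simple }\DBr\text{-modules}\}$ from \cite[Theorem~7.5]{RSu}, forces every $\FF(T_\xi)$ to be indecomposable.)

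Now the inclusion $K_{\xi^{\rm top}}\hookrightarrow T_\xi$ and the exactness of $\FF$ give an epimorphism $\FF(T_\xi)\twoheadrightarrow\FF(K_{\xi^{\rm top}})$ of right $\DBr$-modules, and by the first assertion together with Proposition~\ref{kac-ke} we have $\FF(K_{\xi^{\rm top}})=\FF(K_{\bar\lambda})\cong C(f,\mu',(\nu^o)')$; this is nonzero because $\bar\lambda$ is a dominant weight of $M_{pq}^{rt}$, so $V_{\bar\lambda}\ne0$ by Theorem~\ref{theorem-main1}(2) and $C(f,\mu',(\nu^o)')\cong V_{\bar\lambda}$. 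Writing $D$ for the simple head of the PIM $\FF(T_\xi)$, composition of $\FF(T_\xi)\twoheadrightarrow C(f,\mu',(\nu^o)')$ with the projection of $C(f,\mu',(\nu^o)')$ onto its semisimple top yields a nonzero semisimple quotient of $\FF(T_\xi)$, hence a quotient of $D$, hence equal to $D$; so $C(f,\mu',(\nu^o)')$ has simple head $D$ and $\FF(T_\xi)$ is its projective cover. Finally $D^{f,\mu',(\nu^o)'}=C(f,\mu',(\nu^o)')/{\rm Rad\,}\phi_{f,\mu',(\nu^o)'}$ is $0$ or simple, and once it is known to be nonzero it is a nonzero simple quotient of $C(f,\mu',(\nu^o)')$ and therefore coincides with $D$, which completes the proof.

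The main obstacle is exactly this last point: verifying $D^{f,\mu',(\nu^o)'}\ne0$, equivalently that the simple head $D$ of $C(f,\mu',(\nu^o)')$ carries precisely the label $(f,\mu',(\nu^o)')$ and not a strictly larger one (cellular triangularity a priori gives only $\unrhd$). I expect to obtain this from the classification of the labels of simple $\DBr$-modules in Proposition~\ref{simp-b2} and Remark~\ref{cycsim}, which reduces $D^{f,\mu',(\nu^o)'}\ne0$ to $\mu'$ and $(\nu^o)'$ being Kleshchev multipartitions, combined with the fact — parallel to Corollary~\ref{Kl-heck-kac}, and deduced by the same reduction to the cases $f=0$ and $t=0$ used in the proof of Lemma~\ref{M-tilting} — that membership $\xi\in S$ forces precisely $\mu'$ and $(\nu^o)'$ to be Kleshchev. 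Put otherwise, one must check that the bijection $S\leftrightarrow\{\text{simple }\DBr\text{-modules}\}$ of \cite[Theorem~7.5]{RSu} sends $\xi$ to $D^{f,\mu',(\nu^o)'}$; granting that compatibility, $D^{f,\mu',(\nu^o)'}$ is automatically nonzero and the identification is complete.
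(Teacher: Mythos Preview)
Your approach is essentially the same as the paper's, with one genuine simplification. For the indecomposability of $\FF(T_\xi)$, the paper uses the counting argument (your alternative (b)), but instead of your direct observation that faithfulness gives $\GG(P)=M_{pq}^{rt}\cdot e\ne0$ for every primitive idempotent $e$, the paper routes through cell modules: from each left PIM $P$ it passes to a cell-module quotient $\Delta(\ell,\a,\b^o)$ and then must prove $\GG(\Delta(\ell,\a,\b^o))\ne0$ for \emph{every} $(\ell,\a,\b)\in\Lambda_{2,r,t}$. This is done by an explicit construction of a nonzero element in the image, using the highest weight vectors of Theorem~\ref{theorem-main1} and the description of $\Delta(\ell,\a,\b^o)$ from (the left-module analogue of) Proposition~\ref{classcell2}. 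Your Yoneda/anti-equivalence argument (a) bypasses this computation entirely and is the cleaner route.

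On the ``main obstacle'' you flag, the paper is in fact no more careful than you anticipate. Its proof simply writes: ``Thus, $C(f,\mu',(\nu^o)')$ has the simple head, denoted by $D^{f,\mu',(\nu^o)'}$'', i.e.\ it treats the ``where'' clause in the statement as defining the notation. It then asserts, invoking Lemma~\ref{M-tilting}, that $\mu'$ and $(\nu^o)'$ are Kleshchev --- even though that lemma literally gives only the converse implication (Kleshchev $\Rightarrow$ direct summand). The missing direction is obtained by counting, exactly as you outline: Lemma~\ref{M-tilting} injects the Kleshchev triples into $S$, while $\#S$ equals the number of simple $\DBr$-modules by \cite[Theorem~7.5]{RSu}, which equals the number of Kleshchev triples by Proposition~\ref{simp-b2}/Remark~\ref{cycsim}; hence the injection is a bijection and the labelling is forced. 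So your plan for resolving the obstacle is correct, and in this respect your write-up is more explicit than the paper's own proof.
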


\begin{proof} If $\xi\in S$, then $T_\xi$ is an indecomposable tilting module with $\ell_\xi>0$. By Theorem~\ref{level-21},  $\FF(T_\xi)$  is a direct sum of certain principle  indecomposable right $\DBr$-modules.
We claim that  $\FF(T_\xi)$ is indecomposable for any $\xi\in S$. Otherwise, $\sum_{\xi\in S} \ell_\xi$ is strictly less than the number of principal indecomposable direct summands of right  $\DBr$-module $\DBr$.
However, for each principal  indecomposable direct summand $P$ of left  $\DBr$-module $\DBr$, $P$ has to be a projective cover of irreducible left $\DBr$-module, say $D$,
which is the simple head of a left cell module, say $\Delta(\ell , \a, \b)$ for some $(\ell, \a, \b)\in \Lambda_{2, r, t}$, where  $\Delta(\ell , \a, \b)$ is defined via a weakly  cellular basis of $\DBr$.
So, there is an epimorphism from $P$ to   $\Delta(\ell, \a, \b)$. Since  $\GG:=M_{pq}^{rt}\otimes_{\DBr} ?$ is  right exact,  there is an epimorphism from $\GG(P)$ to $\GG(\Delta(\ell, \a, \b))$. If  $\GG(\Delta(\ell, \a, \b))\neq 0$, then  $\GG(P)$ is a non-zero direct summand of $M_{pq}^{rt}$.
This implies that the number of indecomposable direct summands  of left  $\DBr$-module $\DBr$ is strictly less than $\sum_{\xi\in S} \ell_\xi$. This is  a contradiction since  the number of principal indecomposable direct summands of left $\DBr$-module $\DBr$ is equal to that of right $\DBr$-module $\DBr$. So, $\FF(T_\xi)$ is indecomposable. Since $K_{\xi^{\rm top}}\hookrightarrow T_{\xi}$, we have  $\FF(T_\xi)\twoheadrightarrow \FF(K_{\xi^{\rm top}})$. By  Proposition~\ref{kac-ke}, $\FF(K_{\xi^{\rm top}})\cong C(f, \mu', (\nu^o)')$. Thus, $C(f, \mu', (\nu^o)')$ has the simple head, denoted by  $D^{f, \mu', (\nu^{o})'}$, and hence
 $\FF(T_\xi)=P^{f, \mu', (\nu^{o})'}$.  Since $\xi\in S$, by Lemma~\ref{M-tilting}, both $\mu'$ and $(\nu^{o})'$ are Kleshchev in the sense of \eqref{Kle} with respect to $-p, m-q$ and $q, p-n$.

It remains to  prove $\GG(\Delta(\ell, \a, \b^o))\neq 0$ for any $\delta:=(\ell, \a, \b)\in \Lambda_{2, r, t}$.  By Theorem~\ref{theorem-main1}, $V_{\bar\delta}$ contains
a non-zero vector $v:= v_1^{\otimes \ell} \otimes v_{\mathbf i}\otimes v_{pq}\otimes v_{\mathbf j}\otimes \bar v_1^{\otimes \ell} \mathfrak e^f  w_{\a, \b} \mathfrak y_{\a'} \bar {\mathfrak y}_{(\b^{o})'}$,
where $\mathbf i$ and $\mathbf j$ are defined as in Definition~\ref{mixh}.
So, it is enough to show $v\in  \GG(\Delta(\ell, \a, \b^o))$, where $\Delta(\ell, \a, \b^o)$ is defined via a suitable weakly cellular basis of $\DBr$. We use cellular bases of $\mathscr H_{2,r-f}$ and $\mathscr H_{2, t-f}$
in Lemma~\ref{cell-h2}\,(1)\,(3) to construct a weakly cellular basis of $\DBr$, which is similar to that in Theorem~\ref{cellular-1}. Let $\Delta(\ell, \a, \b^o)$ be the corresponding left cell module with respect to
$(\ell, \a, \b^{o})\in \Lambda_{2, r, t}$. By arguments similar to those for the proof of Proposition ~\ref{classcell2}, one can verify
$$\Delta(\ell, \a, \b^o)\cong\DBr \mathfrak e^f \mathfrak x_\a \bar{\mathfrak x}_{\b^o} w_{\a, \b} \mathfrak y_{\a'}\bar{\mathfrak y}_{(\b^{o})'} \pmod{ \mathscr \B_{2, r, t}^{\ell+1}}.  $$
Let $M=\tilde v \DBr$ be the cyclic $\DBr$-module generated by $\tilde v :=
v_1^{\otimes \ell} \otimes v_{\mathbf i}\otimes v_{pq}\otimes v_{\mathbf j}\otimes \bar v_1^{\otimes \ell}$. Then  $M\otimes_{\DBr}\Delta(\ell, \a, \b^o) $ is a subspace of $\GG(\Delta(\ell, \a, \b^o))$.
Since   $\mathscr \B_{2, r, t}^{\ell+1}$ acts on $M$ trivially, there is  a $\mathbb C$-linear map $\phi: M\otimes_{\DBr}\Delta(\ell, \a, \b^o)\rightarrow M$ such that $\phi(m\otimes \bar h)=m h$ for any
$\bar h\in  \DBr \mathfrak e^f \mathfrak x_\a \bar{\mathfrak x}_{\b^o} w_{\a, \b} \mathfrak y_{\a'}\bar{\mathfrak y}_{(\b^{o})'} \pmod{ \mathscr \B_{2, r, t}^{\ell+1}}$. Since $\lambda_{pq}$ is typical and the ground filed is $\mathbb C$,
up to a non-zero scalar, we have $v=\phi(\tilde v\otimes \bar h)$ , where $h\equiv \mathfrak  e^f  w_{\a, \b} \mathfrak y_{\a'} \bar {\mathfrak y}_{(\b^{o})'}   \pmod{ \mathscr \B_{2, r, t}^{\ell+1}}$.
Thus,  $\GG(\Delta(\ell, \a, \b^o))\neq 0$.
\end{proof}

\begin{remark} Proposition~\ref{tilt} implies that $C(f, \mu, \nu)$ has the simple head if $\mu$ and $\nu$ are Kleshchev bipartitions with respect to $-p, m-q$ and $q, p-n$ in the sense of \eqref{Kle}. Further, all non-isomorphic  simple $\DBr$-modules can be realized in this way.\end{remark}

 \begin{proposition}\label{fsimple} Suppose  $\xi\in P^+$. Then  $\FF(L_{{\xi}})=0$ if $\xi\not\in S$ $($cf.~\eqref{tiltings}$)$ and  $\FF(L_{{\xi}})\cong D^{f,\mu',(\nu^{o})'}$ if  $\xi\in S$,
  where  $\xi^{\rm top}=\lambda_{pq}+\mu-\hat\nu$ with $(f, \mu, \nu)\in \Lambda_{2, r, t}$.\end{proposition}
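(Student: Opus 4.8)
The plan is to compute $\FF(L_\xi)$ by fitting $L_\xi$ into the known Kac-module and tilting-module filtrations and using the exactness of $\FF$ established in Lemma~\ref{exact}. First I would recall that since $\lambda_{pq}$ is typical, every indecomposable $\mfg$-module appearing as a subquotient of $M_{pq}^{rt}$ lies in the block determined by $\lambda_{pq}$, and in that block each simple $L_\eta$ with $\eta\in P^+$ arises as the simple head of the tilting module $T_\eta$ and as the simple socle of $K_{\eta^{\rm top}}$ (this is the content of Definition~\ref{topp} and the Brundan–Stroppel results invoked in Section~5–6). The strategy is: (i) if $\xi\notin S$, show $\FF(L_\xi)=0$ by a Hom-computation; (ii) if $\xi\in S$, identify $\FF(L_\xi)$ with the simple head $D^{f,\mu',(\nu^o)'}$ of $\FF(T_\xi)=P^{f,\mu',(\nu^o)'}$ from Proposition~\ref{tilt}.

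For part (i): suppose $\xi\notin S$, i.e.\ $\ell_\xi=0$ in the tilting decomposition \eqref{Mrtpq}. Since $\FF = \Hom_{U(\mfg)}(?,M_{pq}^{rt})$ is exact and $M_{pq}^{rt}$ is projective–injective–tilting as a $\mfg$-module, $\FF(L_\xi)$ is a subquotient of $\FF$ applied to a Kac-module resolution of $L_\xi$, hence it suffices to show $\Hom_{U(\mfg)}(L_\xi, M_{pq}^{rt})=0$. Any nonzero such homomorphism would embed $L_\xi$ into $M_{pq}^{rt}$; since $L_\xi$ is simple it would land in the socle of some indecomposable summand $T_\eta$ ($\eta\in S$), and the socle of an indecomposable tilting module in a typical block is $L_{\eta'}$ for suitable $\eta'$ with $\eta'^{\rm top}=\eta^{\rm top}$ — in particular $L_\xi\hookrightarrow T_\eta$ forces $\xi^{\rm top}=\eta^{\rm top}$, whence $\xi=\eta\in S$, a contradiction. (Alternatively one can argue via $\Hom_{U(\mfg)}(L_\xi, M_{pq}^{rt}) \cong \Hom_{U(\mfg)}(L_\xi, \oplus_{\eta\in S} T_\eta^{\oplus\ell_\eta})$ and the fact that $[T_\eta:L_\xi]_{\rm soc}\neq 0$ only when $\xi\in S$.)

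For part (ii): let $\xi\in S$ with $\xi^{\rm top}=\lambda_{pq}+\mu-\hat\nu$, $(f,\mu,\nu)\in\Lambda_{2,r,t}$. Consider the short exact sequence $0\to N\to T_\xi\to L_\xi\to 0$, where $N=\operatorname{rad} T_\xi$ (using that $T_\xi$ has simple head $L_\xi$ in a typical block). Applying the exact functor $\FF$ gives $0\to\FF(N)\to\FF(T_\xi)\to\FF(L_\xi)\to 0$. By Proposition~\ref{tilt}, $\FF(T_\xi)\cong P^{f,\mu',(\nu^o)'}$, the projective cover of $D^{f,\mu',(\nu^o)'}$; it has simple head $D^{f,\mu',(\nu^o)'}$. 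Since $\FF$ is exact and $N$ has no composition factor isomorphic to $L_\xi$ while $\FF(L_\xi)$ is a quotient of $\FF(T_\xi)$ (hence has $D^{f,\mu',(\nu^o)'}$ as its unique top), it remains to show $\FF(L_\xi)$ is actually simple, i.e.\ $\FF(N)=\operatorname{rad}\FF(T_\xi)$. For this I would show $\FF$ sends the radical filtration of $T_\xi$ surjectively onto the radical of $P^{f,\mu',(\nu^o)'}$: every other composition factor $L_\eta$ of $N$ with $\eta\in S$ contributes $\FF(L_\eta)\neq 0$, while composition factors with $\eta\notin S$ are killed by part (i); combined with $\dim\FF(L_\xi)=\dim\Hom(L_\xi,M_{pq}^{rt})$ and a count of multiplicities (using BGG-reciprocity type identities $[T_\xi:K_\eta]=[K_\eta^{\rm top}:L_\xi]$ on the $\mfg$-side matched against Cartan invariants $[P^{f,\mu',(\nu^o)'}:\Delta]$ on the $\DBr$-side), one gets $\dim\FF(L_\xi)=\dim D^{f,\mu',(\nu^o)'}$, forcing $\FF(L_\xi)\cong D^{f,\mu',(\nu^o)'}$.

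The main obstacle I expect is the last multiplicity/dimension bookkeeping in part (ii): one must be careful that the exact functor $\FF$, while killing exactly the simples $L_\eta$ with $\eta\notin S$, induces a bijection on the remaining simples compatible with the labelling $\eta^{\rm top}\leftrightarrow(f,\mu',(\nu^o)')$, and that no "extra collapsing" occurs among the $L_\eta$ with $\eta\in S$. The clean way around this is to invoke that $\FF$ restricted to the block of $\lambda_{pq}$ is (up to the known identification) a quotient functor onto $\mathrm{mod}\text{-}\DBr$ — i.e.\ $\FF$ is fully faithful on tilting modules (Proposition~\ref{tilt} plus Theorem~\ref{level-2schur}) — so that it sends simples to simples or zero, with zero exactly on the $L_\eta$, $\eta\notin S$; then $\FF(L_\xi)$ is automatically simple, and identifying it with $D^{f,\mu',(\nu^o)'}$ is immediate from the surjection $\FF(T_\xi)\twoheadrightarrow\FF(L_\xi)$ and the fact that $P^{f,\mu',(\nu^o)'}$ has simple head $D^{f,\mu',(\nu^o)'}$.
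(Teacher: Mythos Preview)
Your treatment of the vanishing case $\xi\notin S$ is correct and essentially identical to the paper's argument: any nonzero map $L_\xi\to M_{pq}^{rt}$ lands in the socle of some $T_\eta$ with $\eta\in S$, and ${\rm soc}\,T_\eta=L_\eta$ forces $\xi\in S$. (Your detour through ``$\FF(L_\xi)$ is a subquotient of $\FF$ applied to a Kac resolution'' is unnecessary: $\FF(L_\xi)=\Hom_{U(\mfg)}(L_\xi,M_{pq}^{rt})$ by definition.)

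For $\xi\in S$, however, there is a genuine error. The functor $\FF=\Hom_{U(\mfg)}(?,M_{pq}^{rt})$ is \emph{contravariant}, so applying it to $0\to N\to T_\xi\to L_\xi\to0$ yields
\[
0\longrightarrow \FF(L_\xi)\longrightarrow \FF(T_\xi)\longrightarrow \FF(N)\longrightarrow 0,
\]
not the sequence you wrote. Thus $\FF(L_\xi)$ is a \emph{submodule} of $P^{f,\mu',(\nu^o)'}$, not a quotient, and your ``simple head'' identification does not go through directly. Even after repairing the direction, your argument for simplicity of $\FF(L_\xi)$ is circular: you propose to show $\FF(N)=\operatorname{rad}\FF(T_\xi)$ by invoking that $\FF$ sends simples to simples or zero, which is precisely what is being proved. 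The appeal to ``$\FF$ is a quotient functor'' would need an independent justification not available at this point in the paper.

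The paper avoids this altogether. It first computes $\dim_{\mathbb C}\FF(L_\xi)=\ell_\xi$ directly (since $\Hom_{U(\mfg)}(L_\xi,T_\zeta^{\oplus\ell_\zeta})=0$ for $\zeta\ne\xi$ and is $\ell_\xi$-dimensional for $\zeta=\xi$). It then shows $\FF(L_\xi)$ is simple by an explicit construction: for each $A\in M_{\ell_\xi}(\mathbb C)$ there is an element $\phi_A\in\DBr=\mathrm{End}_{U(\mfg)}(M_{pq}^{rt})$ acting on the generating space of $T_\xi^{\oplus\ell_\xi}$ by the matrix $A$, and one checks that the induced action of $\phi_A$ on the basis $(f^1,\dots,f^{\ell_\xi})$ of $\FF(L_\xi)$ is again by $A$. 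Hence a full matrix algebra acts, and $\FF(L_\xi)$ is irreducible. Only then is the identification with $D^{f,\mu',(\nu^o)'}$ made, via the contravariant surjection $\FF(K_{\xi^{\rm top}})\twoheadrightarrow\FF(L_\xi)$ coming from $L_\xi\hookrightarrow K_{\xi^{\rm top}}$.
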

\begin{proof} 
By \eqref{Mrtpq},  $\FF(L_\xi)=\oplus_{\zeta\in S}{\rm Hom}_\mfg(L_\xi,T_\zeta^{\otimes
\ell_\zeta})$. Suppose $0\ne f\in{\rm Hom}_{U(\mfg)}(L_\xi,T_\zeta^{\oplus \ell_\zeta})$. Then
$L_\xi\cong f(L_\xi)$ is a simple submodule of $T_\zeta^{\oplus \ell_\zeta}$.
Since $T_\zeta$ has the unique simple submodule $L_\zeta$,
$\FF(L_\xi)=0$ if $\xi\notin S$.  If $\xi\in S$, then  \equa{F-L=}{\FF(L_\xi)={\rm Hom}_{U(\mfg)}(L_\xi,T_\xi^{\oplus \ell_\xi}),} which is obviously  $\ell_\xi$-dimensional.
Let $v_{\xi}^1,...,v_\xi^{\ell_\xi}\in T_\xi^{\oplus\ell_\xi}$ be the generators of the tilting module
$T_\xi^{\oplus\ell_\xi}$ (then $v_{\xi}^1,...,v_\xi^{\ell_\xi}$ span the generating space, denoted $\textbf{\textit{V}}$, of $T_\xi^{\oplus\ell_\xi}$), and $v'^1_{\xi},...,v'^{\ell_\xi}_{\xi}\in L_\xi^{\oplus\ell_\xi}
$, the corresponding generators of the submodule $ L_\xi^{\oplus\ell_\xi}$ of $ T_\xi^{\oplus\ell_\xi}$. Thus, there exists a unique $u\in U(\mfg)$ such that \equa{v-prime}{v'^i_{\xi}=uv_\xi^i\mbox{ \ for \ }i=1,...,\ell_\xi.} Let $\tilde v_\xi\in L_\xi$ be the generator of the simple module $L_\xi$. As in the proof of Corollary \ref{heck-kac}, we can define  $f^i:L_\xi\to T_\xi^{\oplus\ell_\xi}$ to be the $U(\mfg)$-homomorphism sending $\tilde v_\xi$ to $v'^i_\xi$ for $i=1,...,\ell_\xi$. Then $(f^1,...,f^{\ell_\xi})$  is obviously a basis of $\FF(L_\xi)$ (cf.~\eqref{F-L=}).

For any $A\in M_{\ell_\xi}$ (the algebra of $\ell_\xi\times\ell_\xi$ complex matrices),
we can define an element $\phi_A\in {\rm End}_{U(\mfg)} (M_{pq}^{rt})=\DBr$ as follows:
$\phi_A\Big|_{T_\zeta^{\oplus\ell_\zeta}}=0$ if $\zeta\ne\xi$ and
\equa{phi-A-xi}{\phi_A\Big|_{T_\xi^{\oplus\ell_\xi}}: (v_\xi^1,...,v_\xi^{\ell_\xi})\mapsto(v_\xi^1,...,v_\xi^{\ell_\xi})A
\mbox{ \ (regarded as vector-matrix multiplication)},}
i.e., the transition matrix of the action of $\phi_A\Big|_{T_\xi^{\oplus\ell_\xi}}$ on the generating space $\textbf{\textit{V}}$ of $T_\xi^{\oplus\ell_\xi}$ \vspace*{-5pt}under the basis $(v_\xi^1,...,v_\xi^{\ell_\xi})$  is $A$. By the universal property of projective modules, this uniquely defines an element $\phi_A\in\DBr$. Thus we have the embedding $\phi:M_{\ell_\xi}\to\DBr$ sending $A$ to $\phi_A$.
Write $A$ as $A=(a_{ij})_{i,j=1}^{\ell_\xi}$. Then by \eqref{phi-A-xi} and
 definition of the right action of $\DBr$ on $M_{pq}^{rt}$, we have
\begin{eqnarray}\label{phi-A-action}
\!\!\!\!\!\!&\!\!\!\!\!\!\!\!\!\!\!\!\!\!\!&
f^i(\tilde v_\xi)\phi_A=v'^i_\xi\phi_A=(uv_\xi^i)\phi_A=u(v_\xi^i\phi_A)=u\mbox{$\sum\limits_{j=1}^{\ell_\xi}$}
a_{ji}v_\xi^j
=\mbox{$\sum\limits_{j=1}^{\ell_\xi}$}a_{ji}v'^j_\xi=\Big(\mbox{$\sum\limits_{j=1}^{\ell_\xi}$}a_{ji}f^j\Big)(\tilde v_\xi),
\end{eqnarray}
i.e., the transition matrix of the action of $\phi_A$ on
$\FF(L_\xi)$ under the basis $(f^1,...,f^{\ell_\xi})$ is $A$.
Thus $\phi(M_{\ell_\xi})$ acts transitively on
 the $\ell_\xi$-dimensional space $\FF(L_\xi)$ and hence  $\FF(L_\xi)$ is a simple $\DBr$-module.
 Finally, since $L_\xi\hookrightarrow K_{\xi^{\rm top}}$, we have  $\FF(K_{\xi^{\rm top}})\twoheadrightarrow\FF(L_\xi)$. Note that   $D^{f,\mu',(\nu^{o})'}$ is the simple head of
 $\FF(K_{\xi^{\rm top}})$. Thus,  $\FF(L_\xi)\cong D^{f,\mu',(\nu^{o})'}$.
\end{proof}

\begin{theorem} Suppose $(f, \alpha, \beta)\in \Lambda_{2, r, t}$ such that  there is a  $\l\in S$ $($cf. \eqref{tiltings}$)$
satisfying $\l^{\rm top}=\l_{pq}+\a-\hat \b$. If  $\mu:=(\ell, \gamma, \delta)\in \Lambda_{2, r, t}$, then
$[C(\ell, \gamma', (\delta^{o})'): D^{f, \a', (\b^{o})'} ]=(T_{\l}: K_{\bar \mu})$. \end{theorem}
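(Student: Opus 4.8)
The plan is to transport a Kac filtration of the indecomposable tilting module $T_\l$ through the exact functor $\FF$ of Definition~\ref{f} and then read off the decomposition number on the $\DBr$-side via BGG reciprocity for weakly cellular algebras. Since $\l\in S$ and $\l^{\rm top}=\l_{pq}+\a-\hat\b$, Proposition~\ref{tilt} identifies $\FF(T_\l)\cong P^{f,\a',(\b^{o})'}$, the projective cover of the simple right $\DBr$-module $D^{f,\a',(\b^{o})'}$ (which is non-zero, since by Lemma~\ref{M-tilting} both $\a'$ and $(\b^{o})'$ are Kleshchev). On the other hand, for $\mu=(\ell,\gamma,\delta)\in\Lambda_{2,r,t}$ Proposition~\ref{kac-ke} gives $\FF(K_{\bar\mu})\cong C(\ell,\gamma',(\delta^{o})')$. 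It therefore suffices to prove (i) that $C(\ell,\gamma',(\delta^{o})')$ occurs exactly $(T_\l:K_{\bar\mu})$ times as a section in a cell filtration of $P^{f,\a',(\b^{o})'}$, and (ii) that this cell-filtration multiplicity equals the decomposition number $[C(\ell,\gamma',(\delta^{o})'):D^{f,\a',(\b^{o})'}]$.

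For (i), recall that $T_\l$ has a filtration whose sections are Kac modules $K_\nu$ (see \cite{BS4}). As $\l\in S$, $T_\l$ is a direct summand of $M_{pq}^{rt}$, so every weight $\nu$ occurring is a dominant weight of $M_{pq}^{rt}$ and hence, by Theorem~\ref{theorem-main1}(1) (equivalently Lemma~\ref{lemm111}), of the form $\bar\mu$ for a unique $\mu=(\ell,\gamma,\delta)\in\Lambda_{2,r,t}$; the number of sections isomorphic to $K_{\bar\mu}$ is $(T_\l:K_{\bar\mu})$. Applying the exact contravariant functor $\FF$ and using $\FF(K_{\bar\mu})\cong C(\ell,\gamma',(\delta^{o})')$ converts this into a filtration of $\FF(T_\l)\cong P^{f,\a',(\b^{o})'}$ by right cell modules of $\DBr$: the order of the sections is reversed, but their multiplicities are unchanged, and since $(\ell,\gamma,\delta)\mapsto(\ell,\gamma',(\delta^{o})')$ is a bijection of $\Lambda_{2,r,t}$, the cell module $C(\ell,\gamma',(\delta^{o})')$ occurs exactly $(T_\l:K_{\bar\mu})$ times. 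This proves (i) and shows in passing that $P^{f,\a',(\b^{o})'}$ has a cell filtration.

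Statement (ii) is BGG reciprocity for $\DBr$, which by Theorem~\ref{cellular-1} is weakly cellular over $\mathbb C$ and hence, following \cite{GL} and \cite{GG}, shares the homological features of a genuine cellular algebra; in particular, for a projective indecomposable module, the multiplicity of a cell module in a cell filtration equals the corresponding decomposition number. Combining (i) and (ii) gives $[C(\ell,\gamma',(\delta^{o})'):D^{f,\a',(\b^{o})'}]=(T_\l:K_{\bar\mu})$, as required. I expect (ii) to be the main obstacle: one has to make sure that the BGG reciprocity established for cellular algebras in \cite{GL} still holds, with the correct handedness (right cell modules against right projectives), for the weakly cellular structure on $\DBr$ supplied by the basis $\mathscr C$ of Theorem~\ref{cellular-1}; this should come down to rerunning Graham--Lehrer's argument with $\mathscr C$ in place of a cellular basis and using the anti-involution on $\DBr$ that fixes its generators to interchange the two handednesses.
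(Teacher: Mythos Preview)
Your argument is correct, but it takes a different route from the paper's. The paper applies $\FF$ not to a Kac filtration of $T_\l$ but to a \emph{composition series} of $K_{\bar\mu}$: by Lemma~\ref{exact} this yields a filtration of $\FF(K_{\bar\mu})\cong C(\ell,\gamma',(\delta^{o})')$ whose sections are the $\FF(L_\xi)$, and Proposition~\ref{fsimple} says these are either zero (if $\xi\notin S$) or the simple $D^{f,\alpha',(\beta^{o})'}$ (if $\xi\in S$ with $\xi^{\rm top}=\l_{pq}+\alpha-\hat\beta$). Hence $[C(\ell,\gamma',(\delta^{o})'):D^{f,\alpha',(\beta^{o})'}]=[K_{\bar\mu}:L_\l]$, and BGG reciprocity on the $\mfg$ side gives $[K_{\bar\mu}:L_\l]=(T_\l:K_{\bar\mu})$.

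Thus the paper places the BGG reciprocity on the Lie-superalgebra side (a standard fact for the category of finite-dimensional $\mathfrak{gl}_{m|n}$-modules), and makes essential use of Proposition~\ref{fsimple}, which you do not invoke at all. Your route instead pushes a Kac filtration of $T_\l$ through $\FF$ via Propositions~\ref{tilt} and~\ref{kac-ke} and places the BGG reciprocity on the $\DBr$ side. That reciprocity does hold for weakly cellular algebras --- the Graham--Lehrer argument goes through verbatim, as you anticipate --- so your worry about (ii) is mild. Each approach has its virtue: the paper's avoids any appeal to cell-filtration multiplicities of projectives for $\DBr$, at the cost of needing the description of $\FF$ on simples; yours bypasses Proposition~\ref{fsimple} entirely but must justify the cellular-side reciprocity.
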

\begin{proof} The result follows from Lemma~\ref{exact}, Propositions~\ref{kac-ke} and \ref{fsimple}, together with the BGG reciprocity formula for $\mfg$.
\end{proof}

\small

\end{document}